\documentclass[10pt,onecolumn]{article}
\usepackage[left=1in,top=1in,right=1in,bottom=1in]{geometry}
\geometry{letterpaper} 
\usepackage{graphicx,caption,subcaption,hyperref,authblk,enumitem,parskip}
\usepackage{amssymb,amsmath,amsthm,xcolor,mathrsfs}
\usepackage{diagbox}
\numberwithin{equation}{section}
\graphicspath{{./Figures/}}	
\captionsetup{font=small,labelfont=bf,figurename=Fig.}
\DeclareMathOperator{\eps}{\varepsilon}

\theoremstyle{plain}

\newtheorem{proposition}{Proposition}[section]

\newtheorem{lemma}{Lemma}[section]
\newtheorem{remark}{Remark}[section]

\title{\vspace{-4em}Les Canards de Turing}
\author[1]{Theodore Vo}
\author[2]{Arjen Doelman}
\author[3]{Tasso J. Kaper}

\affil[1]{\footnotesize School of Mathematics, Monash University, Clayton, Victoria 3800, Australia}
\affil[2]{\footnotesize Mathematisch Instituut, Universiteit Leiden, 2300 RA Leiden, the Netherlands}
\affil[3]{\footnotesize Department of Mathematics and Statistics, Boston University, Boston, MA 02215, USA}

\begin{document}
\bibliographystyle{abbrv}
\maketitle

\begin{abstract}
In this article, we study a prototypical system of reaction-diffusion equations in which the diffusivities are widely separated. We report on the discovery of families of spatially periodic canard solutions that emerge from {\em singular Turing bifurcations}.  
We show that the small-amplitude, spatially periodic solutions that emerge from the Turing bifurcations form families of spatially periodic canards that oscillate about the homogeneous equilibrium, with wavenumbers near the critical value obtained from the Turing analysis. 
The emergence of these spatially periodic canards asymptotically close to the Turing bifurcations, which are reversible 1:1~resonant Hopf bifurcations in the spatial ODE system, is an analog in spatial dynamics of the emergence of limit cycle canards in the canard explosions that occur asymptotically close to Hopf bifurcations in time-dependent ODEs. 
We also find families of large-amplitude, spatially periodic canards, including some with $\mathcal{O}(1)$ wavenumber and some with small wavenumbers. 
These lie further from the homogeneous state and have a ``fast-slow" spatial structure, with segments of steep gradients and segments of gradual variation.
In the full PDE system, we show that for most parameter values under study the Turing bifurcation is sub-critical, and we present the results of some direct numerical simulations showing that several of the different types of spatial canard patterns are attractors in the prototypical PDE.
    
To support the main numerical discoveries, we use the method of geometric desingularization and geometric singular perturbation theory on the spatial ODE system to demonstrate the existence of these families of spatially periodic canards. 
Crucially, in the singular limit, 
we study a novel class of {\em reversible folded singularities} of the spatial ODE system. 
In particular, there are two reversible folded saddle-node bifurcations of type II (RFSN-II), each occurring asymptotically close to a Turing bifurcation. We derive analytical formulas for these singularities and show that their canards play key roles in the observed families of small-amplitude and large-amplitude spatially periodic canard solutions. 
Then, for an interval of values of the bifurcation parameter further below the Turing bifurcation and RFSN-II point, the spatial ODE system also has spatially periodic canard patterns, however these are created by a reversible folded saddle (instead of the RFSN-II).   
It also turns out that there is an interesting scale invariance, so that some components of some spatial canards exhibit nearly self-similar dynamics. 
\end{abstract}

\noindent
{\bf Key words.} 
folded singularities,
spatial canards,
singular Turing bifurcation,
periodic solutions,
Turing instability,
spatial dynamics,
reversible systems,
nearly self-similar dynamics,
subcritical Ginzburg-Landau

\noindent
{\bf MSC codes.} 
35B36, 34E17, 34E15, 35B25

\section{Introduction}		\label{sec:intro}

In mathematical models of pattern formation in biology, chemistry, ecology, engineering, material science, physics, and many other fields, the Turing bifurcation \cite{T1952} is one of the key mechanisms that generates spatially periodic patterns. 
It was a pioneering discovery of Alan Turing in 1952 that diffusion can destabilize spatially homogeneous steady states, which are stable states of the associated reaction kinetics, and that this instability to plane wave perturbations results in the formation of spatially periodic patterns as the attractors. See for example \cite{CH1993,E1965,EK2005,EP1998,HI2011,M1982,MG2000,M1993,SU2017,W1997}.

In this article, we report on the discovery and analysis of reaction-diffusion systems in which the spatially periodic solutions created in Turing bifurcations are spatially periodic canards. 
Like the known spatially periodic patterns that emerge from Turing bifurcations, 
these spatially periodic canards oscillate about the homogeneous equilibrium state. 
However, unlike the known periodic patterns, they consist of canard segments generated by folded singularities.

In the systems of spatial ordinary differential equations (ODEs) that govern the time-independent states of reaction-diffusion models,  Turing bifurcations correspond to reversible 1:1~resonant Hopf bifurcation points, see for example  \cite{HI2011,SU2017}.
We show that, asymptotically close to these reversible 1:1~Hopf bifurcations, the spatial ODE systems can have reversible folded saddle-node singularities of type II (RFSN-II) and that, together with the true and faux canards attached to them, these singularities can serve as the mechanisms responsible for the creation of the observed spatially periodic canard patterns.
We focus first on the van der Pol partial differential equation (PDE),
\begin{equation} \label{eq:vdp}
  \begin{split}
    u_t &= v - f(u) + d u_{xx}, \\ 
    v_t &= \eps (a-u) + v_{xx},
  \end{split}
\end{equation}
and later generalise to a class of activator-inhibitor systems.
The van der Pol PDE is a prototypical reaction diffusion system of activator-inhibitor type.
Here, $t \ge 0$, $x \in \mathbb R$, $u$ represents a voltage (or more genrally an activator), $v$ represents a recovery variable (or more generally an inhibitor), the nonlinear reaction function is $f(u) = \tfrac{1}{3}u^3-u$, $a$ is a threshold parameter, $\eps$ measures the timescale separation for the underlying kinetics, and $0< d \ll 1$ is the ratio of the diffusivities. 
With this small parameter, the activator diffuses more slowly than the inhibitor.
This spatial scale separation, i.e., the difference between the diffusivities of the interacting species, is an important --though not necessary-- feature of the emergence of periodic patterns in spatially extended systems.

For the van der Pol PDE \eqref{eq:vdp}, we consider all positive $\mathcal{O}(1)$ values of the  parameter $\eps$. 
This will allow us to consider the general kinetics of the van der Pol system. 
We recall that, in the regime of small $\eps$, the van der Pol ordinary differential equation (ODE), which corresponds to the ODE satisfied by spatially-homogeneous solutions of \eqref{eq:vdp}, is in the strongly nonlinear limit, and the limit cycles of the temporal ODE are relaxation oscillations, created in an explosion of temporal limit cycle canards. 
By contrast, in the regime of large $\eps$, the associated oscillations are in the weakly nonlinear limit, and the limit cycles of the kinetics ODE are small perturbations of circular orbits. 
(We refer the reader to \cite{H1980,P1921,P1926} for analysis of the classical van der Pol ODE, {\it i.e.} of the ODE satisfied by the spatially-independent solutions of \eqref{eq:vdp}.)

The main outcomes of this article are the numerical and analytical demonstrations that the van der Pol  PDE  \eqref{eq:vdp} possesses several main types of spatially periodic canards.
We find small-amplitude canards with $\mathcal{O}(1)$ wavenumbers (and hence $\mathcal{O}(1)$ spatial periods), and small-amplitude canards with small wavenumbers (and hence large spatial periods). 
These small-amplitude solutions emerge along branches emanating from the Turing point, and they oscillate spatially near the homogeneous state (see Fig.~\ref{fig:introspatialperiodic}). 
In addition, we find large-amplitude canards with $\mathcal{O}(1)$ wavenumbers, as well as large-amplitude canards with small wavenumbers (and hence large periods, which are also referred to as ``near-homoclinic" periodic solutions). 
These lie further from the homogeneous state in norm and have a distinct ``fast-slow" spatial structure, with segments of steep gradients and segments of gradual variation. 
The small-amplitude canards transition continuously into those with large-amplitudes. 
Moreover, at each value of $a$ in the main interval studied, one can find spatial canards of different types.

\begin{figure}[h!]
  \centering
  \includegraphics[width=5in]{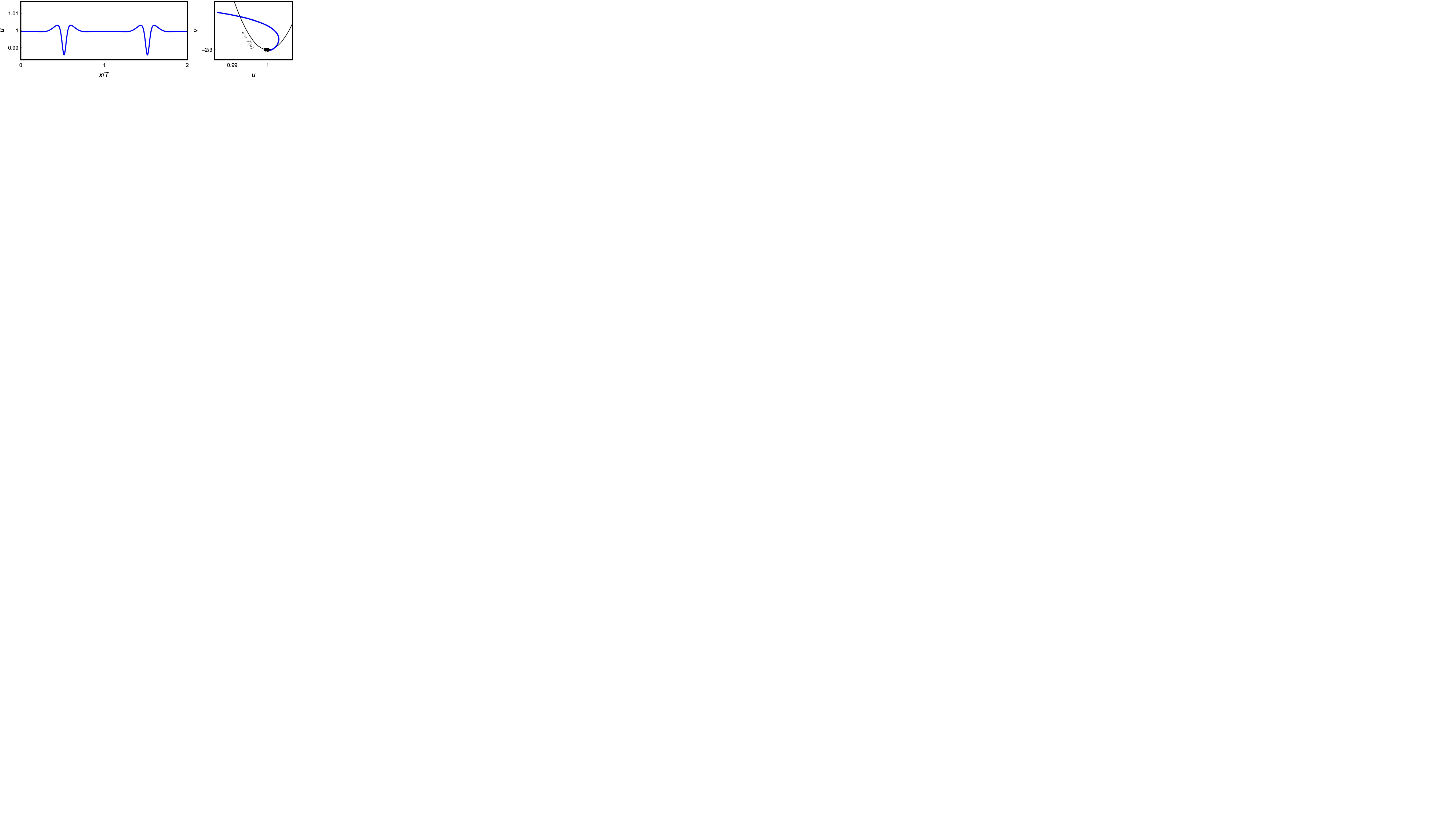}
  \put(-364,86){(a)}
  \put(-124,88){(b)}
  \caption{A small-amplitude spatially periodic canard solution for $a=0.999433, \eps = 0.1$,  and $\delta = 0.01$. (a) Spatial profile of the $u$-component of the solution over two periods $T$, with $k\approx 1.257$. (b) Projection of the solution onto the $(u,v)$ plane, showing that it is concentrated around a small neighbourhood of the fold of the $v=f(u)$ nullcline.
  The left dot on the nullcline is the equilibrium,   and the right dot is at the local minimum, where we will see that there is a key folded singularity.}
\label{fig:introspatialperiodic}
\end{figure}

We use the method of geometric desingularization (also known as the blow up method) and analysis of folded singularities,  especially of the new RFSN-II singularities, to establish the existence of the different types of spatially periodic canards.
The time-independent solutions of \eqref{eq:vdp} satisfy the following fourth-order system of ODEs, in which the spatial variable $x$ is the independent variable: 
\begin{equation} \label{eq:spatialODE-x-intro}
  \begin{split}
    \delta u_x &= p, \\ 
    \delta p_x &= f(u) - v, \\ 
    v_x &= q, \\
    q_x &= \eps(u-a),
  \end{split}
\end{equation}
where $\delta = \sqrt{d}$.
We show that this spatial ODE system has a two-dimensional critical manifold in the four-dimensional $(u,p,v,q)$ phase space.
This manifold is induced by the cubic function $v=f(u)$, and hence it has three branches. 
We will see that two branches consist of saddle equilibria and one of center equilibria.
Crucially, there are fold sets that separate these  branches, and the RFSN-II singularities that are responsible for creating the spatial canards for values of $a$ near the Turing value 
\begin{equation}
    a_T = \sqrt{1 -2\delta \sqrt{\eps}} 
\end{equation}
lie on these fold sets.

Additionally, we study the desingularized reduced vector field on the critical manifold.
We will show that the desingularized reduced system has a cusp point precisely at $a=1$, where the RFSN-II singularity occurs in the limit $\delta=0$. 
Then, for all sufficiently small values of $\delta>0$, we show that key center-unstable and center-stable manifolds coincide
at locally unique critical values of $a$.
The first of these is 
\begin{equation}
\label{eq:ac}
    a_c (\delta) = 1 - \frac{5\eps}{48}\delta^2 +\mathcal{O}(\delta^3).
\end{equation} 
It corresponds to the locally unique parameter value at which key center-unstable and center-stable manifolds coincide.
Of all of the canards that pass through the neighborhood of the cusp point, the maximal canard has the longest segments near the stable and unstable manifolds of the cusp point (Fig.~\ref{fig:loops}(a)). 
There are also maximal canards that make a small loop about the equilibrium. as seen in the projection on to the $(u,q)$ coordinates  in phase space (Fig.~\ref{fig:loops}(b)). 
Then, under further variations in $a$, additional small loops develop around the equilibrium state, and the solutions exhibit nearly self-similar dynamics (Fig.~\ref{fig:loops}(c)). 
The analysis near the RFSN-II singularity will be valid for all values of $a$ and $\delta$ such that $a = 1 + \mathcal{O}(\delta^{3/2})$. 

\begin{figure}[h!]
  \centering
  \includegraphics[width=5in]{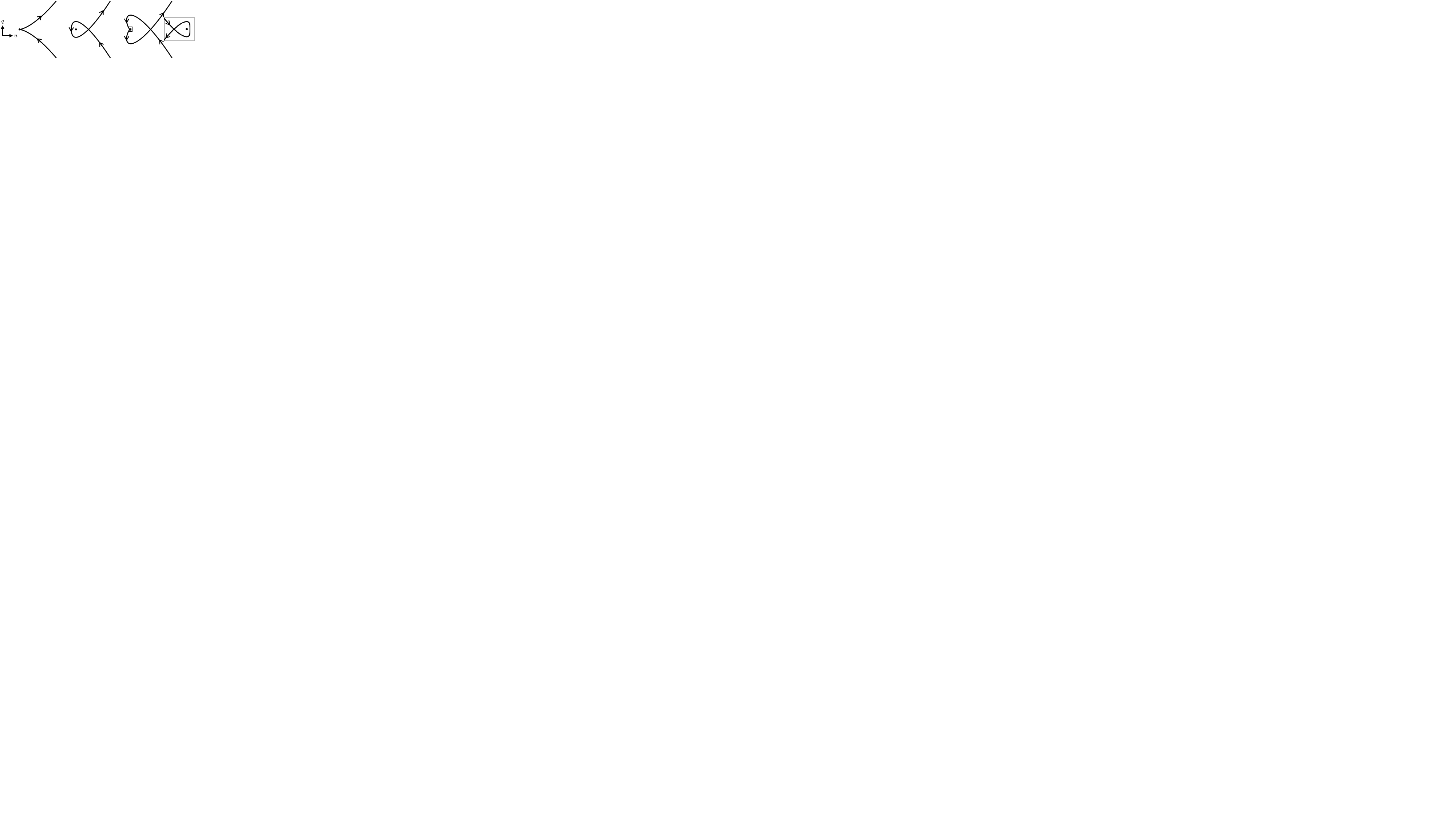}
  \put(-330,106){(a)}
  \put(-232,106){(b)}
  \put(-136,106){(c)}
  \caption{Development of small loops in the canard solutions under variation of $a$, shown in the $(u,q)$ projection. (a) The canard solutions meet at a cusp point. (b) Single loop about the equilibrium state. (c) Double loop around the equilibrium state. Inset: zoom on a neighbourhood of the equilibrium.}
  \label{fig:loops}
\end{figure}

In the spatial ODE, we also study the spatially periodic solutions for values of $a$ further from the Turing value.
We will show that there are reversible folded saddles (RFS) on the fold sets for all $\mathcal{O}(1)$ values of $a \in [0,1)$. 
Together with their true and faux canards, these RFS points will be shown numerically to be responsible for creating the spatial canards in this interval away from the Turing bifurcation $a_T$. 

To complement the analysis of the periodic canard solutions of the spatial ODE system, we also present the results of some direct numerical simulations of the PDE \eqref{eq:vdp}. 
These reveal that several of the large-amplitude spatial canards are stationary attractors. 
Moreover, we will find that there are also parameter regimes in which the attractors are small-amplitude solutions that are periodic in both space and time, with profiles given by spatial canards of the form discovered here. 
Overall, it turns out that the main Turing bifurcations we study (in the parameter regime in which $\eps \delta^2 < \frac{64}{625}$) are sub-critical bifurcations (also referred to as the focusing case), in which the coefficient on the cubic term in the Ginzburg-Landau equation is positive.
We will discuss how the spatial canard patterns here constitute a new class of nonlinear attractors in the sub-critical case, and how the canards provide a new selection mechanism.

Having outlined the main ODE and PDE results in this article, we briefly highlight the motivation for this study, and we provide a short comparison of the new spatial canards to classical temporal canards in fast-slow systems of ODEs.
To highlight the motivation, we recall that Turing bifurcations in two-component reaction-diffusion systems with widely-separated spatial scales, such as \eqref{eq:vdp}, are reversible, 1:1~resonant Hopf bifurcations in the associated spatial ODE systems 
\cite{D2019,HI2011,IMD1989,IP1993,MDK2001}.
In particular, the critical points that undergo Turing/1:1 Hopf bifurcations cannot lie on normally hyperbolic slow manifolds, but must instead be located on the boundaries of such manifolds. 
This is precisely the case with the RFSN-II singularities that lie on the fold sets between the two-dimensional, saddle and center slow manifolds here in the spatial ODE. 
Therefore, in some respect, from the point of view of spatial dynamics, there should be a natural connection between Turing bifurcations and the formation of canards. 

As to the comparison, the spatial canards introduced in this article may be viewed as analogs in spatial dynamics of the temporal limit cycle canards first discovered in the van der Pol ODE \cite{BCDD1981,D1984}, as well as of the canards of folded saddles and folded saddle-nodes in many other (temporal) systems, see for example \cite{B2013,DR1996,E1983,KS2001,MKKR1984,Moehlis,RKZE2003}. 
As just recalled, the parameter values at which Turing bifurcations occur are reversible 1:1~resonant Hopf bifurcations in the spatial ODE system \cite{D2019,HI2011,IMD1989,IP1993,MDK2001}. 
These are the analogs in spatial dynamics of the (singular) Hopf bifurcations that occur in fast-slow ODEs.
Then, just after the Turing bifurcations, families of spatially periodic canards are created, with small-amplitudes for $a$ close to $a_T$,  wavenumbers close to the critical wavenumber $k_T$ (determined by the point of marginal stability), and spatial profiles close to the plane wave $e^{ik_T x}$. 
Hence, these are analogs in spatial dynamics of the small-amplitude, temporally oscillating solutions that exist close to the Hopf bifurcation in fast-slow systems of ODEs and that oscillate essentially as $e^{i\omega t}$, where $\omega$ is the imaginary part of the eigenvalues at the Hopf bifurcation. 
Moreover, for parameter values further from $a_T$,  the spatial canards appear to be created by folded saddles, and they appear numerically to exist over a broad range of parameter values, as is also observed for folded saddles and their temporal canards in fast-slow systems of ODEs 
(see for example \cite{DKO2010,Mitry2017}).
Also, in the spatial dynamics, the maximal spatial canards act as separatrices that locally partition the phase and paramater spaces into regions of distinct spatial behavior, which is also analagous to the roles played by maximal canards in the phase spaces and parameter spaces of temporal fast-slow ODEs. Finally, in comparing, we will see that there are also key new features of the spatial canards that arise due to the dimension and geometry of the critical manifolds.

Zooming out more broadly, we suggest that the results presented here for spatially periodic canards also contribute to the growing literature about canards and bifurcation delay in spatially-extended systems.
Canards in the kinetics of a two-component model for the Belousov-Zhabotinsky reaction were demonstrated to play a crucial role in the nucleation and annihilation of trigger waves in a 1-D medium of phase waves \cite{Buchholtz1995}. 
Canards arise in the traveling wave ODEs of some reaction-diffusion systems \cite{Buric2006}. 
Slow passage through a saddle-node bifurcation in linear and semi-linear heat equations in 1-D can lead robustly to solutions that spend long times near unstable states as shown analytically in \cite{DPK2009}. 
In astrophysics, folded saddles and their canards play a central role in a model of solar wind  when there is a steady, spherically-symmetric outflow from the surface of a star \cite{CKW2017}. 
In an Amari-type neural field integral model \cite{ADK2017}, temporal canards were observed in the spatial patterns of coherent structures, as were  some more complex spatio-temporal patterns containing canard segments.

More recently for PDEs, canard solutions have been studied in an ODE model derived from a sub-critical, infinite-dimensional, pattern-forming system with nonlinear advection on a bounded domain
\cite{ADKK2017}, where they play a role in the nonlinear transitions between two primary states of the system describing the locations of stationary fronts.
Spatio-temporal canards serve as boundaries in multi-mode attractors of reaction-diffusion systems, in which different regions of the domains exhibit different modes of stable oscillation and the canards mediate the transition intervals, keeping  the regions separated, see 
\cite{KV2021,VBK2020}.
Delayed loss of stability occurs in nonlinear PDEs that undergo slow passage through Hopf bifurcations, with examples including the CGL equation, the Brusselator model, the FitzHugh-Nagumo PDE, and the Hodgkin-Huxley PDE (see \cite{GKV2022,KV2018}).
The solutions remain for long times near unstable states in a rich manner governed by space-time buffer curves. 
A rigorous framework for the local analysis of canard solutions and other forms of bifurcation delay was developed in \cite{ADVW2020} for systems in which the fast variables are governed by a PDE (i.e., infinite-dimensional dynamical system) and the slow variables are governed by ODEs (i.e., by a finite dimensional dynamical system). 
Slow passage through fold bifurcations has been studied in fast-slow systems of reaction-diffusion equations, using a Galerkin approach \cite{EHKPPZ2022}. 
Slow passage through Turing bifurcations has been studied in reaction-diffusion systems (see  \cite{AACS2024,ADVW2020,HJK2022,JK2024}), as has slow passage through pitchfork bifurcations in Allen-Cahn type equations, in the presence of quenching fronts with small spatial gradients (see \cite{GKSV2023,GKS2024}).


The article is organized as follows.
Section~\ref{sec:turingbifn} contains the application of classical analysis to identify the Turing bifurcations and Turing-Hopf bifurcations in \eqref{eq:vdp} and the application of the classical normal form analysis to show that the Turing bifurcation  to spatially periodic canards is sub-critical in the main parameter regimes we study, and also to identify where it is super-critical. 
In Section~\ref{sec:nearTuring}, we introduce the four main types of spatial canards, and present the results obtained from numerical continuation to identify regimes in the $(a,k)$ parameter plane in which spatial canards exist. 
In Section~\ref{sec:fast}, we begin the geometric singular perturbation analysis by analyzing the fast system, also known as the layer problem. We identify the jump conditions for the fast homoclinics that generate spikes and for the sharp-interfaces (Proposition~\ref{prop:jumpcondition}). Also, we identify the cusp of the fast system singularity that will be the organizing center for the spatial canard dynamics. 
We continue the geometric singular perturbation analysis in Section~\ref{sec:slow} by deriving the desingularized reduced vector field on the critical manifold and by studying the slow flow.
We show that there are folded saddle singularities with reversibility symmetry. Moreover, these undergo reversible folded saddle-node bifurcations of type II (RFSN-II) under variation of the system parameters. 
Then, in Section~\ref{sec:desingFSNII}, we rigorously analyze the dynamics around the RFSN-II using the blow-up technique. We determine the key parameter values for which there is an explosion of spatial canards with reversibility symmetry. 
Next in Section~\ref{sec:Turingspatialcanards}, we analyze in detail the geometry of the small-amplitude and large-amplitude spatially periodic canard solutions where the analysis is informed by the rigorous results about the fast system, the desingularized reduced vector field, and the folded singularity with its canards 
in Sections~\ref{sec:fast}--\ref{sec:desingFSNII}.  
In Section~\ref{sec:isolas}, we use that same information to deconstruct the bifurcation sequences along isolas of spatially periodic canards.
Then, in Section~\ref{sec:selfsimilar}, we analyze the aspects of the spatial canard dynamics that are nearly self-similar. 
Section~\ref{sec:spatialdynamicsanalog} describes how the spatial canards are analogs in spatial dynamics of the temporal limit cycle canards found in many fast-slow ODEs, such as the van der Pol ODE, FitzHugh-Nagumo ODE, the Lengyel-Epstein model, the Kaldor model, among others, and we identify important differences.
The final new results are in Section~\ref{sec:pdedyn}, where we present results from some direct PDE simulations that complement the analysis. Several of the different types of spatial canards discovered here are    observed to be attractors in the PDE \eqref{eq:vdp}.
We conclude the article in Section~\ref{sec:conclusions} with a summary of the main results, as well as a generalization from the prototype \eqref{eq:vdp} to a class of activator-inhibitor systems, and discussion of future work and open problems.
The appendices contain further information about the main numerical methods we employed, as well as the proofs of some of the propositions and lemmas.

\section{Turing Bifurcation to Spatially Periodic Solutions}		\label{sec:turingbifn}
In this section, we apply the classical Turing analysis \cite{T1952} (see also \cite{E1965,EK2005}) to derive the neutral stability curve of the homogeneous steady state $(u,v)=(a,f(a))$ of \eqref{eq:vdp}.
There are Turing and Hopf bifurcations that occur asymptotically close to each other in the parameter $a$. 
We will obtain the critical wavenumbers and system parameters for the onset of spatially periodic patterns. 
In addition, we will apply the classical normal form theory \cite{HI2011} for reversible 1:1~resonant Hopf points to show that, in the main parameter regimes studied here with $0<\delta \ll 1$,  the bifurcation to spatially periodic solutions is sub-critical; and, we also identify the conditions under which  it is super-critical.

We linearize \eqref{eq:vdp} about the homogeneous state  and then Fourier transform the system in space, so that the governing equations become
\begin{equation}
\label{eq:eigenvaluecurves}
  \begin{split}
    \begin{bmatrix} \dot U \\ \dot V \end{bmatrix} = DF \begin{bmatrix} U \\ V  \end{bmatrix} = \begin{bmatrix} -f^\prime(a)-dk^2 & 1 \\ -\eps & -k^2 \end{bmatrix} \begin{bmatrix} U \\ V  \end{bmatrix}.
  \end{split}
\end{equation}
The overdot denotes the time derivative, $k$ is the wavenumber, and $U(k,t)$ and $V(k,t)$ denote the Fourier transforms of $u(x,t)$ and $v(x,t)$. 
For each $k \in \mathbb{R}$, the Jacobian has two eigenvalues $\lambda_\pm(k)$, and the state $(a,f(a))$ is spectrally stable as a solution of the PDE (\ref{eq:vdp}) if Re$(\lambda_\pm(k)) < 0$ for all $k\in \mathbb{R}$.  
The trace of the Jacobian, $\operatorname{tr} DF= -f^\prime(a)-(d+1)k^2$, is negative whenever 
$f'(a) = a^2 - 1 > 0$, and positive for $-1<a<1$ (where $f'(a)<0$). 
Hence, we recover the classical (non-spatial) Hopf bifurcation of the van der Pol equation at $a=1$. 
See the left panel in Fig. \ref{fig:EVcurves}.
\begin{figure}[ht]
\centering
        \begin{subfigure}[t]{0.475\textwidth}
		\centering
		\includegraphics[width=\linewidth]{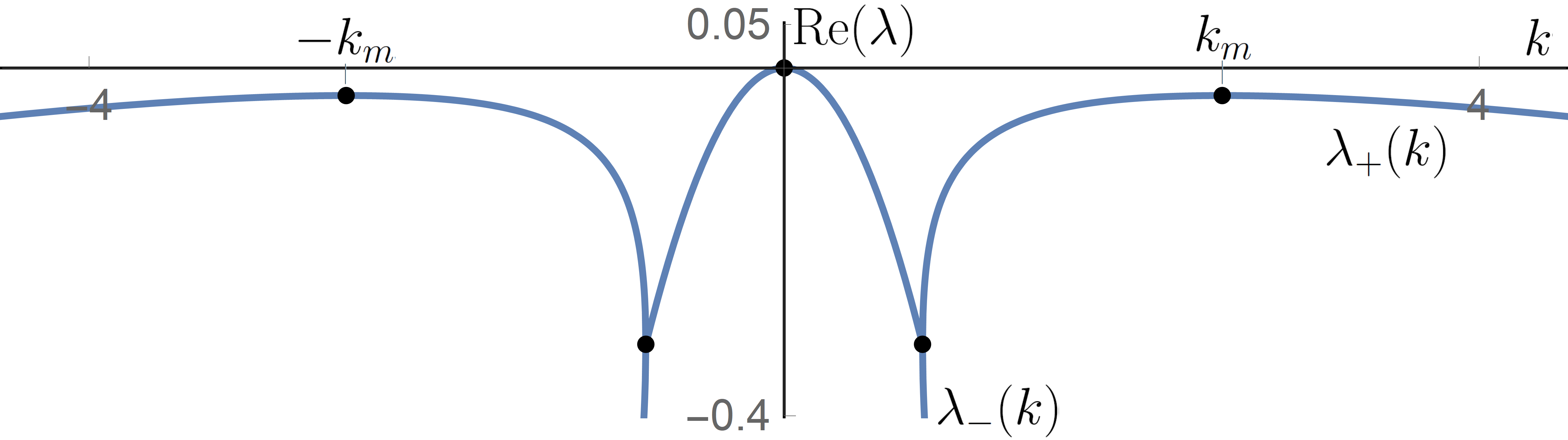}
	\end{subfigure}
        \hspace{0.25cm}
        \begin{subfigure}[t]{0.475\textwidth}
		\centering
		\includegraphics[width=\linewidth]{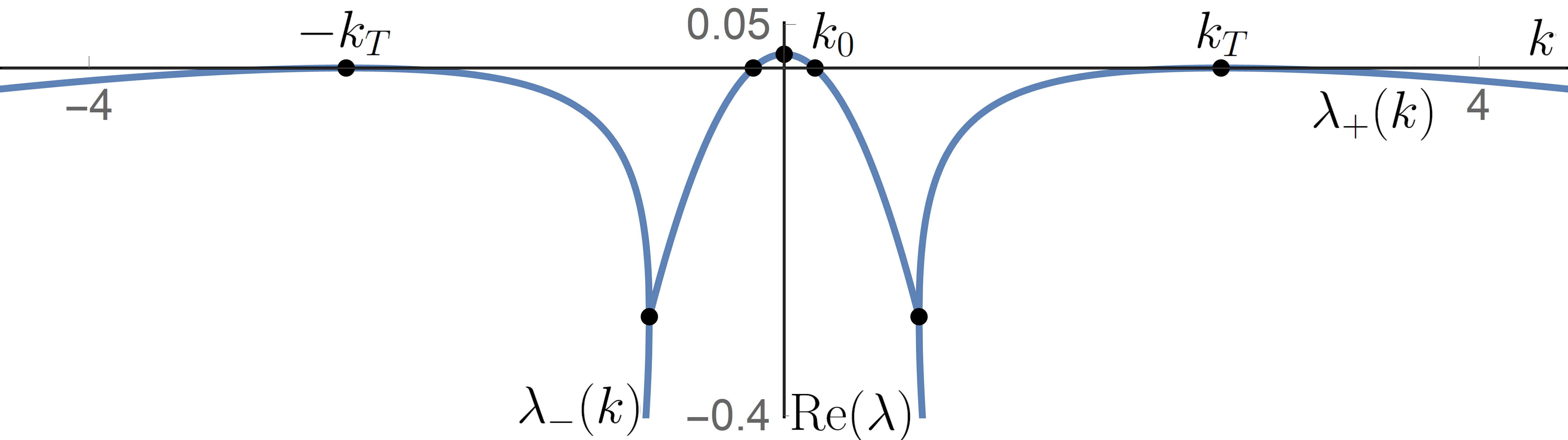}
	\end{subfigure}
\caption{Plots of the eigenvalues $\lambda_\pm(k)$ of the Jacobian matrix $DF$ in (\ref{eq:eigenvaluecurves}) as functions of the wavenumber $k$, at two different values of $a$. 
Left: at $a=1$, the (spatial) Hopf bifurcation. 
Right: at $a_T = \sqrt{1-2 \sqrt{\eps d}}$, the Turing bifurcation.  
At $a=1$, ${\rm Re}(\lambda_+(0))=0$, so that the $k=0$ mode is marginally stable. 
Also, at $a=1$, there are local maxima in $\lambda_+(k)$ at $k=\pm k_m\sim \pm (\eps/d)^{1/4}$,
and we note that $k_m \sim k_T$ for small $d$.  
In particular, $\lambda_+(k_m) = -2 \sqrt{\eps d}$ (to leading order), so that the modes with $k$ near $k_T$ are only very weakly stable at $a=1$.
Now, at $a=a_T$ the Turing bifurcation, 
the homogeneous state is marginally stable to perturbations with wavenumber $k_T$.
Also, it is
weakly unstable to perturbations with wavenumbers near $k=0$, since
$0 < \,$Re$(\lambda_\pm(k)) \leq \,$Re$(\lambda_\pm(0)) = \sqrt{\eps d}  \ll 1$ for $k \in (-k_0,k_0)$ with $k_0 = \sqrt{2}(\eps d)^{1/4} \ll 1$. 
Note that for small $k$, $\lambda_\pm(k) \notin \mathbb{R}$. 
Also, note that for reasons of presentation, we have set $\delta=\sqrt{d}= 0.05$, slightly larger than the value used throughout, and kept $\eps=0.1$, its standard value.} 
\label{fig:EVcurves}
\end{figure}

Now, for $a<1$, there are two key dynamical features. First, a narrow interval of unstable wavenumbers exists for $a<1$, centered around $k=0$, so that the homogeneous state becomes unstable to long wavelength perturbations that oscillate in time as $e^{i \omega_H t}$, with $\omega_H = $Im$(\lambda_+(0))$.  However, this instability is only a weak instability, since the rate constant (${\rm Re}(\lambda_+(0))=\sqrt{\eps d}$) is asymptotically small for $0<d \ll 1$ and $\eps=\mathcal{O}(1)$.

Second, a Turing instability occurs slightly below $a=1$.
It appears at the critical wavenumber and critical parameter determined by the conditions
\begin{equation}
\label{eq:Turingcond}
    \det DF = 0 \quad \text{ and } \quad \frac{\partial}{\partial k^2} \det DF = 0
\end{equation}
(for which $\lambda_\pm (k) \in \mathbb{R}$).
For the van der Pol PDE \eqref{eq:vdp}, we find 
\begin{equation}  \label{eq:kTaT}
k_T^2 = - \frac{f^\prime(a_T)}{2d} = \sqrt{\frac{\eps}{d}} \quad \text{ and } \quad a_T = \pm \sqrt{1 - 2\sqrt{\eps d} }.
\end{equation}
The focus in this article will be on the positive value of $a_T$; the results for the negative value may be obtained by using the symmetry $(u,v,a) \to (-u,-v,-a)$ of \eqref{eq:vdp}.

Precisely at $a=a_T$, the real part of the dominant eigenvalue of the Jacobian is zero for wavenumbers $k = \pm k_T = \pm {\left(\frac{\eps}{d}\right)}^{1/4}$. 
See the right panel in Fig. \ref{fig:EVcurves}. 
The Turing point $a_T$ marks the boundary between two different (linear) regimes.
On one side, for $a \lesssim a_T$, there are intervals of wavenumbers $k$, one about each of $\pm k_T$, over which the homogeneous steady state is linearly unstable to plane waves $e^{ikx}$ since the determinant is strictly negative there. 
On the other side, for $a \gtrsim a_T$, the homogeneous state is linearly stable to plane waves with $k$ near $k_T$ (though not to those with $k=0$  if $a<1$ by the above).  

Critically, the modes with $k$ near $k_T$ (which is $\sim k_m$ for small $d$) are only weakly stable for $a \gtrsim a_T$, as may be seen at $a=1$ in the left panel in Fig.~\ref{fig:EVcurves}.
There, $\lambda_+(k_m)=-2\sqrt{\eps d}$, so that the rate constant is asymptotically small, and nonlinear terms also play a central role.
In comparison, the magnitude of this weak stability of the Turing modes ($k\sim k_T$) is of the same size asymptotically as the magnitude of the weak instability of the Hopf modes ($k\sim 0$).  Therefore, it is important and useful first to study the two competing mechanisms individually and then to study their interactions. 

In this article, we focus primarily on the spatial dynamics of stationary solutions of (\ref{eq:vdp}), which turn out to capture important features of the overall dynamics of \eqref{eq:vdp} for $a$ near $a_T$. 
In addition, toward the end of the article, we identify some of the rich dynamics created by the interactions of the Turing and Hopf modes.

We study the Turing bifurcation by using the equivalent formulation obtained through the spatial ODE \eqref{eq:spatialODE-x-intro},  which we recall is
\begin{equation} \label{eq:spatialODE-x}
  \begin{split}
    \delta u_x &= p, \\ 
    \delta p_x &= f(u) - v, \\ 
    v_x &= q, \\
    q_x &= \eps(u-a).
  \end{split}
\end{equation}
and we recall the small parameter is $\delta = \sqrt{d}$. 
(We refer to \cite{IP1993,SU2017} for the general theory about the equivalence of this spatial ODE formulation.) 

The equilibrium at $(u,p,v,q) = (a,0,f(a),0)$ corresponds to the homogeneous steady state. Linear stability analysis shows that the Jacobian of \eqref{eq:spatialODE-x} has a quartet of eigenvalues
\begin{equation}
\label{eq:spatial-eivals}
\mu = \pm \frac{1}{\sqrt{2}\delta}
\left[
f'(a) \pm \sqrt{ (f'(a))^2 - 4 \eps \delta^2}
        \right]^{1/2},
\end{equation}
which are symmetric about the real- and imaginary-axes in the spectral plane. See Fig.~\ref{fig:spatialeigenvalues}.

\begin{figure}[h!]  
  \centering
  \includegraphics[width=5in]{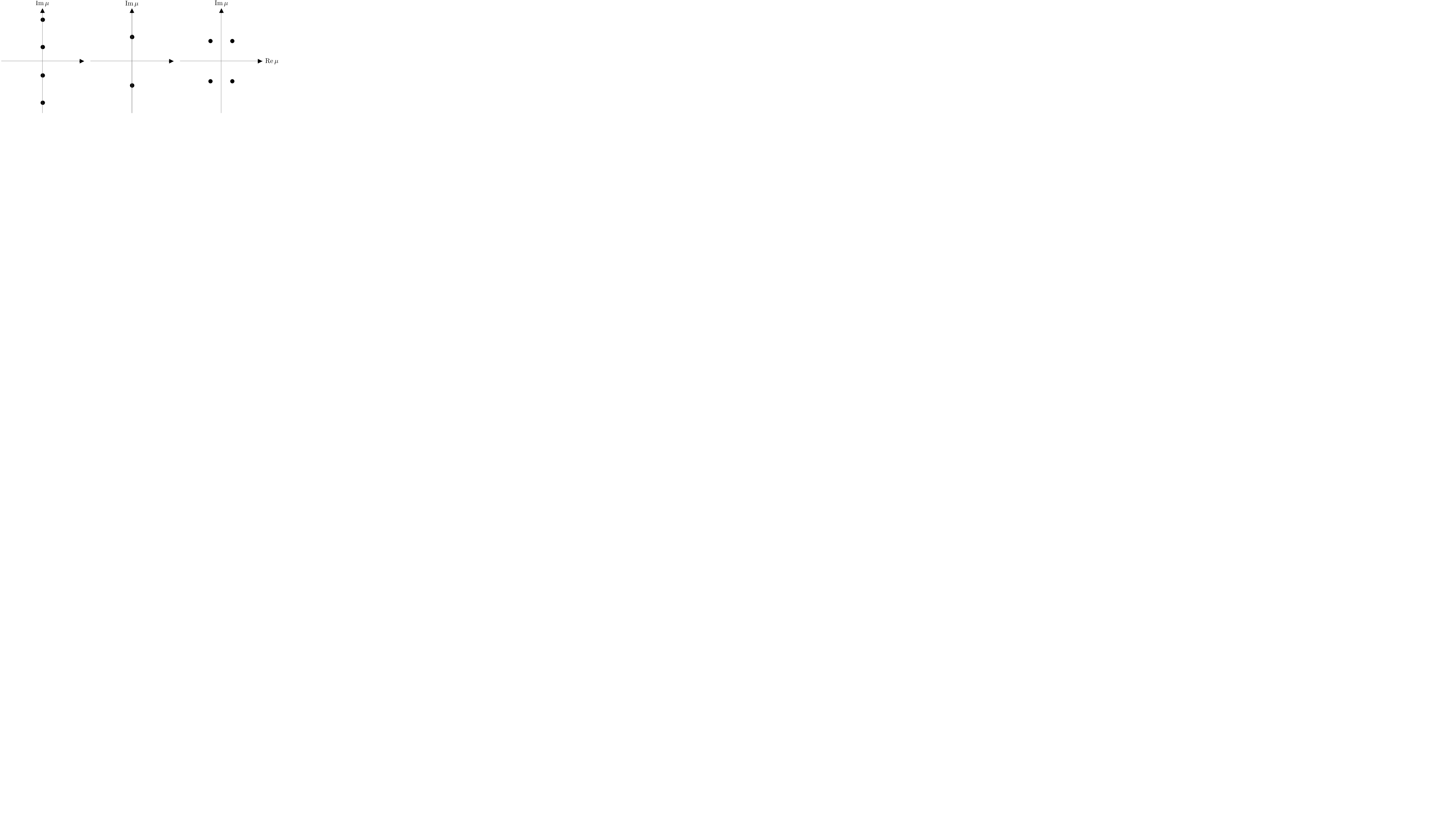}
  \put(-360,140){(a)}
  \put(-242,140){(b)}
  \put(-126,140){(c)}
  \caption{Eigenvalue quartets of the spatial problem \eqref{eq:spatialODE-x} around the reversible, 1:1~resonant Hopf bifurcation. (a) For each $a \in (-a_T,a_T)$, there is a quartet of pure imaginary eigenvalues.  (b) For $a=a_T$, there is a pair of repeated imaginary eigenvalues. (c) For each $|a| \in (a_T,\sqrt{1+2\delta \sqrt{\eps}})$, there is a pair of complex conjugate eigenvalues, one set with positive real parts and the other with negative real parts. The reversible, 1:1~resonant Hopf bifurcation  point $(a_T,0,f(a_T),0)$ is non-degenerate.
  }
  \label{fig:spatialeigenvalues}
\end{figure}

Exactly at the critical value $a_T=\sqrt{1 - 2 \delta \sqrt{\eps}}$ given by \eqref{eq:kTaT}, where $f'(a)<0$ and $(f'(a))^2 = 4 \eps \delta^2$, the quartet of eigenvalues consists of two coincident pairs of pure imaginary eigenvalues. 
Hence, at $a_T$, the equilibrium is a reversible, 1:1~resonant Hopf bifurcation point. 
Moreover, this point is non-degenerate, because the two pairs pass through this configuration transversely as $a$ passes through $a_T$. 
See Fig.~\ref{fig:spatialeigenvalues}. 
Furthermore, this Turing bifurcation is {\em singular} because one pair of the eigenvalues becomes singular in the limit $\delta \to 0$. 
For $-a_T < a < a_T$, the quartet consists of two pairs of pure imaginary eigenvalues with different imaginary parts, while for $a_T < a < \sqrt{1+2\delta\sqrt{\eps}}$ it consists of two pairs of complex conjugate eigenvalues, one with negative real parts and the other with positive real parts of equal magnitude, where the upper bound of this interval corresponds to the parameter value at which the eigenvalues become real.
(See \cite{CH1993,IMD1989,MDK2001,S2003,SU2017} for other examples of the relation between a Turing bifurcation in a PDE and a reversible 1:1~resonant Hopf bifurcation in the associated spatial ODE system.)

It is also useful to study the spatial ODE system \eqref{eq:spatialODE-x} in the stretched spatial variable $y = \frac{x}{\delta}$, 
\begin{equation}
\label{eq:spatialODE-y}
\begin{split}
u_y &= p \\
p_y &= f(u) - v \\
v_y &= \delta q \\
q_y &= \delta \eps (u-a).
\end{split}
\end{equation}
This system will be used to analyze  the steep gradients in the spatially periodic solutions that we study, while system \eqref{eq:spatialODE-x} will be used for the slowly-varying segments of the spatial periodic solutions.
For all $\delta>0$, the systems of ODEs \eqref{eq:spatialODE-x} and \eqref{eq:spatialODE-y} are equivalent.

\smallskip

\begin{remark}
For a spatially periodic solution of \eqref{eq:spatialODE-y} with period $T$, the wavenumber with respect to the stretched $y$ variable is $k_y = \tfrac{2\pi}{T}$. For comparison with the wavenumbers in \eqref{eq:kTaT}, we work with the wavenumber given by $k = \tfrac{1}{\delta} k_y = \tfrac{2\pi}{\delta T}$. Throughout the remainder of the article, we will use the wavenumber $k = \tfrac{2\pi}{\delta T}$. 
\end{remark}

System \eqref{eq:spatialODE-y} has a reversibility symmetry, which it inherits from the $x\to -x$ symmetry of the system of two coupled second-order ODEs that govern time-indepednent solutions of \eqref{eq:vdp}. Let 
\[
\mathcal{R}
     = \left[
    \begin{array}{cccc}
        1 & 0 & 0 & 0 \\
        0 & -1 & 0 & 0 \\
        0 & 0 & 1 & 0 \\
        0 & 0 & 0 & -1 
    \end{array}
    \right], \ \ 
    {\bf u} = \left[
    \begin{array}{c} u \\ p \\ v \\ q
    \end{array} \right], \quad {\rm and}
    \quad
    {\bf F} = \left[ \begin{array}{c}
    p \\ \frac{1}{3}u^3 - u - v \\
    \delta q \\
    \delta \eps (u-a)
    \end{array}
    \right].
\]
Then, $\mathcal{R}$ is a reversibility symmetry of \eqref{eq:spatialODE-y}, because it anti-commutes with the vector field:
\begin{equation}
\label{eq:RF}
    \mathcal{R} {\bf F} ({\bf u}) = - {\bf F}( \mathcal{R} {\bf u}).
\end{equation}
(See for example \cite{HI2011,IP1993} for general results about reversibility in the spatial ODEs governing stationary solutions of reaction-diffusion systems.)  The presence of this reversibility symmetry is also what guarantees the symmetries about the real- and imaginary-axes of the resolvent and spectrum of the operator $L$ obtained by linearizing the vector field ${\bf F}$ at a stationary state, since ${\bf u}$ is real-valued and since the reversibility $\mathcal{R}L = -L\mathcal{R}$ implies $\left( \mu {\mathbb I} + L \right)^{-1} \mathcal{R} = \mathcal{R} \left(\mu {\mathbb I} - L \right)^{-1}$. 

For all $\eps>0$, system~\eqref{eq:spatialODE-x} (equivalently \eqref{eq:spatialODE-y}) has the following conserved quantity/first integral: 
\begin{equation}
\label{eq:G}
    \mathcal{G}(u,p,v,q,a)=
    \frac{1}{2} \eps p^2 - \frac{1}{2} q^2 - \eps [\tilde{f}(u) + (a-u)v], 
\end{equation}
where $\tilde{f}(u) = \int f(u) \,du = \frac{u^4}{12} - \frac{u^2}{2}$.
(It may be derived for example by setting $u_t=0$ in the first equation of \eqref{eq:vdp} and multiplying it by $\eps u_x$, setting $v_t=0$ in the second equation in \eqref{eq:vdp} and multiplying it by $v_x$, then subtracting the second equation from the first so that all terms are perfect derivatives, and recalling the definitions of $p$ and $q$ given in \eqref{eq:spatialODE-x-intro}.)
At the stationary state,  $\mathcal{G}(a,0,f(a),0,a)=\frac{1}{12}\eps a^2 ( 6 - a^2 )$. 

The existence of small-amplitude, spatially periodic solutions of the ODE system near the critical Turing bifurcation and with wavenumber $k$ near $k_T$ is established by the following proposition. 
The proof is presented in Appendix~\ref{sec:app-proof}, where we use the standard procedure to derive the normal form for the reversible 1:1~resonant Hopf bifurcation in the spatial ODE and then directly apply Theorem 3.21 in Chapter 4.3.3 of \cite{HI2011} to derive the conclusions about the system dynamics. 

\smallskip

\begin{proposition}
\label{prop:Turingcanards}
 For $a-a_T$ sufficiently small, where $a_T=\sqrt{1-2\delta \sqrt{\eps}}$, we have:
 \begin{itemize}
\item[(i)]
For $\eps \delta^2 < \frac{64}{625}$, the spatial ODE system \eqref{eq:spatialODE-x} has a one-parameter family of spatially periodic solutions that surround the symmetric equilibrium $(a,0,f(a),0)$, and it has a two-parameter family of quasi-periodic solutions located on KAM tori. 
The same conclusion also holds for $\eps \delta^2 > \frac{64}{625}$ but then only when $a-a_T<0$.
\item[(ii)] 
For $a-a_T<0$ and $\eps \delta^2 > \frac{64}{625}$, there exists a one-parameter family of pairs of reversible homoclinics to periodic solutions.
\item[(iii)]
For $a-a_T>0$ and $\eps \delta^2 < \frac{64}{625}$, there is a pair of reversible homoclinic orbits to the symmetric equilibrium.
\item[(iv)]
For $a-a_T>0$ and $\eps \delta^2 > \frac{64}{625}$, there is just the symmetric equilibrium and no other bounded solutions.
\end{itemize}
\end{proposition}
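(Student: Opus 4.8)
The plan is to reduce the spatial ODE \eqref{eq:spatialODE-x} to the Iooss normal form of a reversible $1{:}1$ resonant Hopf bifurcation and then to read off cases (i)--(iv) from the classification of that normal form, supplying the two structural coefficients specific to the van der Pol nonlinearity.

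First I would translate the symmetric equilibrium $(a,0,f(a),0)$ to the origin and introduce the unfolding parameter $\mu = a - a_T$. As already noted around \eqref{eq:spatial-eivals} and in Fig.~\ref{fig:spatialeigenvalues}, at $\mu = 0$ the linearization has the double pair $\pm i k_T$ with a non-semisimple ($2\times 2$ Jordan-block) structure and anti-commutes with the reversibility $\mathcal{R}$ of \eqref{eq:RF}, so the linear part is exactly that of the reversible $1{:}1$ resonance. The transversality/non-degeneracy hypothesis of the normal-form theorem is read off directly from \eqref{eq:spatial-eivals}: as $a$ decreases through $a_T$ the discriminant $(f'(a))^2 - 4\eps\delta^2$ turns positive and the two pairs split along the imaginary axis, whereas as $a$ increases through $a_T$ it turns negative and the quartet leaves the imaginary axis into the four complex eigenvalues of Fig.~\ref{fig:spatialeigenvalues}(c); both motions are linear in $\mu$ to leading order.

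Next I would apply the standard sequence of near-identity polynomial transformations for reversible vector fields (normal-form theory as in \cite{HI2011}, Chapters~3--4) to bring \eqref{eq:spatialODE-x} into the form
\begin{equation*}
\frac{dA}{dx} = i k_T A + B + i A\, P(u_1,u_2;\mu), \qquad \frac{dB}{dx} = i k_T B + i B\, P(u_1,u_2;\mu) + A\, Q(u_1,u_2;\mu),
\end{equation*}
with $A,B\in\mathbb{C}$, elementary invariants $u_1 = |A|^2$ and $u_2 = \tfrac{i}{2}(A\bar B - \bar A B)$, and $P,Q$ real polynomials vanishing at the origin, $Q = q_1\mu + q_2 u_1 + \cdots$. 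The truncated normal form is integrable: $u_2$ is a first integral, and on the reversible slice $\{u_2 = 0\}$ the flow reduces to a planar Hamiltonian (Duffing-type) equation $\psi'' = q_1 \mu\, \psi + q_2 \psi^3 + \cdots$; in the full system the persisting conserved quantity is the exact first integral $\mathcal{G}$ of \eqref{eq:G}. The remaining work is to evaluate $q_1$ and $q_2$ for \eqref{eq:spatialODE-x}: one finds $q_1\neq 0$ (consistent with the transversality just checked) and, crucially, that $q_2$ vanishes exactly on $\eps\delta^2 = \tfrac{64}{625}$, with $q_2<0$ when $\eps\delta^2 < \tfrac{64}{625}$ and $q_2>0$ when $\eps\delta^2 > \tfrac{64}{625}$ --- this is where the cubic $f(u) = \tfrac13 u^3 - u$ enters. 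Feeding $q_1,q_2$ into Theorem~3.21 in Chapter~4.3.3 of \cite{HI2011} then yields the four cases at once: the sign of $q_1\mu$ decides whether the origin of the reduced system is a center ($a<a_T$) or a saddle ($a>a_T$); the sign of $q_2$ decides whether the nontrivial equilibria of the reduced system exist for $\mu<0$ (then saddles, lifting to homoclinics to periodic orbits --- case~(ii)) or for $\mu>0$ (then centers, enclosed by periodic orbits inside a pair of homoclinics to the equilibrium --- case~(iii)); for $\mu>0$ with $q_2>0$ the origin is the only bounded orbit (case~(iv)); and whenever $q_2<0$, or $q_2>0$ with $\mu<0$, there is a one-parameter family of periodic orbits around the equilibrium together with the Cantor family of KAM $2$-tori on the level sets $\{u_1,u_2 = \mathrm{const}\}$ (case~(i)). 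The codimension-one set $\eps\delta^2 = \tfrac{64}{625}$ is precisely $q_2 = 0$, where this cubic classification degenerates, which is why the proposition does not cover it.

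\textbf{Main obstacle.} The bottleneck is the explicit computation showing $q_2$ is a positive multiple of $\tfrac{64}{625} - \eps\delta^2$: this requires fixing a basis of generalized eigenvectors compatible with $\mathcal{R}$, Taylor-expanding the vector field of \eqref{eq:spatialODE-x} to third order, solving the homological equations at second and third order, and extracting the $u_1$-coefficient of $Q$ --- mechanical but lengthy, and the exact appearance of $\tfrac{64}{625} = (2^3/5^2)^2$ depends delicately on the algebra. A secondary point is to confirm that the non-integrable higher-order remainder does not destroy the structures: the periodic and homoclinic orbits survive because they are forced by $\mathcal{R}$-reversibility on level sets of $\mathcal{G}$, while the quasi-periodic tori survive by the version of KAM for reversible systems invoked in \cite{HI2011}, once the relevant twist/non-degeneracy condition on the normal form is checked.
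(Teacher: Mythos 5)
Your proposal follows essentially the same route as the paper's proof in Appendix~\ref{sec:app-proof}: translate the equilibrium, unfold with $\mu=a-a_T$, reduce to the Iooss normal form for the reversible $1{:}1$ resonance (Lemma~3.17 of \cite{HI2011}), compute the principal-part coefficients of $P$ and $Q$ (your $q_1,q_2$ are the paper's $\hat a,\hat b$, with $\hat b$ changing sign exactly at $\eps\delta^2=\tfrac{64}{625}$), and invoke Theorem~3.21 in Chapter~4.3.3 of \cite{HI2011} to read off cases (i)--(iv). The only blemish is a sign slip in your closing paragraph: $q_2$ is a positive multiple of $\eps\delta^2-\tfrac{64}{625}$ rather than of $\tfrac{64}{625}-\eps\delta^2$, which contradicts the (correct) sign statement you give earlier and which the paper's explicit formula $\hat b=\bigl(25-8\sqrt{d_v/(\eps d_u)}\bigr)/(36 d_u)$ confirms.
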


We are most interested in the parameter values for which $\eps \delta^2 < \frac{64}{625}$, which corresponds to the main case in part (i) and to part (iii). 
These parts of the proposition  establish the existence of one-parameter families of small-amplitude, spatially periodic solutions of \eqref{eq:spatialODE-x} near $a_T$, and the limiting homoclinics.
The Turing bifurcation is sub-critical, and the coefficient on the cubic term in the Ginzburg-Landau PDE is positive \cite{IMD1989}.
(In contrast, for $\eps \delta^2 > \frac{64}{625}$, the Turing bifurcation is super-critical, and the coefficient on the cubic term in the Ginzburg-Landau equation is negative.) 

\begin{figure}[!htbp]  
  \centering
  \includegraphics[width=\textwidth]{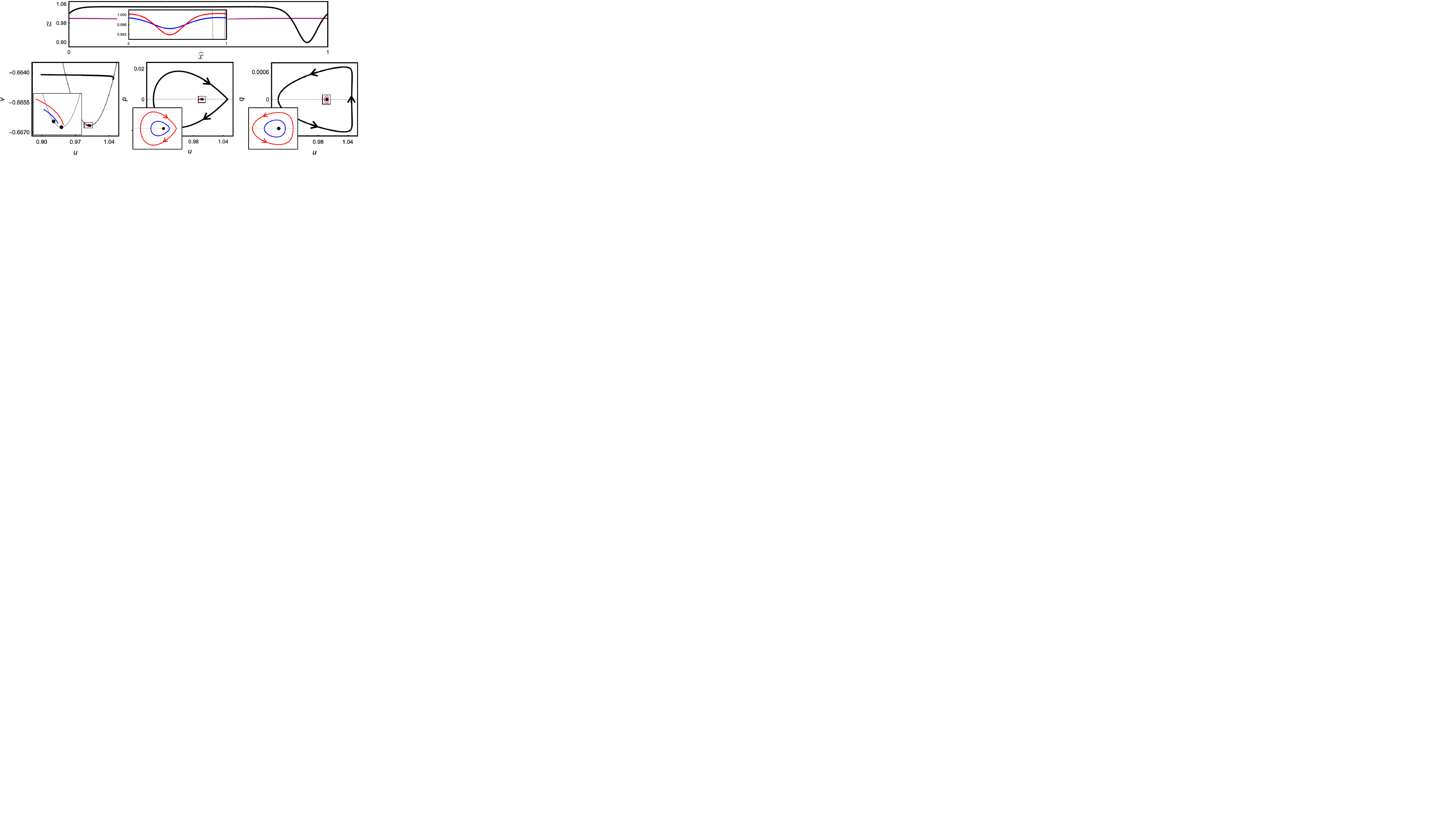}
  \put(-412,195){(a)}
  \put(-470,115){(b)}
  \put(-312,115){(c)}
  \put(-156,115){(d)}
  \caption{Solutions of \eqref{eq:spatialODE-y} near the critical Turing bifurcation at $a_T \approx 0.996832$ for $\eps = 0.1$, $\delta = 0.01$, and $a=1.03193$ (black), $a=0.997476$ (blue), and $a=0.997017$ (red).
  (a) Profiles of $u(\widehat x)$, where $\widehat x = \tfrac{x}{T}$ is the spatial variable normalized to the unit interval. The periods are $T=115$ so that the corresponding wavenumbers are $k\approx 5.464$ (cf. the critical wavenumber at the Turing bifurcation, $k_T \approx 5.623$).
  The projections of these solutions into the $(u,v)$, $(u,p)$, and $(u,q)$ planes are shown in panels (b), (c), and (d), respectively, with insets so the details are visible. 
 }  
  \label{fig:soln_knearcritical} 
  \end{figure}

Some representative spatially periodic solutions are shown (in blue and red) in Fig.~\ref{fig:soln_knearcritical} for parameter values $a$ close to $a_T$. 
These solutions exhibit small-amplitude oscillations about the homogeneous state, and their wavenumber $k$ is close to $k_T$. 
Moreover, the closer  $a$ is to $a_T$, the smaller the amplitude of the solution, the closer $k$ is to $k_T$, and the closer the spatial profile is to that of the plane wave $e^{ik_T x}$.  
Then, for $a$ slightly further away, nonlinear terms have a dominant effect (black orbit), and a corner begins to form in the orbit, which will develop into a cusp.

\smallskip
\begin{remark}
From the point of view of Ginzburg-Landau theory for the weakly nonlinear stability of spatially periodic patterns created in Turing bifurcations, the sub-critical case that we focus on is the least studied and understood. In fact, in the sub-critical case, the Ginzburg-Landau formalism typically predicts that the amplitude of the solutions under consideration will grow beyond that of their assumed near-onset magnitude, and thus beyond the region of validity of the Ginzburg-Landau set-up. 
As the analysis below will show, it turns out to be possible to go beyond this magnitude, and our results may thus be expected to shed a fundamental light on the behavior of patterns near a sub-critical Turing bifurcation (see also Section~\ref{sec:pdedyn}).
\end{remark}

\section{Spatially Periodic Canard Solutions}		\label{sec:nearTuring}

In this section, we present representative numerical simulations of the plethora of spatially periodic canard solutions of \eqref{eq:vdp} that are created asymptotically close to the Turing bifurcation. There are at least four different types of spatial canards, depending on amplitudes and wavenumbers.

\subsection{Small-amplitude spatial canards with \texorpdfstring{$\mathcal{O}(1)$}{Lg} wavenumbers}		\label{sec:numerics-1}  

Immediately beyond the parameter regime of essentially regular plane wave solutions, where the nonlinear terms become dominant, we find small-amplitude spatially periodic canard solutions with $\mathcal O(1)$ wavenumbers. 
A representative spatial canard is shown in Fig.~\ref{fig:kOrderOne_SAO}.
This solution has period $T \approx 353.366$ $(k \approx 1.778)$, and 
it lies on the level set $\mathcal G = \eps \left( \tfrac{2}{3}a-\tfrac{1}{4} \right)$.
The spatial profile (see Fig.~\ref{fig:kOrderOne_SAO}(a)) shows that it lies in a small neighborhood of the $u=1$ state over the entire period. 
Moreover, on a large portion of the period, the solution slowly increases or slowly decreases. In between these slow regimes, there is a spatial interval on which the solution makes three small-amplitude oscillations around the state $u=a$. 

\begin{figure}[h!]  
  \centering
  \includegraphics[width=\textwidth]{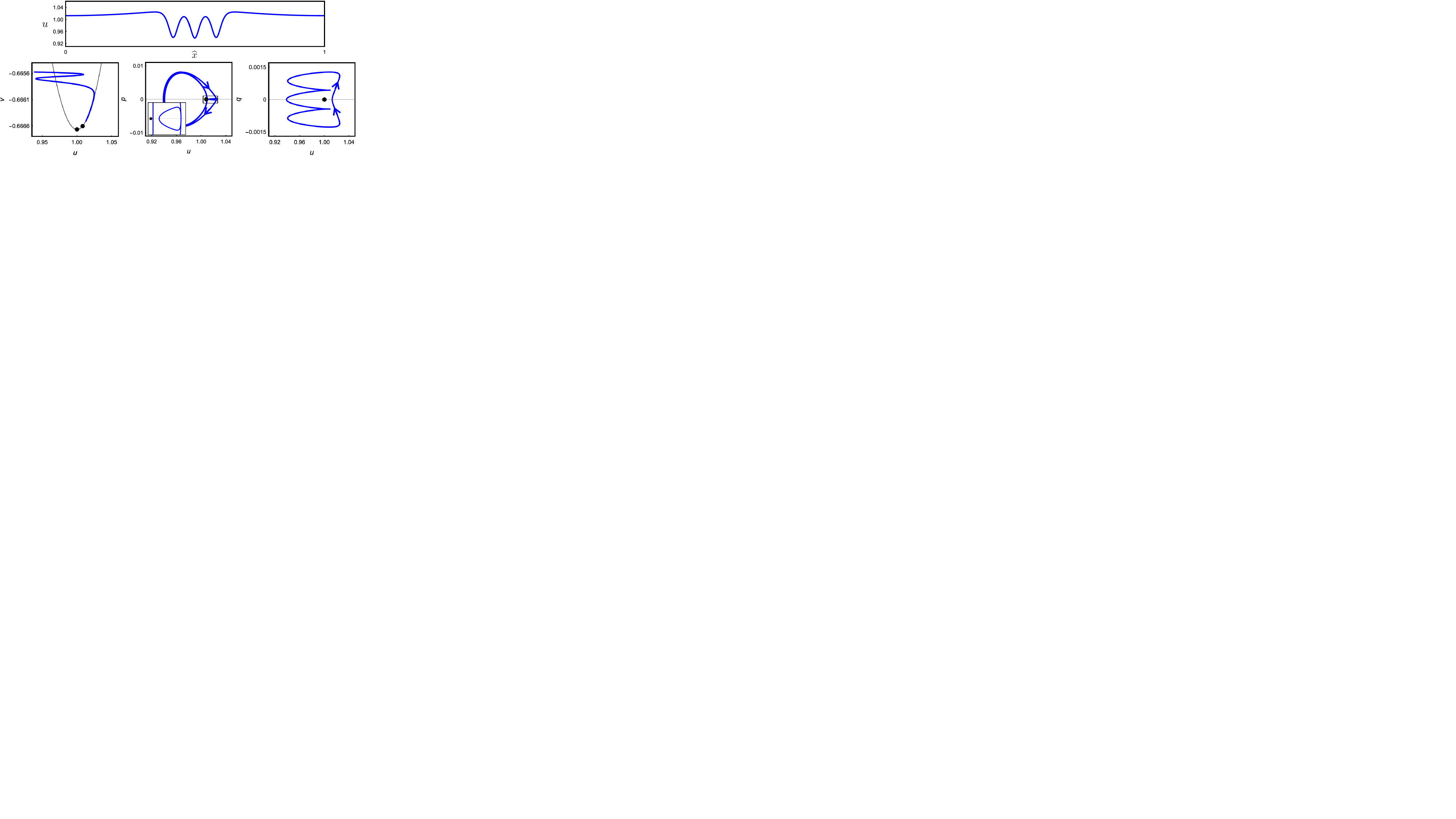}
  \put(-416,196){(a)}
  \put(-470,116){(b)}
  \put(-312,116){(c)}
  \put(-160,116){(d)}
  \caption{Small-amplitude spatially periodic canard solution of \eqref{eq:vdp} with $k\approx 1.778$ for $a=1.008$, $\eps=0.1$, and $\delta=0.01$.  For these parameters, $a_T \approx 0.996832$, so the value of $a$ is slightly above $a_T$. 
  (a) Profile of $u(\hat{x})$, where $\hat{x} = \tfrac{x}{T}$ is a scaled version of the spatial variable chosen to normalize the period ($T \approx 353.366$) to unit length. 
  (b) Projection onto the $(u,v)$ plane. 
  The cubic nullcline $v=f(u)$ is represented by the black curve, and the black dots mark the local minimum  $(u,v)=(1,-\frac{2}{3})$ of the nullcline and the equilibrium $(u,v)=(a,f(a))$.
  (c) Projection onto the $(u,p)$ plane. Inset: zoom on the neighborhood of the equilibrium (black dot).  
  (d) Projection onto the $(u,q)$ plane. The solution oscillates about the equilibrium at $(a,0,f(a),0)$ (black dot).
  This solution lies on the level set $\mathcal{G}=\eps\left( \frac{2}{3}a - \frac{1}{4}\right)$.
  } 
  \label{fig:kOrderOne_SAO}  
\end{figure}

During the long portion of slow increase,  the orbit gradually moves up the right branch of the cubic nullcline $v=f(u)$ (black curve), with $u$ slowly increasing. 
See the projection onto the $(u,v)$ plane shown in Fig.~\ref{fig:kOrderOne_SAO}(b).
The orbit then transitions away and oscillates rapidly about the middle branch of the cubic nullcline. 
In the $(u,v)$ projection, the oscillations are manifested by the zigzag upward to the upper left extremum, and then back along the same path (due to the reversibility symmetry of the solution) to the right branch, completing the oscillatory segment on the central spatial interval. 
Along the remainder of the period, the solution slowly moves down along the right branch of the $v=f(u)$ curve, to its minimum value. Along this segment, $u$ slowly decreases. 
The projections onto the $(u,p)$ and $(u,q)$ planes (see panels (c) and (d), respectively) also show that the solution oscillates about the equilibrium state. 

We will show in Section~\ref{sec:slow} that system \eqref{eq:spatialODE-y} has a folded singularity --in particular a reversible folded saddle-node of type II-- which lies on the fold set $L^+$ and that portions of this  orbit (and of the other small-amplitude solutions of this type) lie along the canards of this singularity. Hence, these are spatial canard solutions. 

\subsection{Small-amplitude spatial canards with small wavenumbers}		\label{sec:numerics-2}

The spatial ODE system \eqref{eq:spatialODE-y} also possesses small-amplitude spatial canards with small ($\mathcal{O}(\delta)$)  wavenumbers. 
A representative solution is shown in Fig.~\ref{fig:smallamp-selfsimilar}. 
The solution stays close to the equilibrium state $u=a$ over almost the entire period, as may be seen from the spatial profile of $u$ shown in Fig.~\ref{fig:smallamp-selfsimilar}(a).
(Here, the value of $a$ is $\mathcal{O}(\delta^2)$ below $a=1$.)
Then, on an extremely short interval in space, it exhibits a 
small-amplitude excursion, with magnitude of at most $\mathcal O(\delta)$ (see also the inset in Fig.~\ref{fig:smallamp-selfsimilar}(a)). 

\begin{figure}[h!]  
  \centering
  \includegraphics[width=\textwidth]{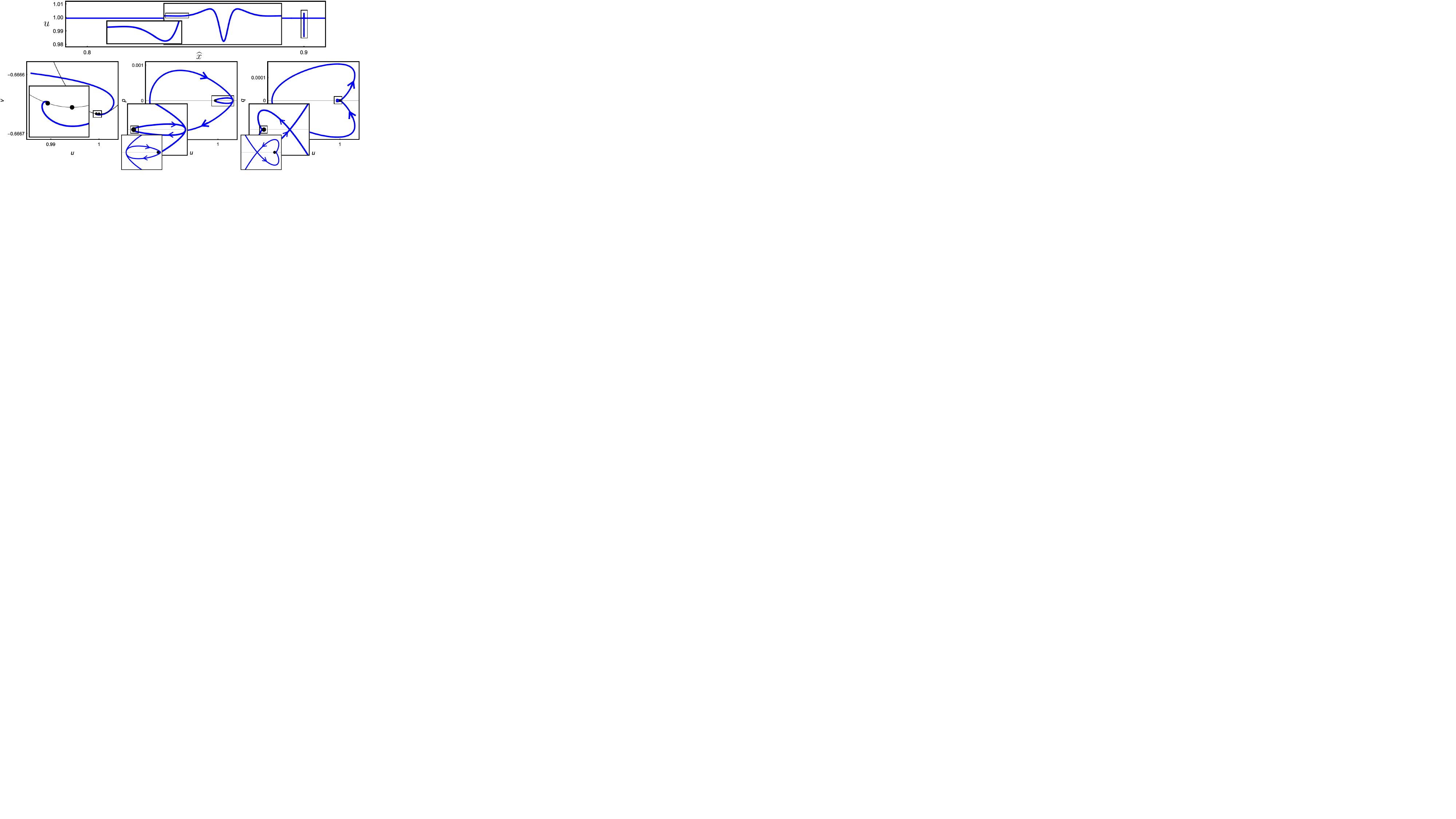}
  \put(-418,214){(a)}
  \put(-470,134){(b)}
  \put(-312,134){(c)}
  \put(-156,134){(d)}
  \caption{A small-amplitude, spatially periodic canard solution of \eqref{eq:vdp} with small wavenumber on the level set $\mathcal{G}= \eps\left( \frac{2}{3}a - \frac{1}{4} \right)$ for $a=0.999433, \eps = 0.1$, and $\delta = 0.01$.
  The period is $T = 1 \times 10^6$.
  (a) Profile of $u(\widehat x)$, where $\widehat x = \tfrac{x}{T}$ is the normalized spatial coordinate.  
  Insets: two successive zooms showing the oscillations in the short interval near $\widehat{x}=0.9$. 
  (b) Projection onto the $(u,v)$ plane. Inset: Zoom onto a small neighborhood of the equilibrium. The equilibrium is the left black dot, and the fold point corresponds to the right black dot.
  (c) Projection onto the $(u,p)$ plane. Insets: two successive zooms of the neighborhood of $E$ and three levels of the self-similarity. 
  (d) Projection onto the $(u,q)$ plane. Insets: two successive zooms of the neighborhood of $E$ and three levels of the self-similarity. (There are further, nested, successively smaller loops not shown.)
  The scale on the $u$ axis is the same in (b)-(d), and the widths of the insets in (d) are approximately $0.0015$ and $1 \times 10^{-4}$, respectively.
  (Note that, while the parameters are the same here as in Fig.~\ref{fig:introspatialperiodic}, the wavenumbers of the solutions differ by several orders of magnitude.) 
 } 
  \label{fig:smallamp-selfsimilar} 
  \end{figure}

During the long spatial interval over which $u$ is close to $a$, the solution slowly evolves in a neighborhood of the local minimum (see the projection of the solution onto the $(u,v)$ plane shown in Fig.~\ref{fig:smallamp-selfsimilar}(b)).
In particular, the solution slowly moves down  the right branch of the nullcline with $u$ decreasing, 
and then it passes underneath  the local minimum 
(right black dot in Fig.~\ref{fig:smallamp-selfsimilar}(b)), slowly spiraling in toward the equilibrium at $(u,v)=(a,f(a))$ (left black dot).
Subsequently, the solution slowly spirals away from the equilibrium, retracing its path (in the $(u,v)$ projection, due to the reversibility symmetry) back underneath the local minimum and toward the right branch of the nullcline, completing the long segment of slow evolution. 
On the short spatial interval that interrupts the long segment of slow evolution, the fast excursion corresponds to the jump away from the right branch to just beyond the middle branch, followed by the return along the same segment (in the projection), due to the reversibility symmetry.
We emphasize that these small-amplitude spatially periodic solutions are also canard solutions, because they also have long segments near the canards of a reversible folded saddle-node singularity, as we will show in Sections~\ref{sec:fast} and \ref{sec:desingFSNII}.
For completeness, we note that, in frames (c) and (d),  the fast portion of the solution on the short spatial interval corresponds to the segments located to the left of the box marking the first inset, {\it i.e.} to the left of approximately $u=0.998$. 

The dynamics of the solution in the neighborhood of the equilibrium also exhibit a degree of self-similarity. This may be seen in the projections onto the $(u,p)$ and $(u,q)$ planes; see frames (c) and (d), respectively.  
As $u$ and $v$ slowly evolve, their spatial derivatives $p$ and $q$ alternate in sign, and the solution exhibits nested spirals as it slowly approaches --and then slowly recedes from-- the equilibrium. 
Three levels of the nested, nearly self-similar spiraling are shown, and the solution exhibits additional, more-deeply nested levels (not shown). 
The nearly self-similar dynamics of this representative solution (and of other small-amplitude solutions with small wavenumbers) will be shown to follow from the self-similarity in a level set of the Hamiltonian that arises naturally in the desingularized reduced system (see Section~\ref{sec:selfsimilar}).

\subsection{Large-amplitude spatial canards with moderately small wavenumbers}		\label{sec:numerics-3}

In this section, we turn our attention to large-amplitude spatially periodic canard solutions, and we present a class of spatial canards  
that have moderately small ($\mathcal{O}(\sqrt{\delta})$) wavenumber. 
A representative spatial canard solution of this type is shown in Fig.~\ref{fig:kOrderOne_LAO}. 
Over a large portion of the spatial period, $u$ slowly decreases monotonically from $\sqrt{3}$ to just above $1$. 
Near its minimum, the $u$-component exhibits a small-amplitude spike (see the inset in Fig.~\ref{fig:kOrderOne_LAO}(a)).
Subsequently, $u$ slowly increases monotonically  near 1 back up to near $\sqrt{3}$. 
Finally, there is a fast downward jump
to $u=-\sqrt{3}$ on a narrow spatial interval, followed by a slow segment along which $u$ slowly increases to a local maximum just above $-\sqrt{3}$, and then a fast upward jump back to $\sqrt{3}$, to complete the period. 

\begin{figure}[h!]  
  \centering
  \includegraphics[width=\textwidth]{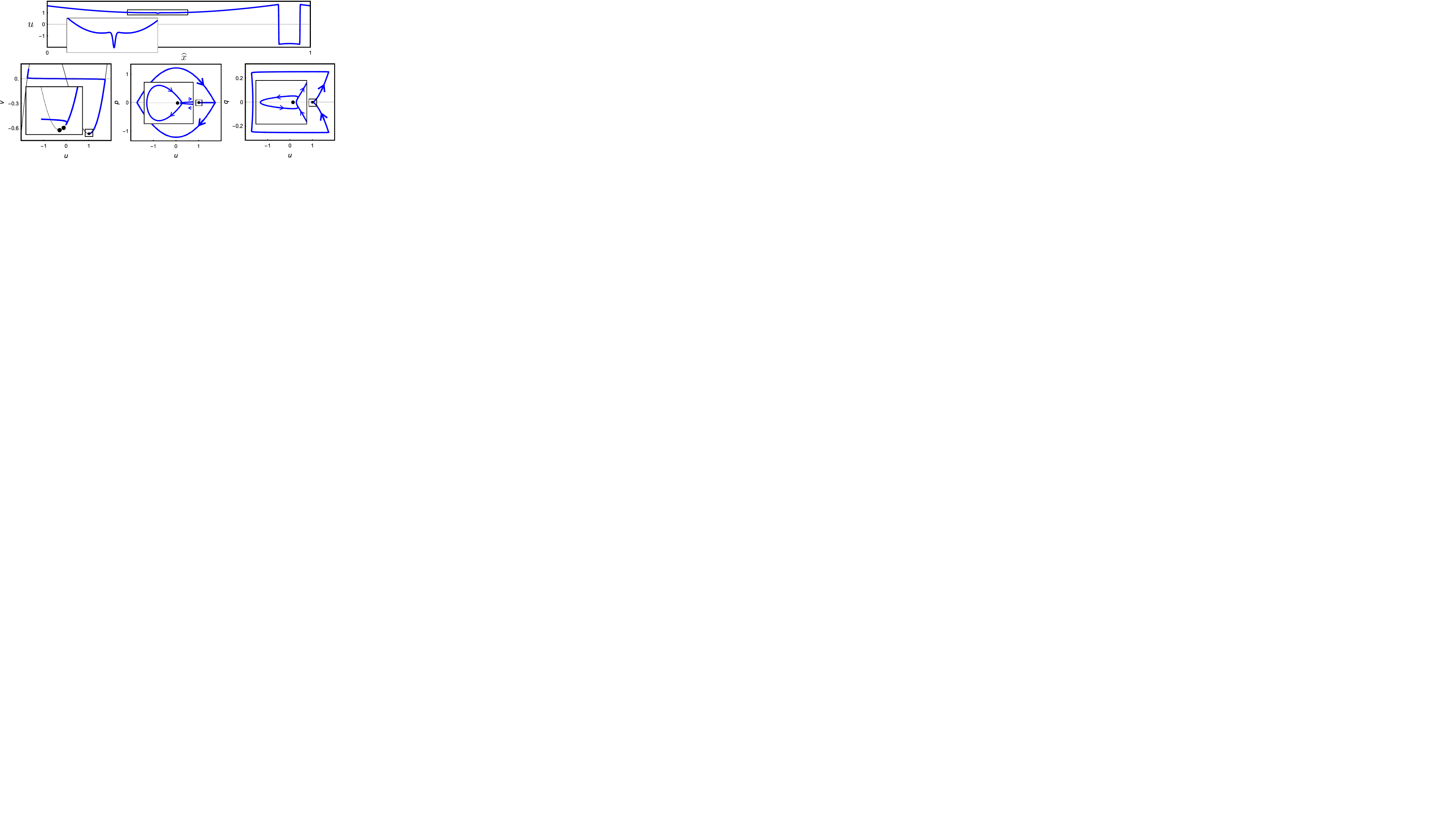}
  \put(-433,214){(a)}
  \put(-470,126){(b)}
  \put(-312,126){(c)}
  \put(-158,126){(d)}
  \caption{Large-amplitude spatially periodic canard solution of \eqref{eq:vdp} with moderately small wavenumber for $a=1.008$, $\eps=0.1$, and $\delta=0.01$. 
  The parameter values are the same as those in Fig.~\ref{fig:kOrderOne_SAO}.  a) Profile of $u(\hat{x})$, where $\hat{x} = \tfrac{x}{T}$ is a scaled version of the spatial variable chosen to normalize the period ($T \approx 2347.12$) to unit length.  (b) Projection onto the $(u,v)$ plane. The cubic nullcline $v=f(u)$ is represented by the black curve. Inset: zoom on the neighborhood of the fold point and equilibrium state (black markers).
  (c) Projection onto the $(u,p)$ plane. Inset: zoom on the neighborhood of the equilibrium.  
  (d) Projection onto the $(u,q)$ plane. 
  The rightmost portions correspond to the slow drift into and out of a neighborhood of the cusp equilibrium, which is the phenomenon that lies at the heart of the spatial canard dynamics.
  Inset: zoom on the neighborhood of the equilibrium (black marker) at $(a,0,f(a),0)$.
  This solution also lies on the level set $\mathcal{G}=\eps\left( \frac{2}{3}a - \frac{1}{4}\right)$.
  } 
  \label{fig:kOrderOne_LAO}  
\end{figure}

The dynamics along each of the slow and fast segments of the solution observed in the spatial profile of $u$ may be understood from the projections onto the $(u,v), (u,p)$, and $(u,q)$ planes, which are shown in Figs.~\ref{fig:kOrderOne_LAO}(b)--(d), respectively.
The long portion on which $u$ slowly decreases from $\sqrt{3}$ (near $v=0$) down toward 1 corresponds  to the slow evolution near the right branch of the cubic nullcline $v = f(u)$, see  Fig.~\ref{fig:kOrderOne_LAO}(b).
The solution comes close to the equilibrium (right black dot). 
Then, it exhibits a small, localized oscillation about the middle branch of the nullcline, before returning to the neighborhood of the right branch of the $v = f(u)$ nullcline, where it slowly moves upward until it reaches the neighborhood of the $v=0$ state. 

Subsequently, at the point where the solution reaches the $\{ v=0 \}$ state, a fast jump takes place, along which the solution transitions to the left branch of the $v=f(u)$ nullcline. 
It slowly flows up the left branch of the cubic nullcline until it reaches a maximum, turns around, and then heads back toward the $v=0$ state. 
Once it reaches the $v=0$ state, another fast transition is initiated, and the solution returns to the right branch of the cubic. 
The projections of the large-amplitude solution onto the $(u,p)$ and $(u,q)$ phase spaces are shown in Figs.~\ref{fig:kOrderOne_LAO}(c) and (d), respectively. 

These solutions are called spatial canards, 
because they have long segments near the canards of a reversible folded saddle-node singularity, as we will show in Section~\ref{subsec:singularLAO}. 
Moreover, there are also large-amplitude spatial canards with $\mathcal{O}(1)$ wavenumbers, as we will see in Section~\ref{sec:numerics-5}.
These have no loops and only relatively short segments near the true and faux canards of the RFSN-II point.

\subsection{Large-amplitude spatial canards with small  wavenumbers}		\label{sec:numerics-4}

System \eqref{eq:spatialODE-y} has a fourth main family of spatially periodic canards, which are large-amplitude, small wavenumber solutions.
In Fig.~\ref{fig:kOrderDelta_LAO}, we show a  representative solution with wavenumber $k \approx 0.000628$ ($T = 1 \times 10^6$). 
Over most of the spatial period, the solution slowly varies near $u=a$ 
(see Fig.~\ref{fig:kOrderDelta_LAO}(a)).
There is a narrow interval in which the solution has five, symmetrically-disposed, large-amplitude spikes, with peaks near $u=\sqrt{3}$ and troughs near $u=-\sqrt{3}$. 
This narrow interval is magnified in the main inset, so that the individual spikes are visible, including the  pairs of spikes at the edges and the central spike. 
Also, within the main inset, two further insets zoom onto the sharp, small spikes at the outer edges.

\begin{figure}[h!]  
  \centering
\includegraphics[width=\textwidth]{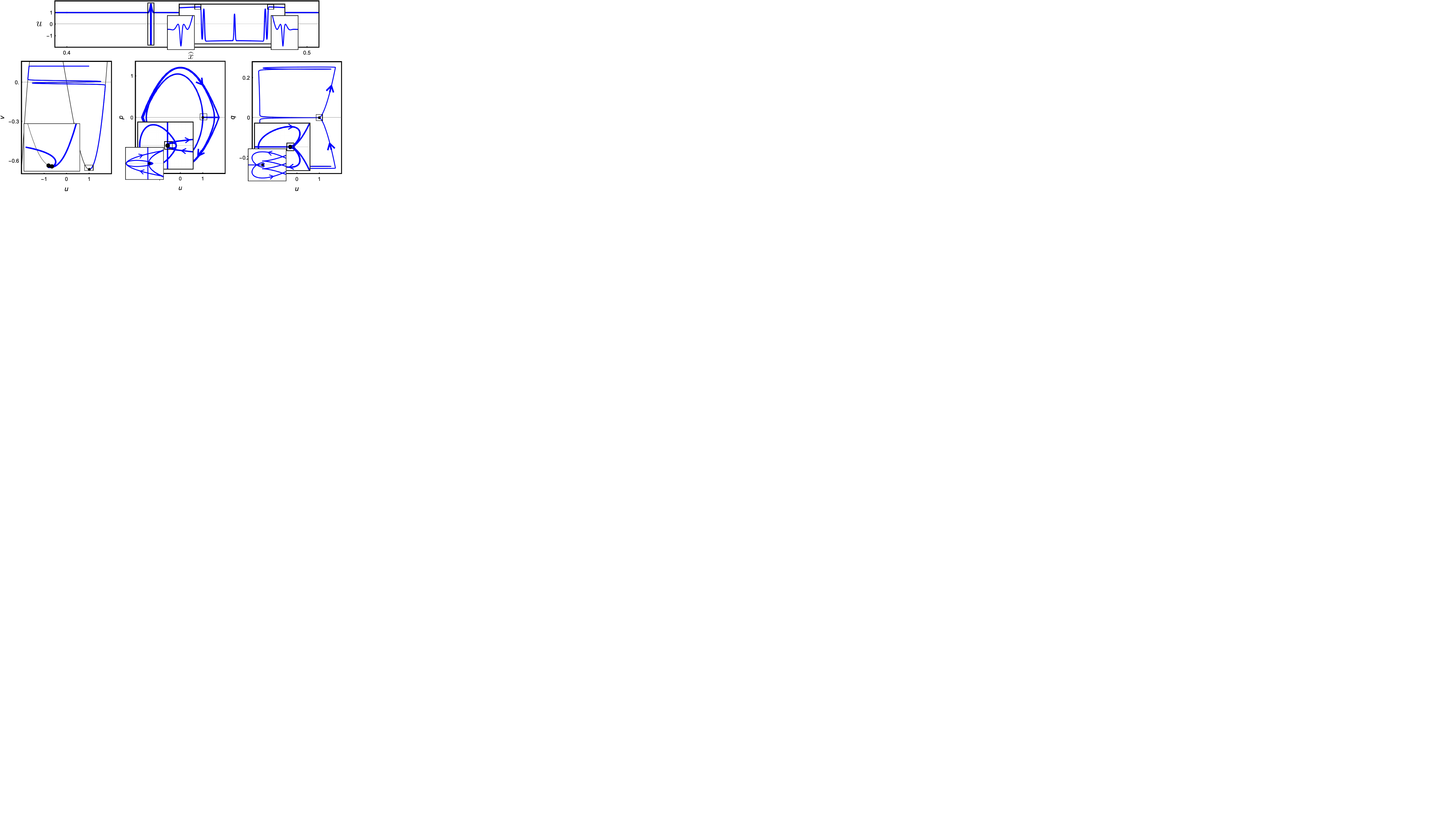}
  \put(-422,254){(a)}
  \put(-470,172){(b)}
  \put(-310,172){(c)}
  \put(-155,172){(d)}
  \caption{A large-amplitude spatially periodic canard solution of \eqref{eq:vdp} with small wavenumber for $a=0.998512$, $\eps=0.1$, and $\delta=0.01$. 
  The parameters are the same as in Fig.~\ref{fig:smallamp-selfsimilar}. 
  (a) Profile of $u(\hat{x})$, where $\hat{x} = \tfrac{x}{T}$ is the normalized spatial coordinate and $T = 1 \times 10^6$ is the period. 
  (b) Projection of the solution onto the $(u,v)$ plane. 
 Inset: zoom on the neighborhood of the equilibrium at $(a,0,f(a),0)$ (left black marker) showing the small-amplitude spikes.
  (c) Projection of the solution onto the $(u,p)$ plane, along with zooms of two successively smaller neighborhoods of the equilibrium.
  (d) Projection onto the $(u,q)$ plane. 
  The rightmost portions correspond to the `cusp'-like slow drift into and out of a neighborhood of the equilibrium.
  This is the essence of the spatial canard dynamics.
  The upper inset shows magnifications of the left pair of edge spikes.
  The two lower, successively smaller insets show a few levels of the infinite self-similarity in the neighborhood of the equilibrium.   
  The solution lies on the level set $\mathcal{G}=\eps\left( \frac{2}{3}a - \frac{1}{4}\right)$.
  } 
  \label{fig:kOrderDelta_LAO}  
\end{figure}
 
In the phase space, the solution is near the equilibrium during the large portion on which $u$ near $a$, see the left black dot in Fig.~\ref{fig:kOrderDelta_LAO}(b).  
Then, outside of this large portion of slow evolution, the solution has a narrow interval (see the main inset in Fig.~\ref{fig:kOrderDelta_LAO}(a)) in which it makes the two small and five large-scale oscillations. In particular, the small-amplitude, moderately fast oscillation about $u=1$ (secondary inset) corresponds in phase space to the small-amplitude excursion around the manifold of center points (near the local minimum of the nullcline, as shown in the inset oanel (b)).  
Then, the orbit travels fairly rapidly up the right branch of the cubic nullcline until it reaches a neighborhood of $v=0$, where the profile has the first of the five large-amplitude peaks. Subsequently, it makes a fast jump toward the left branch of the cubic, oscillating once before it reaches the left branch. 
This oscillation corresponds to the second spike at the left edge of the interval shown in the spatial profile (recall Fig.~\ref{fig:kOrderDelta_LAO}(a)). 

Further along, the orbit travels up the left branch of the cubic until it reaches some maximum, where it exhibits a fast jump corresponding to the middle large-amplitude spike of the five in the inset in Fig.~\ref{fig:kOrderDelta_LAO}(a). 
Finally, the solution travels back down near the left branch of the cubic nullcline until it reaches the neighborhood of $v=0$, where another fast jump is initiated, and it makes another large-amplitude oscillation on its way back to the right branch of the cubic. 
These final large-scale oscillations correspond  to the spikes at the right edge in Fig.~\ref{fig:kOrderDelta_LAO}(a), thereby completing the periodic solution. 

This large-amplitude, small wavenumber spatial canard exhibits dynamics that are close to being self-similar in the neighborhood of the $u=1$ state. 
This may be seen in the projections of this solution onto the $(u,p)$ and $(u,q)$ planes (see  Figs.~\ref{fig:kOrderDelta_LAO}(c) and (d), respectively). This near self-similarity will be discussed in Section~\ref{sec:selfsimilar}.

\subsection{Numerical continuation of the families of spatial canards}		\label{sec:numerics-5}

Having presented four main types of spatially periodic canard solutions, we now identify regions in the $(a,k)$ parameter space containing spatially periodic canards, as obtained using numerical continuation.  
(We refer to Appendix~\ref{app:numericalmethods} for details about the numerical continuation.) 
We began by finding a stable periodic orbit for $a=0$, continued this with respect to the period to find the orbits with minimal and maximal periods for $a=0$, and then performed two-parameter continuation on these in $a$ and the period.
The continuation results are shown in Fig.~\ref{fig:frommole2manatee}. 
For each of four values of $\delta$,  namely for $10^{-2}, 10^{-1}, 0.5,$ and $1$, spatially periodic canard solutions exist in the region between two curves.
The upper curve corresponds to minimal period solutions (with maximal $k$) and the lower curve to the maximal period solutions (with minimal $k$). 

\begin{figure}[h!]    
  \centering
  \includegraphics[width=5in]{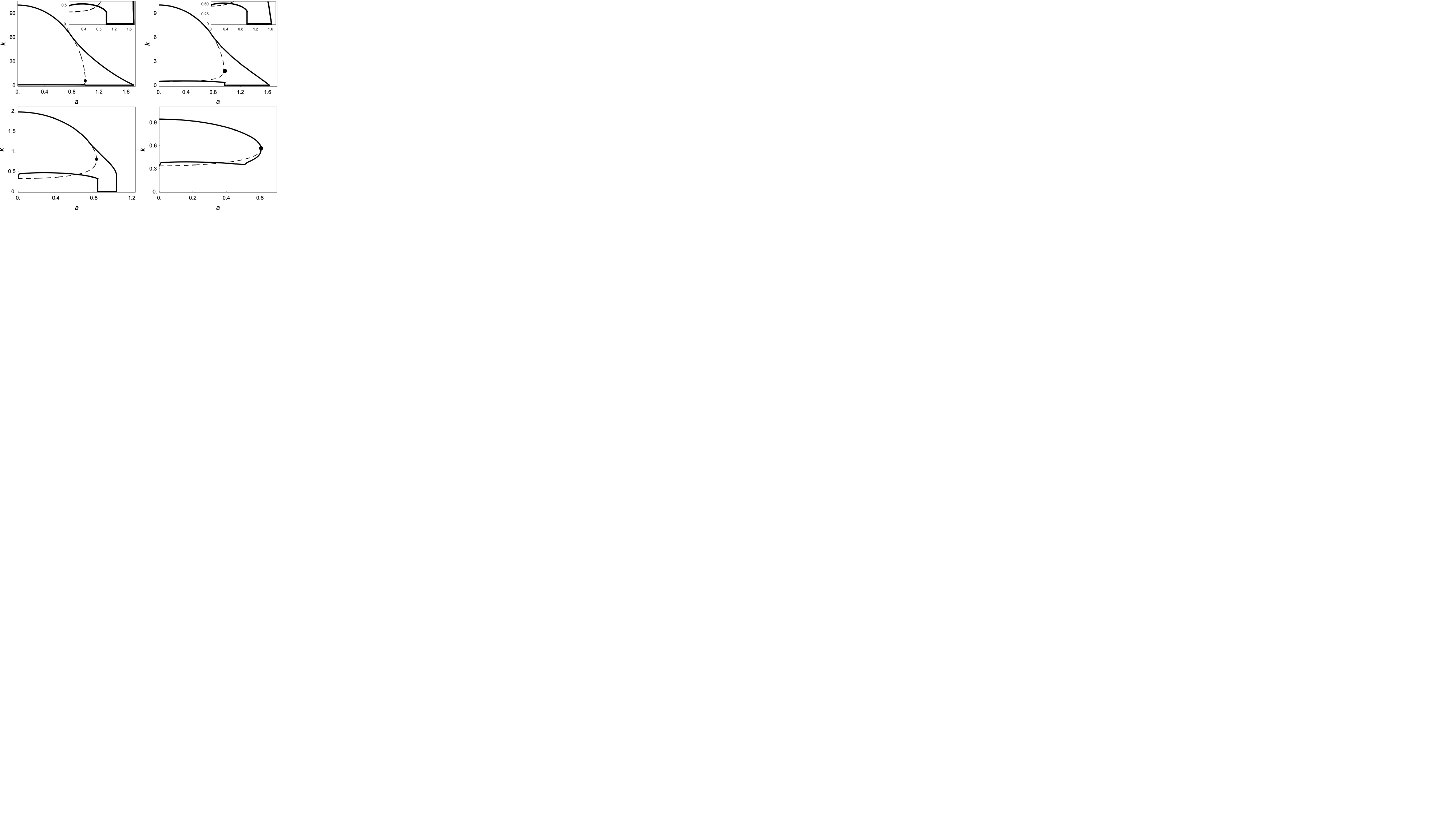}
  \put(-362,265){(a)}
  \put(-181,265){(b)}
  \put(-362,128){(c)}
  \put(-181,128){(d)}
\caption{Space of spatially periodic patterns of \eqref{eq:spatialODE-y} in the $(a,k)$ plane for $\eps = 0.1$ and (a) $\delta = 0.01$, (b) $\delta = 0.1$, (c) $\delta = 0.5$, and (d) $\delta = 1$. Spatially periodic solutions exist inside the region enclosed by the upper and lower solid curves. 
The upper solid curve denotes spatially periodic orbits with minimal spatial periods (maximal $k$), whilst the lower branch denotes those with maximal spatial periods (minimal $k$). Hence, for a fixed value of $a$, there are spatially periodic orbits for all $k$ between the minimal and maximal values. The sharp drop in $k$ occurs at $a=a_T$, and homoclinics exist for $a>a_T$ for all $\delta < \tfrac{8}{25\sqrt{\eps}}$ (see Proposition~\ref{prop:Turingcanards}). The dashed curves denote the loci of points at which ${\rm det}DF = 0$, recall \eqref{eq:Turingcond}, marking 
the linear stability boundary of the equilibrium. These curves have two $k$-intercepts. The upper intercept occurs at $k \sim \frac{1}{\delta}-\frac{1}{2}\eps\delta$ for $\delta \ll 1$ and the lower intercept is at $k \sim \sqrt{\eps} + \mathcal{O}(\eps^{3/2} \delta^2)$. Also, the Turing point $(a_T,k_T)=\left(\sqrt{1-2\delta \sqrt{\eps}},\left(\frac{\eps}{\delta^2}\right)^{1/4}\right)$ (solid dot) is located at the right extremum of the dashed curves where also $\frac{\partial}{\partial k^2} {\rm det} DF =0$, recall \eqref{eq:kTaT}.
}
\label{fig:frommole2manatee} 
\end{figure}

The solutions with small $k$ have long segments along which they are near the equilibrium,  making successively smaller nested loops, which are close to being self-similar.
The solutions with $\mathcal{O}(1)$ or higher $k$ have shorter or no segments near the equilibrium.
This trend is illustrated for values of $a$ on both sides of $a=1$ by the representative canards shown in Figs.~\ref{fig:kOrderOne_SAO}--\ref{fig:kOrderDelta_LAO}.
Consider first $a=0.999433$, so that the equilibrium lies to the left of the local minimum.
The small-amplitude canard with small $k$ shown in Fig.~\ref{fig:smallamp-selfsimilar} has a finite sequence of successively smaller, nearly self-similar loops, and hence it has a long segment near the equilibrium (and we note that similar canards exist also for $a>1$, {\it  e.g.} along the rightmost blue stalk in Fig.~\ref{fig:bifndetailed}(c)). 
By contrast, small-amplitude canards with $\mathcal{O}(1)$ $k$
(one of which is shown for example below in Section~\ref{subsec:Turingspatialcanards}) have few or no loops, and hence have only short or no segments near the equilibrium.
At this same value of $a$, the large-amplitude canards exhibit the same trend with respect to $k$, as may be seen for example by comparing the canard with small $k$ shown in Fig.~\ref{fig:kOrderDelta_LAO} with a canard with $\mathcal{O}(1)$  $k$.
Similar trends are seen for $a=1.008$, as well as for values of $a$ further from $a=1$ on both sides.

The lower curves in Figs.~\ref{fig:frommole2manatee} (a), (b), and (c) exhibit several sharp drops in $k$ near the critical Turing value $a_T = \sqrt{1-2\delta \sqrt{\eps}}$. 
These sharp drops occur because of the birth of infinite-period (i.e., zero wavenumber),  homoclinic solutions. 
The existence of these homoclinics follows from part (iii) of  Proposition~\ref{prop:Turingcanards}.
Finally, as illustrated in Fig.~\ref{fig:frommole2manatee} (d), the rightmost edge of the region of spatial periodics converges to the critical Turing bifurcation as $\delta \to \tfrac{8^-}{25\sqrt{\eps}}$, which is in accord with part (i) of Proposition~\ref{prop:Turingcanards}, since spatially periodic solutions only exist for $a<a_T$ for $\delta$ on the other side.
(For reference, we also show the linear stability boundary, $\delta^2 k^4 + (a^2-1) k^2+\eps = 0, $
of the equilibrium state, which is obtained from $\det DF = 0$ \eqref{eq:Turingcond}, as the black dashed curve.)

\begin{figure}[h!]    
  \centering
  \includegraphics[width=5in]{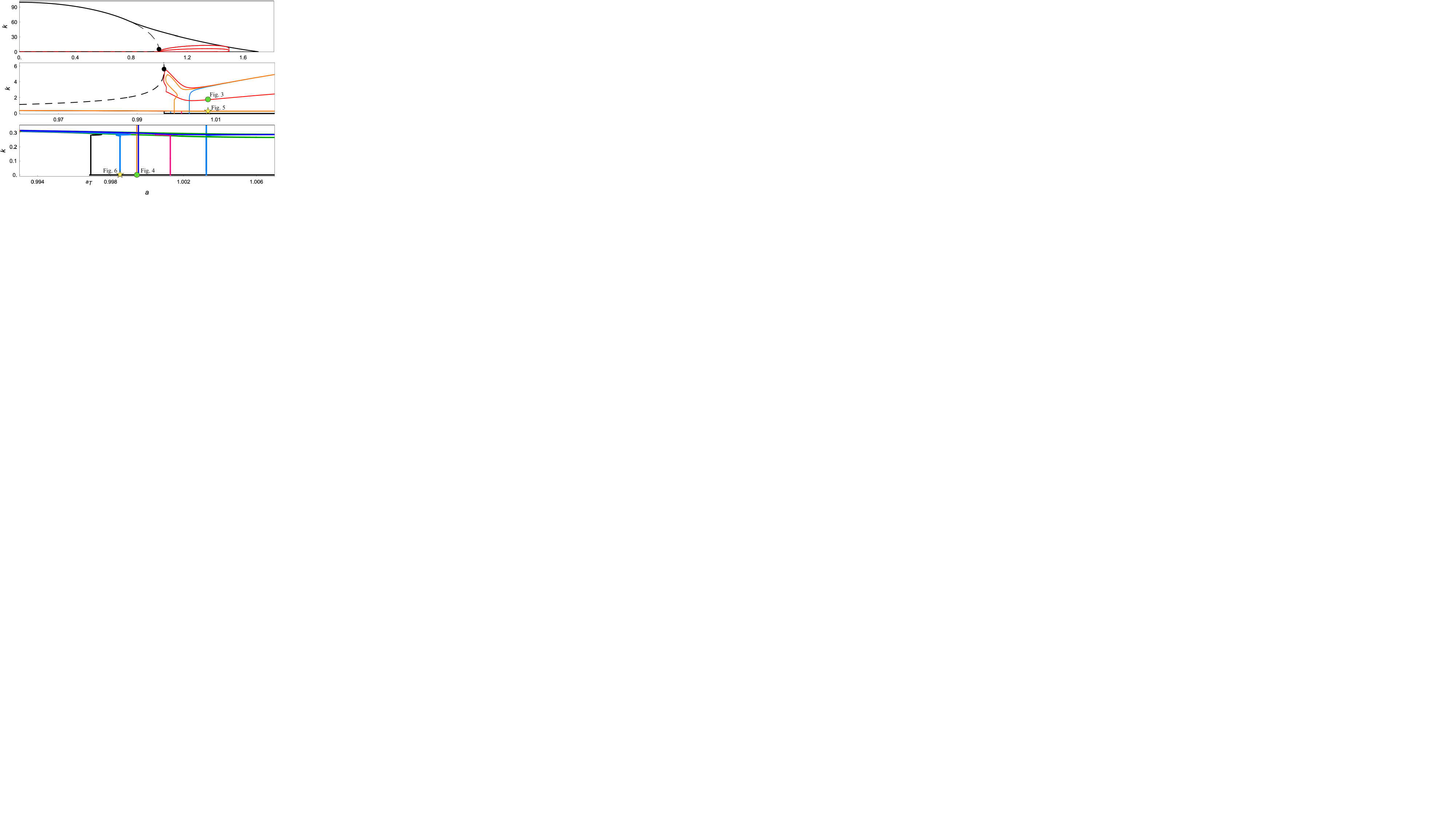}
  \put(-362,248){(a)}
  \put(-362,166){(b)}
  \put(-362,84){(c)}
\caption{Partial bifurcation diagram in the $(a,k)$ plane together with curves of constant $\mathcal{G}$ for (a) $k = \mathcal{O}(\delta^{-1})$, (b) $k = \mathcal{O}(1)$, and (c) $k = \mathcal{O}(\delta^{1/2})$ for $\eps=0.1$ and $\delta=0.01$.
The green circle markers correspond to the small-amplitude solutions shown in Figs.~\ref{fig:kOrderOne_SAO} and \ref{fig:smallamp-selfsimilar}, and the yellow star markers correspond to the large-amplitude solutions shown in Figs.~\ref{fig:kOrderOne_LAO} and \ref{fig:kOrderDelta_LAO}.
As in Fig.~\ref{fig:frommole2manatee}, 
the dashed black curves correspond to the linear stability boundary of the equilibrium, the Turing bifurcation occurs at $(a_T,k_T)=(0.996832...,5.6327...)$ here, and 
the solid black curves enclose the region where spatially periodic solutions exist. 
With the exception of the black branches, all branches lie on the level set $\mathcal{G}=\eps\left( \frac{2}{3}a - \frac{1}{4} \right)$.
The two red branches emerge from the Turing bifurcation (see panel (b)). 
Along these branches, the amplitude of the oscillations about the homogeneous state grows as $a$ increases beyond $a_T$, though not at the same rate along the two branches. 
Also, several of the $\mathcal{G}=\eps\left( \frac{2}{3}a - \frac{1}{4} \right)$ branches terminate at homoclinics (see panel (c)). 
} 
\label{fig:bifndetailed} 
\end{figure}

A more detailed view of the bifurcation structure of the spatially periodic solutions of \eqref{eq:spatialODE-y} is  presented in Fig.~\ref{fig:bifndetailed}, with frames (a)--(c) showing the wavenumbers $k$ of sizes $k \gg 1$, $k=\mathcal{O}(1)$,
and $k \ll 1$, respectively. 
First, the same diagram from Fig.~\ref{fig:frommole2manatee}(a) is shown in Fig.~\ref{fig:bifndetailed}(a), now also with  curves of constant $\mathcal{G}$ (red curves), fixed at the special value $\mathcal{G} = \eps \left( \tfrac{2}{3}a-\tfrac{1}{4} \right)$. These particular spatially periodic solutions only exist at small and $\mathcal{O}(1)$ wavenumbers. At the $\mathcal{O}(1)$ scale of wavenumbers (Fig.~\ref{fig:bifndetailed}(b)), the fine structure of the spatially periodic solutions becomes more apparent. Each of the different colored curves (red, orange and blue) correspond to different branches of spatially periodic solutions on the level set $\mathcal{G} = \eps \left( \tfrac{2}{3}a-\tfrac{1}{4} \right)$. The red branch emerges from the Turing bifurcation (black marker), continues into large $a$, decreases to small $k$, and remains at small wavenumbers (horizontal segments).
The detailed structure of the nearly horizontal segments for small wavenumbers is shown in Fig.~\ref{fig:bifndetailed}(c). It can be seen that there are even more branches of spatially periodic solutions with $\mathcal{G} = \eps \left( \tfrac{2}{3}a-\tfrac{1}{4} \right)$. Some of these branches exhibit sharp drops in $k$ and continue in toward the homoclinic solution along $k=0$ for $a>a_T$. 
Each of these branches in Fig.~\ref{fig:bifndetailed}(c) contains spatially periodic canard solutions.

Overall, the numerical continuation results show that system \eqref{eq:vdp} has many different types of spatially periodic canard solutions in the $(a,k,d)$ space for a wide range of values of $\eps>0$. 
We have found these types of spatially periodic canards for other values of $\eps$, including $\eps=0.8$.
We add that, in Section~\ref{sec:isolas}, 
the solutions and bifurcations along these branches will be studied in detail by using the results obtained in the analysis of the fast system, the desingularized reduced vector field, and the RFSN-II singularity (Sections~\ref{sec:fast}--\ref{sec:desingFSNII}).

To conclude this section, we note that the regions shown in Fig. \ref{fig:frommole2manatee} correspond to what are called `existence balloons' in \cite{DRS12}, {\it i.e.}, regions in (parameter,wave number)-space in which spatially periodic patterns exist. 
Naturally, the eventual goal would be to determine the Busse balloon, the sub-balloon of stable spatially periodic patterns \cite{B1978,D2019}. 
The Busse balloon plays a central role in pattern forming systems in fluid mechanics, reaction-diffusion systems, ecological models, and many other scientific and engineering problems \cite{AACS2024,B1978,BC1979,BW1971,CH1993,CN1984,D2019,GZK2018,MDK2001,S2003,VSC2023} and is also expected to play a similar role in understanding the impact of the canard-type solutions of this work on the dynamics exhibited by PDE (\ref{eq:vdp}).

\section{Fast System (Layer Problem)}
\label{sec:fast}

In this section, we study the fast system (layer problem) of the spatial ODE, which is obtained by taking the limit $\delta \to 0$ in \eqref{eq:spatialODE-y}: 
\begin{equation}
\label{eq:layer}
\begin{split}
  u_y &= p \\
  p_y &= f(u) - v \\
  v_y &= 0 \\
  q_y &= 0.
\end{split}
\end{equation}
Hence, $v$ and $q$ are constants, and the fast system is independent of $a$.
Moreover, for each fixed $v$, the fast system is Hamiltonian, 
\begin{equation}
\label{eq:H-fast}
  u_y = \frac{\partial H_{\rm fast}}{\partial p}, \quad 
  p_y = -\frac{\partial H_{\rm fast}}{\partial u},
  \quad {\rm with} \quad 
  H_{\rm fast}(u,p;v)= \frac{1}{2}p^2 - \tilde{f}(u) + uv.
\end{equation}
The functional form of $H_{\rm fast}$ is equivalent to that obtained by reducing $\mathcal{G}$ to the fast system and scaling by a factor of $\eps$. We recall from \eqref{eq:G} that $\tilde{f}= \frac{1}{12}u^4 - \frac{1}{2} u^2$.

\subsection{The critical manifold of the fast system}
\label{subsec:fastphaseplanes}

In the four-dimensional $(u,p,v,q)$ space, the fast system \eqref{eq:layer} has a two-dimensional critical manifold, 
\begin{equation}
\label{eq:S}
        S := \{ (u,p,v,q)
        : u\in \mathbb{R},\,\, p=0,\,\, v=f(u), \,\, q \in \mathbb{R} \},
\end{equation}
which corresponds to the union of all of the fixed points of \eqref{eq:layer}.
Moreover, given that the cubic function $f(u)=\frac{1}{3}u^3 - u$ has three segments separated by the local extrema at $(u,v)=\left(\pm 1, \mp \frac{2}{3}\right)$, the critical manifold $S$ has three branches:
\begin{equation}
    \label{eq:saddle+elliptic}
\begin{split}
     S_s^-
        &= \{ (u,p,v,q) \in S \, :  u=u_- < -1, \,\,  p = 0, \,\, v=f(u_-),\,\,  q \in \mathbb{R} \}, \\
     S_c 
         &= \{ (u,p,v,q) \in S :  u=u_0, \,\, -1 < u_0 < 1, \,\, p=0, \,\,  v=f(u_0), \,\,  q \in \mathbb{R} \}, \\
     S_s^+ 
         &= \{ (u,p,v,q) \in S : u=u_+>1, \,\, p=0, \,\, v=f(u_+), \,\,  q \in \mathbb{R} \}.
\end{split}
\end{equation}
$S_s^\pm$ are hyperbolic saddle invariant manifolds, because the fixed points $(u_\pm,0)$ are saddle fixed points of the $(u,p)$ subsystem for each $v>-\frac{2}{3}$ and each $v < \frac{2}{3}$, respectively. Similarly, $S_c$ is a non-hyperbolic, elliptic invariant manifold, because the fixed point $(u_0,0)$ of the $(u,p)$ subsystem is a nonlinear center for each $-\frac{2}{3} < v < \frac{2}{3}$.
The three invariant manifolds  are illustrated in Fig.~\ref{fig:S}.

\begin{figure}[h!]  
  \centering
  \includegraphics[width=5in]{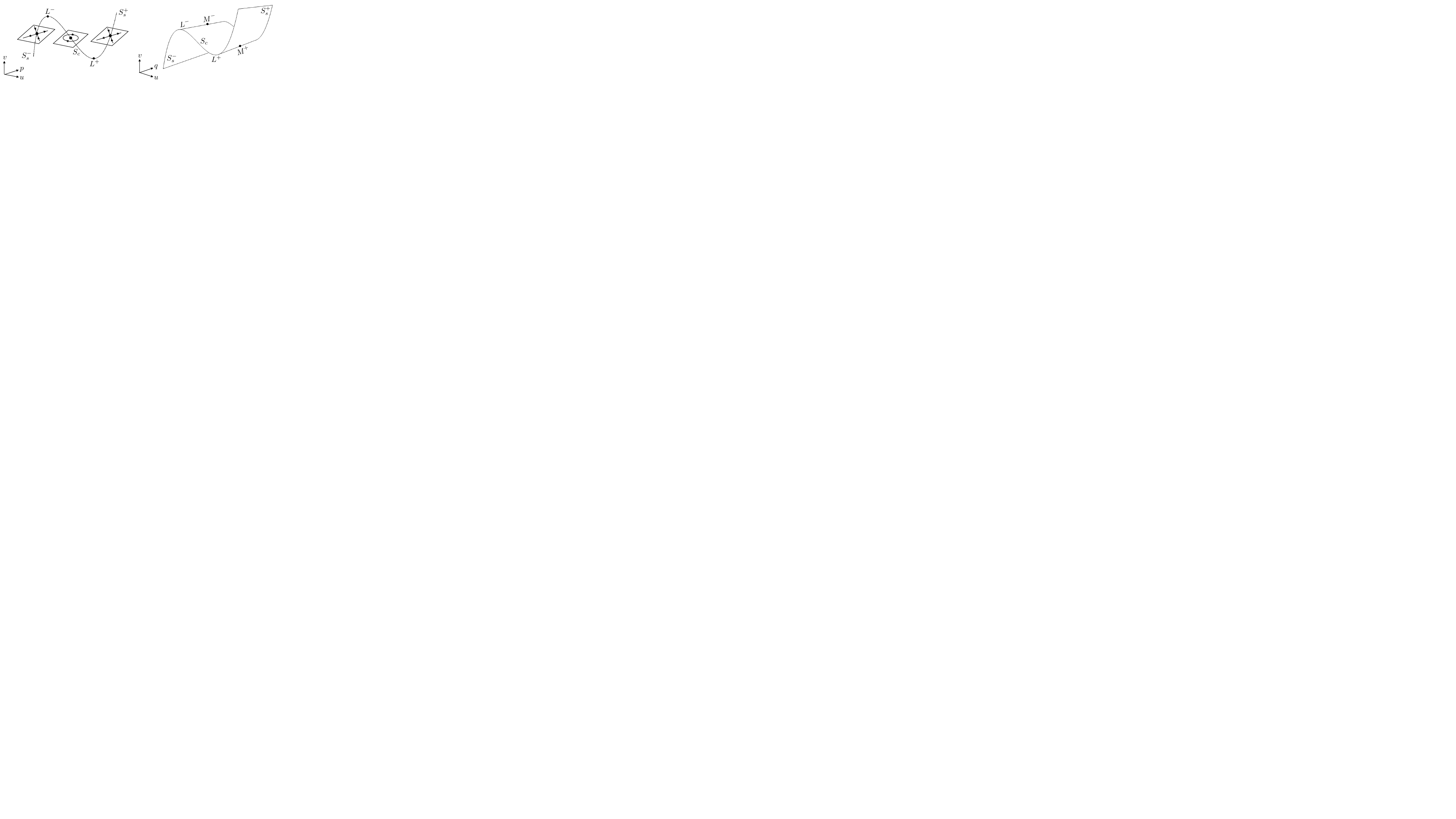}
  \put(-360,90){(a)}
  \put(-180,90){(b)}
  \caption{The critical manifold $S$ given by \eqref{eq:S} projected (a) in the $(u,p,v)$ space and (b) in the $(u,q,v)$ space. It consists of the two hyperbolic components $S_s^\pm$ and the elliptic component $S_c$ (recall \eqref{eq:saddle+elliptic}), which are separated by the fold curves $L^\pm$, see \eqref{eq:foldcurves}.
  For each fixed $a$, the 2D critical manifold consists of a 1-parameter family of 1D slow manifolds.
  }
  \label{fig:S}
\end{figure}

The fold curves are
\begin{equation}
\label{eq:foldcurves}
L^{\pm}=
\left\{
(u,p,v,q) | u = \pm 1, p=0,
v = \mp \frac{2}{3}, q \in \mathbb{R} \right\}.
\end{equation}
They form the boundaries of the invariant manifolds, where $S_s^{\pm}$ lose normal hyperbolicity.
Together with the three invariant manifolds, they provide the following natural decomposition of the critical manifold, 
\begin{equation}
\label{eq:S-decomp}
    S  = S_s^- \cup L^- \cup S_c \cup L^+ \cup S_s^+.
\end{equation}

\subsection{Homoclinics of the fast system}
\label{subsec:fasthomoclinics}
We prove the existence of homoclinic orbits in the fast system, which will be useful in the deconstruction of spatially periodic canard solutions.

\begin{proposition}  \label{prop:fasthomoclinics}
For each fixed $v \in (-\tfrac{2}{3},0)$, there exists a homoclinic solution of \eqref{eq:H-fast} corresponding to the intersection $W^s(u_+) \cap W^u(u_+)$, where $u_+ \in S_s^+$ is the root of $v = f(u)$ for which $u_+ >1$. 
Similarly, for each fixed $v \in (0,\tfrac{2}{3})$, there exists a homoclinic solution of \eqref{eq:H-fast} corresponding to the intersection $W^s(u_-) \cap W^u(u_-)$, where $u_- \in S_s^-$ is the root of $v = f(u)$ for which $u_- <-1$. 
\end{proposition}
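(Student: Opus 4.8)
The plan is to exploit the planar Hamiltonian structure of the $(u,p)$ subsystem \eqref{eq:H-fast} at fixed $v$ (with $q$ a passive constant) and to read the homoclinic off the geometry of the potential
$V(u;v):=-\tilde f(u)+uv=-\tfrac{1}{12}u^4+\tfrac12 u^2+uv$, so that $H_{\rm fast}=\tfrac12 p^2+V(u;v)$. First I would record the elementary features of $V$: its critical points are exactly the roots of $f(u)=v$, hence for $v\in(-\tfrac23,\tfrac23)$ there are precisely three, $u_-<-1<u_0<1<u_+$; since $V''(u)=-f'(u)$, the outer roots $u_\pm$ are nondegenerate local maxima and $u_0$ a nondegenerate local minimum; and $V(u)\to-\infty$ as $|u|\to\infty$. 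The orbit we want will live on the energy level $\{H_{\rm fast}=V(u_+;v)\}$, which contains the saddle $(u_+,0)$ of \eqref{eq:layer} on $S_s^+$.

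The key quantitative input is the ordering of the two maximum values. Set $g(v):=V(u_+(v);v)$. By the envelope theorem, differentiating through $\partial_u V(u_+;v)=0$ gives $g'(v)=\partial_v V(u_+;v)=u_+(v)>1>0$, so $g$ is strictly increasing. The involution $(u,v)\mapsto(-u,-v)$ satisfies $V(u;v)=V(-u;-v)$ and sends the critical point $u_-(v)$ to $u_+(-v)$, whence $V(u_-(v);v)=g(-v)$. Therefore, for $v\in(-\tfrac23,0)$ we have $v<-v$, and strict monotonicity of $g$ yields $V(u_+(v);v)=g(v)<g(-v)=V(u_-(v);v)$.

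Armed with this, I would analyze the level set $\{(u,p):\tfrac12 p^2=V(u_+;v)-V(u;v)\}$. For $u>u_+$ the potential is strictly decreasing, so that branch is unbounded. For $u<u_+$, as $u$ decreases from $u_+$ the potential first decreases (to $V(u_0)$ on the interval $(u_0,u_+)$) and then increases toward $V(u_-)$ on $(u_-,u_0)$; since $V(u_-)>V(u_+)$, the intermediate value theorem produces a unique $u^{\ast}\in(u_-,u_0)$ with $V(u^{\ast};v)=V(u_+;v)$, and by the monotonicity just described $V(u;v)<V(u_+;v)$ for all $u\in(u^{\ast},u_+)$. Since $u^{\ast}$ lies in the open interval $(u_-,u_0)$ it is not a critical point of $V$, so $(u^{\ast},0)$ is a regular turning point and $\{p^2=2(V(u_+;v)-V(u;v)),\ u^{\ast}\le u\le u_+\}$ is a simple closed curve, smooth except for the corner at $(u_+,0)$ forced by the nondegeneracy of the maximum. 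Removing $(u_+,0)$, this curve is a single trajectory of \eqref{eq:layer} (the Hamiltonian vector field $(p,-\partial_u V)$ vanishes on it only at $(u_+,0)$) that is both forward and backward asymptotic to $(u_+,0)$; it is exactly the common "leftward" branch of $W^s(u_+)$ and $W^u(u_+)$, i.e.\ the asserted homoclinic, and it encircles the center $(u_0,0)$ but not $(u_-,0)$. The case $v\in(0,\tfrac23)$ follows verbatim after applying the involution $(u,v)\mapsto(-u,-v)$, which swaps the roles of $u_+$ and $u_-$.

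The one genuine subtlety — and the step I would be most careful to get right — is the height comparison $V(u_+;v)<V(u_-;v)$ on $(-\tfrac23,0)$: it is precisely what forces the leftward unstable manifold of $u_+$ to turn around at $u^{\ast}$ rather than escape past $u_-$ toward $u=-\infty$, and it is also what singles out the two parameter ranges in the statement (at $v=0$ the level set degenerates into a pair of heteroclinics between $u_-$ and $u_+$, and for $v>0$ the inequality, and hence the homoclinic, flips to $u_-$). Everything beyond that comparison is a routine potential-well/phase-plane argument.
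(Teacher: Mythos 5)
Your proof is correct and follows essentially the same route as the paper: both restrict the Hamiltonian to the section $\{p=0\}$ (your potential $V(u;v)$ is exactly $H_{\rm fast}(u,0;v)$), use the ordering of the critical values $V(u_0)<V(u_+)<V(u_-)$ together with the monotonicity of $V$ between critical points and the intermediate value theorem to produce the regular turning point, and conclude that the resulting closed level-set curve through the saddle is the homoclinic. The one place you go beyond the paper is the height comparison $V(u_+;v)<V(u_-;v)$ for $v\in(-\tfrac23,0)$, which the paper asserts without proof and which you establish cleanly via the envelope identity $\frac{d}{dv}V(u_\pm(v);v)=u_\pm(v)$ and the odd symmetry $(u,v)\mapsto(-u,-v)$.
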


\begin{proof}
We examine the fast system Hamiltonian along the cross-section $\{ p =0 \}$:
\[ H_{\rm fast}(u,0;\,v) = u v - \left( \tfrac{1}{12}u^4-\tfrac{1}{2}u^2 \right). \]
For a fixed $v$ in the interval $(0,\tfrac{2}{3})$, $H_{\rm fast}(u,0;\,v)$ is a quartic polynomial (Fig.~\ref{fig:fasthomoclinics}) in $u$ with local maxima at $u_{\pm}$ and a local minimum at $u_0$.
Recall that $u_{\pm}$ are saddle equilibria of \eqref{eq:layer} and $u_0$ is a center. 

\begin{figure}[h!]
  \centering
  \includegraphics[width=5in]{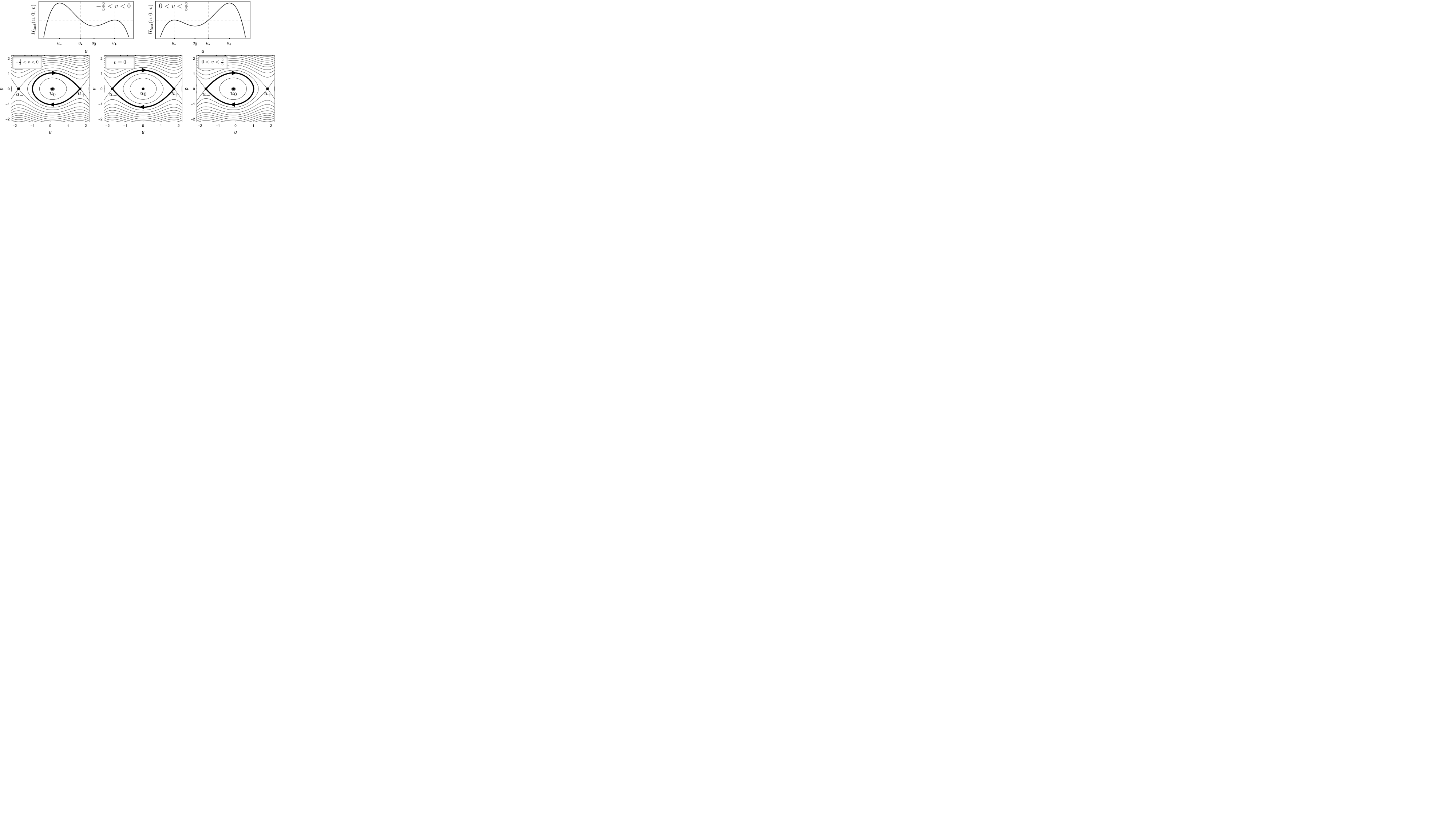}
  \caption{Top row: fast subsystem Hamiltonian in the cross-section $\{ p =0 \}$. Bottom row: phase planes of \eqref{eq:layer} for fixed $v$. The homoclinics (for $|v| \in (0,\tfrac{2}{3})$) and heteroclinic (for $v=0$) are highlighted as the thick black curves.}
  \label{fig:fasthomoclinics}
\end{figure}

The Hamiltonian $H_{\rm fast}(u,0;\,v)$ is monotonically increasing for all $u<u_-$, decreasing for $u_-<u<u_0$, increasing for $u_0<u<u_+$, and decreasing for $u>u_+$.
Moreover,
\[ H_{\rm fast}(u_+,0;\,v) > H_{\rm fast}(u_-,0;\,v) > H_{\rm fast}(u_0,0;\,v), \]
so that, by the Intermediate Value Theorem, there exists a $u_* \in (u_0,u_+)$ such that $H_{\rm fast}(u_-,0;\,v) = H_{\rm fast}(u_*,0;\,v)$.
Since the only equilibrium between $u_-$ and $u_*$ is the nonlinear center $u_0$, the orbits with $u \in [u_-,u_*]$ are closed. 
Hence, there exists a closed orbit of \eqref{eq:layer} that connects the fixed point $(u_-,0)$ to a point $(u_*,0)$ and then back to itself. 

The proof for $v \in (-\tfrac{2}{3},0)$ is similar.
\end{proof}

\subsection{Heteroclinics in the intersections \texorpdfstring{$W^u(S_s^+) \cap W^s(S_s^-)$ and $W^u(S_s^-) \cap W^s(S_s^+)$}{Lg}}
\label{subsec:fastheteroclinics}

In this section, we establish

\begin{proposition} \label{prop:jumpcondition}
In the $(u,p,v,q)$ space, $W^u(S_s^-) \cap W^s(S_s^+)$ and $W^u(S_s^+) \cap W^s(S_s^-)$. Moreover, these intersections lie in the plane $\{ v=0 \}$, and they are transverse.
\end{proposition}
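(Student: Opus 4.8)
The plan is to exploit the planar Hamiltonian structure of the layer problem \eqref{eq:layer}: exhibit the connections explicitly inside the invariant plane $\{v=0\}$, rule them out for $v\neq0$ by an energy (Melnikov-type) comparison of the two saddle equilibria, and then deduce transversality from the fact that the connection breaks at first order as $v$ is varied. First I would note that $\{v=0\}$ is invariant since $v_y=0$, that $q$ is constant there as well, and that for each fixed $q$ the dynamics reduces to the one-degree-of-freedom Hamiltonian system $u_y=\partial_p H_{\rm fast}$, $p_y=-\partial_u H_{\rm fast}$ with $H_{\rm fast}(u,p;0)=\tfrac12 p^2-\tilde f(u)$. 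The fixed points are the center $(0,0)$ and the two saddles $(\pm\sqrt3,0)$, lying on $S_s^+$ and $S_s^-$ respectively.

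Because $\tilde f$ is even, both saddles sit on the same energy level: $H_{\rm fast}(\pm\sqrt3,0;0)=-\tilde f(\sqrt3)=\tfrac34$. On that level, $\tfrac12 p^2=\tfrac34+\tilde f(u)=\tfrac1{12}(u^2-3)^2$, so the bounded component of $\{H_{\rm fast}(\cdot\,;0)=\tfrac34\}$ is the closed curve $p=\pm\tfrac{1}{\sqrt6}(3-u^2)$, $u\in[-\sqrt3,\sqrt3]$, whose interior contains no equilibrium. Hence its upper arc $\Gamma^+$ (where $p>0$, so $u$ is increasing) is a single orbit in $W^u(u_-)\cap W^s(u_+)\subset W^u(S_s^-)\cap W^s(S_s^+)$, and the lower arc $\Gamma^-$ lies in $W^u(S_s^+)\cap W^s(S_s^-)$; letting $q$ range over $\mathbb R$ gives two two-dimensional families of heteroclinics, all contained in $\{v=0\}$. (In fact the upper arc integrates to $u(y)=\sqrt3\tanh(y/\sqrt2)$.) To see there is no connection off $\{v=0\}$, I would observe that for small $|v|$ the potential $-H_{\rm fast}(u,0;v)=\tilde f(u)-uv$ has maxima exactly at the two roots $u_-(v)<-1<1<u_+(v)$ of $f(u)=v$, so a saddle--saddle connection between these two ``hilltops'' forces equal energies, i.e.
\[
D(v):=H_{\rm fast}(u_+(v),0;v)-H_{\rm fast}(u_-(v),0;v)=0 .
\]
Since $u_\pm(v)$ is a critical point of $u\mapsto H_{\rm fast}(u,0;v)$, the total $v$-derivative equals the partial, $\tfrac{d}{dv}H_{\rm fast}(u_\pm(v),0;v)=u_\pm(v)$, whence $D'(v)=u_+(v)-u_-(v)>2>0$; combined with $D(0)=0$ this gives $D(v)\neq0$ for $v\neq0$, so heteroclinics occur only in $\{v=0\}$.

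For transversality I would use the two conserved functions that cut out the manifolds. Along each orbit $v$ and $H_{\rm fast}(\cdot\,;v)$ are constant and equal to the saddle value at the limit set, so $W^u(S_s^-)\subset\mathcal M_-:=\{\Phi_-=0\}$ and $W^s(S_s^+)\subset\mathcal M_+:=\{\Phi_+=0\}$, where $\Phi_\mp(u,p,v):=H_{\rm fast}(u,p;v)-H_{\rm fast}(u_\mp(v),0;v)$. Differentiating and using $\tfrac{d}{dv}H_{\rm fast}(u_\mp(v),0;v)=u_\mp(v)$ gives $d\Phi_\mp=(-f(u)+v,\;p,\;u-u_\mp(v),\;0)$, so along $\Gamma^+\subset\{v=0\}$ (where $u_-(0)=-\sqrt3$, $u_+(0)=\sqrt3$) we get $d\Phi_-=(-f(u),p,u+\sqrt3,0)$ and $d\Phi_+=(-f(u),p,u-\sqrt3,0)$. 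Since $p\neq0$ on the interior of $\Gamma^+$, a relation $d\Phi_-=\lambda\,d\Phi_+$ would force $\lambda=1$ and hence $\sqrt3=0$, a contradiction; thus $d\Phi_\pm$ are linearly independent there, each $\mathcal M_\pm$ is a smooth hypersurface near $\Gamma^+$, the $3$-dimensional manifolds $W^u(S_s^-)$ and $W^s(S_s^+)$ coincide locally with $\mathcal M_-$ and $\mathcal M_+$, and $T W^u(S_s^-)+TW^s(S_s^+)=\ker d\Phi_-+\ker d\Phi_+=\mathbb R^4$, i.e. the intersection is transverse. The statement for $W^u(S_s^+)\cap W^s(S_s^-)$ is identical along $\Gamma^-$ (where also $p\neq0$), or follows from the symmetry $(u,p,v,q)\mapsto(-u,-p,-v,-q)$ of \eqref{eq:layer}.

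The main obstacle is the transversality claim. Within each slice $\{v=\text{const}\}$ the stable and unstable manifolds of the two planar saddles are level curves of $H_{\rm fast}$ and are therefore highly non-transverse, so all of the transversality in the four-dimensional problem is generated by how the connection is destroyed as $v$ leaves $0$ --- quantitatively by $D'(0)=u_+(0)-u_-(0)=2\sqrt3\neq0$. Converting this Melnikov-type nondegeneracy into a clean statement about tangent spaces is the step requiring care; the device above, namely producing the conserved functions $\Phi_\mp$ whose common zero set pins down $W^u(S_s^-)$ and $W^s(S_s^+)$ and checking that their differentials are independent along the connection, is what makes that conversion transparent.
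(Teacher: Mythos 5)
Your proof is correct and follows essentially the same route as the paper's: both rest on the observation that the two saddles lie on the common energy level $H_{\rm fast}=\tfrac{3}{4}$ exactly when $v=0$, and that the energy difference between them varies monotonically in $v$ (your $D'(v)=u_+(v)-u_-(v)>2$ is exactly the paper's sign comparison of $H_{\rm fast}$ at the two saddles for $v\gtrless 0$). Your write-up adds value by exhibiting the heteroclinic explicitly and, more importantly, by converting the energy splitting into an actual tangent-space computation via the conserved functions $\Phi_\mp$ — the step the paper asserts rather than carries out; the only blemish is the harmless slip that $u_\pm(v)$ are minima, not maxima, of $\tilde f(u)-uv$ (they are maxima of the potential $V(u)=-\tilde f(u)+uv$, which is what the saddle structure requires).
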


The stable and unstable manifolds are shown in Fig.~\ref{fig:fastphasespace},
along with the heteroclinic orbits that lie in their transverse intersections.

\begin{figure}[!h]
  \centering
  \includegraphics[width=5in]{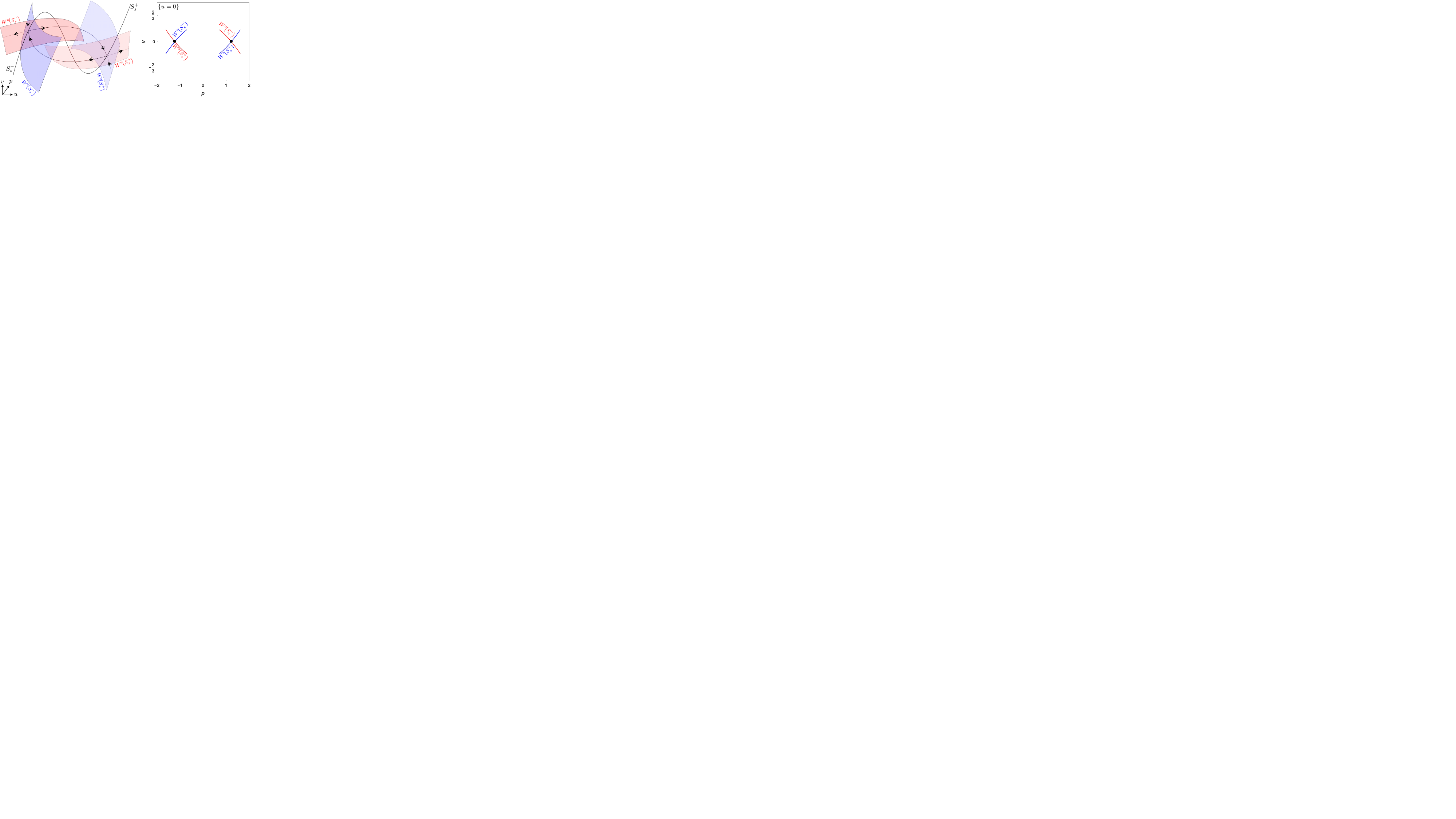}
  \put(-360,128){(a)}
  \put(-160,128){(b)}
  \caption{Dynamics of the fast system \eqref{eq:H-fast}. (a) The invariant stable manifolds $W^s(S_s^{\pm})$ (blue) and unstable manifolds $W^u(S_s^{\pm})$ (red) of the saddle manifolds $S_s^\pm$ in \eqref{eq:H-fast}, along with the heteroclinic orbits (thin black curves with arrows) that lie in their transverse intersections. The stable and unstable manifolds have been computed up to the cross-section $\left\{ u=0 \right\}$. The heteroclinics exist exactly in the $\{ v=0 \}$ hyperplane. (b) Intersections of the stable and unstable manifolds of the saddles in the cross-section $\left\{ u=0 \right\}$. The solid black dots represent the points at which the heteroclinic orbits intersect the cross-section. }
  \label{fig:fastphasespace}
\end{figure}

\begin{proof}
For $v=0$,
the saddles $(-\sqrt{3},0)$
and $(\sqrt{3},0)$
lie on the level set
$H_{\rm fast} (\pm\sqrt{3},0;0)=\frac{3}{4}$.
They are connected by heteroclinic orbits
in the $(u,p)$ plane,
as may be verified directly
from the functional form
of the Hamiltonian.

Next, for values of $v$ near --but not equal to-- zero,
the saddles $(u^{\pm},0)$ lie on different level sets of the Hamiltonian.
In particular, for each fixed $v>0$ and close to zero,
$H_{\rm fast} < \frac{3}{4}$ at the saddle $(u^-,0)$ on $S_s^-$,
and $H_{\rm fast} > \frac{3}{4}$ at the saddle $(u^+,0)$ on $S_s^+$.
Similarly, for each fixed $v\lesssim  0$ ,
$H_{\rm fast} > \frac{3}{4}$ at the saddle $(u^-,0)$ on $S_s^-$,
and $H_{\rm fast} < \frac{3}{4}$ at the saddle $(u^+,0)$ on $S_s^+$.
Therefore,
$W^u(S_s^-)$ and $W^s(S_s^+)$
intersect transversely
in the plane $\{ v=0 \}$,
as do the invariant manifolds
$W^u(S_s^+)$ and $W^s(S_s^-)$.
\end{proof} 

\subsection{The cusp point at \texorpdfstring{$(u,p)=(1,0)$ for $v=-\frac{2}{3}$}{Lg}}
\label{subsec:fascusp}

For $v=-\frac{2}{3}$,
the fast system \eqref{eq:H-fast} has a saddle-node point at $(u,p)=(1,0)$. The Jacobian at that point has a double zero eigenvalue, with
Jordan block given by 
\[
\left[  
\begin{array}{cc}  
0 & 1 \\
0 & 0
\end{array}
\right].
\]
In the phase plane, it is a cusp point with a single-branched stable manifold
and a single-branched unstable manifold. In parametrized form, the functions whose graphs give these manifolds have the form 
\[ p = \mp C (u-1)^{3/2}, \] 
for some $C>0$, respectively, and they lie on the level set $H_{\rm fast}(u,p)=- \frac{1}{4}$.

This cusp point is the point we will desingularize.  The stable manifold will correspond to one fixed point on the equator of the blown-up hemisphere, and the unstable manifold to another fixed point. The dynamics of the orbits near the cusp point will then be determined by studying the dynamics over the hemisphere.

\section{Slow System on \texorpdfstring{$S$}{Lg}, with the Folded and Ordinary Singularities} 
\label{sec:slow}

In this section, we study the slow system induced by the spatial ODE system on the two-dimensional critical manifold $S = \{ u \in \mathbb{R},\,\, p=0,\,\, v=f(u),\,\, q \in \mathbb{R} \}$, recall \eqref{eq:S}. We derive the locations and stability types of the folded and ordinary singularities. 

Recall that the slow manifold $S$ consists of the fixed points of the fast system in the limit $\delta \to 0$ ({\it i.e.,} of \eqref{eq:spatialODE-y} with $\delta=0$).
One obtains the slow dynamics on $S$ in the slow variable by differentiating the constraints $p=0$ and $v=f(u)$ that define $S$:
\begin{equation}
        \label{eq:initialreduced}
        \left[
        \begin{array}{cc}
        0 & 1 \\
        f'(u) & 0
        \end{array}
        \right]
        \left[
        \begin{array}{c}
        u_x \\ p_x
        \end{array}
        \right]
        =
        \left[
        \begin{array}{c}
    0  \\ v_x
        \end{array}
        \right].
\end{equation}
The adjoint operator is
\begin{equation}
{\rm adj}
\left[
\begin{array}{cc}
        0 & 1 \\ f'(u) & 0
\end{array}
\right]
=
\left[
\begin{array}{cc}
        0 & -1 \\ -f'(u) & 0
\end{array}
\right].
\end{equation}
Hence, left multiplying \eqref{eq:initialreduced}
by the adjoint and recalling the third and fourth components of
\eqref{eq:spatialODE-x}, we find
\begin{equation}
        \label{eq:reduced}
        \begin{split}
        -f'(u)
        \left[
        \begin{array}{c}
        u_x \\  p_x
        \end{array}
        \right]
        &= \left[
        \begin{array}{c}
        -q \\ 0
        \end{array}
        \right] \\
        \left[
        \begin{array}{c}
        v_x \\  q_x
        \end{array}
        \right]
        &= \left[
        \begin{array}{c}
        q \\ \eps(u-a)
        \end{array}
        \right].
        \end{split}
\end{equation}

Next, we desingularize the reduced system on $S$
by rescaling time with a factor of $f'(u)$.
Let $dx = f'(u)\,dx_d$.
In this new, dynamic time variable,
the reduced system \eqref{eq:reduced} is equivalent to
\begin{equation*}
        \left[
        \begin{array}{c}
                u_{x_d} \\  p_{x_d} \\v_{x_d} \\ q_{x_d}
        \end{array}
        \right]
        = \left[
        \begin{array}{c}
                q \\ 0 \\ f'(u) q \\ \eps f'(u) (u-a)
        \end{array}
        \right].
\end{equation*}
Finally, we observe that, on $S$, the constant solution for $p$ is $p=0$.
Also, the equation for $v$ decouples (and may be solved by quadrature).
Hence, we arrive at the desingularized reduced system
\begin{equation}
\label{eq:desing-reduced}
\begin{split}
        u_{x_d} &= q \\ 
        q_{x_d} &= \eps f'(u) (u-a).
\end{split}
\end{equation}

This is the main system studied in this section; it defines the folded and ordinary singularities. 
We note that on $S_s^\pm$ the direction of the flow of \eqref{eq:desing-reduced} is the same as that of \eqref{eq:reduced}, whereas on $S_c$ the flow direction is reversed since $f'(u)<0$ on $S_c$.
Also, \eqref{eq:desing-reduced} is Hamiltonian with
\begin{equation}
        \label{eq:hatH}
        H_{\rm desing}(u,q;a)= -\frac   {1}{2}q^2 + \eps \left( \frac{1}{4}u^4
           - \frac{1}{3}a u^3 - \frac{1}{2}u^2 + au \right),
\end{equation}
where $H_{\rm desing}$ is induced by \eqref{eq:G}, i.e., $H_{\rm desing} = \left.\mathcal G\right|_{S}$, and $u_{x_d}=-\tfrac{\partial H_{\rm desing}}{\partial q}$ and $q_{x_d}=\tfrac{\partial H_{\rm desing}}{\partial u}$.

The ordinary and folded singularities of the desingularized reduced system \eqref{eq:desing-reduced} are found as follows.
Off the fold set, both components of the vector field vanish at $(u,q)=(a,0)$. Hence, \eqref{eq:desing-reduced} has an ordinary singularity (equilibrium) at
\begin{equation*}
        E = \{ u=a, q=0 \}.
\end{equation*}
Then, on the fold set where $f'(u)=0$, there are two  folded singularities 
\begin{equation}
        \label{eq:Mpm}
        M^{\pm} = \{ u = \pm 1, q=0 \},
\end{equation}
where the vector field in \eqref{eq:desing-reduced} vanishes.

We classify the linear stability types of the ordinary and folded singularities using the eigenvalues of the Jacobian of \eqref{eq:desing-reduced},
\begin{equation*}
        \left[
        \begin{array}{cc}
          0 & 1 \\
        \eps f''(u) (u-a) + \eps f'(u) & 0
        \end{array}
        \right]
\end{equation*}
%
%
The classifications are listed in Table~\ref{table:class}.

\begin{table*}[htpb]
        \centering
            \begin{tabular}{| c | c | c | c | c | c |}
                \hline
                    \backslashbox{\footnotesize Singularity}{\footnotesize Interval} & $a<-1$ & $a=-1$ & $-1<a<1$ & $a=1$ & $a>1$ \\ \hline
                    $E$ & S & SN & C & SN & S \\
                    $M^+$ & FS & FS & FS & RFSN-II & FC \\
                    $M^-$ & FC & RFSN-II & FS & FS & FS \\ 
                    \hline
            \end{tabular}
            \caption{Classification of the ordinary singularity $E$ and the folded singularities $M^\pm$ as functions of the bifurcation parameter $a$. S = saddle; SN = saddle-node; C = center; FS = folded saddle; RFSN-II = reversible FSN-II; FC = folded center. At $a=-1$, the ordinary singularity $E$ coincides with the folded singularity $M^-$ in a RFSN-II. Similarly, at $a=1$, $E$ and $M^+$ coincide at the RFSN-II.}
    \label{table:class}
\end{table*}

\begin{figure}[h!]
    \centering
    \includegraphics[width=5in]{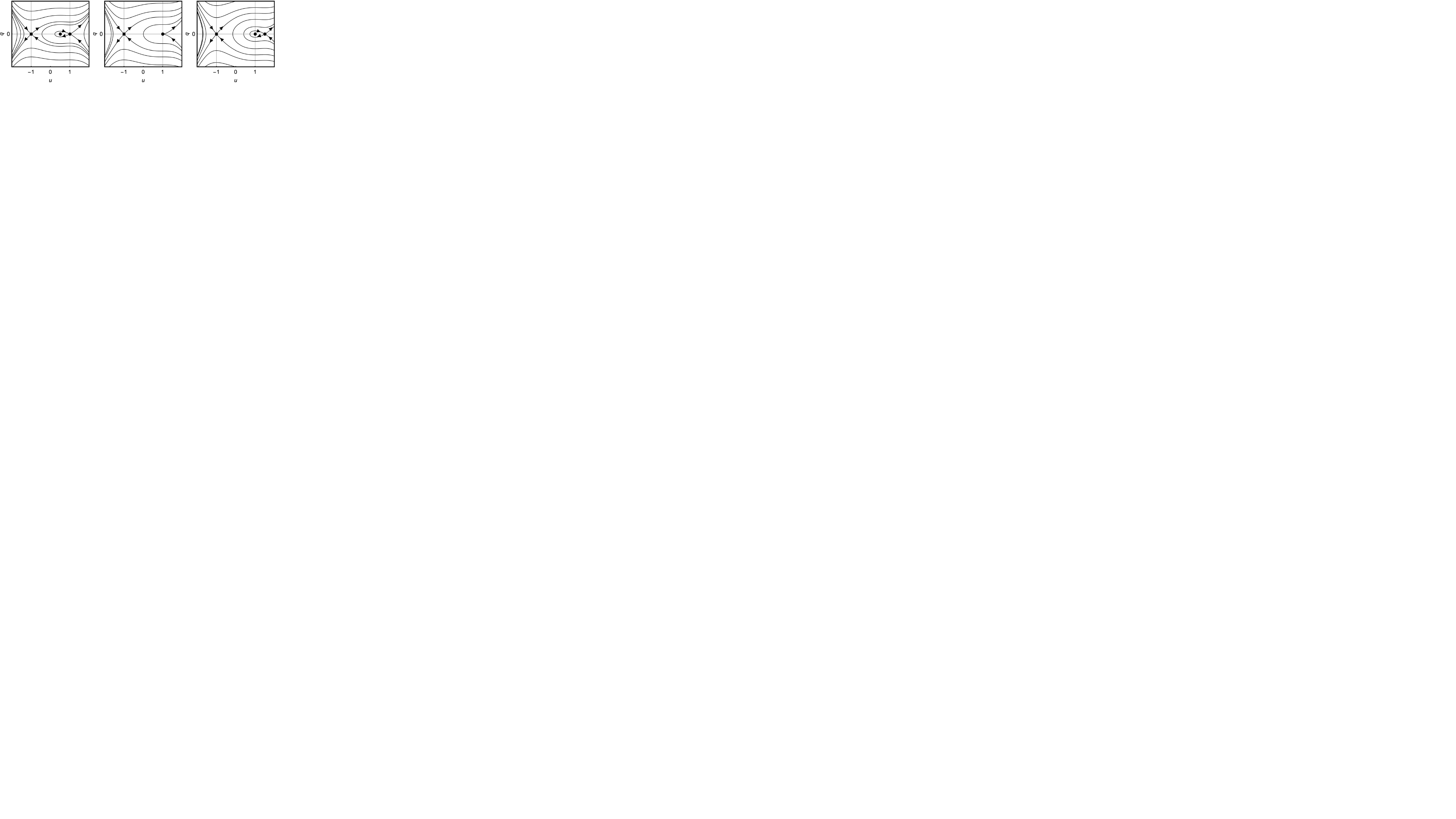}
    \put(-362,102){(a)}
    \put(-242,102){(b)}
    \put(-120,102){(c)}
    \caption{Phase plane of the desingularized reduced system \eqref{eq:desing-reduced} for different values of the bifurcation parameter $a$. 
    (a) $0<a<1$: 
    $M^\pm$ are FS, and $E$ is a center.
    (b) $a=1$: $M^-$ is a FS, and the point $(1,0)$ is a RFSN-II point, where 
    $E$ and $M^+$ coincide. The one-branched, local stable and unstable manifolds of $(1,0)$ are given by the graphs of the functions $q = \pm \frac{2 \sqrt{\eps}}{\sqrt{3}} (u-1)^{3/2}\left( 1 + \frac{3}{8} (u-1) \right)$ for $u \ge 1$. 
    (c) $1 < a < 3$: $M^-$ is still a FS, but $E$ and $M^+$ have exchanged stability types, with $M^+$ now a center and $E$ a saddle. 
    For all values of $a$, the orbits lie on the level sets of the Hamiltonian $H_{\rm desing}(u,q;a)$ given by \eqref{eq:hatH}.
    Moreover, these level sets are the projections onto the $(u,q)$ plane of the level sets of the conserved quantity $\mathcal{G}$ on $S$.
    The key folded singularity we analyze using geometric desingularization in the case $a=1$ is the RFSN-II point in (b) (see Section~\ref{sec:desingFSNII}), while for $a<1$ the FS point $M^+$ located at $(1,0)$ in (a) is the main folded singularity to be  studied 
    (current work).
    (For completeness, we note that for $a=3$ there are heteroclinic connections from $M^-$ to $E$ and a family of elliptic orbits that surround $M^+$; and, for $a>3$, $M^-$ is connected to itself by a homoclinic orbit that surrounds $M^+$; phase planes not shown.)  Here, $\eps=0.1$.
    }
\label{fig:phasespace-desingreduced}
\end{figure}

The phase plane of \eqref{eq:desing-reduced}
is sketched for different values of $a>0$
in Fig.~\ref{fig:phasespace-desingreduced}. (Those for $a<0$ may be obtained using the symmetry $(u,q,a) \to (-u,-q,-a)$.)
The orbits lie on the level sets
of the Hamiltonian $H_{\rm desing}$
given by \eqref{eq:hatH}. 
The equilibrium $E$ lies on the level set $H_{\rm desing}=\frac{\eps a^2}{12} ( 6 - a^2 )$, and the folded singularities $M^\pm$ lie on the level sets $H_{\rm desing}=\eps \left( \pm \frac{2}{3}a - \frac{1}{4} \right)$. Furthermore, we observe that $\frac{\partial H_{\rm desing}}{\partial u} (u,0) = \eps (u^2-1)(u-a)> 0 $
for $u \in (-1,a)$ in the case $0<a<1$ and for $u \in (-1,1)$
in the case $a\ge 1$. \medskip

\begin{remark}
By Fenichel theory \cite{Fenichel1979,Jones1995}, the normally hyperbolic saddle sheets, $S_s^{\pm}$, of the critical manifold perturb to nearby invariant slow manifolds, $S_{s,\delta}^{\pm}$, of \eqref{eq:spatialODE-y}. Moreover, the flow of \eqref{eq:spatialODE-y} restricted to $S_{s,\delta}^{\pm}$ is a small $\mathcal{O}(\delta)$ perturbation of the reduced flow on $S_s^{\pm}$. Canard theory can then be used to establish that the singular true and faux canards of the folded saddle persist as maximal canard solutions for $0<\delta \ll 1$ provided the eigenvalues of the folded saddle remain bounded away from zero \cite{Mitry2017,Szmolyan2001}. 

\begin{figure}[h!]
  \centering
  \includegraphics[width=0.95\textwidth]{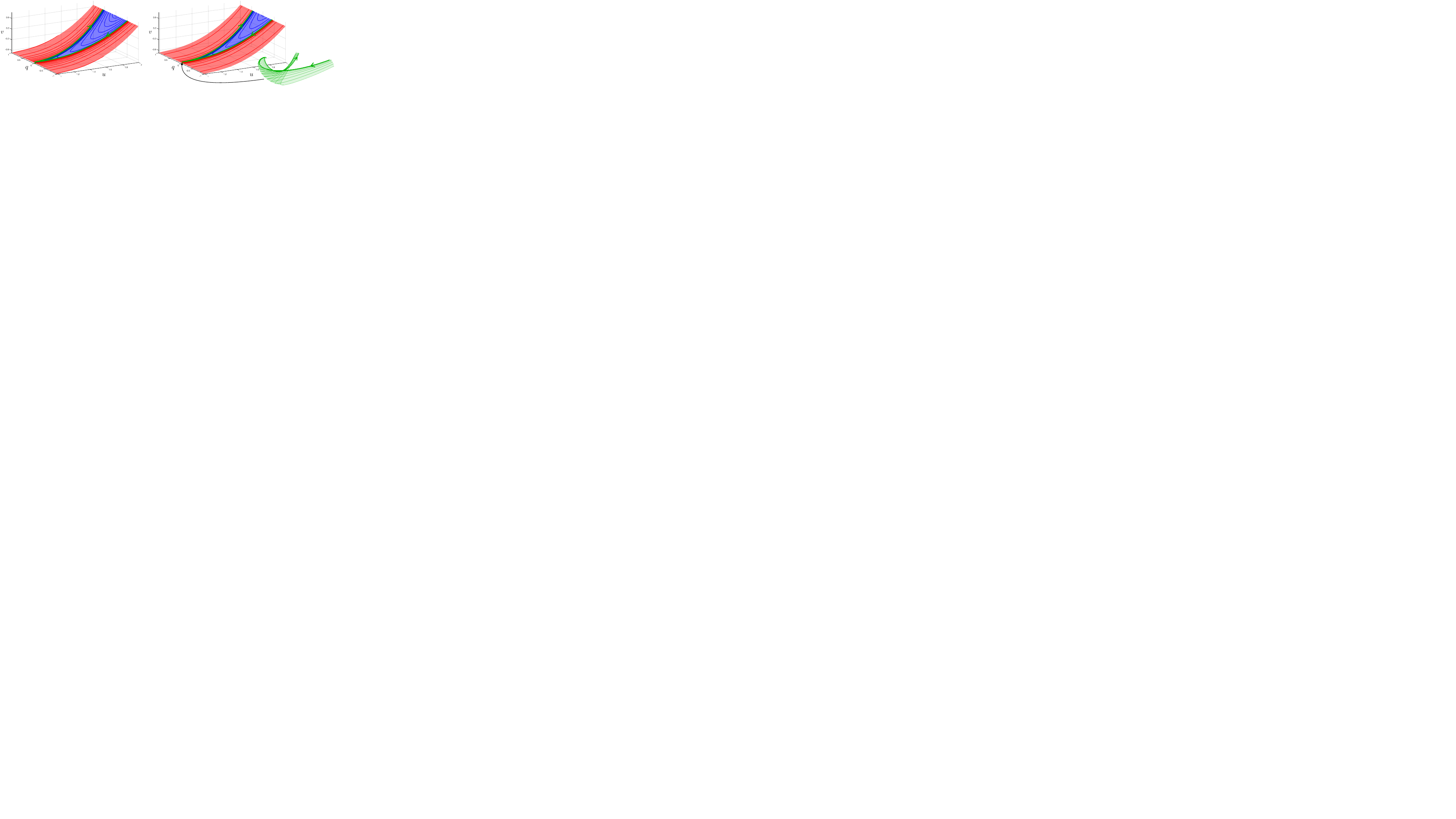}
  \put(-452,96){(a)}
  \put(-256,96){(b)}
  \caption{Saddle slow manifolds, $S_{s,\delta}^{+}$, for (a) $a=0.6$, i.e., far from the Turing value $a_T$ and (b) $a=1$, i.e., close to $a_T$. The red and blue surfaces, respectively, indicate the solutions that transition to fast dynamics and the solutions that turn away from $L^+$ and remain on $S_{s,\delta}^{+}$. The maximal true and faux canards of the folded saddle (green curves) are the separatrices that delimit the two classes of solutions on $S_{s,\delta}^+$. In (b), there is a family of canard solutions and these canards can execute almost self-similar loops (green surface).  
  Here, $\eps = 0.1$ and $\delta=0.01$.}
  \label{fig:slowmanifolds}
\end{figure}

The saddle slow manifolds, $S_{s,\delta}^+$, and associated maximal canards are shown in Fig.~\ref{fig:slowmanifolds}. The numerical method used to compute these is outlined in Appendix~\ref{app:numericalmethods_slowmanifolds}.
The saddle slow manifolds consist of two types of solutions. The first type are the solutions that approach the neighbourhood of the fold curve $L^+$ and subsequently escape via the fast dynamics. The other type are the solutions that turn away from the fold region and remain on $S_{s,\delta}^+$. The separatrices that divide between the two types of solutions are the maximal canards of the folded saddle. In the RFSN-II limit, the saddle slow manifolds and the canard dynamics become complicated (see Fig.~\ref{fig:slowmanifolds}(b).)

\end{remark}

\section{Desingularization of the Reversible FSN-II Point \texorpdfstring{$M^+$ at $a=1$}{Lg}}
\label{sec:desingFSNII}

In this section, we focus on the dynamics of the system for $a$ values near $a=1$ and desingularize the reversible FSN-II point $M^+$ located at $(u, p, v, q, a, \delta) = (1, 0, -\frac{2}{3}, 0, 1, 0)$.
First, we translate coordinates so that the RFSN-II point is shifted to the origin.
Let
\begin{equation}
\label{eq:translate}
(u,p,v,q,a,\delta)
=\left(1+\tilde{u},\tilde{p},-\frac{2}{3} + \tilde{v}, \tilde{q},
1+\tilde{a},\tilde{\delta}\right).
\end{equation}
In these new coordinates,
the spatial system \eqref{eq:spatialODE-y} becomes
\begin{equation}
\label{eq:tilde-spatialODE-y}
\begin{split}
\tilde{u}_y &= \tilde{p} \\
\tilde{p}_y &= \frac{\tilde{u}^3}{3} + \tilde{u}^2 - \tilde{v} \\
\tilde{v}_y &= \tilde{\delta} \tilde{q} \\
\tilde{q}_y &= \tilde{\delta} \eps (\tilde{u}-\tilde{a}) \\
\tilde{a}_y &= 0 \\
\tilde{\delta}_y &= 0.
\end{split}
\end{equation}
Also, the conserved quantity \eqref{eq:G} may be rewritten as
\begin{equation}
    \label{eq:Gtilde}
    \mathcal{G}(u,p,v,q,a) 
    =  \tilde{\mathcal{G}}(\tilde{u},\tilde{p},\tilde{v},\tilde{q},\tilde{a}) + \eps \left( \frac{2}{3}\tilde{a} + \frac{5}{12} \right)  
    =  \tilde{\mathcal{G}}(\tilde{u},\tilde{p},\tilde{v},\tilde{q},\tilde{a}) + \eps \left( \frac{2}{3}a - \frac{1}{4} \right),
\end{equation}
where $\tilde{\mathcal{G}}(\tilde{u},\tilde{p},\tilde{v},\tilde{q},\tilde{a})= \frac{1}{2} (\eps \tilde{p}^2 - \tilde{q}^2 ) 
- \eps \left( \frac{1}{12}\tilde{u}^4 + \frac{1}{3} \tilde{u}^3 + (\tilde{a}-\tilde{u})\tilde{v} \right)$.

The point 
$(\tilde{u},\tilde{p},\tilde{v},\tilde{q},\tilde{a},\tilde{\delta})
=(0,0,0,0,0,0)$, which corresponds to the RFSN-II point at $a=1$
(recall Table~\ref{table:class}), 
is a nilpotent point
of \eqref{eq:tilde-spatialODE-y}.
Hence, in order to unfold the dynamics, we desingularize this point 
by introducing the additional dependent variable $r$
and by dynamically rescaling the variables:
\begin{equation}
\label{eq:dynrescale}
        \tilde{u} = \sqrt{\eps} r^2 \bar{u}, \ \
        \tilde{p} = \eps r^3 \bar{p}, \ \
        \tilde{v} = \eps r^4 \bar{v}, \ \ 
        \tilde{q} = \eps^{3/2}  r^3 \bar{q}, \ \ 
        \tilde{a} = \sqrt{\eps} r^3 \bar{a}, \ \ 
        \tilde{\delta} = r^2 \bar{\delta}. 
\end{equation}
Here, $(\bar u,\bar p,\bar v,\bar q,\bar a, \bar \delta) \in \mathbb S^5$.
The factors of $\eps$ in \eqref{eq:ODE-K2} were chosen 
for symmetry in the equations for $\tilde u$ and $\tilde v$.

We will examine system
\eqref{eq:tilde-spatialODE-y}
in two coordinate charts:
the entry/exit chart
\begin{equation}
        \label{eq:defK1}
        K_1 = \{ \bar{u}=1 \}
\end{equation}
and the rescaling (or central) chart
\begin{equation}
        \label{eq:K2}
        K_2 = \{ \bar{\delta}=1 \}.
\end{equation}
As is customary in articles on geometric desingularization (a.k.a. blow up), in chart $K_i, i=1,2,$
the variable $\bar{\square}$
will be denoted by $\square_i$.
In chart $K_1$,
the rescaling \eqref{eq:dynrescale}
is given by
\begin{equation}
\label{eq:dynrescale-K1}
        \begin{split}
        \tilde{u} &= \sqrt{\eps} r_1^2, \ \
        \tilde{p} = \eps r_1^3 p_1, \ \
        \tilde{v} = \eps r_1^4 v_1, \ \
        \tilde{q} = \eps^{3/2}  r_1^3 q_1, \ \
        \tilde{a} = \sqrt{\eps}r_1^3 a_1, \ \
        \tilde{\delta} = r_1^2 \delta_1.
        \end{split}
\end{equation}
Then, in chart $K_2$,
the rescaling \eqref{eq:dynrescale}
is given by
\begin{equation}
\label{eq:dynrescale-K2}
        \begin{split}
        \tilde{u} &= \sqrt{\eps} r_2^2 u_2, \ \
        \tilde{p} = \eps r_2^3 p_2, \ \
        \tilde{v} = \eps r_2^4 v_2, \ \
        \tilde{q} = \eps^{3/2}  r_2^3 q_2, \ \
        \tilde{a} = \sqrt{\eps} r_2^3 a_2,\ \
        \tilde{\delta} = r_2^2.
        \end{split}
\end{equation}

This section is organized as follows.
In Subsection~\ref{sec:K2}, we present the analysis in the rescaling chart $K_2$, identifying the key algebraic solution in this chart and the self-similarity of the level set on which it lies. 
These results also justify the range of validity of the asymptotics, as stated in the Introduction, since  we have $\tilde{a} = \eps^{3/2}\tilde{\delta}^{3/2} q_2$ by the rescalings \eqref{eq:dynrescale-K2} and $\tilde{\delta}=\delta$ by \eqref{eq:translate}.
Subsection~\ref{sec:K1} contains the analysis in the entry/exit chart $K_1$.
Then, the transition maps between the two charts are presented in Subsection~\ref{sec:Gamm0-intoK1}, and these are used to track the key solution from chart $K_2$ into chart $K_1$. 
Next, in Subsection~\ref{sec:WcuWcs}, the transverse intersection of the main center-unstable and center-stable manifolds is shown, thereby establishing the formula \eqref{eq:ac} for the critical value $a_c(\delta)$.
Finally, Subsection~\ref{sec:FraserRoussel} presents sp,e analysis of the smoothness of key solutions.

\subsection{Rescaling (or central) chart \texorpdfstring{$K_2$ ($\delta_2=1$) }{Lg}}
\label{sec:K2}

To obtain the governing equations
in chart $K_2$,
we substitute \eqref{eq:dynrescale-K2}
into \eqref{eq:tilde-spatialODE-y}:
\begin{equation*}
\begin{split}
  \frac{du_2}{dy} &= r_2 \sqrt{\eps} p_2 \\
  \frac{dp_2}{dy} &= r_2 (u_2^2 - v_2) + \frac{1}{3} \sqrt{\eps} r_2^3 u_2^3 \\
  \frac{dv_2}{dy} &= r_2 \sqrt{\eps} q_2\\
  \frac{dq_2}{dy} &= r_2 (u_2 - r_2 a_2) \\ 
  \frac{da_2}{dy} &= 0.
\end{split}
\end{equation*}
Here, we used $\frac{dr_2}{dy}=0$ in $K_2$,
which follows since $\bar{\delta}\equiv 1$ implies
$\frac{d\bar{\delta}}{dy}=0$.
Then, introducing the rescaled independent variable
$y_2 = r_2 y$
and letting prime denote
$\frac{d}{dy_2}$,
we obtain the desingularized vector field in chart $K_2$,
\begin{equation}
\label{eq:ODE-K2}
\begin{split}
  {u_2}' &= \sqrt{\eps} p_2 \\
  {p_2}' &= u_2^2 - v_2 + \frac{1}{3} \sqrt{\eps} r_2^2 u_2^3 \\
  {v_2}' &= \sqrt{\eps} q_2\\
  {q_2}' &= u_2 - r_2 a_2 \\
  a_2' &= 0.
\end{split}
\end{equation}

For each $a_2$, this system is Hamiltonian
\begin{equation*}
\begin{split}
    \begin{bmatrix} u_2^\prime \\ q_2^\prime \\ p_2^\prime \\ v_2^\prime \end{bmatrix} &= \begin{bmatrix} \frac{\partial H_2}{\partial p_2} \\ \frac{\partial H_2}{\partial v_2} \\  -\frac{\partial H_2}{\partial u_2} \\ -\frac{\partial H_2}{\partial q_2} \end{bmatrix} = J \ \nabla_{(u_2,q_2,p_2,v_2)} H_2
\end{split}
\end{equation*}
Here, $J$ is the $4\times 4$ anti-symmetric matrix $\begin{bmatrix} 0 & \mathbb{I} \\ -\mathbb{I} & 0 \end{bmatrix}$, $\mathbb I$ is the $2\times 2$ identity matrix, and $H_2$ is the Hamiltonian:
\begin{equation}
\label{eq:H2}
H_2(u_2,p_2,v_2,q_2;r_2)
   = \frac{1}{2}\sqrt{\eps}\left( p_2^2 - q_2^2 \right)
   + (u_2 - r_2 a_2) v_2
     - \left( \frac{1}{3} u_2^3 + \frac{1}{12} \sqrt{\eps} r_2^2 u_2^4 \right), 
\end{equation}
in which $r_2$ is a constant parameter.
Furthermore, this Hamiltonian $H_2$ is directly related through the transformation  \eqref{eq:dynrescale-K2} to the conserved quantity via:
$\tilde {\mathcal{G}}  (\tilde{u},\tilde{p},\tilde{v},\tilde{q},\tilde{a}) 
= \eps^{5/2} r_2^6 H_2 (u_2, p_2, v_2, q_2; r_2)$,  
recall \eqref{eq:Gtilde}.

The hyperplane $\{ r_2=0 \}$
is invariant in chart $K_2$.
On this hyperplane,
the system reduces to
\begin{equation}
        \label{eq:r2zero}
\begin{split}
  {u_2}^\prime &= \sqrt{\eps} p_2 \\
  {p_2}^\prime &= u_2^2 - v_2 \\
  {v_2}^\prime &= \sqrt{\eps} q_2\\
  {q_2}^\prime &= u_2.
\end{split}
\end{equation}
%
\subsubsection{The key algebraic solution in chart \texorpdfstring{$K_2$}{Lg}}
\label{subsec:Gamma0}

In this section, we establish the existence of a key algebraic solution that acts as a separatrix in phase space.

\begin{lemma}
The system \eqref{eq:r2zero} possesses an explicit algebraic solution given by
\begin{equation}
\label{eq:Gamma0}
\Gamma_0 = (u_2(y_2), p_2(y_2), v_2(y_2), q_2(y_2))
         = \left(
           \frac{1}{12}\sqrt{\eps} y_2^2,
           \frac{1}{6} y_2,
           \frac{1}{144}\eps y_2^4 - \frac{1}{6},
           \frac{1}{36} \sqrt{\eps} y_2^3
         \right).
\end{equation}
Moreover, the orbit $\Gamma_0$ has the following two halves:
\[ \Gamma_0^- = \left. \Gamma_0 \right|_{y_2\leq 0} \quad \text{ and } \quad \Gamma_0^+ = \left. \Gamma_0 \right|_{y_2 \geq 0}. \]
The segment $\Gamma_0^-$ corresponds to the true canard of the RFSN-II singularity and the segment $\Gamma_0^+$ corresponds to the faux canard of the RFSN-II. 
\end{lemma}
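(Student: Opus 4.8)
The plan is to separate the statement into two parts: first, that $\Gamma_0$ in \eqref{eq:Gamma0} is genuinely a solution of \eqref{eq:r2zero}, and then the identification of $\Gamma_0^-$ and $\Gamma_0^+$ as the true and faux canards of the RFSN-II point.

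For the first part I would simply substitute \eqref{eq:Gamma0} into each of the four scalar equations of \eqref{eq:r2zero}; each check is a one-line differentiation, e.g. $u_2' = \tfrac16\sqrt{\eps}\,y_2 = \sqrt{\eps}\,p_2$, $q_2' = \tfrac{1}{12}\sqrt{\eps}\,y_2^2 = u_2$, $v_2' = \tfrac{1}{36}\eps\,y_2^3 = \sqrt{\eps}\,q_2$, and $u_2^2 - v_2 = \tfrac16 = p_2'$ because the $y_2^4$ terms cancel exactly. To explain how \eqref{eq:Gamma0} is found rather than guessed, I would note that one looks for a polynomial solution of the quasi-homogeneous system \eqref{eq:r2zero} of the form $u_2 = \alpha y_2^2,\ p_2 = \beta y_2,\ q_2 = \rho y_2^3,\ v_2 = \gamma y_2^4 + \gamma_0$: the degrees are forced by the chain $u_2' = \sqrt{\eps}\,p_2$, $q_2' = u_2$, $v_2' = \sqrt{\eps}\,q_2$, and demanding that $p_2' = u_2^2 - v_2$ be constant forces $\alpha^2 = \gamma$ and $\gamma_0 = -\beta$; the resulting triangular relations then determine, up to the trivial solution, $\alpha = \tfrac{1}{12}\sqrt{\eps}$ and hence all remaining coefficients, giving exactly \eqref{eq:Gamma0}. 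I would also record the short computation that $\Gamma_0$ lies entirely on the zero level set of $H_2|_{r_2=0}$ from \eqref{eq:H2}, equivalently $\{\tilde{\mathcal{G}}=0\}$, which by \eqref{eq:Gtilde} is the level set $\mathcal{G} = \eps(\tfrac23 a - \tfrac14)$ --- the one on which the numerically continued spatial canards of Section~\ref{sec:nearTuring} all lie; this already exhibits $\Gamma_0$ as a separatrix within that level set.

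For the second part I would first observe that $\Gamma_0$ is a reversible orbit. The reversibility $\mathcal{R}$ induces on chart $K_2$ the involution $(y_2; u_2, p_2, v_2, q_2) \mapsto (-y_2; u_2, -p_2, v_2, -q_2)$, since the rescaling \eqref{eq:dynrescale-K2} does not mix components and $\mathrm{Fix}(\mathcal{R}) = \{p_2 = q_2 = 0\}$; formula \eqref{eq:Gamma0} is invariant under this involution, and $\Gamma_0 \cap \mathrm{Fix}(\mathcal{R})$ is the single point $(0,0,-\tfrac16,0)$ reached at $y_2 = 0$. Hence $\Gamma_0^-$ and $\Gamma_0^+$ are exchanged by $\mathcal{R}$, and because $\mathcal{R}$ reverses the flow direction, $\Gamma_0^+$ traversed in increasing $y_2$ is the time-reversal of $\Gamma_0^-$ --- precisely the relation that a true canard and its faux counterpart bear to each other at a reversible folded singularity.

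It remains to pin down which half is which, and this is the step that carries the real weight. I would track $\Gamma_0^\mp$ as $y_2 \to \mp\infty$: both branches leave every compact set (like $y_2^2$ in $u_2$), so after blow-down they asymptote to the equator $\{r=0\}$ of the blow-up sphere. Passing into the entry/exit chart $K_1$ through the transition map computed in Subsection~\ref{sec:Gamm0-intoK1}, one checks the signs: on $\Gamma_0^-$ one has $p_2<0$ and $q_2<0$, and that branch limits onto the equilibrium on the equator whose blow-down is the branch $p=-C(u-1)^{3/2}$ of the stable manifold of the fast cusp point of Section~\ref{subsec:fascusp} (equivalently the branch $q \sim \tfrac{2\sqrt{\eps}}{\sqrt3}(u-1)^{3/2}$ of the stable manifold of the RFSN-II in the desingularized reduced flow, cf. Fig.~\ref{fig:phasespace-desingreduced}(b)), while $\Gamma_0^+$ limits onto the equilibrium corresponding to the unstable branch. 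By the standard correspondence in the geometric desingularization of folded singularities (Szmolyan--Wechselberger; Krupa--Szmolyan), the blow-down of the connection to the ``stable'' equatorial equilibrium is the true (maximal) canard of the folded singularity and the blow-down of the connection to the ``unstable'' one is the faux canard. Applying this to $\Gamma_0^\mp$ yields the stated identification. Thus the verification of \eqref{eq:Gamma0} is routine; the load-bearing ingredient is the chart-$K_1$ transition analysis of Subsection~\ref{sec:Gamm0-intoK1} showing that the two halves land on exactly these two equatorial equilibria, so, strictly, this half of the lemma is completed only once that subsection is in hand.
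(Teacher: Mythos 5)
Your proof is correct and follows essentially the same route as the paper: direct substitution (plus membership in the zero level set of $H_2$) for the first claim, with the true/faux identification resting on the chart-$K_1$ asymptotics of Subsection~\ref{sec:Gamm0-intoK1} and the matching of the two ends of $\Gamma_0$ to the two branches $\tilde q = \pm\tfrac{2\sqrt{\eps}}{\sqrt{3}}\tilde u^{3/2}(1+\cdots)$ of the desingularized reduced flow. One harmless sign slip: since $q_{x_d}=\eps f'(u)(u-a)>0$ near the cusp at $a=1$, the branch that is \emph{forward}-asymptotic to the RFSN-II point (the stable one) is $q=-\tfrac{2\sqrt{\eps}}{\sqrt{3}}(u-1)^{3/2}(1+\cdots)$ --- consistent with your own correct observation that $q_2<0$ on $\Gamma_0^-$ --- not the $+$ branch quoted in your parenthetical; this does not affect the conclusion.
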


As we will see, both halves of the orbit $\Gamma_0$  play central roles in the dynamics induced by the RFSN-II singularity and also in the singular limit of the spatially periodic canards. 
It lies in the zero level set of the Hamiltonian $H_2(u_2,p_2,v_2,q_2;0)$, as may be verified by direct substitution and observing that all $y_2$-dependent terms sum to zero.  
For reference, it may also be parametrized by $p_2$: $u_2 = 3 \sqrt{\eps} p_2^2,$ $v_2 = 9\eps p_2^4 - \frac{1}{6},$ and $q_2 = 6\sqrt{\eps} p_2^3$.

\subsubsection{Geometric role of the algebraic solution \texorpdfstring{$\Gamma_0$}{Lg}}
Here, we study the unperturbed problem \eqref{eq:r2zero} and provide a geometric interpretation of the algebraic solution $\Gamma_0$. 
Recall from \eqref{eq:H2} that the system \eqref{eq:r2zero} has the Hamiltonian 
\[ H_2(u_2,p_2,v_2,q_2;0) = \tfrac{1}{2}\sqrt{\eps} (p_2^2-q_2^2)+u_2 v_2 - \tfrac{1}{3} u_2^3, \] 
and the canard solutions, $\Gamma_0^{\pm}$, lie in the zero level contour $\{ H_2(u_2,p_2,v_2,q_2;0) = 0 \}$. 
By using this constraint to eliminate the $v_2$ variable, the dynamics restricted to the $\{ H_2(u_2,p_2,v_2,q_2;0) = 0 \}$ level set are given by 
\begin{equation} \label{eq:H2zero}
\begin{split}
u_2^\prime &= \sqrt{\eps} p_2 \\ 
p_2^\prime &= \tfrac{2}{3}u_2^2 + \tfrac{1}{2u_2}\sqrt{\eps} (p_2^2-q_2^2) \\ 
q_2^\prime &= u_2.
\end{split}
\end{equation}
To remove the singularity at $u_2 = 0$, we desingularize the vector field via the transformation $dy_2 = u_2 d\eta_2$, which gives
\begin{equation} \label{eq:H2zerodesing}
\begin{split}
\frac{du_2}{d\eta_2} &= \sqrt{\eps} u_2 p_2 \\ 
\frac{dp_2}{d\eta_2} &= \tfrac{2}{3}u_2^3 + \tfrac{1}{2}\sqrt{\eps} (p_2^2-q_2^2) \\ 
\frac{dq_2}{d\eta_2} &= u_2^2.
\end{split}
\end{equation}
The system \eqref{eq:H2zerodesing} is topologically equivalent to \eqref{eq:H2zero} for $u_2 >0$ and has opposite orientation for $u_2<0$. Moreover, the system \eqref{eq:H2zerodesing} has the symmetry 
\begin{equation} \label{eq:H2zerosymmetry} 
(u_2,p_2,q_2,\eta_2) \to (u_2,-p_2,-q_2,-\eta_2),
\end{equation}
which it inherits from the reversibility symmetry \eqref{eq:RF} of the original problem. 

The system \eqref{eq:H2zerodesing} possesses two lines of equilibria, 
\[ \mathscr{L}_{\pm} = \left\{ u_2 = 0, p_2 = \pm q_2: q_2 \in \mathbb R \right\}. \]
The line $\mathscr{L}_-$ has hyperbolic and center spectra, $\sigma_{\rm s/u} = \{ -\sqrt{\eps}q_2,-\sqrt{\eps}q_2\}$ (where $s/u$ depends on the sign of $q_2$) and $\sigma_c = \{ 0 \}$, respectively, with corresponding subspaces 
\[ \mathbb{E}^{\rm s/u}\left( \mathscr{L}_- \right) = \operatorname{span} \left\{ \begin{bmatrix} 1 \\ 0 \\ 0 \end{bmatrix}, \begin{bmatrix} 0\\1\\0 \end{bmatrix} \right\} \quad \text{ and } \quad \mathbb{E}^c\left( \mathscr{L}_- \right) = \operatorname{span} \begin{bmatrix} 0 \\ -1 \\ 1 \end{bmatrix}. \]
Thus, the half-lines 
\begin{equation} \label{eq:Lminus}
\mathscr{L}_-^s := \left\{ (u_2,p_2,q_2) \in \mathscr{L}_- : q_2 >0 \right\} \quad \text{ and } \quad \mathscr{L}_-^u := \left\{ (u_2,p_2,q_2) \in \mathscr{L}_- : q_2 < 0 \right\}
\end{equation} 
are center-stable and center-unstable, respectively.
Similarly, the line $\mathscr{L}_+$ has hyperbolic and center spectra, $\sigma_{\rm s/u} = \left\{ \sqrt{\eps} q_2, \sqrt{\eps} q_2 \right\}$ and $\sigma_c = \{ 0 \}$, respectively, with corresponding subspaces 
\[ \mathbb{E}^{\rm s/u} \left( \mathscr{L}_+ \right) = \operatorname{span} \left\{ \begin{bmatrix} 1 \\ 0 \\ 0 \end{bmatrix}, \begin{bmatrix} 0\\1\\0 \end{bmatrix} \right\} \quad \text{ and } \quad \mathbb{E}^c \left( \mathscr{L}_+ \right) = \operatorname{span} \begin{bmatrix} 0 \\ 1 \\ 1 \end{bmatrix}. \]
Thus, the half-lines 
\begin{equation} \label{eq:Lplus}
\mathscr{L}_+^s := \left\{ (u_2,p_2,q_2) \in \mathscr{L}_+ : q_2 <0 \right\} \quad \text{ and } \mathscr{L}_+^u := \left\{ (u_2,p_2,q_2) \in \mathscr{L}_+ : q_2 >0 \right\}
\end{equation}
are center-stable and center-unstable, respectively.

In Appendix~\ref{sec:app61},
we prove the following proposition that establishes the key geometric properties of the algebraic solution, $\Gamma_0$, and its stable and unstable manifolds in phase space:

\begin{proposition}
The restriction of the system \eqref{eq:H2zerodesing} to the half-space $\{ u_2 \geq 0\}$ possesses three classes of heteroclinic solutions. 
\begin{itemize}
\item A class 1 heteroclinic is a solution that connects a point $(0,-\tilde{q}_2,\tilde{q}_2) \in \mathcal{L}_-^u$ for some $\tilde{q}_2<0$ to a point $(0,-\hat{q}_2,\hat{q}_2) \in \mathcal{L}_-^s$ for some $\hat{q}_2>0$.
\item A class 2 heteroclinic is a solution that connects a point $(0,-\tilde{q}_2,\tilde{q}_2) \in \mathcal{L}_-^u$ for some $\tilde{q}_2<0$ to a point $(0,\hat{q}_2,\hat{q}_2) \in \mathcal{L}_+^s$ for some $\hat{q}_2<0$.
\item A class 3 heteroclinic is a solution that connects a point $(0,\tilde{q}_2,\tilde{q}_2) \in \mathcal{L}_+^u$ for some $\tilde{q}_2>0$ to a point $(0,-\hat{q}_2,\hat{q}_2) \in \mathcal{L}_-^s$ for some $\hat{q}_2>0$.
\end{itemize}
The stable manifold, $W^s(\Gamma_0^-)$, of the algebraic solution $\Gamma_0^-$ is the phase-space boundary between class 1 and class 2 heteroclinics. 
The unstable manifold, $W^u(\Gamma_0^+)$, of the algebraic solution $\Gamma_0^+$ is the phase-space boundary between class 1 and class 3 heteroclinics. 
\label{prop:61}
\end{proposition}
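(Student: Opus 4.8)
The plan is to analyze the three-dimensional flow \eqref{eq:H2zerodesing} on the invariant half-space $\{u_2\ge 0\}$ by combining the structure of the invariant plane $\{u_2=0\}$ with the monotonicity of $q_2$. On $\{u_2=0\}$ the flow is $q_2'=0$, $p_2'=\tfrac{1}{2}\sqrt{\eps}(p_2^2-q_2^2)$, so on each level $q_2=c\neq 0$ there is exactly one source ($p_2=|c|$) and one sink ($p_2=-|c|$), and by the sign conventions in \eqref{eq:Lminus}--\eqref{eq:Lplus} the source lies on $\mathscr{L}_-^u$ when $c<0$ and on $\mathscr{L}_+^u$ when $c>0$, while the sink lies on $\mathscr{L}_-^s$ when $c>0$ and on $\mathscr{L}_+^s$ when $c<0$. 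Since $q_2'=u_2^2>0$ on $\{u_2>0\}$, the coordinate $q_2$ is strictly increasing along every orbit in the open half-space, so a forward-bounded orbit has a finite limit $c:=\lim_{\eta_2\to+\infty}q_2$ and a backward-bounded orbit a finite limit $\tilde c:=\lim_{\eta_2\to-\infty}q_2$, with $\tilde c<c$. A LaSalle argument with $-q_2$ (whose only critical set is $\{u_2=0\}$), refined by a $\lambda$-lemma estimate at the source (whose unstable manifold is two-dimensional, hence cannot lie in the $\omega$-limit of a converging orbit), then shows that a forward-bounded orbit converges to the sink at level $c$ and a backward-bounded one emanates from the source at level $\tilde c$. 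Finally, the reversibility \eqref{eq:H2zerosymmetry} sends $\Gamma_0^-\leftrightarrow\Gamma_0^+$, $\mathscr{L}_\pm^u\leftrightarrow\mathscr{L}_\pm^s$, and class~2 $\leftrightarrow$ class~3, so it suffices to establish non-emptiness of classes~1 and~3 and the $W^s(\Gamma_0^-)$ statement.

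It follows that every bounded orbit in $\{u_2>0\}$ is a heteroclinic from the source at level $\tilde c$ to the sink at level $c$, and because $\tilde c<c$ the only admissible sign patterns are $\tilde c<0<c$, $\tilde c<0$ with $c<0$, and $\tilde c>0$ with $c>0$, which are precisely classes~1, 2 and~3. Non-emptiness I would get by a short shooting argument: push the in-plane heteroclinic at a level $q_2=c_0\neq 0$ slightly into $\{u_2>0\}$ and follow it, producing for $c_0>0$ orbits that still land on $\mathscr{L}_-^s$; varying the backward level then gives class~1 (with $\tilde c<0$) and class~3 (with $\tilde c>0$), and class~2 follows from class~3 by reversibility. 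From the explicit formula \eqref{eq:Gamma0}, $q_2=\tfrac{1}{36}\sqrt{\eps}\,y_2^3$, so along $\Gamma_0^-$ (the branch $y_2\le 0$) one has $c=0$ while in backward time the orbit escapes with $u_2\to\infty$, $q_2\to-\infty$; symmetrically $\Gamma_0^+$ has $\tilde c=0$. In the relevant direction both $\Gamma_0^{\pm}$ limit onto the nilpotent point $(u_2,p_2,q_2)=(0,0,0)=\mathscr{L}_+\cap\mathscr{L}_-$, tangent to the $p_2$-axis, with the scaling $u_2\sim p_2^2$, $q_2\sim p_2^3$.

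The proposition then reduces to showing that $\mathcal{C}:=\{c=0\}\cap\{u_2>0\}$ is a codimension-one invariant manifold, equal to $W^s(\Gamma_0^-)$, and that it is the common boundary of the open sets $\{c>0\}$ and $\{c<0\}$; intersecting $\mathcal{C}$ with the union of the unstable manifolds of the equilibria on $\mathscr{L}_-^u$ then yields the asserted boundary between class~1 and class~2. Openness of $\{c>0\}$ and $\{c<0\}$ follows from continuous dependence on initial data together with the convergence statement above, their union is the complement of $\mathcal{C}$, hence $\partial\{c>0\}\cap\partial\{c<0\}=\mathcal{C}$; and $\mathcal{C}$ has codimension one because $c$ varies non-degenerately along a transversal, which I would check by differentiating $c$ along a small arc of $\mathscr{L}_-^s$ displaced into $\{u_2>0\}$. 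It remains to identify $\mathcal{C}$ with $W^s(\Gamma_0^-)$ rather than merely with the stable set of the point $(0,0,0)$: an orbit with $c=0$ is forward-bounded with $u_2,q_2\to 0$, so it enters every neighborhood of $(0,0,0)$, and one must show it actually shadows $\Gamma_0^-$. I would do this by a secondary geometric desingularization of the nilpotent point $(0,0,0)$ inside \eqref{eq:H2zerodesing}, in which $\Gamma_0^-$ becomes a hyperbolic connection with a genuinely two-dimensional stable manifold; this simultaneously confirms the codimension count and gives $\mathcal{C}=W^s(\Gamma_0^-)$. The $W^u(\Gamma_0^+)$ statement, and the boundary between class~1 and class~3, is then the image of this one under the reversibility symmetry.

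The hard part will be this last step. The point $(0,0,0)$ is the transverse crossing of the two lines of equilibria $\mathscr{L}_\pm$ and has a nilpotent linearization, so neither hyperbolic theory nor a single center-manifold reduction applies there; resolving it demands a further, nested blow-up in which one must verify that $\Gamma_0^{\pm}$ lift to hyperbolic saddle-type connections. This is precisely what upgrades ``the orbit enters a neighborhood of $(0,0,0)$'' to ``the orbit is asymptotic to $\Gamma_0^-$'' and fixes the codimension. Everything else --- the in-plane phase-line analysis, the LaSalle/$\lambda$-lemma convergence along $\mathscr{L}_\pm$, the shooting argument for non-emptiness, the openness/density argument, and the symmetry bookkeeping --- is routine.
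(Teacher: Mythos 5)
Your outline gets the bookkeeping right: the sign analysis of the in-plane flow on $\{u_2=0\}$, the monotonicity $q_2'=u_2^2>0$, the resulting constraint $\tilde c<c$ that forces exactly the three sign patterns of the proposition, and the reversibility \eqref{eq:H2zerosymmetry} exchanging $\Gamma_0^-\leftrightarrow\Gamma_0^+$ and class~2 $\leftrightarrow$ class~3 are all correct and match the geometry in the paper. But the decisive step --- resolving the nilpotent point $(0,0,0)=\mathscr{L}_+\cap\mathscr{L}_-$ so that $\Gamma_0^{\pm}$ acquire genuine two-dimensional stable/unstable manifolds and the identification $\{c=0\}=W^s(\Gamma_0^-)$ can be made --- is exactly the step you defer with ``I would do this by a secondary geometric desingularization,'' and that step \emph{is} the paper's entire proof. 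The paper blows up $(u_2,p_2,q_2)=(0,0,0)$ to a hemisphere of $\mathbb{S}^2$, works in the three charts $\{\bar u_2=1\}$, $\{\bar p_2=1\}$, $\{\bar q_2=1\}$ (the first reducing to explicitly solvable Riccati equations, the other two to planar saddle/degenerate-node phase portraits), and reads off the three classes and both separatrix statements directly from the glued dynamics on the hemisphere. So what you have is a plan whose hard kernel is unexecuted, not an alternative proof.

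Two of the ``routine'' steps are also less routine than you claim. First, the LaSalle/$\lambda$-lemma convergence argument: the equilibria lie on \emph{lines} of equilibria and are only partially hyperbolic (one zero eigenvalue along $\mathscr{L}_{\pm}$), and the $\omega$-limit set of a forward-bounded orbit at level $c\neq0$ could a priori be the whole in-plane heteroclinic segment from $p_2=|c|$ to $p_2=-|c|$; excluding this needs a Butler--McGehee-type argument adapted to the non-isolated, partially hyperbolic setting, which you assert rather than carry out. Second, non-emptiness of class~1 does not follow from perturbing in-plane heteroclinics, since those all have $\tilde c$ and $c$ of the same sign; producing an orbit with $\tilde c<0<c$ requires the intermediate-value/connectedness argument across the separatrices $W^u(\Gamma_0^+)$ and $W^s(\Gamma_0^-)$ --- i.e., it again presupposes the blow-up you have postponed. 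Once the blow-up is done (as in the paper's Appendix), all of these soft arguments become unnecessary, which is a good indication that the proposal's ordering of ``routine'' versus ``hard'' is inverted: the blow-up is not a final patch but the load-bearing construction.
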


The results of this proposition are illustrated in Fig.~\ref{fig:secondblowup}. 

\begin{figure}[h!]
  \centering
  \includegraphics[width=3.75in]{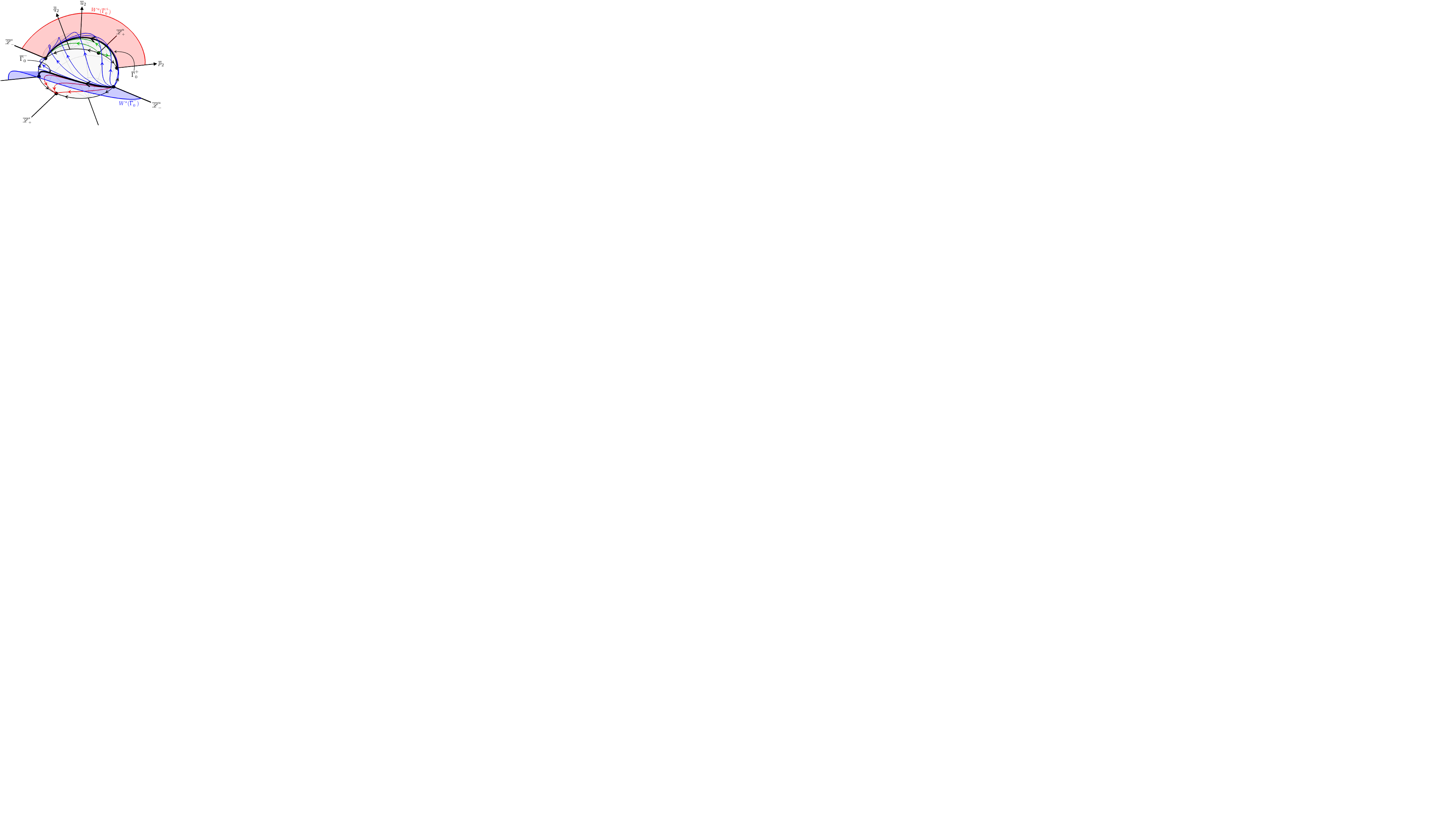}
  \caption{Dynamics on the blown-up hemisphere $\overline{u}_2^2+\overline{p}_2^2+\overline{q}_2^2=1$ in the half-space $\{ \overline{u}_2 \geq 0 \}$. The lines of equilibria, $\overline{\mathscr{L}}_{\pm}^u$, intersect the blown-up hemisphere on the equator at unstable nodes. The lines of equilibria, $\overline{\mathscr{L}}_{\pm}^s$, intersect the blown-up hemisphere on the equator at stable nodes. 
  The algebraic solution $\overline{\Gamma}_0^-$ intersects $\mathbb{S}^2$ at $(0,-1,0)$ and continues across the hemisphere as the heteroclinic solution that joins the points $(0,-1,0)$ and $\overline{\mathscr{L}}_{\pm}^u \cap \mathbb{S}^2$.
  The class 1, 2, and 3 heteroclinics are shown as blue, red, and green curves, respectively. The stable manifold (blue surface) of $\Gamma_0^-$ separates the class 1 and class 2 heteroclinics. The unstable manifold (red surface) of $\Gamma_0^+$ separates the class 1 and class 3 heteroclinics.}
  \label{fig:secondblowup}
\end{figure}

\subsection{Entry/exit chart \texorpdfstring{$K_1$}{Lg}}
\label{sec:K1}

We substitute \eqref{eq:dynrescale-K1} into \eqref{eq:tilde-spatialODE-y} and desingularize the vector field by transforming to a dynamic independent variable $y_1= r_1 y$. Hence, the system in chart $K_1$ is
\begin{equation}
        \label{eq:K1}
\begin{split}
\frac{dr_1}{dy_1} &= \frac{1}{2}\sqrt{\eps} p_1 r_1 \\
\frac{d \delta_1}{dy_1} &= -\sqrt{\eps}p_1 \delta_1 \\
\frac{dp_1}{dy_1} &= 1-v_1-\frac{3}{2}\sqrt{\eps}p_1^2+\frac{1}{3}\sqrt{\eps}r_1^2 \\
\frac{dv_1}{dy_1} &= \sqrt{\eps} \left(-2p_1 v_1 + \delta_1 q_1 \right) \\
\frac{dq_1}{dy_1} &= -\frac{3}{2} \sqrt{\eps} p_1 q_1+\delta_1 (1-r_1 a_1 ) \\
         \frac{da_1}{dy_1} &= -\frac{3}{2} \sqrt{\eps} p_1 a_1.
\end{split}
\end{equation}

This system has a series of invariant hyperplanes defined by $\{ r_1=0\}$, $\{ \delta_1=0\}$, $\{ a_1=0\}$, and their intersections. In this section, we present the analysis of system \eqref{eq:K1} in the hyperplane $\{r_1=0\}\cap\{\delta_1=0\} \cap \{ a_1=0\}$, where the dynamics are the simplest.  Then, we go directly to the analysis of system \eqref{eq:K1} in the full $(r_1,\delta_1,p_1,v_1,q_1,a_1)$ space. 
Intermediate results that are needed for this full analysis are presented in Appendix~\ref{sec:appK1}, where we analyze system \eqref{eq:K1} in the following sequence of invariant hyperplanes: 
$\{ \delta_1=0\} \cap \{ a_1=0\}$, 
$\{ r_1=0\} \cap \{ a_1 = 0 \}$, 
$\{ r_1 = 0 \} \cap \{ \delta_1 = 0 \}$, 
$\{ \delta_1=0 \}$, 
and $\{ r_1=0\}$,
respectively.
For the remainder of this section and in Appendix~\ref{sec:appK1}, we will use an overdot to denote the derivative with respect to $y_1$: $\dot{}  = \frac{d}{dy_1}$.

In the invariant hyperplane
$\{r_1=0\}\cap\{\delta_1=0\} \cap \{ a_1 = 0 \}$,
the equations \eqref{eq:K1} reduce to
\begin{equation}
        \label{eq:r1delta1a1zero}
\begin{split}
        \dot{p}_1 &= 1 - v_1 - \frac{3}{2}\sqrt{\eps} p_1^2 \\
        \dot{v}_1 &= -2 \sqrt{\eps} p_1 v_1 \\
        \dot{q}_1 &= -\frac{3}{2}\sqrt{\eps} p_1 q_1.
\end{split}
\end{equation}
Hence, the $q_1$ equation decouples from the equations for $(p_1,v_1)$. 
This system \eqref{eq:r1delta1a1zero} has two invariant lines. The first is a line of equilibria
\begin{equation}
\label{eq:ell}
        \ell = \{ r_1=0,\,\, \delta_1=0,\,\, p_1=0,\,\, v_1=1,\,\, q_1 \in \mathbb{R},\,\, a_1=0 \}.
\end{equation}
It is a line of saddle points,
since the eigenvalues are
\begin{equation*}
        \lambda_s = -\sqrt{2} \eps^{1/4}, \quad
        \lambda_u = \sqrt{2}\eps^{1/4}, \quad \text{ and } \quad
        \lambda_c = 0.
\end{equation*}
The associated eigenvectors are
\begin{equation*}
{\boldsymbol w}_s = \left[ \begin{array}{c}
        2\sqrt{2}  \\ 4\eps^{1/4} \\ 3 \eps^{1/4} q_1
             \end{array} \right],
\quad
{\boldsymbol w}_u = \left[ \begin{array}{c}
        -2\sqrt{2}  \\ 4\eps^{1/4} \\ 3 \eps^{1/4} q_1
             \end{array} \right],
\quad \text{ and } \quad
{\boldsymbol w}_c = \left[ \begin{array}{c}
               0 \\ 0 \\ 1
             \end{array} \right].
\end{equation*}
See Fig.~\ref{fig:K1}. 
Therefore, by standard center manifold theory (see for example \cite{C1981}), system \eqref{eq:r1delta1a1zero} has a one-dimensional center manifold $W^c(\ell)$ in the invariant hyperplane
$\{ r_1 = 0 \} \cap \{ \delta_1 = 0 \} \cap \{ a_1=0 \}$, and it has one-dimensional fast stable and unstable fibers in the transverse directions.
This manifold, which we will show is embedded in a larger center manifold in the full system, and the fast stable and unstable fibers,
will play central roles in the dynamics of the full system \eqref{eq:K1}.

\begin{figure}[h!]
  \centering
  \includegraphics[width=5in]{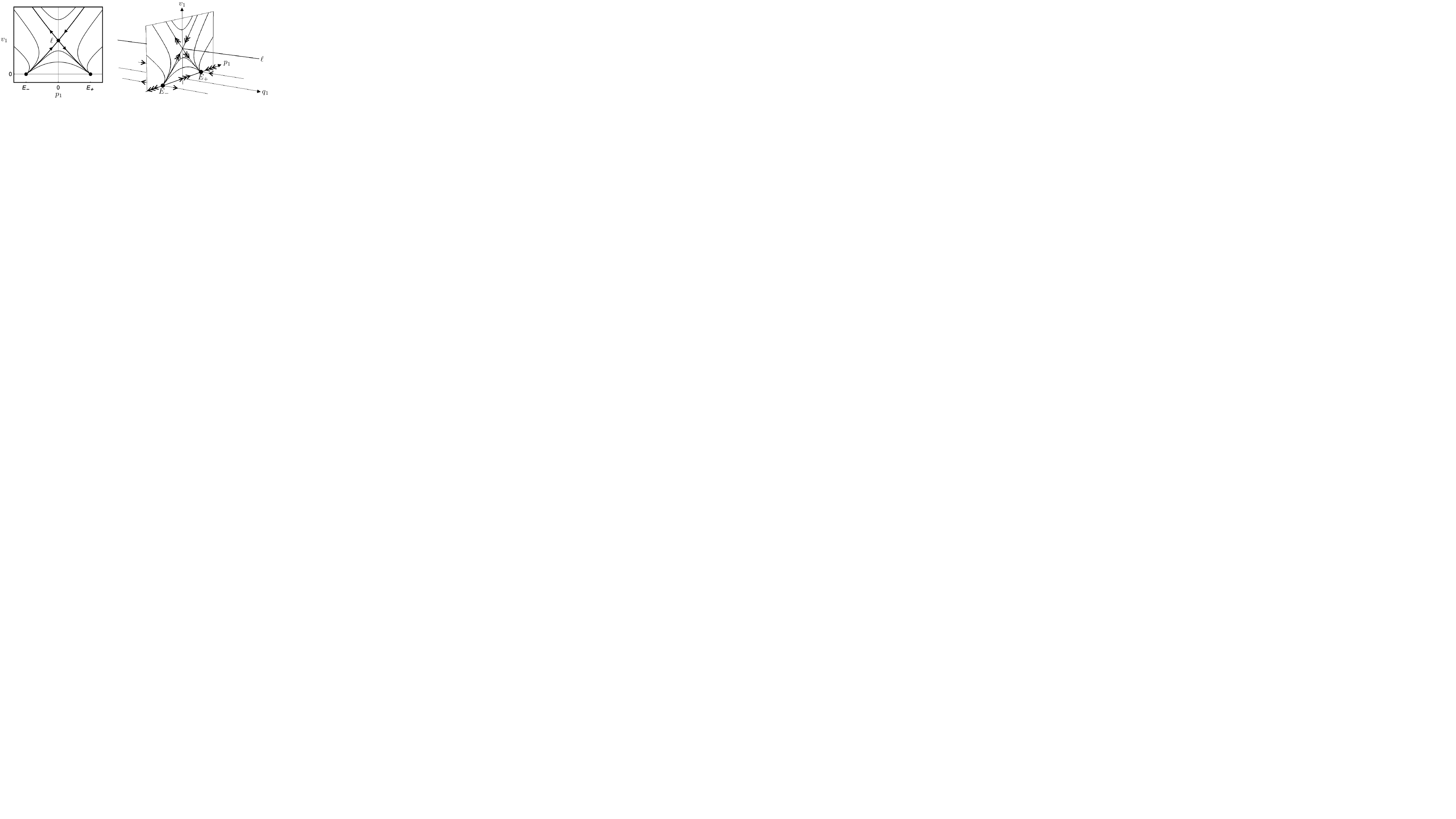}
  \put(-362,124){(a)}
  \put(-208,124){(b)}
  \caption{Dynamics in the invariant subspace $\left\{  r_1 = 0\right\} \cap \left\{ \delta_1 = 0 \right\} \cap \left\{ a_1 = 0 \right\}$. showing the line $\ell$ of saddle fixed points and the invariant line $I$ with the two equilibria $E_\pm$ on it.   
  (a) Projection onto the $(p_1,v_1)$ plane. (b) Projection onto the $(p_1,v_1,q_1)$ phase space. 
  Trajectories are given implicitly by $\sqrt{\eps} p_1^2 - \left( \frac{2}{3} - 2 v_1 \right) -c v_1^{3/2} = 0$, for $c \in \mathbb R$, with $q_1$ obtained by quadratures.
  }
  \label{fig:K1}
\end{figure}

The second invariant line in the phase space of equations \eqref{eq:r1delta1a1zero} is
\begin{equation}
        \label{eq:I}
        I = \left\{   r_1=0,\,\, \delta_1=0,\,\,
                      p_1 \in \mathbb{R},\,\,
                      v_1=0,\,\, q_1=0,\,\, a_1=0
            \right\}.
\end{equation}
On $I$, the vector field reduces to $\dot{p}_1 = 1 - \frac{3}{2}\sqrt{\eps}p_1^2$, and there are two equilibria:
\begin{equation}
        \label{eq:Epm}
        E_{\pm}= \left\{ r_1 =0,\,\, \delta_1=0, \,\,
                      p_1 = \pm \sqrt{\tfrac{2}{3}} \eps^{-1/4},\,\,
                      v_1=0,\,\, q_1=0,\,\, a_1=0
            \right\}.
\end{equation}
The equilibrium $E_+$ is stable with eigenvalues $-\sqrt{6}\eps^{1/4}, -2\sqrt{\frac{2}{3}}\eps^{1/4}, -\sqrt{\frac{3}{2}}\eps^{1/4}$,
and eigenvectors
\begin{equation*}
{\boldsymbol w}_1^s = \left[ \begin{array}{c}
               1 \\ 0 \\ 0
             \end{array} \right],
\quad
{\boldsymbol w}_2^s = \left[ \begin{array}{c}
        -\sqrt{3} \\ \sqrt{2}\eps^{1/4} \\ 0
             \end{array} \right],
\quad
{\boldsymbol w}_3^s = \left[ \begin{array}{c}
               0 \\ 0 \\ 1
             \end{array} \right].
\end{equation*}
The equilibrium $E_-$ is unstable with eigenvalues
$\sqrt{\frac{3}{2}}\eps^{1/4}$,
$2\sqrt{\frac{2}{3}}\eps^{1/4}$,
and $\sqrt{6}\eps^{1/4}$,
and eigenvectors
\begin{equation*}
{\boldsymbol w}_1^u = \left[ \begin{array}{c}
               0 \\ 0 \\ 1
             \end{array} \right],
\quad
{\boldsymbol w}_2^u = \left[ \begin{array}{c}
        \sqrt{3} \\ \sqrt{2}\eps^{1/4} \\ 0
             \end{array} \right],
\quad
{\boldsymbol w}_3^u = \left[ \begin{array}{c}
               1 \\ 0 \\ 0
             \end{array} \right].
\end{equation*}

The equilibria $E_\pm$ are shown in the $(p_1,v_1)$ subsystem in 
Fig.~\ref{fig:K1}(a) 
and in the $(p_1,v_1,q_1)$ system in Fig.~\ref{fig:K1}(b).
The number of arrows on the principal components of the stable and unstable manifolds indicates  the relative magnitudes of the eigenvalues. 
The orbits in this phase plane are defined implicitly by the one-parameter family  $\sqrt{\eps} p_1^2 - \left( \frac{2}{3} - 2 v_1 \right) -c v_1^{3/2} = 0$, where $c$ denotes the parameter.
Overall, the dynamics of \eqref{eq:r1delta1a1zero} are organized by the line of equilibria $\ell$ and the invariant line $I$. 

Now, we turn to the invariant sets and dynamics of the full system \eqref{eq:K1}, recalling that some of the intermediate results are in Appendix~\ref{sec:appK1}.
The line $I$ given by \eqref{eq:I} is invariant for the full system \eqref{eq:K1}, and $E_{\pm}$ are again fixed points on $I$, recall \eqref{eq:Epm}.

The equilibrium $E_+$ is a hyperbolic saddle with stable and unstable spectra 
\begin{equation}
    \label{eq:spectrum:E+}
\sigma^s_+ = \left\{ -\sqrt{6}\eps^{1/4},
-2\sqrt{\frac{2}{3}}\eps^{1/4},
-\sqrt{\frac{3}{2}}\eps^{1/4},
-\sqrt{\frac{3}{2}}\eps^{1/4},
-\sqrt{\frac{2}{3}}\eps^{1/4}\right\}
\quad{\rm and}
\quad\sigma_+^u = \left\{ \frac{1}{\sqrt{6}}\eps^{1/4}\right\}.
\end{equation}
At $E_+$, the stable and unstable subspaces are given by 
\begin{equation}
\label{eq:Esu4E+}
\mathbb{E}^s = {\rm span}  
\left\{ 
\left[ \begin{array}{c}
               0 \\ 0 \\ 1 \\ 0 \\ 0 \\ 0 
             \end{array} \right],
\left[ \begin{array}{c} 0 \\ 0 \\
        -\sqrt{3} \\ \sqrt{2}\eps^{1/4} \\ 0 \\ 0
             \end{array} \right],
\left[ \begin{array}{c}
               0 \\ 0 \\ 0 \\ 0 \\ 1 \\ 0 
             \end{array} \right],
\left[ \begin{array}{c}
               0 \\ 0 \\ 0 \\ 0 \\ 0 \\ 1 
             \end{array} \right],
\left[ \begin{array}{c}
               0 \\ \eps^{1/4} \\ 0 \\ 0 \\ \sqrt{6} \\ 0 
             \end{array} \right]
\right\}
\quad{\rm and}
\quad 
\mathbb{E}^u = {\rm span} 
\left\{ 
\left[ \begin{array}{c}
               1 \\ 0 \\ 0 \\ 0 \\ 0 \\ 0 
             \end{array} \right]
             \right\}.
 \end{equation}

The other equilibrium, $E_-$, is also a hyperbolic saddle with stable and unstable spectra given by
\begin{equation}
\label{eq:spectrum:E-}
    \sigma^s_- = 
    \left\{ 
    \frac{-1}{\sqrt{6}} \eps^{1/4} \right\}
    \quad{\rm and}
    \quad\sigma_-^u = \left\{ 
    \sqrt{\frac{2}{3}}\eps^{1/4},
    \sqrt{\frac{3}{2}}\eps^{1/4},
    \sqrt{\frac{3}{2}}\eps^{1/4},
2\sqrt{\frac{2}{3}}\eps^{1/4}, \sqrt{6}\eps^{1/4}
\right\}.
\end{equation}
At $E_-$, the stable and unstable subspaces are
\begin{equation}
\label{eq:Esu4E-}
\mathbb{E}^s = {\rm span} 
\left\{ 
\left[ \begin{array}{c}
               1 \\ 0 \\ 0 \\ 0 \\ 0 \\ 0 
             \end{array} \right]
             \right\}
             \quad{\rm and}
             \quad
\mathbb{E}^u = {\rm span} \left\{
\left[ \begin{array}{c}
               0 \\ -\eps^{1/4} \\ 0 \\ 0 \\ \sqrt{6} \\ 0 
             \end{array} \right],
\left[ \begin{array}{c}
               0 \\ 0 \\ 0 \\ 0 \\ 1 \\ 0 
             \end{array} \right],
\left[ \begin{array}{c}
               0 \\ 0 \\ 0 \\ 0 \\ 0 \\ 1 
             \end{array} \right],
\left[ \begin{array}{c} 0 \\ 0 \\
        \sqrt{3} \\ \sqrt{2}\eps^{1/4} \\ 0 \\ 0
             \end{array} \right],
\left[ \begin{array}{c} 0 \\ 0 \\
        1 \\ 0 \\ 0 \\ 0
             \end{array} \right]
\right\}.
\end{equation}
Thus, the blow-up transformation \eqref{eq:dynrescale-K1} splits the non-hyperbolic equilibrium state $(a,0,f(a),0)$ into the pair of hyperbolic fixed-points $E_{\pm}$. Moreover, by \eqref{eq:Esu4E+} $E_+$ has a five-dimensional stable manifold and a one-dimensional unstable manifold, with the latter being in the $r_1$ direction,
and by \eqref{eq:Esu4E-} $E_-$ has a one-dimensional stable manifold (in the $r_1$ direction) and a five-dimensional unstable manifold. 

Now, in addition to the equilibria $E_\pm$ and invariant line $I$, the full system \eqref{eq:K1} also has an important, three-dimensional manifold of equilibria:
\begin{equation}
\label{eq:mfldS1}
        \mathcal{S}_1 = \{ r_1 \in \mathbb{R},\,\, \delta_1=0, \,\, p_1=0,\,\, v_1=1 + \frac{1}{3}\sqrt{\eps} r_1^2,\,\, q_1 \in \mathbb{R},\,\, a_1 \in \mathbb{R} \}.
\end{equation}
It contains the line $\ell$, and it is a manifold of saddle fixed points, since the eigenvalues are
\begin{equation*}
        \lambda_s = -\sqrt{2 \sqrt{\eps} +  \eps r_1^2}, \quad
        \lambda_u = \sqrt{2 \sqrt{\eps} +  \eps r_1^2}, 
        \quad
        \lambda_c = 0, 0, 0, 0.
\end{equation*}
The associated stable and unstable eigenspaces are
\begin{equation}
\label{eq:EsEu}
\mathbb{E}^s = {\rm span} 
\left\{ 
\left[ \begin{array}{c}
               -\sqrt{\eps} r_1 \\ 0 \\ 2 \sqrt{2\sqrt{\eps} + \eps r_1^2} \\ \frac{4}{3}\sqrt{\eps}(3 + \sqrt{\eps}r_1^2) \\  3 \sqrt{\eps} q_1 \\  3\sqrt{\eps}a_1  
             \end{array} \right]
             \right\},
             \quad
\mathbb{E}^u = {\rm span} \left\{
\left[ \begin{array}{c}
-\sqrt{\eps} r_1 \\ 0 \\ - 2 \sqrt{2\sqrt{\eps} + \eps r_1^2} \\ \frac{4}{3}\sqrt{\eps}(3 + \sqrt{\eps}r_1^2) \\  3 \sqrt{\eps} q_1 \\  3\sqrt{\eps}a_1  
\end{array} \right] \right\},
\end{equation}
and the associated center subspace is 
\begin{equation}
\label{eq:Ec}
\mathbb{E}^c = {\rm span} \left\{ 
\left[ \begin{array}{c}
               0 \\ 0 \\ 0 \\ 0 \\ 1 \\ 0 
             \end{array} \right],
\left[ \begin{array}{c}
               0 \\ 0 \\ 0 \\ 0 \\ 0 \\ 1 
             \end{array} \right],
\left[ \begin{array}{c} 3 \\ 0 \\
        0 \\ 2\sqrt{\eps} r_1\\ 0 \\ 0
             \end{array} \right],
\left[ \begin{array}{c} 0 \\ 2 + \sqrt{\eps} r_1^2 \\
        q_1 \\ 0 \\ 0 \\ 0
             \end{array} \right]
\right\}.
\end{equation}
Therefore, application of  center manifold theory
(see for example \cite{C1981}) establishes that \eqref{eq:K1}
has a four-dimensional
center manifold
\begin{equation}
 \label{eq:M}
    M = W^c(\mathcal{\ell}).
\end{equation}
This manifold contains the surface $\mathcal{S}_1$ of equilibria.
It also contains the three-dimensional center manifold $N$ 
in the invariant hyperplane $ \{ r_1=0 \}$, see \eqref{eq:N} in Appendix~\ref{sec:appK1}.
Moreover, the branch of $N$ in $p_1<0$ and $\delta_1>0$ is unique.

The above analysis establishes the following lemma:
\begin{lemma}
The following properties hold for the system \eqref{eq:K1}.
  \begin{itemize}
  \item  There exists a stable invariant foliation, $\mathscr F^s$, with base $M$ and one-dimensional fibers. For any constant $c>-\sqrt 2$, the contraction along $\mathscr F^s$ during an interval $[0,X]$ is stronger than $e^{cX}$. 
  \item There exists an unstable invariant foliation, $\mathscr F^u$, with base $M$ and one-dimensional fibers. For any constant $c< \sqrt 2$, the expansion along $\mathscr F^u$ during an interval $[0,X]$ is stronger than $e^{cX}$. 
  \end{itemize}
\end{lemma}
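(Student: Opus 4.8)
The plan is to read this lemma as the assertion that the four‑dimensional center manifold $M=W^c(\ell)$ of \eqref{eq:K1} carries a pair of invariant foliations — the stable and unstable foliations associated with its one‑dimensional fast transverse directions — and to deduce it from the standard invariant‑foliation (fibration) theorem for normally hyperbolic invariant manifolds, applied within the bounded chart $K_1$. The necessary structure has already been assembled above: $M$ is a four‑dimensional center manifold of \eqref{eq:K1} that contains the three‑dimensional manifold $\mathcal{S}_1$ of saddle equilibria; at every point of $\mathcal{S}_1$ the linearization of \eqref{eq:K1} has transverse spectrum $\{\pm\sqrt{2\sqrt{\eps}+\eps r_1^2}\}$ with one‑dimensional eigenspaces $\mathbb{E}^s,\mathbb{E}^u$ given in \eqref{eq:EsEu}, while the remaining four eigenvalues vanish, with center eigenspace $\mathbb{E}^c$ as in \eqref{eq:Ec}. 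Hence $M$ is normally hyperbolic (indeed $k$‑normally hyperbolic for every $k\ge 1$): the transverse rates dominate the tangential (zero) rates, which is precisely the hypothesis of the fibration theorem.

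The steps I would carry out are as follows. \emph{(i) Localize to a bounded region.} Since $M$ is noncompact (it extends in $r_1$, $q_1$, $a_1$), restrict attention to a compact set $\mathcal{U}$ containing the relevant segment of $M$; the blow‑up is arranged so that all dynamics to be tracked occurs in such a bounded region of chart $K_1$, so nothing is lost, and if convenient one modifies the vector field outside a slightly larger compact set without altering it on $\mathcal{U}$. \emph{(ii) Invoke the fibration theorem.} By Fenichel's invariant‑foliation theorem (equivalently the fibered center manifold theory; see \cite{Fenichel1979,Jones1995}, and \cite{C1981} for the center manifold itself), there exist $C^k$ invariant foliations $\mathscr{F}^s$ and $\mathscr{F}^u$ of a neighborhood of $M$ in $\mathcal{U}$, with base $M$ and one‑dimensional leaves, the leaf of $\mathscr{F}^s$ (resp. $\mathscr{F}^u$) through $m\in M$ being tangent at $m$ to $\mathbb{E}^s(m)$ (resp. $\mathbb{E}^u(m)$), and each leaf being carried by the flow into a leaf over the image base point. \emph{(iii) Read off the rates.} For $x$ on the $\mathscr{F}^s$‑leaf through $\pi^s(x)\in M$, the distance $\lVert \Phi^X(x)-\Phi^X(\pi^s(x))\rVert$ contracts at the rate of the transverse stable eigenvalue, which on $\mathcal{S}_1$ equals $-\sqrt{2\sqrt{\eps}+\eps r_1^2}\le -\sqrt{2}\,\eps^{1/4}$ and is no weaker elsewhere on $M$; so for any $c$ weaker than this worst‑case value — i.e. for any $c>-\sqrt{2}$ in the rescaled independent variable used in chart $K_1$, where the $\eps^{1/4}$ factor is absorbed — there is a constant $K$ with $\lVert \Phi^X(x)-\Phi^X(\pi^s(x))\rVert\le K e^{cX}\lVert x-\pi^s(x)\rVert$ on $[0,X]$. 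Reversing time gives the analogous expansion estimate for $\mathscr{F}^u$ with any $c<\sqrt{2}$.

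The main obstacle is the noncompactness and nonuniformity of $M$: the transverse eigenvalues, though bounded away from zero, grow like $\sqrt{\eps}\,r_1$ as $r_1\to\infty$, so a compact‑manifold fibration theorem cannot be applied verbatim. The remedy — localizing to a bounded chart neighborhood and, where needed, cutting off the vector field — is the routine device in the blow‑up setting, and is exactly what makes the charts $K_1$, $K_2$ useful; once it is in place, the uniform spectral gap over $\mathcal{U}$ supplies the fibration theorem's hypotheses, and the claimed exponential rates follow from the infimum $\sqrt{2}\,\eps^{1/4}$ of the transverse eigenvalue magnitudes, attained in the limit at $\ell$ where $\lambda_{s,u}=\mp\sqrt{2}\,\eps^{1/4}$. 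A secondary, already‑handled point is the existence and local uniqueness of $M$ itself, which come from the center manifold construction together with the uniqueness of the $N$‑branch in $p_1<0$, $\delta_1>0$ noted above; no estimate beyond standard invariant‑manifold theory is needed.
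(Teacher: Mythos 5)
Your proposal is correct and follows essentially the same route as the paper, which offers no separate proof but simply asserts that the preceding spectral analysis of $\mathcal{S}_1$ (one-dimensional transverse stable and unstable eigenspaces with eigenvalues $\mp\sqrt{2\sqrt{\eps}+\eps r_1^2}$ and a four-dimensional center subspace containing $T\mathcal{S}_1$) together with standard center-manifold/invariant-foliation theory establishes the lemma; you merely make explicit the localization to a compact piece of the chart and the appeal to Fenichel's fibration theorem. Your remark that the stated rates $\pm\sqrt{2}$ correspond to the eigenvalues $\pm\sqrt{2}\,\eps^{1/4}$ attained at $\ell$, so that the $\eps^{1/4}$ factor must be absorbed into the independent variable (or $\eps$ treated as a fixed $\mathcal{O}(1)$ constant), is a fair and careful reading of the paper's statement.
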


\subsection{Forward and backward asymptotics of \texorpdfstring{$\Gamma_0$}{Lg} in chart \texorpdfstring{$K_1$}{Lg}}
\label{sec:Gamm0-intoK1}

In this section, we study the forward and backward asymptotics (as $y_2 \to \pm \infty$) of the ``Vo connection" $\Gamma_0$, given by \eqref{eq:Gamma0}, along with its tangent vectors. 

We start with the coordinate transformation maps between the two charts. 
For $\delta_1>0$, the coordinate transformation from $K_1$ to $K_2$ is
\begin{equation}
        \begin{split}
\kappa_{12} (r_1,\delta_1,p_1,v_1,q_1,a_1)
          &= (r_2,u_2,p_2,v_2,q_2,a_2) 
           = \left(
               r_1 \sqrt{\delta_1}, \frac{1}{\delta_1},
               \frac{p_1}{\delta_1^{3/2}}, \frac{v_1}{\delta_1^2},
               \frac{q_1}{\delta_1^{3/2}},
               \frac{a_1}{\delta_1^{3/2}}
            \right).
        \end{split}
\end{equation}
For $u_2>0$,
the coordinate transformation from $K_2$ to $K_1$ is
\begin{equation}
        \begin{split}
\kappa_{21} (r_2,u_2,p_2,v_2,q_2,a_2)
        &= (r_1,\delta_1,p_1,v_1,q_1,a_1) 
         = \left(
             r_2 \sqrt{u_2}, \frac{1}{u_2}, \frac{p_2}{u_2^{3/2}},
             \frac{v_2}{u_2^2}, \frac{q_2}{u_2^{3/2}},
             \frac{a_2}{u_2^{3/2}}
           \right).
        \end{split}
\end{equation}

Now, for the forward asymptotics of $\Gamma_0$, we use $\kappa_{21}$ to find 
\begin{equation}
        \lim_{y_2 \to \infty} \kappa_{21}(\Gamma_0)
        = \lim_{y_2 \to \infty}
          \left(
          0, \frac{12}{\sqrt{\eps} y_2^2}, \frac{4\sqrt{3}}{\eps^{3/4} y_2^2},
          1-\frac{24}{\eps y_2^4}, \frac{2\sqrt{3}}{3\eps^{1/4}}, 0
          \right)
        = \left(
          0, 0, 0, 1,
          \frac{2\sqrt{3}}{3\eps^{1/4}}, 0
          \right).
\end{equation}
Hence,
on the invariant hyperplane $\{ r_1=0 \}$,
orbits on $\Gamma_0$
are forward asymptotic
to the fixed point
$(0,0,0,1,q^+,0)$
with $q^+=\frac{2\sqrt{3}}{3\eps^{1/4}}$,
on the line of equilibria $\ell$.

Similarly, the backward asymptotics of $\Gamma_0$
are given by:
\begin{equation}
        \lim_{y_2 \to -\infty} \kappa_{21}(\Gamma_0)
        = \lim_{y_2 \to -\infty}
          \left(
          0, \frac{12}{\sqrt{\eps} y_2^2}, \frac{-4\sqrt{3}}{\eps^{3/4} y_2^2},
          1-\frac{24}{\eps y_2^4}, \frac{-2\sqrt{3}}{3\eps^{1/4}}, 0 
          \right)
        = \left(
          0, 0, 0, 1,
          \frac{-2\sqrt{3}}{3\eps^{1/4}}, 0 
          \right).
\end{equation}
Hence, on the invariant hyperplane
$\{r_1=0\}$, solutions on $\Gamma_0$
are backward asymptotic to the fixed point
$(0,0,0,1,q^-,0)$ with
$q^-=\frac{-2\sqrt{3}}{3\eps^{1/4}}$,
on the line of equilibria $\ell$.

\medskip

The tangent vectors of $\Gamma_0$ in chart $K_1$ at the equilibria $(0,0,0,1,q^{\pm},0)$ are given by
\[ \lim_{y_2 \to \pm \infty} \frac{\frac{d}{dy_2}\kappa_{21}(\Gamma_0)}{\lVert \frac{d}{dy_2}\kappa_{21}(\Gamma_0) \rVert} 
= \lim_{y_2 \to \pm \infty} \frac{\left(0,\mp 24 \sqrt{\eps} y_2^2,-8\sqrt{3} \eps^{1/4} y_2^2,\pm 96,0 ,0\right)}{8\sqrt{3} \sqrt{48+\sqrt{\eps}y_2^4(1+3\sqrt{\eps})}} 
= \left( 0, \mp \frac{\sqrt{3}\eps^{1/4}}{\sqrt{1+3\sqrt{\eps}}},\frac{-1}{\sqrt{1+3\sqrt{\eps}}}, 0, 0, 0\right).
\]
Hence, the tangent vectors of $\Gamma_0$ in chart $K_1$ at the equilibria $(0,0,0,1,q^{\pm},0)$ are tangent to the center manifolds $M$. In fact, at these equilibrium points, the tangent vectors are parallel to the generalized eigenvector of the center subspace.

\subsection{Intersection of center-unstable and center-stable manifolds at \texorpdfstring{$a_c(\delta)$}{Lg}}
\label{sec:WcuWcs}

In this section, we present the derivation of \eqref{eq:ac}: 

\begin{lemma}
\label{lem:persistence}
   For $0 < \delta \ll 1$, the center-unstable manifold $W^{cu}(0,0,0,1,q^-,0)$ coincides with the center-stable manifold $W^{cs}(0,0,0,1,q^+,0)$ for 
   \begin{equation}
   \label{eq:lemma1-exp-a}
       a_c(\delta) = 1 - \frac{5 \eps}{48} \delta^2 + \mathcal{O}(\delta^3).
   \end{equation}
\end{lemma}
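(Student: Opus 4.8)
The plan is to use the reversibility \eqref{eq:RF} to reduce the coincidence of the two manifolds to a single scalar condition, and then to extract that condition by a Melnikov-type computation along the algebraic solution $\Gamma_0$ of \eqref{eq:Gamma0}, which is the $\delta=0$ remnant of the connecting orbit.

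First, I would record how $\mathcal{R}$ acts in the blow-up charts: since $\mathcal{R}$ fixes $u,v$, flips $p,q$, and reverses the independent variable, in chart $K_i$ it acts by $(r_i,\delta_i,p_i,v_i,q_i,a_i)\mapsto(r_i,\delta_i,-p_i,v_i,-q_i,a_i)$ together with time reversal. Thus $\mathcal{R}$ fixes the equilibria on the line $\ell$ of \eqref{eq:ell} except that it interchanges the two saddles $(0,0,0,1,q^{\mp},0)$, because $q^{+}=-q^{-}=\tfrac{2\sqrt3}{3\eps^{1/4}}$. Hence $\mathcal{R}\big(W^{cu}(0,0,0,1,q^{-},0)\big)=W^{cs}(0,0,0,1,q^{+},0)$; and since the two equilibria share the four-dimensional center manifold $M$ of \eqref{eq:M}, the obstruction to coincidence of the two manifolds reduces to whether the orbit in $W^{cu}(0,0,0,1,q^{-},0)$ that shadows $\Gamma_0^{-}$ meets $\mathrm{Fix}(\mathcal{R})=\{p_i=q_i=0\}$, i.e.\ is $\mathcal{R}$-symmetric. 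The singular object $\Gamma_0$ already does this: it is $\mathcal{R}$-symmetric, crossing $\mathrm{Fix}(\mathcal{R})$ at $y_2=0$ where $(u_2,p_2,v_2,q_2)=(0,0,-\tfrac16,0)$, and by the asymptotics of Section~\ref{sec:Gamm0-intoK1} its halves $\Gamma_0^{\mp}$ are backward/forward asymptotic in chart $K_1$ to $(0,0,0,1,q^{\mp},0)$. So at $(a,\delta)=(1,0)$ the coincidence holds, and the task is to continue it for $0<\delta\ll1$.

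The main step is the splitting computation. I would track $W^{cu}(0,0,0,1,q^{-},0)$: near that saddle in $K_1$ it is the one-dimensional unstable fibre (in the $r_1$-direction) carried over $M$, whose exponential contraction/expansion rates are furnished by the invariant foliations $\mathscr{F}^{s},\mathscr{F}^{u}$; mapping into $K_2$ by $\kappa_{12}$ it becomes $\mathcal{O}(r_2)$-close to $\Gamma_0^{-}$, with $r_2=\sqrt{\delta}$. Writing the chart-$K_2$ system \eqref{eq:ODE-K2} as the integrable $r_2=0$ Hamiltonian flow ($H_2(\cdot;0)$) plus the perturbation $-r_2a_2v_2-\tfrac1{12}\sqrt{\eps}\,r_2^{2}u_2^{4}$ in $H_2$ of \eqref{eq:H2}, and using $a-1=\sqrt{\eps}\,r_2^{3}a_2$ and $\delta=r_2^{2}$, the two perturbation pieces have effective sizes $(a-1)/\delta$ and $\delta$, with no lower-order piece. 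Measuring the displacement of $W^{cu}$ from $\mathrm{Fix}(\mathcal{R})$ at the section $\{p_2=0\}$ --- equivalently, by reversibility, one half of the $W^{cu}$--$W^{cs}$ separation --- then yields a scalar splitting function
\[
d(a,\delta)=c_1\,\frac{a-1}{\delta}+c_2\,\delta+\mathcal{O}\big(\delta^{2}\big),
\]
where $c_1,c_2$ are the integrals of $v_2$ and $u_2^{4}$ along $\Gamma_0$ against the bounded solution of the variational equation transverse to the flow (which exists by the Hamiltonian/reversible structure). As $\Gamma_0$ is polynomial in $y_2$, $c_1,c_2$ are explicit, and an explicit evaluation of these integrals together with the $\eps$-bookkeeping forces $c_2/c_1=\tfrac{5\eps}{48}$. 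Multiplying by $\delta$ and applying the implicit function theorem to $c_1(a-1)+c_2\delta^{2}+\mathcal{O}(\delta^{3})=0$ --- with $c_1\neq0$ by non-degeneracy of the RFSN-II point (cf.\ Table~\ref{table:class}) --- gives the locally unique curve $a_c(\delta)=1-(c_2/c_1)\delta^{2}+\mathcal{O}(\delta^{3})$, which is \eqref{eq:ac}.

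I expect the main obstacle to be the two intertwined analytic points behind the displayed formula, not its conceptual scheme. The first is making the chart transition rigorous: one must show that $W^{cu}$ entering $K_2$ from $K_1$ stays uniformly $\mathcal{O}(r_2)$-close to $\Gamma_0^{-}$ over the matching region, which is unbounded in $y_2$, using the exponential estimates for $\mathscr{F}^{s},\mathscr{F}^{u}$ near $\ell$ together with a Gronwall-type shadowing argument (and the reflected statement for $W^{cs}$ near $\Gamma_0^{+}$). The second is defining and evaluating $c_1,c_2$: since $\Gamma_0$ grows polynomially the naive integrands do not decay, so one must check that the divergent contributions are precisely those absorbed into the near-equilibrium dynamics in $K_1$, and only after this bookkeeping can the finite ratio $c_2/c_1=5\eps/48$ be read off.
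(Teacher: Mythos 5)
Your overall framework is the right one and matches what the paper itself calls a ``Melnikov type calculation'': perturb from the algebraic solution $\Gamma_0$ of \eqref{eq:Gamma0}, reduce the coincidence of the two manifolds to a single scalar condition (your use of $\mathrm{Fix}(\mathcal{R})$ for this is legitimate), identify the two perturbation pieces $-r_2a_2v_2$ and $-\tfrac{1}{12}\sqrt{\eps}\,r_2^2u_2^4$ in $H_2$ of \eqref{eq:H2}, and balance them to get $a-1=-(c_2/c_1)\delta^2$. Your scaling bookkeeping via $a-1=\sqrt{\eps}\,r_2^3a_2$ and $\delta=r_2^2$ is consistent with the paper's.

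However, the step that actually produces $5\eps/48$ is missing, and the machinery you propose for it does not exist in the form claimed. The natural ``bounded solution of the adjoint variational equation transverse to the flow'' would be $\nabla H_2(\cdot\,;0)\big|_{\Gamma_0}=\bigl(v_2-u_2^2,\ \sqrt{\eps}\,p_2,\ u_2,\ -\sqrt{\eps}\,q_2\bigr)$, whose components along $\Gamma_0$ are $-\tfrac16,\ \tfrac{\sqrt{\eps}}{6}y_2,\ \tfrac{\sqrt{\eps}}{12}y_2^2,\ -\tfrac{\eps}{36}y_2^3$ --- unbounded --- and the would-be integrands ($v_2\sim y_2^4$, $u_2^4\sim y_2^8$ against this density) diverge, so $c_1$ and $c_2$ are not defined as improper integrals. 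The assertion that the divergences are ``absorbed into the near-equilibrium dynamics in $K_1$'' is precisely the hard part of your route, and it is not carried out; the ratio $c_2/c_1=5\eps/48$ is asserted rather than derived. Since the entire content of the lemma is that number, this is a genuine gap.

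For comparison, the paper's proof (Appendix~\ref{sec:appGammadelta}) sidesteps the divergence entirely by exploiting that the chart-$K_2$ system \eqref{eq:ODE-K2} is exactly Hamiltonian for every $r_2$, not only at $r_2=0$. One constructs the candidate connection as a formal series $\Gamma_\delta=\Gamma_0+r_2\Gamma_1+r_2^2\Gamma_2+\cdots$ by solving the hierarchy explicitly (the coefficients are polynomials in $y_2$ with free translation parameters), and then evaluates the exact invariant $H_2(\cdot\,;r_2)$ on the series: conservation forces all $y_2$-dependent terms to cancel identically, leaving the finite algebraic expression $H_2|_{\Gamma_\delta}=-\tfrac{1}{12}a_2r_2-\tfrac{5\sqrt{\eps}}{576}r_2^2+\mathcal{O}(r_2^4)$, whose vanishing (the level set of the asymptotic states) gives \eqref{eq:lemma1-exp-a} with no integrals and no regularization. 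If you want to salvage your version, replace the improper Melnikov integrals by this exact evaluation of $H_2(\cdot\,;r_2)$ on the expanded orbit; it is morally the same computation, with the boundary/divergence issues built out of the problem from the start.
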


This lemma is proven in Appendix~\ref{sec:appGammadelta}. 
It involves a Melnikov type calculation.

An immediate consequence of the lemma is that, for all $\delta>0$ sufficiently small, there is a maximal canard orbit exactly at $a_c(\delta)=1 - \frac{5 \eps}{48}\delta^2 + \mathcal{O}(\delta^3)$.
Moreover, the orbit is unique up to translations since  the ODEs are autonomous. 
Of all the canard orbits in the spatial ODE system \eqref{eq:spatialODE-y} that pass through a neighborhood of the cusp point, the maximal canard has the longest segments near the stable and unstable manifolds of the cusp point, all the way out to where $u=+\sqrt{3}$.
This critical value is the analog in spatial dynamics of the critical parameter value at which the slow attracting and repelling manifolds coincide to all orders, and hence the explosion of temporal limit cycle canards occurs, in planar fast-slow ODEs.

\subsection{Calculation of the center-unstable and center-stable manifolds for \texorpdfstring{ $0<\delta\ll 1$}{Lg}}
\label{sec:FraserRoussel}
In this section, we establish the following proposition: 
\begin{proposition} \label{prop:548}
The branches of the critical manifold corresponding to the level set $\mathcal G = \eps \left(\tfrac{2}{3}a-\tfrac{1}{4} \right)$ perturb to invariant slow manifolds. These slow manifolds are smooth to at least $\mathcal O(\delta^2)$ in $p$ and smooth to at least $\mathcal O(\delta^3)$ in $q$, for 
\begin{equation}
    \label{eq:prop548}
a_c(\delta) = 1 + a_2 \delta^2 + \mathcal O(\delta^3).
\end{equation}
for any real $a_2$.
\end{proposition}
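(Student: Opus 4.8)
\emph{Proof proposal.} The plan is to build the center-stable and center-unstable slow manifolds on the level set $\{\mathcal{G} = \eps(\tfrac23 a - \tfrac14)\}$ as graphs and to compute them recursively by the Fraser--Roussel (iterative) method, tracking the order in $\delta$ at which the construction first loses smoothness because of the fold $L^+$ \eqref{eq:foldcurves}. Since $\mathcal{G}$ of \eqref{eq:G} is a first integral of \eqref{eq:spatialODE-x}, it is efficient to work on the invariant hypersurface through $M^+$ from the start (recall $H_{\mathrm{desing}} = \eps(\tfrac23 a - \tfrac14)$ at $M^+$, cf.\ \eqref{eq:hatH} and Table~\ref{table:class}); there the slow manifolds are one-parameter families of orbits, so I would seek them as graphs over $u$ in the form $p = \delta\,\Pi(u;\delta,a)$, $v = V(u;\delta,a)$, $q = Q(u;\delta,a)$ subject to $\mathcal{G} = \eps(\tfrac23 a - \tfrac14)$. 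Substituting into \eqref{eq:spatialODE-x} gives the invariance relations $\delta^2\,\Pi\,\Pi_u = f(u) - V$, $\Pi\,V_u = Q$, and $\Pi\,Q_u = \eps(u-a)$, whose $\delta = 0$ solution is $p = 0$, $v = f(u)$, and $q = Q_0(u)$ the branch of the level curve $H_{\mathrm{desing}}(u,q;a) = \eps(\tfrac23 a - \tfrac14)$ of the desingularized reduced system \eqref{eq:desing-reduced} that limits onto $M^+$ (cf.\ Fig.~\ref{fig:phasespace-desingreduced}).

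Expanding $V = f(u) + \delta^2 V_2 + \cdots$, $\Pi = \Pi_0 + \delta^2\Pi_2 + \cdots$, $Q = Q_0 + \delta^2 Q_2 + \cdots$ — the relations involve $\delta$ only through $\delta^2$, so $p = \delta\Pi$ is odd in $\delta$ and $q$ is even, which already accounts for the absence of a $\delta^2$ term in $p$ and a $\delta^3$ term in $q$ — I would solve order by order. Where $f'(u) \ne 0$ each step inverts $f(u) = V + \mathcal{O}(\delta^2)$ by the implicit function theorem, and Fenichel theory \cite{Fenichel1979} together with canard theory \cite{Mitry2017,Szmolyan2001} upgrades the formal graphs to genuine smooth invariant center-stable/center-unstable manifolds on the normally hyperbolic portion of $S_s^+$. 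The hypothesis $a = a_c(\delta) = 1 + a_2\delta^2 + \mathcal{O}(\delta^3)$ enters only through the $a$-dependence of $Q_0$ and of $\eps(\tfrac23 a - \tfrac14)$, which is $\mathcal{O}(\delta^2)$; hence the leading data, and all corrections strictly below the claimed orders, are smooth and independent of $a_2$, and the conclusion holds for every $a_2 \in \mathbb{R}$.

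The principal obstacle — and the reason for the qualifier ``to at least'' — is the breakdown of these graphs as $u \to 1$: there $f'(u) \to 0$, the inversion of $f(u) = V + \mathcal{O}(\delta^2)$ develops a square-root singularity, and no uniform $\delta$-expansion to all orders can exist. To continue the slow manifolds through a neighborhood of $M^+$ I would pass to the blow-up of Section~\ref{sec:desingFSNII}: in the rescaling chart $K_2$ the unperturbed system \eqref{eq:r2zero} carries the \emph{exactly algebraic} solution $\Gamma_0$ of \eqref{eq:Gamma0}, and the hypothesis $a - 1 = \mathcal{O}(\delta^2)$ together with $\tilde\delta = r_2^2$ (so $r_2 \sim \sqrt\delta$) places us where $a_2 = \tilde a/(\sqrt\eps\,r_2^3) \to 0$, so the chart-$K_2$ flow \eqref{eq:ODE-K2} is $\mathcal{O}(r_2^2)$-close to \eqref{eq:r2zero}. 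The terms $\tfrac13\sqrt\eps\,r_2^2 u_2^3$ and $-r_2 a_2$ that break the exactness of $\Gamma_0$ translate, after undoing \eqref{eq:dynrescale-K2} and matching to the Fenichel graph via the transition maps of Section~\ref{sec:Gamm0-intoK1}, into corrections that first appear at $\mathcal{O}(\delta^2)$ in $p$ and $\mathcal{O}(\delta^3)$ in $q$; Lemma~\ref{lem:persistence} then confirms that the residual center-stable/center-unstable splitting is of the same order. I expect the delicate step to be not the order-by-order algebra but making the inner/outer matching quantitative enough to certify exactly the orders $\delta^2$ and $\delta^3$; the reversibility symmetry \eqref{eq:RF}, which is what forces the $\delta$-parity noted above, is the structural fact that keeps this bookkeeping manageable.
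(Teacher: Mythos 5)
Your first paragraph is, in substance, the paper's own argument: the proof is a Fraser--Roussel iteration for graphs over $u$ on the level set $\{\mathcal{G}=\eps(\tfrac23 a-\tfrac14)\}$ (the paper eliminates $q$ through the first integral and iterates on a 2-fast/1-slow system, but your three invariance relations are equivalent), and your parity observation --- the relations involve $\delta$ only through $\delta^2$, so for $a$ even in $\delta$ the graph has $p$ odd and $q$ even in $\delta$ --- is a nice structural explanation of facts the paper only obtains by computation, namely $p_2\propto a_1$ and $q_1=q_3=0$.

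The gap is that you have misplaced where the content of the statement lives. ``Smooth to at least $\mathcal O(\delta^2)$ in $p$'' does not mean ``the $\delta^2$ coefficient of $p$ vanishes''; it means every coefficient up to that order is nonsingular at the fold (at $u=0$ after translating the RFSN-II to the origin). Parity disposes of $p_2$, $q_1$, $q_3$, but the surviving coefficients $p_1$, $q_0$, $q_2$ are nonzero, and each involves a square root whose argument must carry enough factors of $u$ to avoid a pole or a sign jump at the fold: $q_0=\sqrt{2\eps}\,\bigl((u-a_0)f(u)-\widetilde f(u)\bigr)^{1/2}$ and $p_1=q_0/f'(u)$ are regular at $u=0$ precisely because $a_0=0$ (otherwise $p_1\sim\mathrm{sgn}(u)$), and $q_2$ happens to carry an explicit factor $\sqrt{\eps u}$ in its numerator. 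Checking this --- and seeing that the obstruction first becomes unavoidable at $p_3\propto u^{-1/2}$ (for every $a_2$) and at $q_4$ (removable only for $a_2=-\tfrac{5\eps}{48}$) --- is essentially the entire proof; you instead assert that ``the leading data, and all corrections strictly below the claimed orders, are smooth'' without verifying it. Your proposed substitute, continuing the manifolds through $M^+$ via the blow-up and certifying the orders $\delta^2$ and $\delta^3$ by inner/outer matching against $\Gamma_0$ and Lemma~\ref{lem:persistence}, is neither needed for this proposition nor carried out: in the paper those orders are read off directly from the pole structure of the explicit expansion coefficients at the fold, with no matching argument.
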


The proof uses the iterative scheme devised in \cite{Roussel1990} to calculate the invariant slow manifolds of \eqref{eq:spatialODE-x}, and determine the key parameter values for which the slow manifolds intersect. 
To do this, we follow the approach in \cite{B2013,ZS1984} in which the parameters are carefully chosen to remove poles in the expansions of the slow manifolds. 

\bigskip
{\bf Proof of Proposition~\ref{prop:548}.} 
We first translate the reversible FSN-II to the origin via the linear shift
\[ \tilde{u} = u-1, \quad \tilde{p} = p, \quad \tilde{v}=v+\tfrac{2}{3}, \quad \tilde{q} = q, \quad \tilde{a} = a-1. \]
Dropping the tildes, the dynamics are governed by 
\begin{equation} \label{eq:FR2fast2slow}
\begin{split}
    \dot u &= p \\ 
    \dot p &= f(u) - v \\ 
    \dot v &= \delta q \\ 
    \dot q &= \eps \delta (u-a)
\end{split}
\end{equation}
where $f(u) = u^2+\tfrac{1}{3}u^3$. In this formulation, the conserved quantity is 
\[ \widetilde{\mathcal{G}} = \eps(u-a)v - \eps \widetilde{f}(u) + \tfrac{1}{2}\eps p^2 - \tfrac{1}{2}q^2, \]
where $\widetilde{f}(u) = \tfrac{1}{3}u^3+\tfrac{1}{12}u^4$ is an antiderivative of $f$, and the level set $\mathcal{G} = \eps\left(\tfrac{2}{3}a-\tfrac{1}{4} \right)$ becomes $\widetilde{\mathcal{G}} = 0$. By eliminating the $q$-variable, we reduce the study of the 2-fast/2-slow system \eqref{eq:FR2fast2slow} with constraint $\widetilde{\mathcal{G}}=0$ to the study of the 2-fast/1-slow system 
\begin{equation} \label{eq:FR2fast1slow}
\begin{split}
    \dot u &= p \\ 
    \dot p &= f(u) - v \\ 
    \dot v &= \sqrt{2\eps}\, \delta\, \sqrt{(u-a)v-\widetilde{f}(u)+p^2}.
\end{split}
\end{equation}
Here, we focus on the positive root for $q$, and then obtain results for the negative root for $q$ by using the symmetry \eqref{eq:RF}. 

For the 2-fast/1-slow system \eqref{eq:FR2fast1slow}, the critical manifold is given by 
\[ S_0 = \left\{ p = 0 , v = f(u) \right\}  \]
with fold points at $(u,p,v)=(0,0,0)$ and $(u,p,v)=(-2,0,\tfrac{4}{3})$, and canard points at $(u,p,v,a)=(0,0,0,0)$ and $(u,p,v,a)=(-2,0,\tfrac{4}{3},-2)$. The canard point at the origin corresponds to the RFSN-II point of interest. 

We assume that the invariant slow manifold, $S_{\delta}$, has a graph representation in $u$, and that it can be expanded as a power series in $\delta$,
\[ S_{\delta} = \left\{ (u,p,v,q) : p = \sum_{k=0}^{\infty} p_k(u) \delta^k, \,\, v = \sum_{k=0}^{\infty} v_k(u) \delta^k, \,\, q = \sum_{k=0}^{\infty} q_k(u) \delta^k \right\} \]
We also expand the parameter $a$ as a power series
\[ a = \sum_{k=0}^{\infty} a_k \delta^k. \]
Substituting these expansions into \eqref{eq:FR2fast1slow} and equating coefficients of like powers of $\delta$, we find that the leading $\mathcal O(1)$ terms are given by 
\[ p_0 = 0, \quad v_0 = f(u), \quad  \text{ and } \quad q_0 = \sqrt{2\eps} \sqrt{(u-a_0)f(u)-\widetilde{f}(u)}, \]
corresponding to the critical manifold. Proceeding to the next terms in the expansion, we find that the $\mathcal{O}(\delta)$ terms are given by 
\[ p_1 = \frac{\sqrt{2\eps} \sqrt{(u-a_0)f(u)-\widetilde{f}(u)}}{f^\prime(u)}, \quad v_1 = 0, \quad \text{ and } \quad q_1 = 0. \]
To remove the singularity at $u=0$ in $p_1$, we set $a_0 = 0$ so that every term in the argument of the square root has a factor of $u^2$. In that case, the coefficients become
\[ p_1 = \frac{\sqrt{2\eps} \sqrt{u \left( \tfrac{1}{4}u + \tfrac{2}{3} \right)}}{2+u}, \quad v_1 = 0, \quad \text{ and } \quad q_1 = 0. \]

Continuing to higher order, we find that the $\mathcal{O}(\delta^2)$ terms are given by 
\[ p_2 = -\frac{a_1 \sqrt{\tfrac{2}{3}\eps} (3+u)}{\sqrt{u} (2+u) \sqrt{8+3u}}, \quad v_2 = -\frac{\eps(4+u)}{3(2+u)^3}, \quad \text{ and } \quad q_2 = -\sqrt{\eps u} \,\frac{4a_2(2+u)^3(3+u)-\eps(10+3u)}{2\sqrt{6} (2+u)^3 \sqrt{8+3u}}. \]
To eliminate the singularity at $u=0$ in $p_2$, we set $a_1 = 0$, so that 
\[ p_2 = 0, \quad v_2 = -\frac{\eps(4+u)}{3(2+u)^3}, \quad \text{ and } \quad q_2 = -\sqrt{\eps u} \,\frac{4a_2(2+u)^3(3+u)-\eps(10+3u)}{2\sqrt{6} (2+u)^3 \sqrt{8+3u}}. \]

The $\mathcal{O}(\delta^3)$ terms are given by 
\[ p_3 = \frac{\sqrt{\eps} \left( -12a_2(2+u)^5(3+u)+\eps(9u^3+54u^2+64u-40) \right)}{6\sqrt{6} (2+u)^6 \sqrt{u}\sqrt{8+3u}}, \quad v_3 = 0, \quad \text{ and } \quad q_3 = 0.  \]
Here, there is a singularity at $u=0$ in $p_3$, however, we choose not to enforce smoothness of the $p$-component of the slow manifold and leave $a_2$ free. 

Therefore, the proposition is proven, since $p$ is smooth up to and including  $\mathcal{O}(\delta^2)$ and $q$ up to and including $\mathcal{O}(\delta^3)$ for any choice of $a_2$. 

We add that the $\mathcal{O}(\delta^4)$ terms are given by 
\[ p_4 = -\sqrt{\frac{2\eps}{3}} \frac{a_3 (3+u)}{\sqrt{u}(2+u)\sqrt{8+3u}}, \quad v_4 = \eps \frac{P_6(u)}{3(2+u)^8}, \quad \text{ and } \quad q_4 = -\frac{\sqrt{\eps} P_{12}(u)}{24 \sqrt{6}
   (u+2)^8 u^{3/2} (8+3u)^{3/2}}.  \]
where $P_6(u)$ and $P_{12}(u)$ are, respectively, the $6^{\rm th}$ and $12^{\rm th}$ order polynomials
\begin{equation*}
\begin{split}
P_6(u) &= -a_2(2+u)^5(4+u)+\eps(3u^3+18u^2+14u-34) \\ 
P_{12}(u) &= 24 \eps a_2 \left(9 u^3+61 u^2+110 u+32\right) (u+2)^5 + 48 a_2^2 u (u+3)^2 (u+2)^8 \,\, + \\ 
& \qquad 48 a_4
   u^2 \left(3 u^2+17 u+24\right) (u+2)^8- \eps^2 \left(513 u^5+4608 u^4+12168 u^3+5120 u^2-12400 u-2560\right)
\end{split}
\end{equation*}
To cancel one factor of $u$ in the denominator of $q_4$, we must set $a_2 = -\frac{5\eps}{48}$. 
Thus, upon reverting to the original coordinates and parameters, Fraser-Roussel iteration shows that in order for the slow manifolds to be smooth to at least $\mathcal O(\delta^2)$ in $p$ and smooth to at least $\mathcal O(\delta^3)$ in $q$, the parameters must satisfy
\[ a = 1 - \frac{5\eps}{48}\delta^2 + \mathcal O(\delta^3).\]
consistent with the values obtained from the Melnikov analysis, recall \eqref{eq:ac}. \hfill $\square$
\medskip

\begin{remark}
On the level set $\mathcal G = \eps \left(\tfrac{2}{3}a-\tfrac{1}{4} \right)$, one can alternatively get smoothness to at least $\mathcal O(\delta^3)$ in $p$ and to at least $\mathcal O(\delta^3)$ in $q$ for 
\begin{equation}
\label{eq:ac2} 
a_{c2}(\delta) = 1 - \frac{5\eps}{144} \delta^2 + \mathcal O \left( \delta^3 \right). 
\end{equation}
This follows from the calculations in the proof of Proposition~\ref{prop:548}. At $\mathcal O(\delta^3)$, the pole at $u=0$ in $p_3$ is removed by seeking a value for $a_2$ such that the numerator in the expression for $p_3$ has $u$ as a factor, namely $a_2 = -\tfrac{5\eps}{144}$. The lack of smoothness in the singular limit is visible in the $(u,p)$ and $(u,q)$ projections for small $\delta)$, with corners and cusps (in the projections) developing. There appears to be a discrete set of critical values of $a$ marking the transitions between solutions with different numbers of loops, and different numbers of points of nonsmoothness in the limit of small $\delta$. \end{remark}

\section{Analyzing the Geometry of the Spatial Canards} \label{sec:Turingspatialcanards}

In this section, we use the analytical results from  Sections~\ref{sec:fast}--\ref{sec:desingFSNII},  for the analyses of the fast system, the desingularized reduced system, and the desingularization of the RFSN-II singularity, respectively, to deconstruct and understand the spatially periodic canard solutions.
This analysis reveals how the RFSN-II point at $a=a_T$  and its canards, as identified in Section~\ref{sec:slow}, are responsible for the creation of these spatial canard solutions.  
The geometry of small-amplitude spatial canards is analyzed in Section~\ref{subsec:Turingspatialcanards}, and that of  large-amplitude spatial canards in Section~\ref{subsec:singularLAO}.

\subsection{Geometry of the Small-Amplitude Spatial Canards} \label{subsec:Turingspatialcanards}
We present the geometry of small-amplitude spatially periodic canards.
For illustration, we use the solution shown in Fig.~\ref{fig:deconstructSAO}, and we recall also the representative small-amplitude canards in Figs.~\ref{fig:kOrderOne_SAO} and \ref{fig:smallamp-selfsimilar}. 

\begin{figure}[h!]
  \centering
  \includegraphics[width=\textwidth]{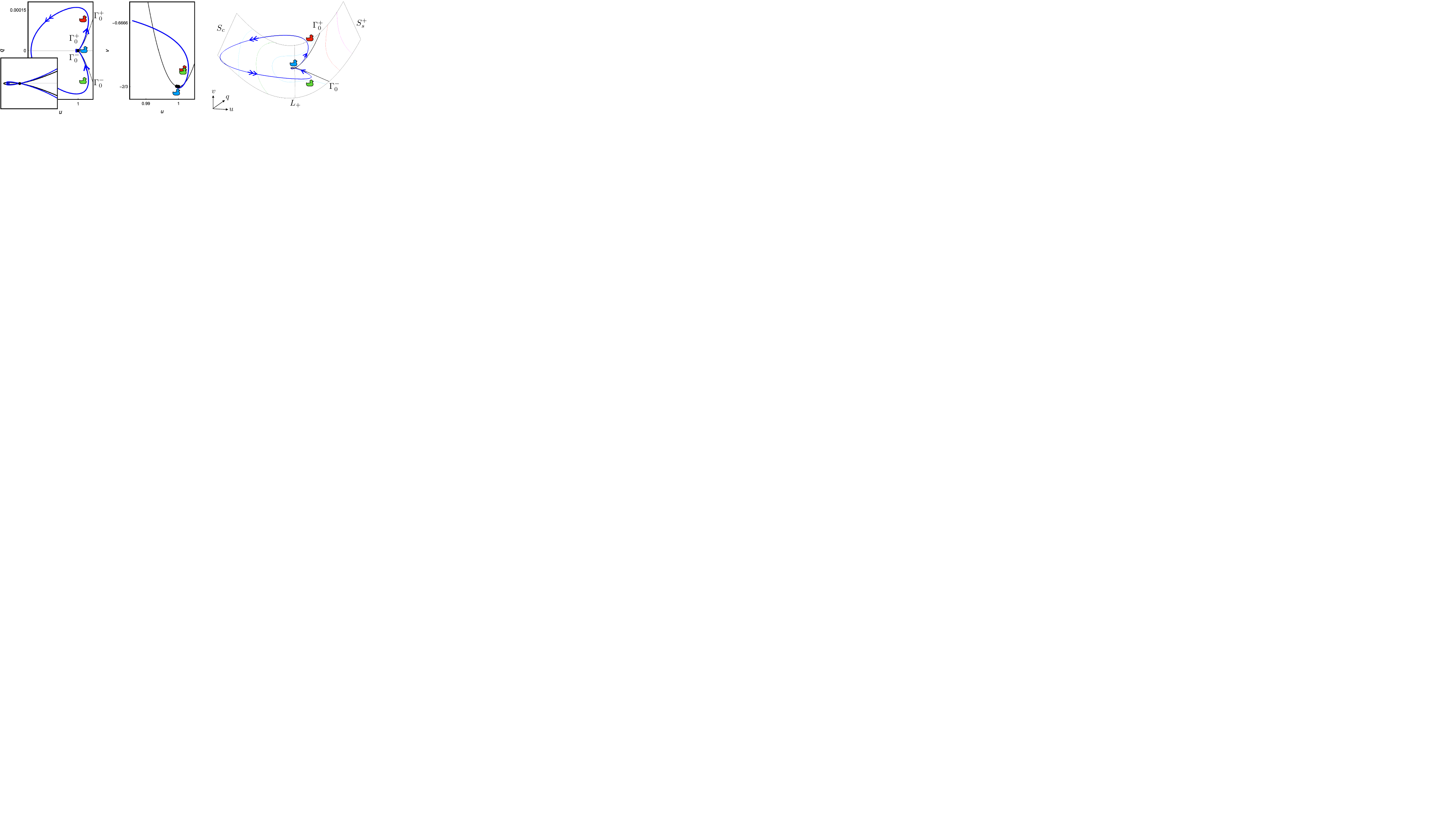}
  \put(-470,136){(a)}
  \put(-336,136){(b)}
  \put(-204,136){(c)}
  \caption{Geometric deconstruction of a small-amplitude, canard solution with spatial period $T=500$ and wavenumber $k=\frac{2\pi}{\delta T}=\frac{2\pi}{5} \approx 1.257$, for $a=0.999433, \eps = 0.1$, and $\delta = 0.01$. This is the same solution shown in Fig.~\ref{fig:introspatialperiodic}. 
  Three complementary views: (a) projection onto the $(u,q)$ plane, showing the true canard $\Gamma_0^+$ and faux canard $\Gamma_0^-$ of the folded saddle; (b) projection onto the $(u,v)$ plane; and (c) projection into the $(u,v,q)$ space. 
  The colored contours on $S$ correspond to different values of the conserved quantity $\mathcal{G}$. The black contours correspond to the level set $\{ \mathcal{G}=\eps \left(\tfrac{2}{3}a-\tfrac{1}{4} \right) \}$ and they contain the true and faux canards, $\Gamma_0^{\pm}$. 
  Note that, while the parameters are the same here as in Fig.~\ref{fig:smallamp-selfsimilar}, the wavenumbers are vastly different. Here, the solution only makes two nested, small loops about the folded saddle (as shown in the inset in (a)), and hence the wavenumber is still $\mathcal{O}(1)$. In contrast, the solution in Fig.~\ref{fig:smallamp-selfsimilar} makes many nested, successively smaller loops and has an asymptotically small wavenumber.
  }
  \label{fig:deconstructSAO}
\end{figure}

In the singular limit, the small-amplitude orbit consists of the following three segments, all confined to the level set $\left\{ \mathcal{G} = \eps \left( \tfrac{2}{3}a-\tfrac{1}{4} \right) \right\}$: 
\begin{itemize}
\item A short slow segment from the folded saddle at $(u,p,v,q) = (1,0,-\tfrac{2}{3},0)$  (blue ducky) that closely follows the faux canard, $\Gamma_0^+$, of the folded saddle up to some nearby $v$-value (near the red ducky).

\item A fast segment following a homoclinic solution (see Proposition~\ref{prop:fasthomoclinics}) of the layer problem until it returns to a neighborhood of $S_s^+$ (near the green ducky). 

\item A short slow segment that closely follows the true canard, $\Gamma_0^-$, of the folded saddle until it returns to the folded saddle. 
\end{itemize}
In this manner, the singular solutions  of small-amplitude, spatially periodic canards with $\mathcal{O}(1)$ wavenumbers are completely understood through the dynamics of the RFSN-II singularity, its true and faux canards, and some invariant manifold theory, following the results of the analysis in Sections~\ref{sec:fast}--\ref{sec:desingFSNII} .
The geometry of other small-amplitude spatially periodic canard solutions with $\mathcal{O}(1)$ wavenumbers, such as that shown in Fig.~\ref{fig:kOrderOne_SAO}, is similar. 

The geometry of small-amplitude canards with asymptotically small wavenumbers may also be understood in a similar manner. 
These are very close in profile to the small-amplitude canards with $\mathcal{O}(1)$ wavenumbers over most of the period, but they also exhibit nested, successively smaller, nearly  self-similar twists (loops in certain projections) that are confined to the neighborhood of the equilibrium and the folded saddle
(recall for example the solution shown in Fig.~\ref{fig:smallamp-selfsimilar}).
These nearly self-similar loops arise for small $\delta$ due to scale invariance of the zero level of the Hamiltonian $H_2$ in the rescaling  chart $K_2$.

\subsection{Geometry of the Large-Amplitude Spatial Canards}
\label{subsec:singularLAO}

In this section, we use the analytical results 
derived in Sections~\ref{sec:fast}--\ref{sec:desingFSNII}, 
about the fast system, the desingularized reduced system, and the blowup analysis, to fully deconstruct the large-amplitude, spatially periodic canard solutions of \eqref{eq:spatialODE-x}. 
We choose the solution from Fig.~\ref{fig:kOrderDelta_LAO} for the deconstruction, and we review some of the main properties of that solution again in  Fig.~\ref{fig:deconstructLAO}.

\begin{figure}[h!]
  \centering
  \includegraphics[width=\textwidth]{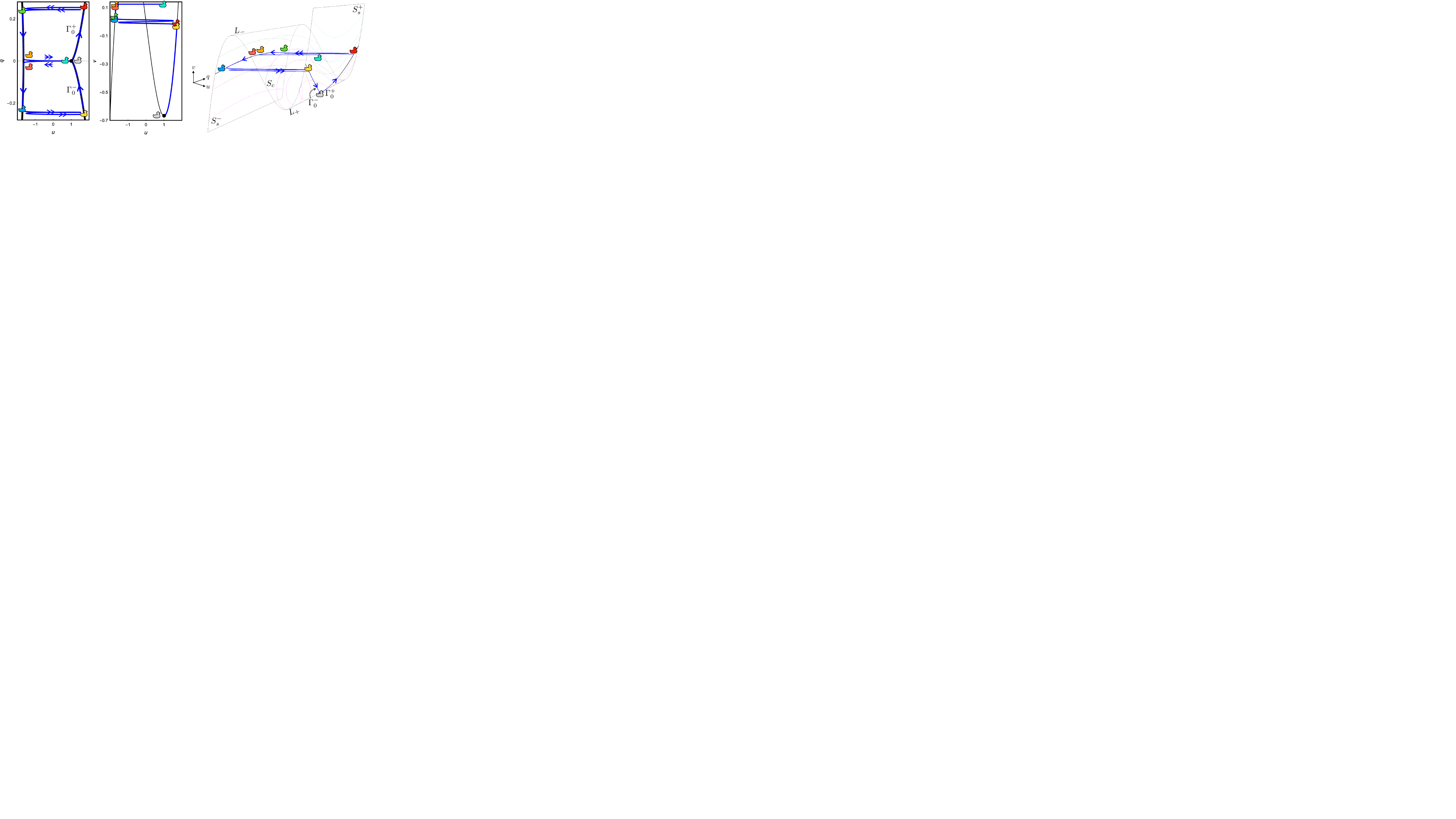}
  \put(-468,162){(a)}
  \put(-354,162){(b)}
  \put(-232,162){(c)}
  \caption{Geometric deconstruction of the large-amplitude, small-wavenumber solution from Fig.~\ref{fig:kOrderDelta_LAO}. Three complementary views: (a) projection onto the $(u,q)$ plane, showing the true canard $\Gamma_0^+$ and faux canard $\Gamma_0^-$ of the folded saddle; (b) projection onto the $(u,v)$ plane; and (c) projection onto the $(u,v,q)$ space. The colored contours on $S$ correspond to different values of the conserved quantity $\mathcal{G}$. The black contours correspond to the level set $\{ \mathcal{G}=\eps \left(\tfrac{2}{3}a-\tfrac{1}{4} \right) \}$ and they contain the true and faux canards, $\Gamma_0^{\pm}$.}
  \label{fig:deconstructLAO}
\end{figure}

The singular limit solution is obtained by concatenating orbit segments from the fast and slow subsystems, \eqref{eq:H-fast} and \eqref{eq:desing-reduced}. 
The singular orbit consists of seven segments, all of which are constrained to the level set $\left\{ \mathcal{G} = \eps \left( \tfrac{2}{3}a-\tfrac{1}{4} \right) \right\}$. 
For convenience, we denote the level set $\left\{ \mathcal{G} = \eps \left( \tfrac{2}{3}a-\tfrac{1}{4} \right) \right\}$ restricted to orbits of the layer problem by $\mathcal{G}_{\rm fast}$ and we denote the level set $\left\{ \mathcal{G} = \eps \left( \tfrac{2}{3}a-\tfrac{1}{4} \right) \right\}$ restricted to the slow flow on $S$ by $\mathcal{G}_{\rm slow}$.

\begin{itemize}
\item Slow segment on $S_s^+$ along the faux canard $\Gamma_0^+$ from the folded saddle at $(u,p,v,q)=(1,0,-\tfrac{2}{3},0)$ (grey ducky) to the upper right corner point (red ducky) at
\[ (u,p,v,q)=\left( \sqrt{3},0,0,\sqrt{\tfrac{2}{3}\eps(3-2a)} \right). \]
(For $a = 0.998512$, the upper right corner point is located at $(u,p,v,q) \approx (1.7321,0,0,0.2586)$.) 

\item Fast jump along the heteroclinic orbit in the transverse intersection $W^u(S_s^+) \cap W^s(S_s^-)$ in $\left\{ v = 0\right\}$ (see Proposition~\ref{prop:jumpcondition}) from the take off point at the upp[er right corner (red ducky) to the touch down point at the upper left corner (green ducky) at 
\[ (u,p,v,q)=\left( -\sqrt{3},0,0,\sqrt{\tfrac{2}{3}\eps(3-2a)} \right). \] This heteroclinic consists of three segments that occur in rapid succession, one from a neighborhood of $S_s^+$ to near $S_s^-$, followed by another back to the neighborhoof of $S_s^+$, and then the third is forward asymptotic to $S_s^-$; it lies in a secondary intersection of the two invariant manifolds.  
(For $a = 0.998512$, this corner point is located at $(u,p,v,q) \approx (-1.7321,0,0,0.2586)$.)
\item Slow segment on $S_s^-$ from the touch down point (green ducky) to the local maximum (in $u$) of the $\mathcal{G}_{\rm slow}$ contour (gold ducky) at 
\[ (u,p,v,q) = \left( u_-,0,f(u_-),0 \right), \] 
where $u_- = \tfrac{1}{3} \left( 2a-3-2\sqrt{a^2+3a} \right)$. (For $a = 0.998512$, this gold ducky is located at $(u,p,v,q) \approx (-1.6664,0,0.1239,0)$.)  
\item Fast jump consisting of the homoclinic orbit of the layer problem (see Proposition~\ref{prop:fasthomoclinics}) from the take off point (gold ducky) to the local maximum of the homoclinic (cyan ducky) 
and then to the touch down point on $S_s^-$ at $(u,p,v,q) = \left( u_-,0,f(u_-),0 \right)$ (magenta ducky).  
\item Slow segment on $S_s^-$ from the touch down point (magenta ducky) until the solution reaches the $\left\{ v = 0 \right\}$ hyperplane at 
$ (u,p,v,q)=\left(-\sqrt{3},0,0,-\sqrt{\tfrac{2}{3}\eps(3-2a)} \right) $
(blue ducky).
\item Fast jump in the transverse intersection $W^u(S_s^-) \cap W^s(S_s^+)$ in $\left\{ v = 0 \right\}$ (see Proposition~\ref{prop:jumpcondition}) from the take off point (blue ducky) to the touch down point at the lower right corner (yellow ducky) on $S_s^+$ at 
\[ (u,p,v,q) = \left(\sqrt{3},0,0,\sqrt{\tfrac{2}{3}\eps(3-2a)} \right) \]
This heteroclinic consists of three segments in rapid succession and lies in a secondary intersection of the manifolds.
(For $a = 0.998512$, the lower right corner point is located at $(u,p,v,q) \approx (1.7321,0,0,-0.2586)$.)
\item Slow segment on $S_s^+$ along the true canard, $\Gamma_0^-$, of the RFSN-II point
from the touch down point (yellow ducky) up to the cusp  (grey ducky), thus completing the singular cycle. 
\end{itemize}

\bigskip

\begin{remark}
For the solution shown in Fig.~\ref{fig:deconstructLAO}, the connection from $S_s^+$ to $S_s^-$ (and vice versa) is a secondary heteroclinic and features additional large-amplitude spikes not present in  the singular limit. 
We observed other large-amplitude, small wavenumber solutions without additional spikes that  correspond to primary heteroclinics. 
The deconstruction of such solutions is similar to that presented above. 
In addition, we observed solutions that have more of these spikes along the outer edges of the orbit.   \end{remark}

\section{Isolas of Canard-Induced Spatial Periodics}
 \label{sec:isolas}

In this section, we employ many of the results derived in Sections~\ref{sec:fast}-\ref{sec:desingFSNII}, about the fast/layer problem, the desingularized reduced vector field, and the geometry of the RFSN-II singularity, to study the bifurcations of the spatially periodic canard solutions along a representative isola in the $(a,k)$ parameter plane (see Fig.~\ref{fig:isola}). 
The branches of the isola have been color coded (green, orange, blue, and red) according to the nature of the spatially periodic solutions that lie on each segment.

\begin{figure}[h!]
  \centering
  \includegraphics[width=5in]{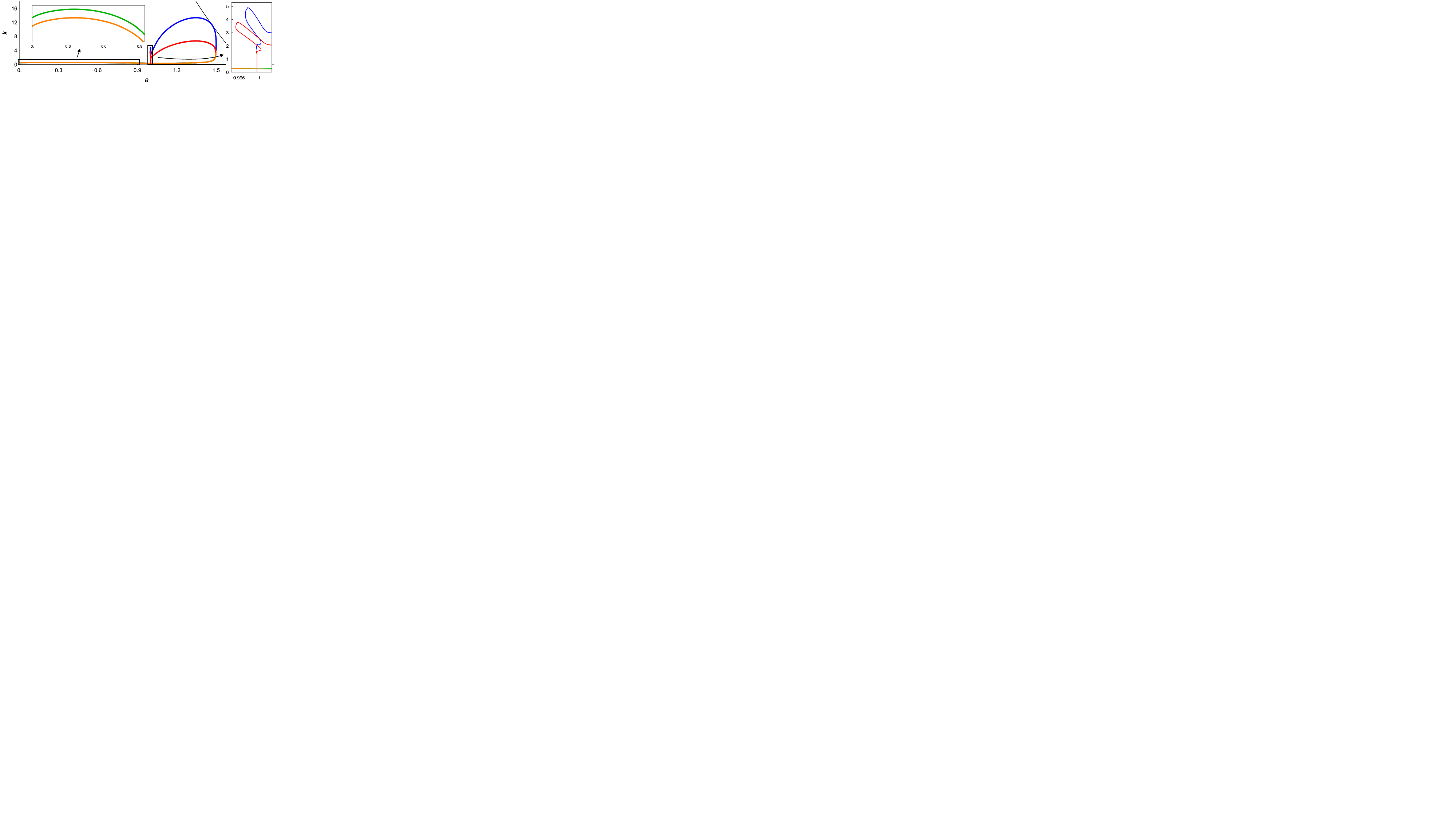}
  \caption{Representative isola of spatially periodic solutions shown in the $(a,k)$ parameter plane, along the $\left\{ \mathcal{G} = \eps \left(\tfrac{2}{3}a-\tfrac{1}{4}\right)  \right\}$ level set. Left inset: zoom on the segments near $k = 0$. Right inset: zoom on the region where the isola emanates/terminates.}
  \label{fig:isola}
\end{figure}

\subsection{Growth of canard segments} \label{subsec:growslow}
We analyze the growth of the canard segments for solutions on the green branch of the isola (see Fig.~\ref{fig:transition_growslow}(a)). 
These solutions are four-stroke, spatial relaxation cycles. 

\begin{figure}[h!]
  \centering
  \includegraphics[width=\textwidth]{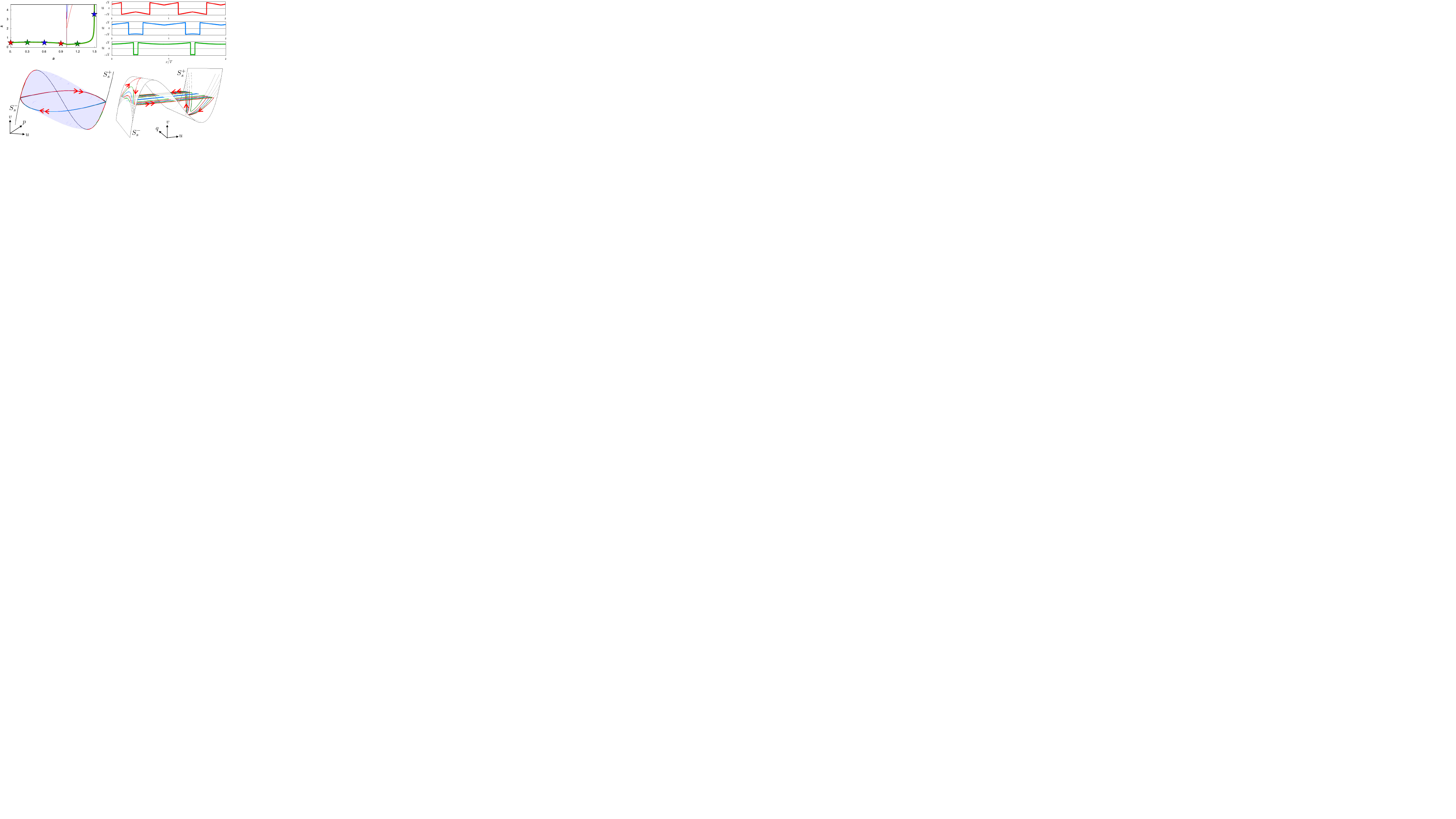}
  \put(-468,284){(a)}
  \put(-264,284){(b)}
  \put(-468,145){(c)}
  \put(-220,145){(d)}
  \caption{Four-stroke, spatial relaxation cycles along the green branch  of the isola.
  The closer to $a=0$ along this branch, the closer the slow segment on $S_s^-$ with $q>0$ is to the true canard of the folded saddle at $(u,p,v,q)=(-1,0,\tfrac{2}{3},0)$, and similarly the closer the slow segment on $S_s^-$ with $q<0$ is to the faux canard of the folded saddle.
  (a) Branch of the isola along which the length of the canard segments grow. 
  (b) Spatial profiles of $u(\hat{x})$ with $a=0, 0.6, 1.2$.
  (c) Projection into the $(u,p,v)$ space showing that the fast jumps are mediated by the fast subsystem heteroclinic. 
  (d) Projection into the $(u,q,v)$ space showing that the slow segments on $S_s^{\pm}$ are constrained to the $\left\{ \mathcal{G} = \eps \left( \tfrac{2}{3}a-\tfrac{1}{4} \right) \right\}$ level sets (thin, black curves). 
  }
  \label{fig:transition_growslow}
\end{figure}

As illustrated in Fig.~\ref{fig:transition_growslow}(c) and (d), in the singular limit, the relaxation cycles consist of:  
\begin{itemize}
\item A slow segment on $S_s^+$ that starts at the touchdown point $(u,p,v,q)=(\sqrt{3},0,0,-\sqrt{2\eps(1-\tfrac{2}{3}a)})$, flows down the critical manifold toward the fold set $L^+$ to the turning point (local minimum) at $(u,p,v,q) = (u_{\rm turn},0,\tfrac{1}{3}u_{\rm turn}^3-u_{\rm turn},0)$ where $u_{\rm turn} = \tfrac{2}{3}a-1+\tfrac{2}{3}\sqrt{a^2+3a}$, and then flows back up the critical manifold to the takeoff point at $(u,p,v,q)=(\sqrt{3},0,0,\sqrt{2\eps(1-\tfrac{2}{3}a)})$.
\item A fast jump at $v = 0$ corresponding to the heteroclinic of the layer problem that takes the orbit from the takeoff point at $(u,p,v,q)=(\sqrt{3},0,0,\sqrt{2\eps(1-\tfrac{2}{3}a)})$ to the touchdown point at $(u,p,v,q)=(-\sqrt{3},0,0,\sqrt{2\eps(1-\tfrac{2}{3}a)})$. 
\item A slow segment that flows along $S_s^-$ from the touchdown point at $(u,p,v,q)=(-\sqrt{3},0,0,\sqrt{2\eps(1-\tfrac{2}{3}a)})$ up to the turning point (local maximum) at $(u,p,v,q) = (u_{\rm turn},0,\tfrac{1}{3}u_{\rm turn}^3-u_{\rm turn},0)$ where $u_{\rm turn} = \tfrac{2}{3}a-1-\tfrac{2}{3}\sqrt{a^2+3a}$, and then down to the takeoff point at $(u,p,v,q)=(-\sqrt{3},0,0,-\sqrt{2\eps(1-\tfrac{2}{3}a)})$. 
\item Another fast jump at $v = 0$ corresponding to the heteroclinic of the layer problem that takes the orbit from the takeoff point at $(u,p,v,q)=(-\sqrt{3},0,0,-\sqrt{2\eps(1-\tfrac{2}{3}a)})$ to the touchdown point at $(u,p,v,q)=(\sqrt{3},0,0,-\sqrt{2\eps(1-\tfrac{2}{3}a)})$.
\end{itemize}

At the right edge of this branch, there is a    
saddle-node bifurcation near $a \approx 1.5$ (Fig.~\ref{fig:transition_growslow}(a)), which marks the transition to the blue branch.
As the parameter $a$ is decreased toward $a=1$, the slow segment on $S_s^+$ with $q<0$ converges to the true canard of the folded saddle-node, and the slow segment on $S_s^+$ with $q>0$ converges to the faux canard of the folded saddle-node. 
Then, as $a$ is further decreased along this branch to $a=0$, the slow segment on $S_s^-$ with $q>0$ grows toward the true canard of the folded saddle at $(u,p,v,q)=(-1,0,\tfrac{2}{3},0)$, and the slow segment on $S_s^-$ with $q<0$ approaches the faux canard of the folded saddle.

\subsection{Spike formation} \label{subsec:spikeformation}
In this section, we analyze the sequence of canards that exhibit spike formation along the orange branch of the isola (see  Fig.~\ref{fig:transitionspikeformation} (a)). 
The solutions here consist of large-amplitude cycles with spikes initiated on $S_s^-$. The geometry of these solutions is similar to that of the solution presented in Section~\ref{subsec:singularLAO}.  

\begin{figure}[h!]
  \centering
  \includegraphics[width=\textwidth]{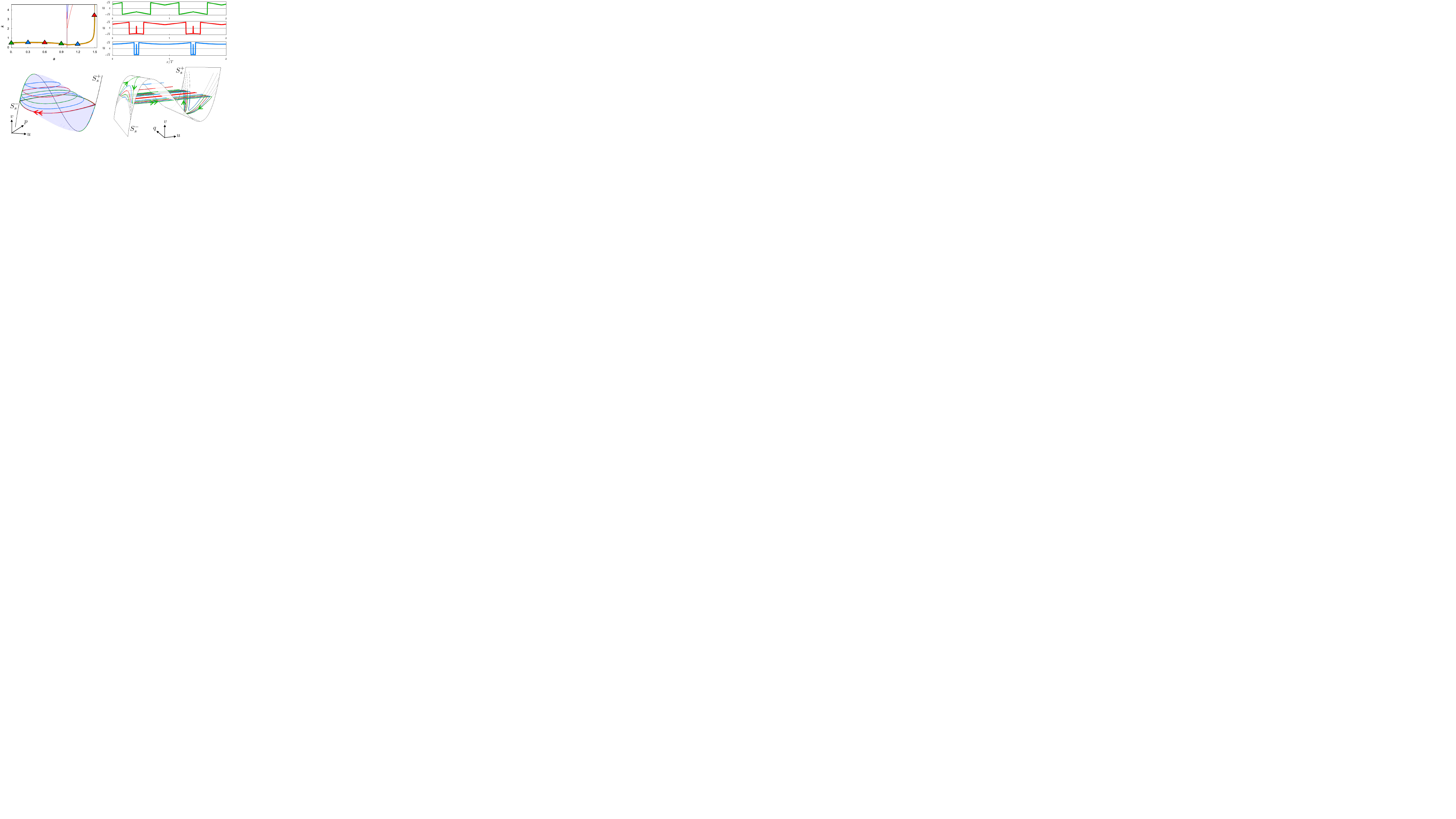}
  \put(-468,282){(a)}
  \put(-264,282){(b)}
  \put(-468,140){(c)}
  \put(-242,140){(d)}
  \caption{Spike formation in four-stroke, spatial  relaxation cycles.
   (a) Branch (orange) ranch of the isola along which the spikes grow. 
  (b) Spatial profiles of $u(\hat{x})$ with $a=0, 0.6, 1.2$.
  (c) Projection into the $(u,p,v)$ space showing that the outer fast jumps are mediated by the fast subsystem heteroclinics and that the internal spikes are created by the fast system homoclinics to $S_s^-$, starting up at the left RFSN-II point on $L^-$. 
  (d) Projection into the $(u,q,v)$ space showing that the slow segments on $S_s^{\pm}$ are constrained to the $\left\{ \mathcal{G} = \eps \left( \tfrac{2}{3}a-\tfrac{1}{4} \right) \right\}$ level sets (thin, black curves). 
  }
  \label{fig:transitionspikeformation}
\end{figure}

At $a=0$ (left green triangle), the solution is a four-stroke, spatial relaxation cycle, with slow segments given by the true and faux canards of the FS points on $L^\pm$, and fast jumps in $\{ v=0 \}$.
Then, moving left to right along the orange branch, as the value of $a$ is increased from $a=0$, an extra spike forms at the local maximum of the slow segment on $S_s^-$. 
It is a given to leading order by a homoclinic of the fast subsystem. 
The size (in the $(u,p)$ projection) of the extra spike increases with $a$, whilst the $v$-height at which it occurs decreases away from $v = \tfrac{2}{3}$. 
At the saddle-node bifurcation at $a \approx 1.5$, the $v$-level of the fast jump is at $v = 0$, the fast jump corresponds to the heteroclinic of the layer problem  
(recall Section~\ref{subsec:fastheteroclinics}), and the slow segment on $S_s^-$ has vanished. 
We also note that at the other end, there is a saddle-node bifurcation near $a \approx -0.0002$, where the orange branch transitions to the green branch

\subsection{Two-stroke relaxation cycles} \label{subsec:2to4stroke}
In this section, we analyze the solutions along the blue branch of the isola (Fig.~\ref{fig:transition_2to4stroke}(a)).
This branch emanates from one of the homoclinic orbits ($k=0$) near $a=1$. 
The homoclinic solution has small amplitude and is  centered on the steady state. Continuation of the solution away from the homoclinic limit causes the wavenumber to increase to a value close to the critical Turing value $k_T$. As this occurs, the corresponding spatially periodic solution remains small amplitude and confined to a small neighborhood of the fold $L^+$.

\begin{figure}[h!]
  \centering
  \includegraphics[width=\textwidth]{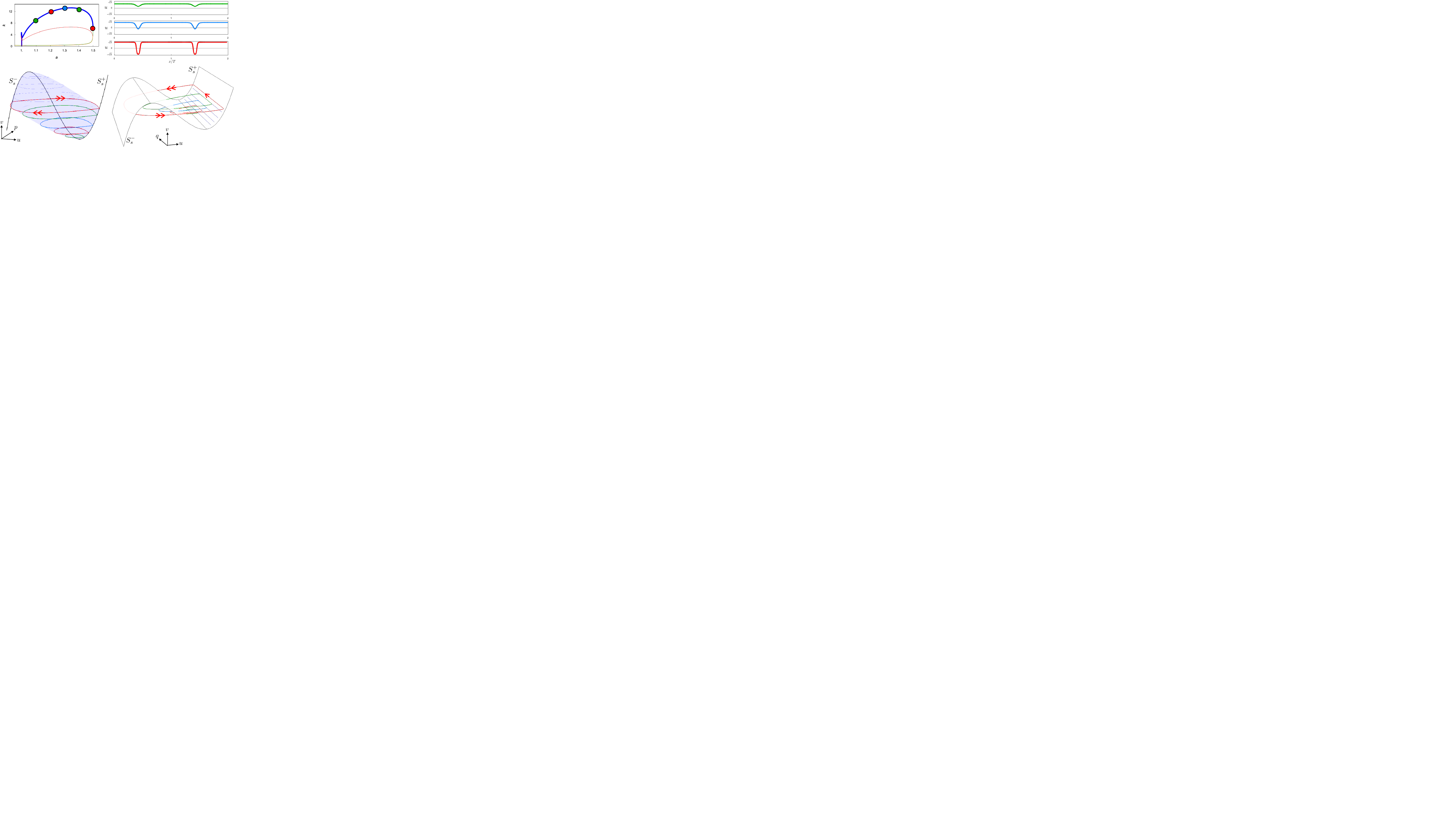}
  \put(-468,292){(a)}
  \put(-268,292){(b)}
  \put(-468,140){(c)}
  \put(-242,140){(d)}
  \caption{(a) Branch (blue) of the isola corresponding to two-stroke relaxation oscillations. 
  (b) Spatial profiles $u({\hat x})$ of the solutions for $a=1.1, 1.3, 1.5$. 
  (c) Projection of the solutions in the $(u,p,v)$ space and (d) in the $(u,q,v)$ space. 
  The $v$-level at which the fast jump occurs increases from $v = -\tfrac{2}{3}$ at $a\approx 1$ up to $v = 0$ at $a \approx 1.5$.}
  \label{fig:transition_2to4stroke}
\end{figure}

As the value of $a$ is increased along this branch, the solutions develop into two-stroke relaxation oscillations that consist of a slow segment on $S_s^+$ and a fast (near homoclinic) jump. 
The slow segment on $S_s^+$ starts at a `touchdown' point at $(u,p,v,q) = (u_{\rm touchdown}(v),0,v,q_{\rm touchdown}(v))$, where 
\begin{equation*} 
  \begin{split}
    u_{\rm touchdown}(v) &= \frac{2^{1/3}}{(3v+\sqrt{9v^2-4})^{1/3}} + \frac{(3v+\sqrt{9v^2-4})^{1/3}}{2^{1/3}}, \\ 
    q_{\rm touchdown}(v) &= -\sqrt{\eps} \sqrt{\tfrac{2}{3}a-\tfrac{1}{4}-au_{\rm touchdown}+\tfrac{1}{2}u_{\rm touchdown}^2+\tfrac{1}{3}au_{\rm touchdown}^3-\tfrac{1}{4}u_{\rm touchdown}^4},
  \end{split}
\end{equation*}
for $v \in (-\tfrac{2}{3},0)$. It flows down the $\mathcal{G}=\eps (\tfrac{2}{3}a-\tfrac{1}{4})$ contour to the turning point $(u,p,v,q) = (u_{\rm turn},0,v_{\rm turn},0)$, where 
\[ \tfrac{1}{3} u_{\rm turn}^3 - u_{\rm turn} = v_{\rm turn} = \frac{2}{81}(2a-3)(2a-3+2\sqrt{a^2+3a})(2a+3+2\sqrt{a^2+3a}). \]
Then, it flows back up the $\mathcal{G}=\eps (\tfrac{2}{3}a-\tfrac{1}{4})$ contour to the `takeoff' point at $(u,p,v,q) = (u_{\rm takeoff}(v),0,v,q_{\rm takeoff}(v))$, where $u_{\rm takeoff} = u_{\rm touchdown}$ and $q_{\rm takeoff} = - q_{\rm touchdown}$.
At the takeoff point, a fast jump is initiated which connects the takeoff point to the touchdown point via the homoclinic of the layer problem that exists at that value of $v$ (see Proposition~\ref{prop:fasthomoclinics}). 

When the blue branch of the isola reaches the saddle-node bifurcation at $a \approx 1.49881$, the $v$-level at which the fast jump occurs is $v = 0$,  and the homoclinic jump mechanism switches to a heteroclinic jump mechanism.

\subsection{Two-stroke double-loop cycles} \label{subsec:doubleloop}
Finally, we analyze solutions along the red branch of the isola, which emerges from the orange branch  at the saddle-node bifurcation near $a \approx 1.5$. 
The solutions on the red branch of the isola are two-stroke relaxation oscillations, like those in Section~\ref{subsec:2to4stroke}. However, the fast segments of these orbits consist of two near-homoclinic cycles. 

\begin{figure}[h!]
  \centering
  \includegraphics[width=\textwidth]{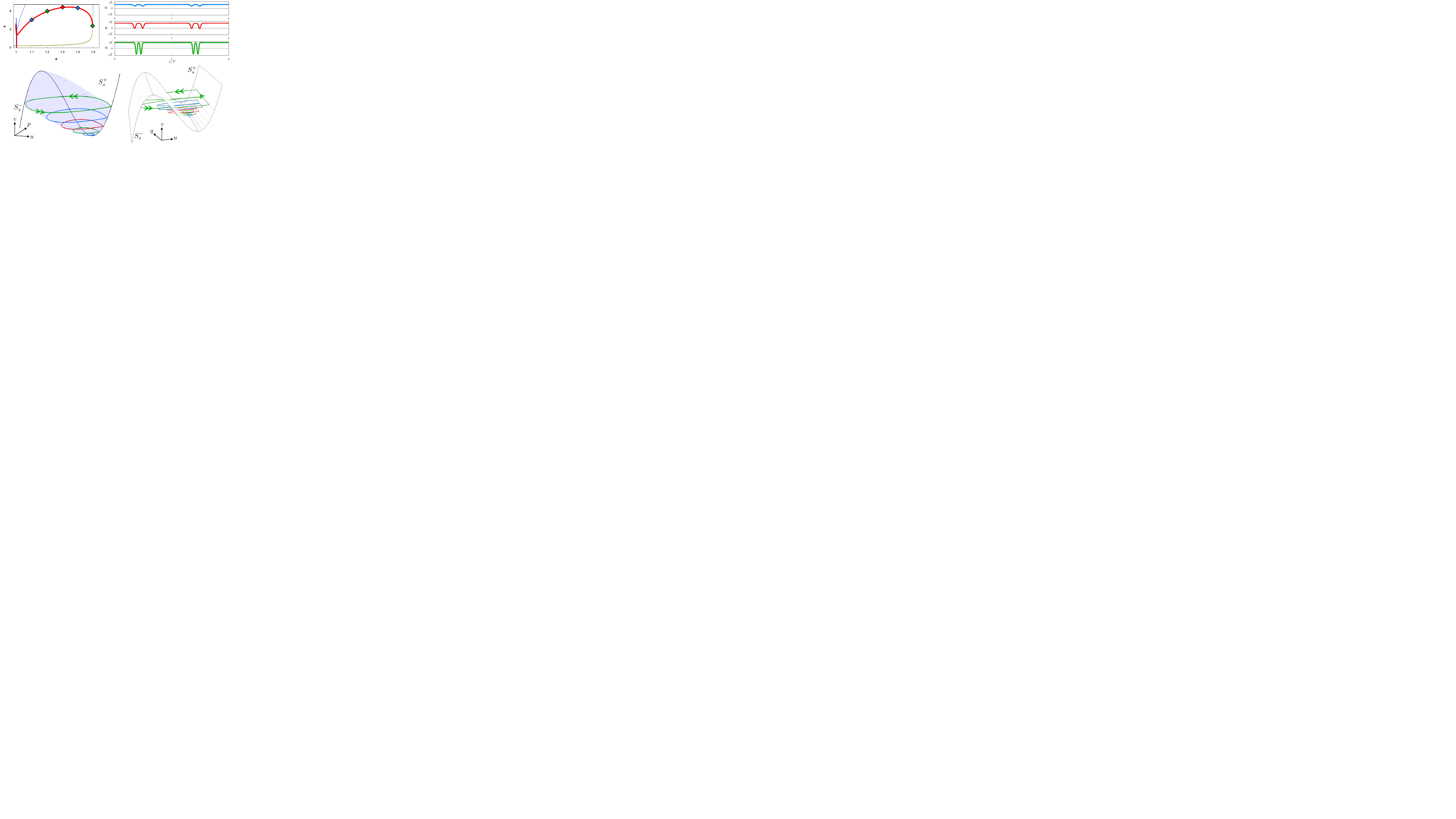}
  \put(-468,292){(a)}
  \put(-264,292){(b)}
  \put(-468,140){(c)}
  \put(-214,140){(d)}
  \caption{
  (a) Branch (red) of the isola corresponding to two-stroke double-loop relaxation cycles.  These solutions have $\mathcal{O}(1)$ values of $k$, in an interval about $k_T$. 
  (b) Spatial profiles $u({\hat x})$ of the solutions for $a=1.1, 1.3, 1.5$.
  Solutions corresponding to the colored markers are shown (c) in the $(u,p,v)$ space and (d) in the $(u,q,v)$ space. 
  }
  \label{fig:transition_doubleloop}
\end{figure}

As the parameter $a$ is decreased along this branch  away from the saddle-node bifurcation near $a\approx 1.5$, the $v$-level at which the fast jump occurs decreases,  and the amplitudes of the double-loop homoclinics shrink. 
Then, near $a=1$, the red branch of the isola becomes nearly vertical. Along this nearly vertical  part of the isola, the amplitudes of the solutions are very small, and the solution becomes homoclinic to the equilibrium near the fold $L^+$ (still with small amplitude). 

\section{The nearly self-similar dynamics of some spatial canards}
\label{sec:selfsimilar}

In this section, we discuss some of the spatial canard solutions that exhibit nearly self-similar dynamics. Recall for example the canards shown in Figs.~\ref{fig:smallamp-selfsimilar} and \ref{fig:kOrderDelta_LAO}. First, we study self-similarity in the equations in the rescaling chart, \eqref{eq:ODE-K2}, and then we study it in the full fourth order system \eqref{eq:spatialODE-y}.

In the coordinate chart $K_2$, the nearly self-similar dynamics for $0<\delta \ll 1$ may be understood as follows. 
For $\delta=0$, the equations are Hamiltonian with
\begin{equation}
\label{eq:H2-r2zero}
H_2(u_2,p_2,v_2,q_2;0)
   = \frac{1}{2}\sqrt{\eps}\left( p_2^2 - q_2^2 \right)
   + u_2 v_2
     - \frac{1}{3} u_2^3, 
\end{equation}
where we recall \eqref{eq:H2}.
Examination of the level set $\{ H_2 = 0 \}$ reveals that it is scale-invariant. 
In particular, on the hyperplane $\{r_2=0\}$, which corresponds to the singular limit of $\delta=0$, the zero level set of the Hamiltonian $H_2(u_2,p_2,v_2,q_2;0)$ is invariant under the scaling
$(u_2,p_2,v_2,q_2) \to (\zeta \tilde{u}_2, \zeta^{3/2} \tilde{p}_2, \zeta^2 \tilde{v}_2, \zeta^{3/2} \tilde{q}_2)$ for any real number $\zeta$.
In fact, $H_2(u_2,p_2,v_2,q_2;0)=\zeta^3 H_2(\tilde{u}_2,\tilde{p}_2,\tilde{v}_2,\tilde{q}_2;0)$.
The projection of this level set onto the $(u,q)$ plane has infinitely many self-crossings and nested loops. It is infinitely self-similar.

Then, for $0<\delta \ll 1$, this scale invariance is broken. Nevertheless, for sufficiently small values of $\delta$, the solutions exhibit nearly self-similar dynamics.
This nearly self-similar dynamics manifests itself in the maximal canards of the full equations with $0 < r_2 \ll 1$ in chart $K_2$. 
A boundary value problem was set up using these equations, together with the boundary conditions
that enforce the symmetry $(u_2,p_2,v_2,q_2) \to (u_2,-p_2,v_2,-q_2)$ and an integral constraint to preserve the Hamiltonian.
(See Appendix~\ref{sec:appA2} for details.) 
The continuation method starts with the algebraic solution $\Gamma_0$ at $r_2=0$ and $a_2=0$ given by \eqref{eq:Gamma0}. 
That solution is continued into $r_2>0$ until $r_2=\sqrt{\delta}$. 
Then, it is continued in $a_2$ to yield the maximal canards for the system in chart $K_2$.
Throughout, the constraint is imposed to conserve the value of the Hamiltonian.
The key observation is that these maximal canards  approach the infinitely self-similar structure in the limit.
The continuation of the maximal canards is shown in the plane of $a_2$ and the $L^2$ norm in Fig.~\ref{fig:selfsimilar} (a), and orbits of the maximal canards are shown in Figs.~\ref{fig:selfsimilar} (b)--(g) for a sequence of $a_2$ values. 

\begin{figure}[h!]
  \centering
  \includegraphics[width=\textwidth]{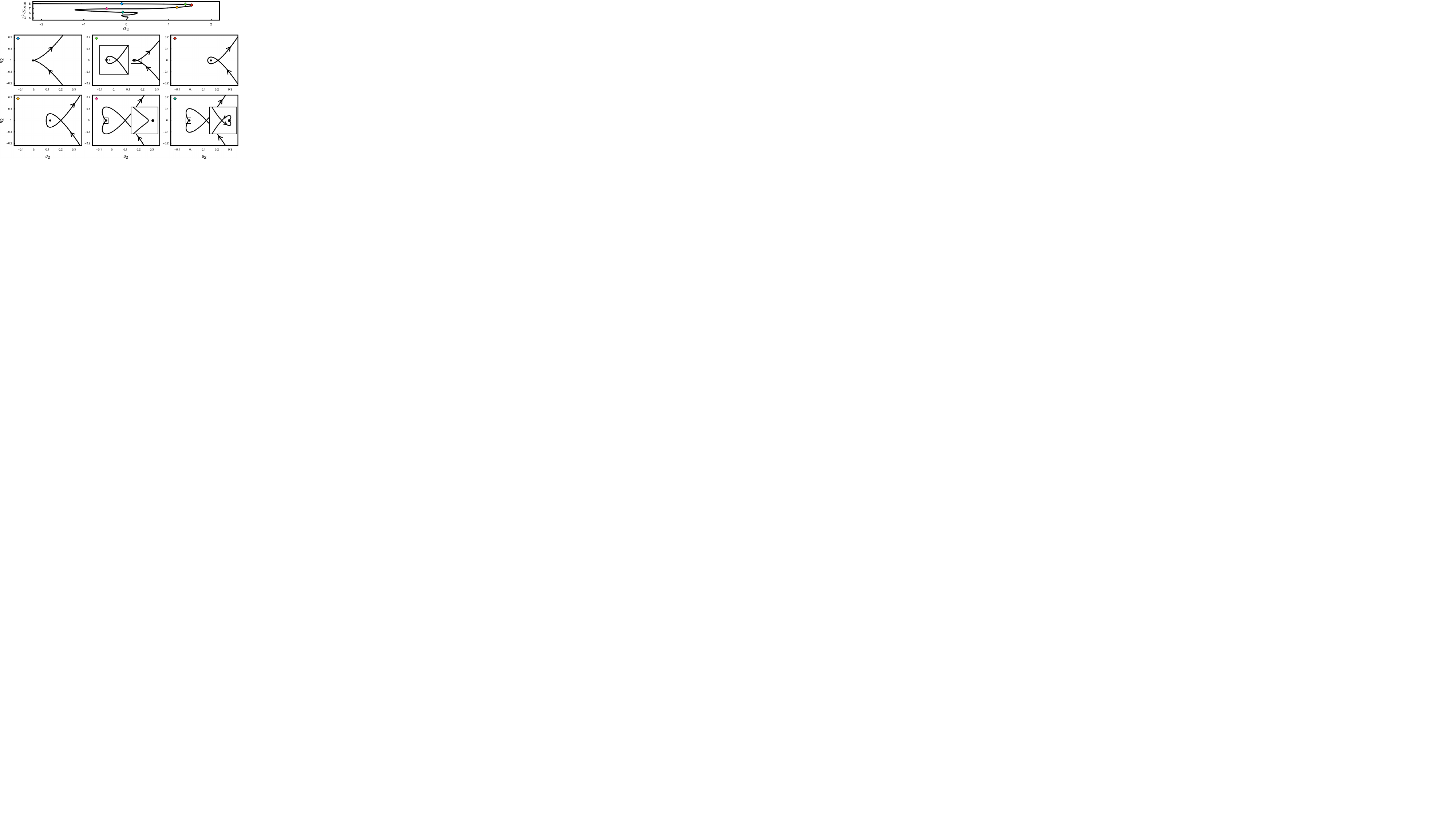}
  \put(-446,305){(a)}
  \put(-468,246){(b)}
  \put(-306,246){(c)}
  \put(-152,246){(d)}
  \put(-468,127){(e)}
  \put(-306,127){(f)}
  \put(-152,127){(g)}
  \caption{Nearly self-similar dynamics of the spatial canards in the central chart $K_2$ of the blow-up for $\eps = 0.1$ and $r_2 = \sqrt{\delta} = 0.1$. The solutions shown here correspond to the zero level set of the full Hamiltonian $H_2(u_2,p_2,v_2,q_2;r_2)$. 
  (a) The bifurcation diagram of solutions on the $H_2(u_2,p_2,v_2,q_2;r_2) = 0$ contour has an almost self-similar structure.
  For panels (b)--(g), the projections of the solutions into the $(u_2,q_2)$ phase space are shown. The insets show the dynamics near the equilibrium point $(u_2,p_2,v_2,q_2) = (r_2 a_2,0,u_2^2+\tfrac{1}{3} \sqrt{\eps} r_2^2 u_2^3, 0)$, indicated by the black marker.. 
  (b) The solution for $a_2 = -0.0928$ approaches the equilibrium and then turns away in a cusp-like manner. 
  (c) The solution for $a_2 = 1.4104$ exhibits a single loop around the equilibrium. 
  (d) For $a_2 = 1.5466$, the amplitude of the loop has grown.
  (e) For $a_2 = 1.2002$, the leftmost part of the loop develops a long, steep  segment.
  (f) The solution for $a_2 = -0.4594$ turns in towards the equilibrium and develops a second loop.
  (g) The solution for $a_2 = -0.0744$ exhibits two loops around the equilibrium state. 
  }
  \label{fig:selfsimilar}
\end{figure}

The bifurcation curve undulates about the line $a_2=0$, as shown in Fig.~\ref{fig:selfsimilar}(a).
The magnitude of the undulations decreases as the $L^2$ norm decreases. 
Also, there appears to be a self-similarity in the curve. 
For example, if one takes the lower part of the bifurcation curve shown in Fig.~\ref{fig:selfsimilar}(a) below the second local extremum on the left, stretches that vertically and horizontally so that it has the same height as the curve shown, and overlays that stretched version onto the curve shown, then they look almost identical.

The nearly self-similar structure of the bifurcation diagram persists into the full fourth-order spatial system \eqref{eq:spatialODE-y}, see Fig.~\ref{fig:???}. 
The undulations decay and the branch eventually converges to a single value of $a$. For $\eps = 1$ and $\delta = 0.05$, we find that the $a$ value to which the branch converges is $a_{c,\rm{num}} \approx 0.99972881$. Comparing this with the critical value, $a_c(\delta)$, from the Melnikov analysis (see formula \eqref{eq:ac}), we have that $a_c(0.05) \approx 0.99973958$. Hence, 
\[ a_c(0.05)-a_{c,\rm{num}} = \mathcal{O}(\delta^4). \]

\begin{figure}[h!]
  \centering
  \includegraphics[width=0.75\textwidth]{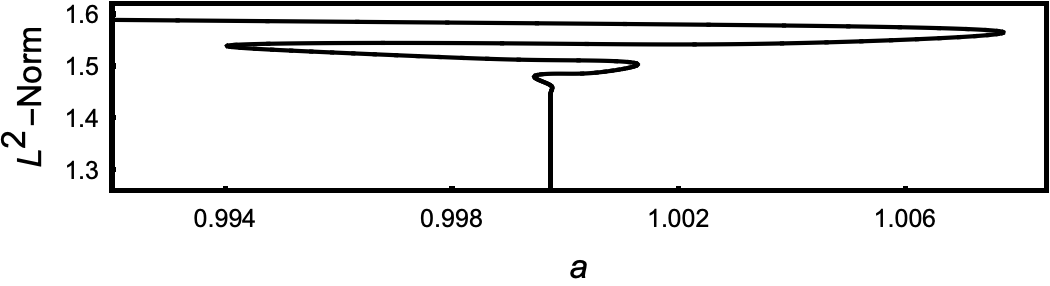}. 
  \caption{Bifurcation diagram for a family of orbit segments corresponding to canard solutions in the full fourth-order spatial system \eqref{eq:spatialODE-y} for $\eps = 1$ and $\delta = 0.05$. This bifurcation diagram was computed by finding maximal canards using the methods outlined in Appendix~\ref{app:numericalmethods_slowmanifolds} and then numerically continuing these canards with respect to $a$. The bifurcation curve undulates about the value $a = a_{c,\rm{num}} \approx 0.99972881$, which is $\mathcal{O}(\delta^4)$ close to the analytic prediction $a_c(\delta)$. 
  The canard solutions at the top of the bifurcation curve have no loops and simply enter and exit the neighbourhood of the folded singularity in a cusp-like fashion (in the $(u,q)$ plane). As $a$ is varied down the bifurcation curve, these canard solutions develop small, nearly self-similar loops in the neighbourhood of the folded singularity. The further down the bifurcation curve, the more self-similar loops in the orbit segments.}
\end{figure}

\section{The spatial canards of \texorpdfstring{\eqref{eq:spatialODE-y}}{Lg} are  analogs in spatial dynamics of temporal limit cycle canards in fast-slow ODEs}
\label{sec:spatialdynamicsanalog}

The spatially periodic canards in the PDE system \eqref{eq:vdp} are spatial analogs of the time-periodic canards known to exist in fast-slow systems of ODEs \cite{BCDD1981,BBE1991,Brons2006,D1984,DR1996,E1983,EP1998,KS2001,MKKR1984,Moehlis,RKZE2003}. 
(For reference, we recall that for example in the van der Pol ODE $\dot u=v-f(u)$, $\dot v=\eps(a-u)$, the Hopf bifurcation occurs at $a=1$ and the canard explosion is centered on $a_{\rm ODE} \sim 1-\frac{1}{8}\eps- \frac{3}{32}\eps^2$, asymptotically as $\eps \to 0$, recall \cite{BCDD1981,D1984,E1983,KS2001}.)

First, the Turing points $a_T$ are reversible 1:1~resonant Hopf bifurcations in the spatial ODE system, as shown in Section~\ref{sec:turingbifn}. 
The reversible Hopf points are the analogs in spatial dynamics of the Hopf bifurcation points that occur in (temporal) fast-slow ODEs.
Second, families of small-amplitude, spatially periodic solutions are created in the Turing bifurcation.
These solutions have wavenumbers close to $k_T$ and profiles close to the plane wave $e^{ik_Tx}$ (recall the red and blue orbits in Fig.~\ref{fig:soln_knearcritical}).
These spatially periodic solutions are the analogs of the small-amplitude, temporally oscillating solutions that are created in (singular) Hopf bifurcations in fast-slow systems of ODEs. 
Third, the critical value $a_c(\delta)$, recall \eqref{eq:ac}, asymptotically close to $a_T$ is the analog in spatial dynamics of the canard explosion value ($a_{\rm ODE}$ for the van der Pol ODE, for example) in fast-slow oscillators, which is asymptotically close to the Hopf point.
Fourth, on each side of the maximal spatial canards, there are families of (non-maximal) spatial canards, just as there are (non-maximal) temporal canards (headless ducks and ducks with heads) on each side of maximal limit cycle canards.
In this manner, the maximal spatial canards act as separatrices that locally partition the phase space into regions of distinct behaviour, and the bifurcations of maximal canards delimit distinct modes of activity in the parameter space, just as is the case for maximal temporal canards. 
Fifth, the singularity responsible for the spatial canards is an RFSN-II point. 
This is the analog of the canard point in the temporal fast-slow systems, which may be viewed as an FSN-II point in the extended $(u,v,a)$ phase space of the kinetics ODE.
Sixth, in carrying out the geometric desingularization analysis of the RFSN-II point in the spatial ODE system \eqref{eq:spatialODE-y}, we identified a key algebraic separatrix solution $\Gamma_0$, recall \eqref{eq:Gamma0}, in the rescaling chart that is the key segment of the singular limit of the spatial canards. 
It is the spatial analog of the parabolic separatrix solution in the rescaling chart (see \cite{KS2001}) in the analysis of the explosion of temporal limit cycle 
canards in fast-slow ODEs.

There are also interesting differences between the spatial canards and the temporal limit cycle canards, which arise due to differences between the spatial ODEs and the temporal (kinetics) ODEs, as well as due to the increased dimension of the phase space. 
In the classical explosion of temporal canards, the left and right branches are one dimensional attracting slow manifolds and the middle branch a one dimensional repelling slow manifold. 
Moreover, the maximal canards exist for the parameter values when these two slow manifolds coincide to all orders. 
In contrast, in the spatial ODE, the left and right branches are two dimensional saddle slow manifolds, and the middle branch a two dimensional elliptic slow manifold, recall Fig.~\ref{fig:S}. 
Further, the maximal canards exist when the single-branched stable and unstable manifolds of the cusp point, which are the true and faux canards of the RFSN-II point, 
continue into each other to all orders.

\section{PDE Dynamics}   \label{sec:pdedyn}
In Sections~\ref{sec:nearTuring}--\ref{sec:spatialdynamicsanalog}, we focused primarily on studying the rich family of stationary, spatially periodic canard patterns created by Turing bifurcations in the singularly perturbed van der Pol PDE \eqref{eq:vdp} for values of $a$ near the  Turing bifurcation $a_T = \sqrt{1-2\delta \sqrt{\varepsilon}}$, as well as numerically for $a\in [0,a_T)$. 
A natural --and necessary-- next question is: which of these patterns are observable, {\it i.e.}, which are  stable as solutions of the PDE?  
This is in general a hard and deep question.
In this section, we will scratch the surface of this challenge, presenting a number of PDE simulations that give some hints about what kind of (stability) results may be expected.

We fix $(\eps,\delta)$ at $(0.1,0.01)$, which implies that $a_T = 0.996832...$.
Moreover, we consider two primary values of $a$: $a=1 > a_T$, so that the background state $(a,f(a))$ is stable as solution of \eqref{eq:vdp} and $a=0.99 < a_T$ for which $(a,f(a))$ has undergone the Turing bifurcation. 
We consider the dynamics generated by \eqref{eq:vdp} from the point of view of the onset of pattern formation, {\it i.e.}, we consider initial conditions $(u_0(x),v_0(x))$ that are close (in $L^1$ norm) to $(u,v) \equiv (a,f(a))$, and we look for the {\it a priori} small ($\sim$ close to $(a,f(a))$)  patterns generated by \eqref{eq:vdp}. 
For $a=1$, $|a-a_T|= 0.00316...$
This implies, from the point of view of weakly nonlinear stability theory/the Ginzburg-Landau approach (see for example \cite{CH1993,D2019,E1965,IMD1989,S2003,SU2017,W1997}), that the magnitudes of the patterns expected near the Turing bifurcation are of the order of  $\sqrt{|a-a_T|} = 0.0562...$. 
Therefore, we only work with initial conditions $(u_0(x),v_0(x))$ that satisfy $|u_0(x)-a|, |v_0(x)-f(a)| \leq 0.10 < 2\sqrt{|a-a_T|}$ uniformly on the $x$-interval under consideration. 

Now, since $\eps \delta^2 < 64/625$ for $\mathcal{O}(1)$ values of $\eps>0$ and $0<\delta \ll 1$, the Ginzburg-Landau equation associated to the Turing bifurcation has a positive Landau coefficient {\it i.e.,} a positive coefficient on the cubic term  (Section \ref{sec:turingbifn}).  
Thus, one does not expect small stable patterns for $a < a_T$.
In fact, one expects the amplitude of the patterns generated by the Turing bifurcation to grow beyond the validity of the Ginzburg-Landau approach \cite{E1993,H1991,SU2017}. 
Then, for $a > a_T$, only the trivial state (corresponding to $(a,f(a))$) will be stable. 
Moreover, its domain of attraction will be small, in the sense that solutions with initial data that is  $\mathcal{O}(\sqrt{|a-a_T|})$ close to $(a,f(a))$, but not sufficiently close, are also expected to outgrow the Ginzburg-Landau domain of validity.   

\begin{figure}[h!]
\centering
        \begin{subfigure}[t]{0.45\textwidth}
		\centering
		\includegraphics[width=\linewidth]{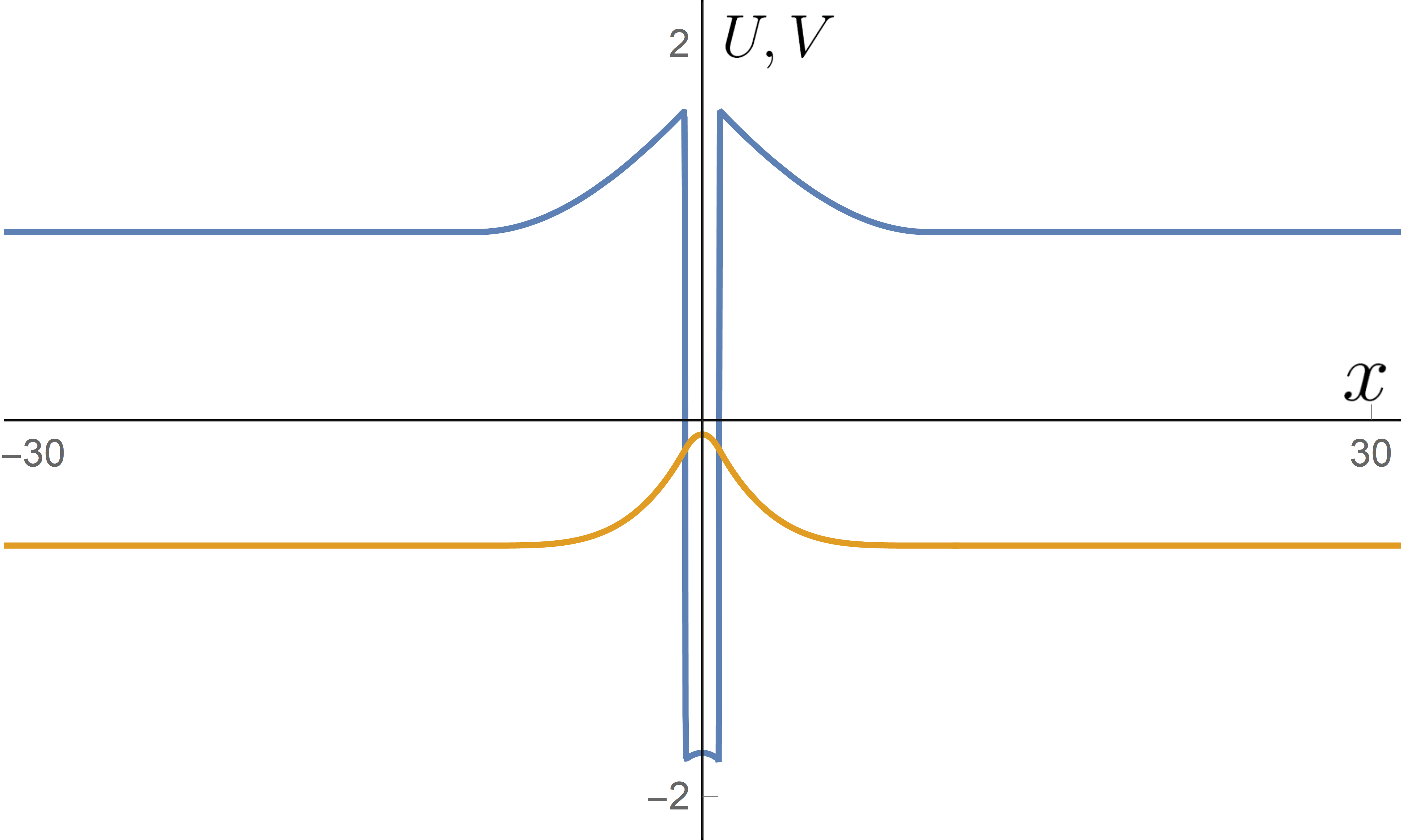}
	\end{subfigure}
        \hspace{1cm}
	\begin{subfigure}[t]{0.45\textwidth}
		\centering
		\includegraphics[width=\linewidth]{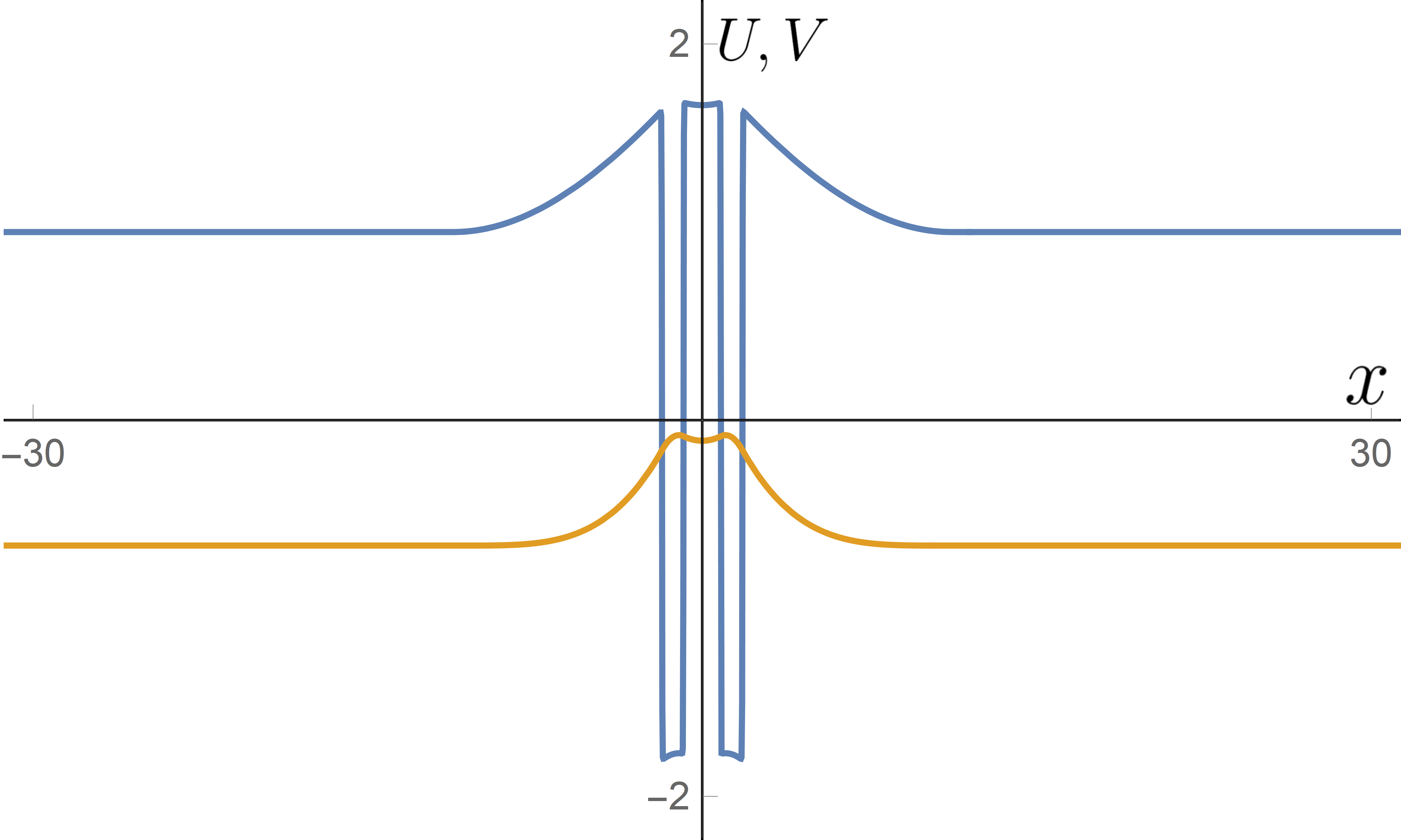}
	\end{subfigure}
\\
\vspace{.5cm}
        \begin{subfigure}[t]{0.45\textwidth}
		\centering
		\includegraphics[width=\linewidth]{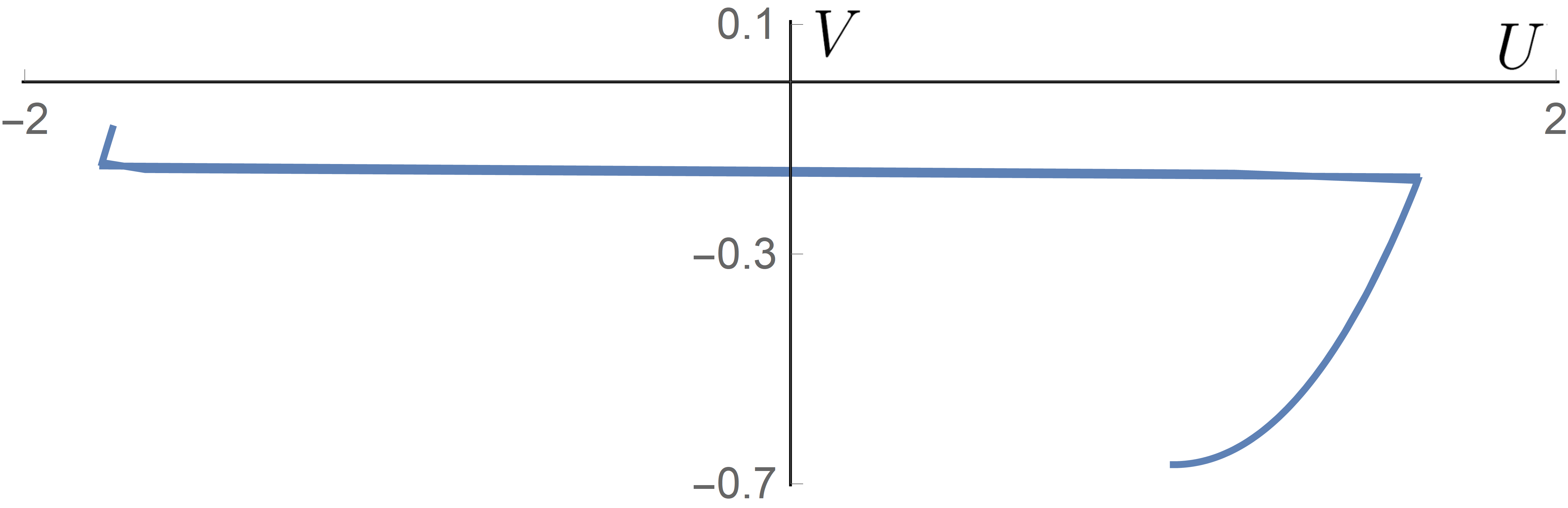}
	\end{subfigure}
        \hspace{1cm}
        \begin{subfigure}[t]{0.45\textwidth}
		\centering
		\includegraphics[width=\linewidth]{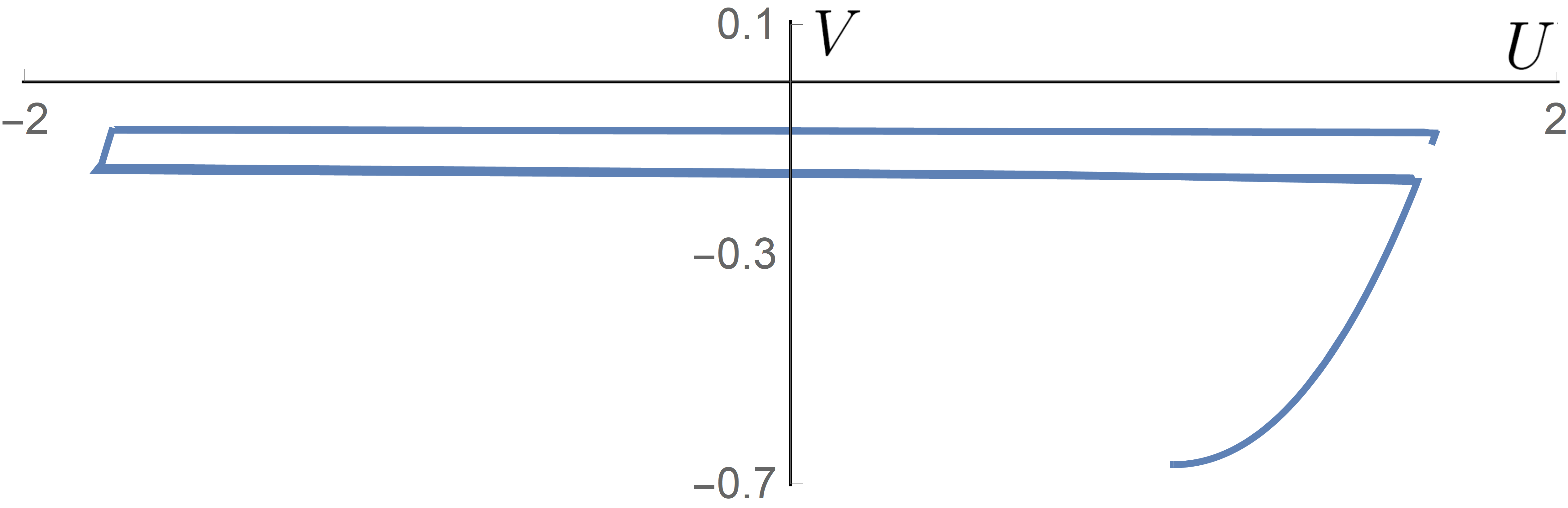}
	\end{subfigure}

\caption{Stable attractors of \eqref{eq:vdp} for two different initial data.
Top row: spatial profiles of the final attracting (stable) large amplitude patterns, with $u(x)$ in blue and $v(x)$ in orange. Bottom row: projections of the same final patterns as in the first row into the $(u,v)$-plane. 
The simulations were conducted on the domain $x \in (-100,100)$ with homogeneous Neumann boundary conditions, but the attractors are only shown for  $x \in (-30,30)$.
For the simulation shown in the first column, the initial data $(u_0(x),v_0(x)) = 1  + 0.05\,[\tanh(10(x - 0.5)) - \tanh(10(x + 0.5))], -2/3 - 0.025\,[\tanh(0.5(x - 1)) - \tanh(0.5(x + 1))]$ was used. For the simulation shown in the second column, $v_0(x)$ is the same, and $u_0(x)$ is changed, with  $\tanh(10(x \pm 0.5))$ replaced by $\tanh(10(x \pm 1.5))$.
The parameters are $(a,\varepsilon, \delta) = (1,0.1,0.01)$.}
\label{fig:Sims1}
\end{figure}

Fig.~\ref{fig:Sims1} shows the final attracting patterns of two numerical simulations with $a=1.00$ that both start with localized initial conditions at most $0.10$ away from the $(1,-2/3)$ background state, {\i.e.}, the initial perturbations of $(a,f(a)) = (1,-2/3)$ can be considered to be sufficiently small so that the dynamics can be expected to be within the range of validity of the Ginzburg-Landau approach. 
In fact, the only difference between the two  simulations is the widths (in $x$) of the `plateaus' where $u_0(x)$ differs from $a=1$. 
In both cases, the initial conditions are outside the domain of attraction of $(u(x,t),v(x,t)) \equiv (1,-2/3)$ (by construction: further reducing the amplitude of the initial conditions brings $(u_0(x),v_0(x))$ into the domain of attraction of $(1,-2/3)$). 
Indeed, the dynamics generated by \eqref{eq:vdp} depart from the Ginzburg-Landau region near $(1,-2/3)$, and the magnitude of the pattern grows to $\mathcal{O}(1)$. 
In both cases, a (stationary) localized homoclinic multi-front pattern appears as attractor.
Depending on the width of the initial plateau, it is either a 1-pulse/2 front or a 2-pulse/4-front pattern, see Fig. \ref{fig:Sims1}. 
In the second row of Fig. \ref{fig:Sims1}, the orbits  of the final attracting states are plotted in the $(u,v)$ phase space (parameterized by $x$). 
This further confirms that the attractors are of the large-amplitude spatial canard types studied in Sections~\ref{sec:numerics-3}, \ref{sec:numerics-4}, and \ref{subsec:singularLAO}. 

\begin{figure}[t]
\centering
        \begin{subfigure}[t]{0.44\textwidth}
		\centering
		\includegraphics[width=\linewidth]{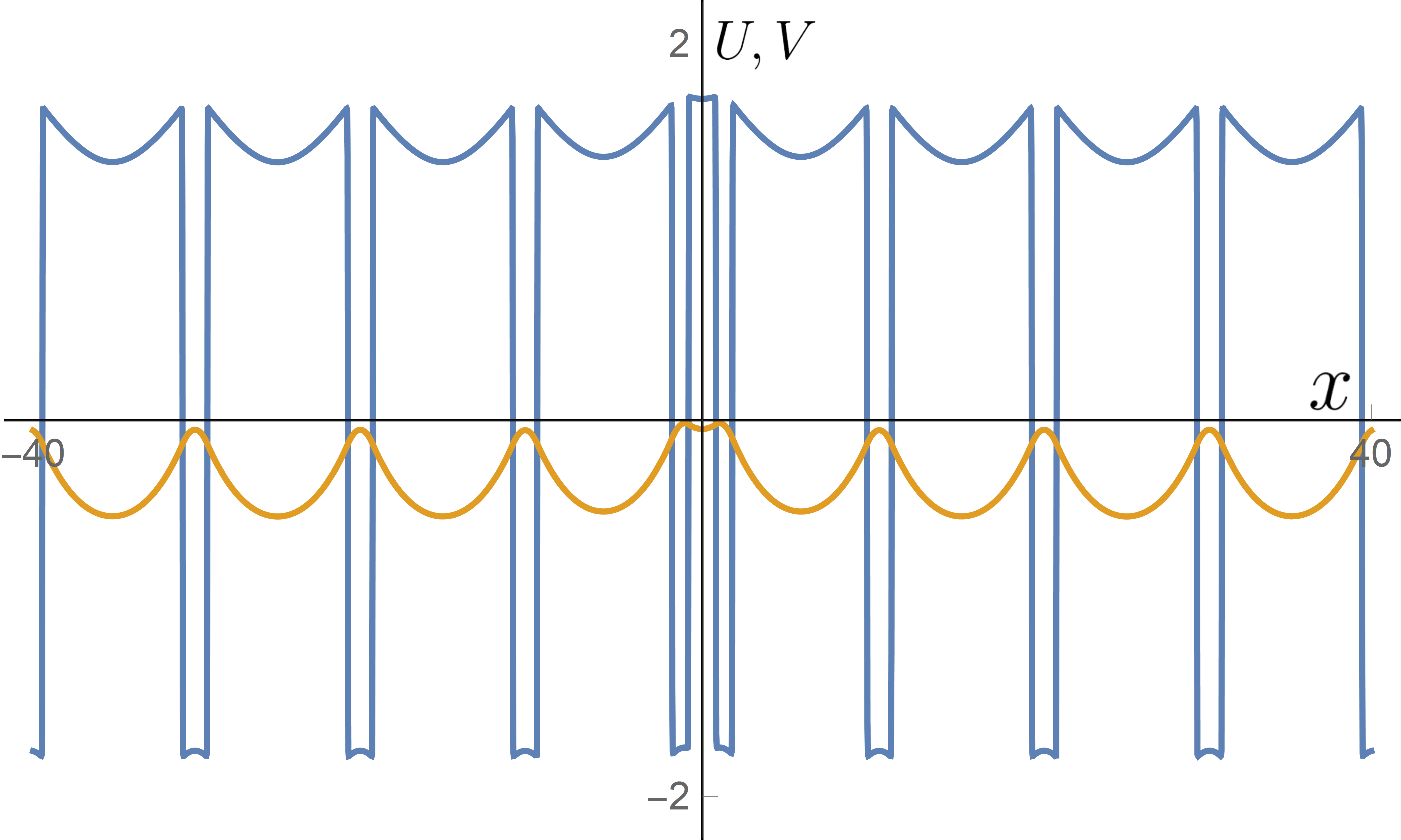}
	\end{subfigure}
        \begin{subfigure}[t]{0.53\textwidth}
		\centering
		\includegraphics[width=\linewidth]{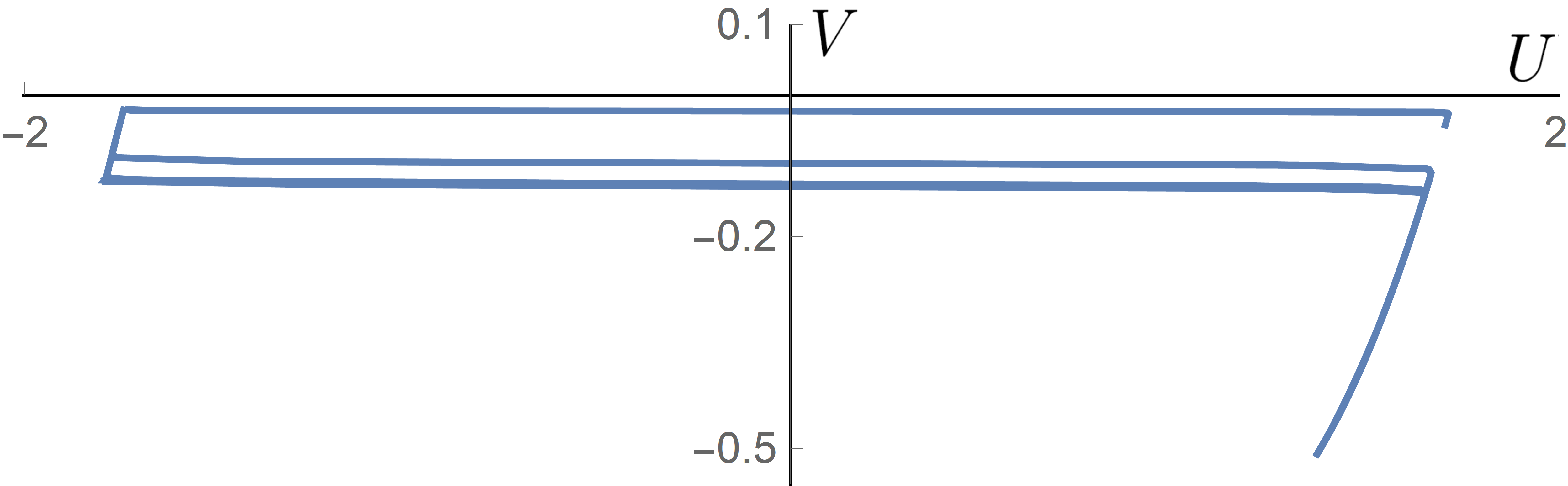}
	\end{subfigure}
\caption{Large-amplitude spatially periodic attractors of \eqref{eq:vdp} obtained from simulations for $x \in (-100,100)$ with homogeneous Neumann boundary conditions.
Left: the final attracting state, plotted for $x \in (-40,40)$.
Right: the final pattern plotted in the $(u,v)$-plane (parameterized by $x$). 
The initial data is $(u_0(x),v_0(x)) = 0.99  + 0.05\,[\tanh(10(x - 0.5)) - \tanh(10(x + 0.5))], f(0.99) - 0.025\,[\tanh(0.5(x - 1)) - \tanh(0.5(x + 1))]$. 
The parameters are $(a,\varepsilon, \delta) = (0.99,0.1,0.01)$.}
\label{fig:Sims2}
\end{figure}

Next, we consider exactly the same setting as in the simulations of Fig. \ref{fig:Sims1} but decrease $a$ to $a=0.99 < a_T$. 
As a consequence, the pattern dynamics of \eqref{eq:vdp} must be significantly different, since the background state $(u(x,t),v(x,t)) \equiv (a,f(a))$ has become unstable. 
In Fig. \ref{fig:Sims2}, the attracting pattern is plotted that emerges from the simulations with $a=0.99$ from initial conditions that are similar to those in the second column of Fig.~\ref{fig:Sims1} (in the sense that the deviations from $(a,f(a))$ are identical in both cases). 
Here also, the attractor is a stationary large-amplitude homoclinic pattern, but now not homoclinic to the homogeneous state $(a,f(a))$, but to a spatially periodic pattern -- with a well-selected wavenumber. 

This periodic pattern appears step-by-step, on a long time scale. 
Up to $t \approx 380$, the pattern is  `transitionally stable', in the sense that it remains very similar to the attractor observed for $a=1.00$ in Fig. \ref{fig:Sims1}. 
Then, a fast growing spike appears that evolves into a new localized 2-front pulse -- or better: two of these, one each for $x > 0$ and for $x <0$ -- between $t = 390$ and $ t=400$ (see the first row of Fig. \ref{fig:Sims3}). 
In the second row of Fig. \ref{fig:Sims3}, the same patterns are plotted, now again in $(u,v)$-space. 
The (spatial) canard dynamics near the point $(a,f(a)) \approx (1,-2/3)$ seem to play a dominant role in the formation and shape of the spike. 
Each `new period' of the periodic pattern to which the final state is asymptotic is formed in the same manner -- with long periods of `transient stability' in which the pattern only evolves marginally and slowly. 
Here, we leave it as a subject for future research to go deeper into the relation between the spatial dynamics near $(1,-2/3)$ and the formation of the periodic pattern and especially the mechanism by which the periodic pattern --that must be an element in a one-parameter family of spatially periodic patterns-- is selected.    
\begin{figure}[t]
\centering
        \begin{subfigure}[t]{0.32\textwidth}
		\centering
		\includegraphics[width=\linewidth]{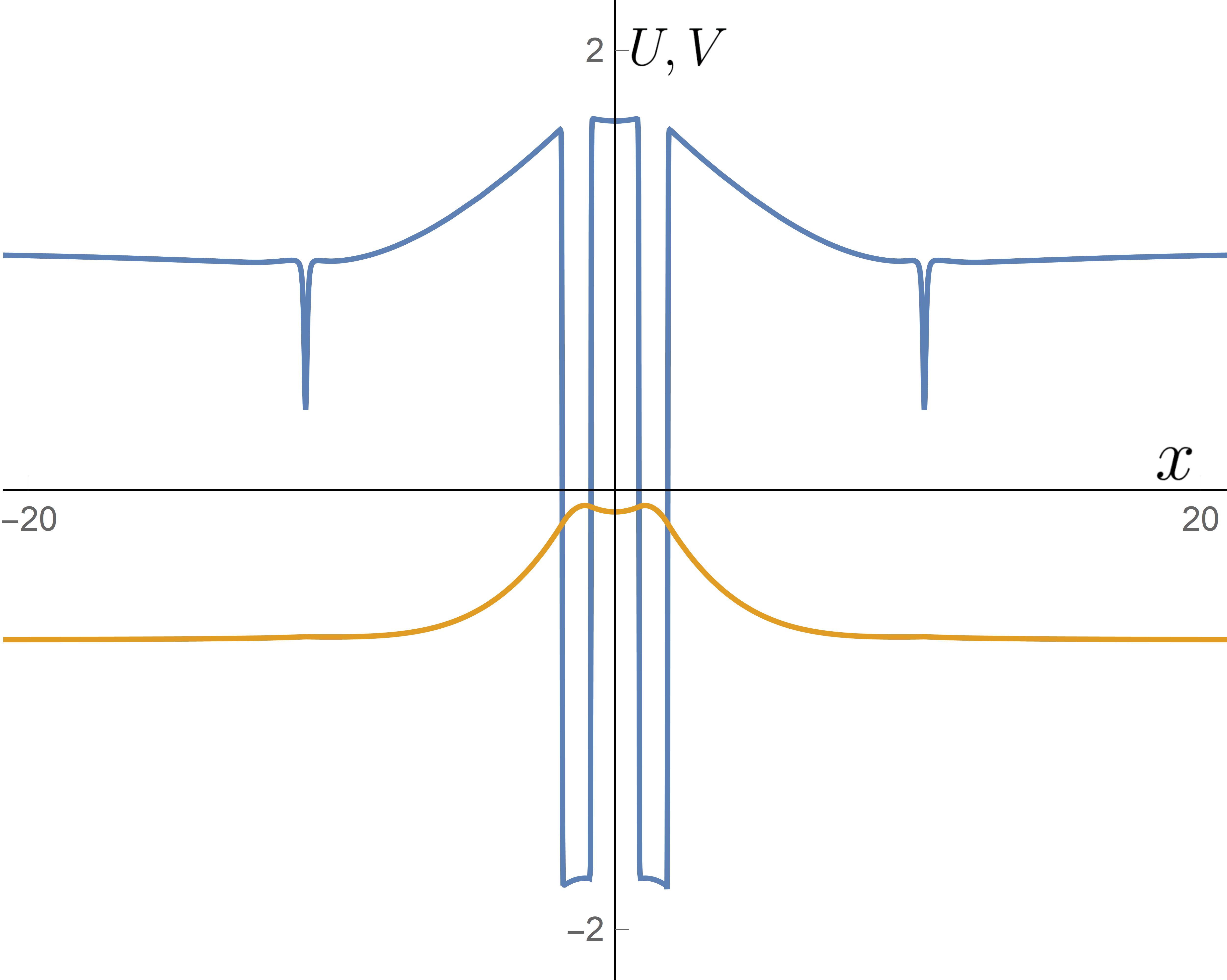}
	\end{subfigure}
        \begin{subfigure}[t]{0.32\textwidth}
		\centering
		\includegraphics[width=\linewidth]{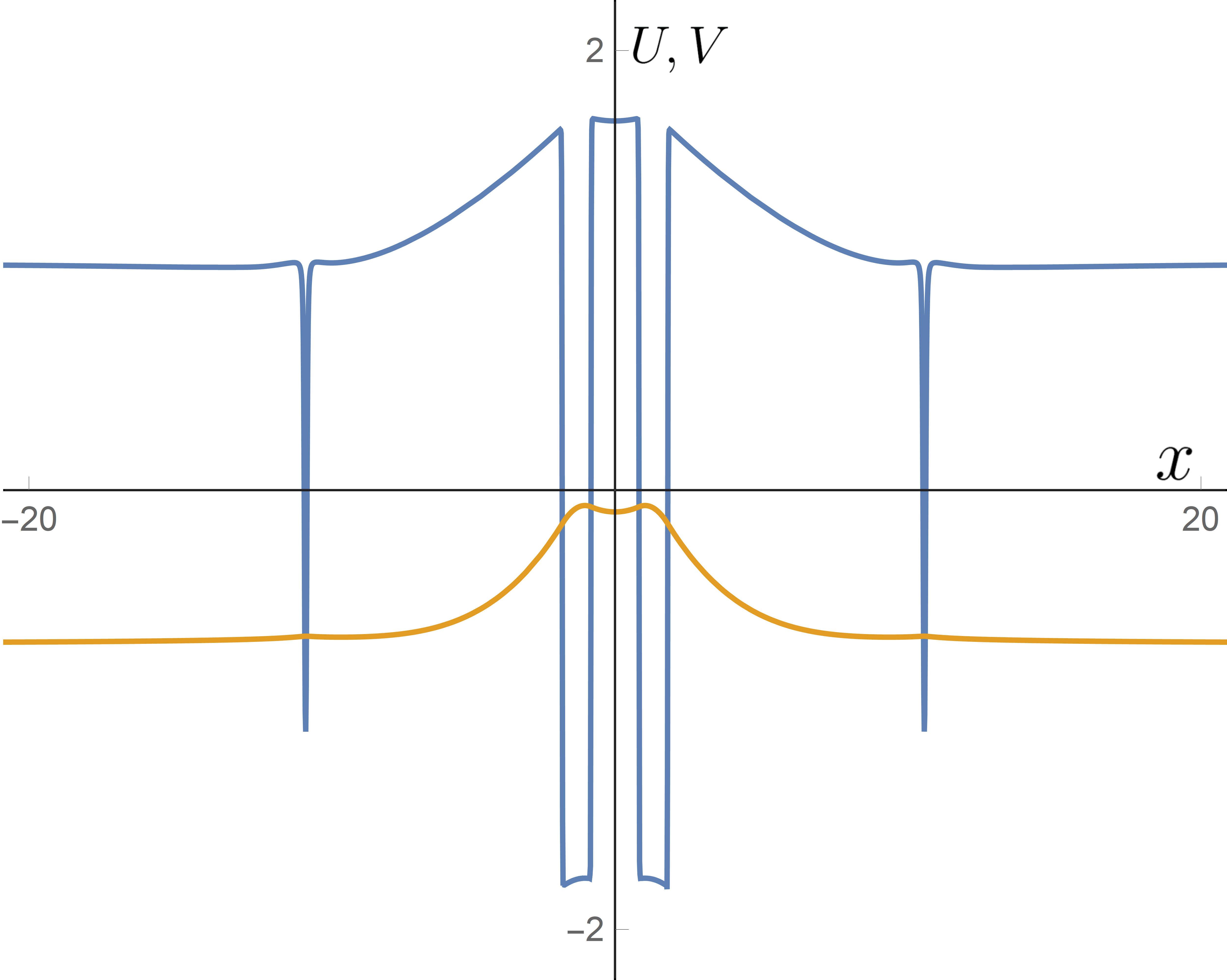}
	\end{subfigure}
        \begin{subfigure}[t]{0.32\textwidth}
		\centering
		\includegraphics[width=\linewidth]{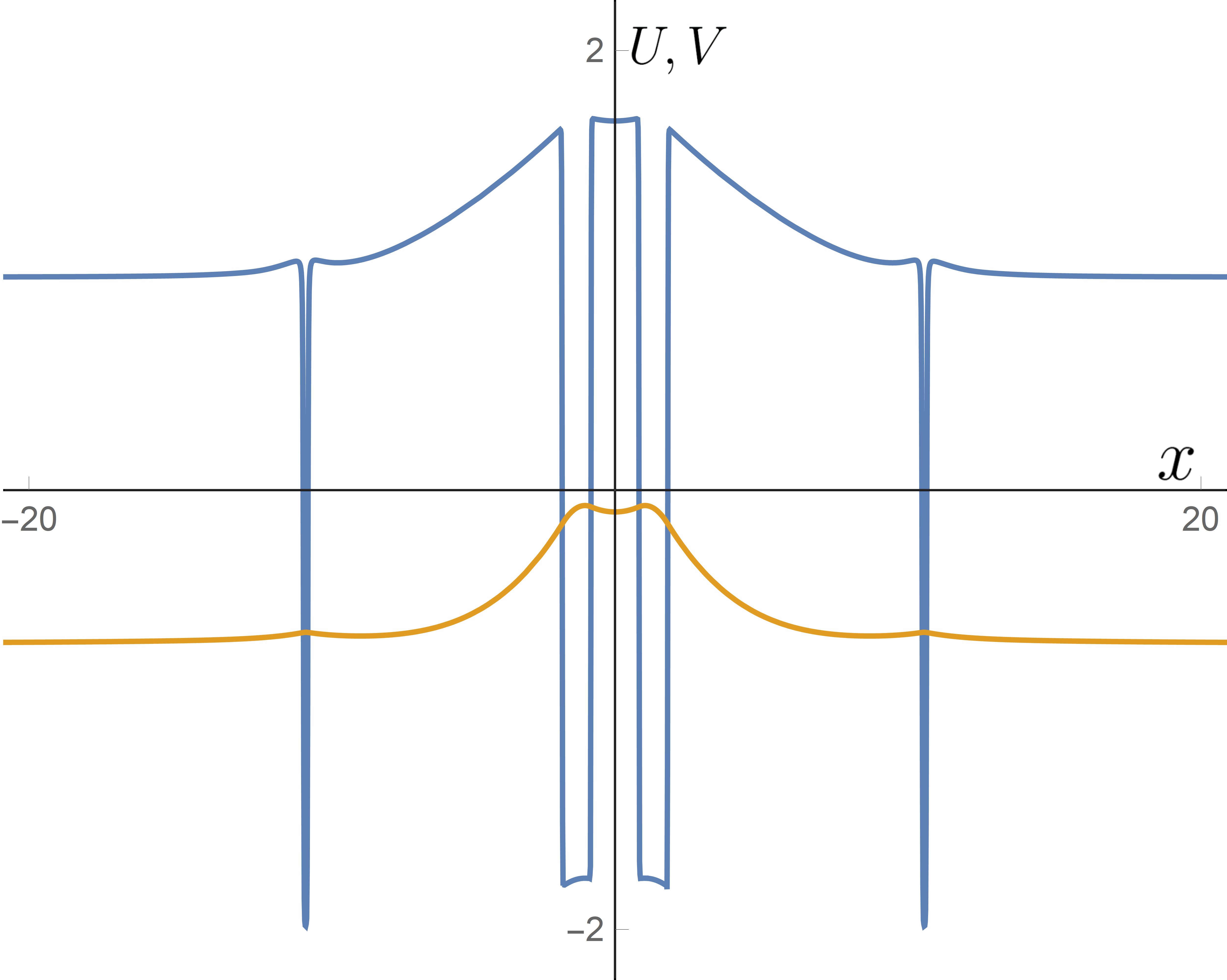}
	\end{subfigure}
	\\
\vspace{.5cm}
        \begin{subfigure}[t]{0.32\textwidth}
		\centering
		\includegraphics[width=\linewidth]{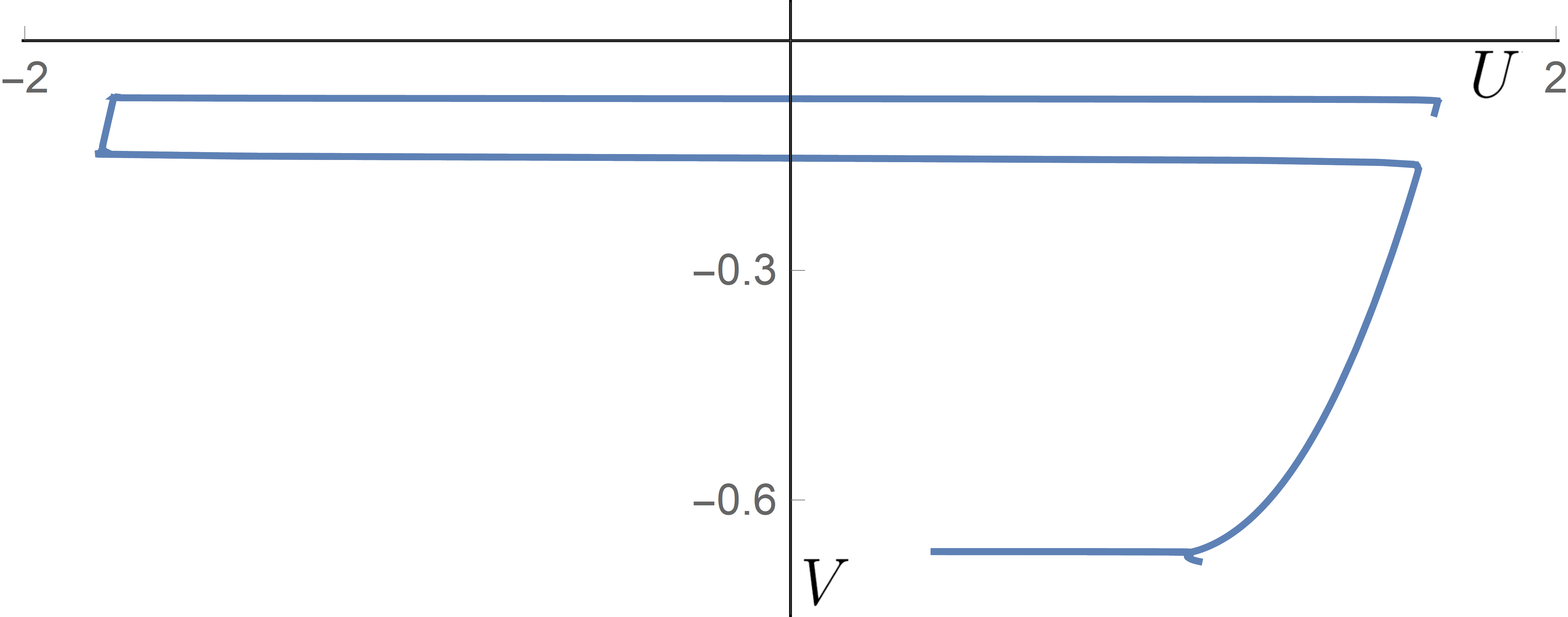}
	\end{subfigure}
        \begin{subfigure}[t]{0.32\textwidth}
		\centering
		\includegraphics[width=\linewidth]{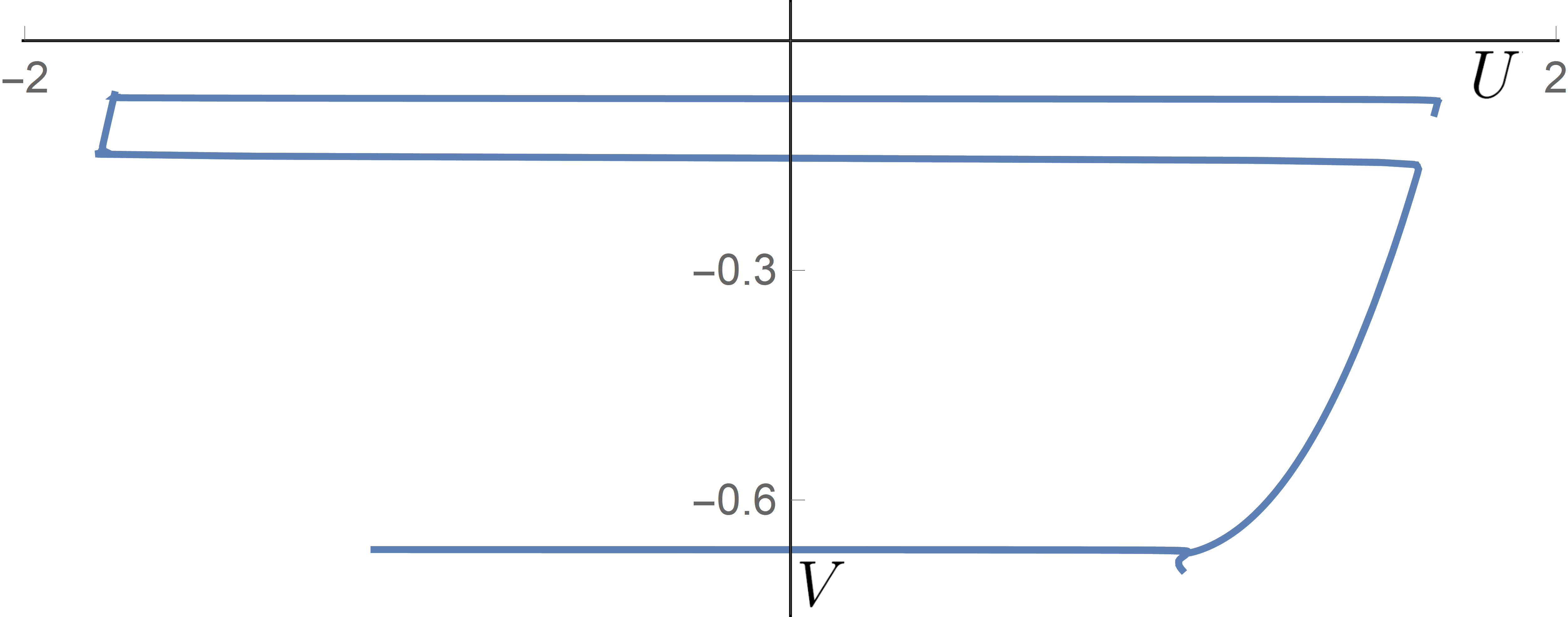}
	\end{subfigure}
        \begin{subfigure}[t]{0.32\textwidth}
		\centering
		\includegraphics[width=\linewidth]{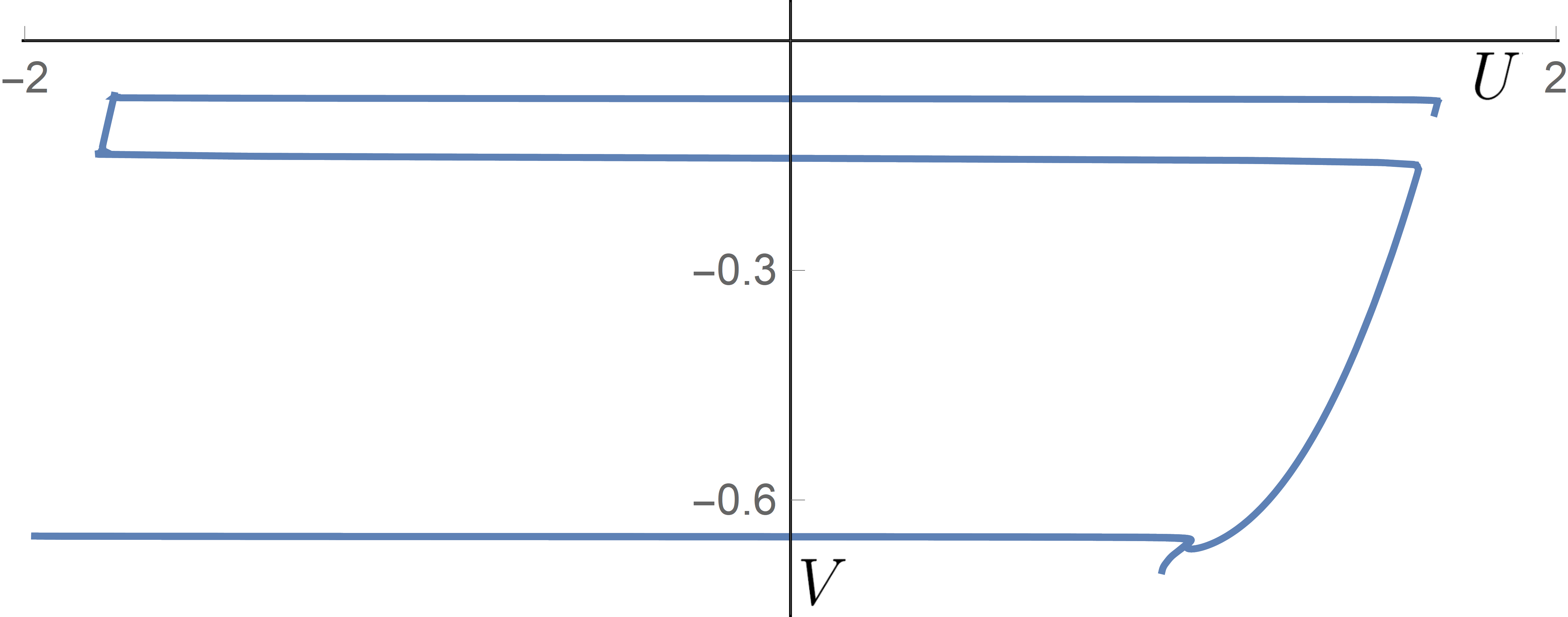}
	\end{subfigure}
\caption{Top row: Spatial profiles of $u(x,t)$ and $v(x,t)$ at times $t= 394, 396, 398$ of the solution obtained from simulations of \eqref{eq:vdp} for $x \in (-100,100)$ with homogeneous Neumann boundary conditions
The fast growing spike is shown for $x$ restricted to $(-20,20)$.
Bottom row: the same patterns now plotted in $(u,v)$-space.
The initial conditions are as in Fig. \ref{fig:Sims2}. 
The parameters are 
$(a,\varepsilon, \delta) = (0.99,0.1,0.01)$.} 
\label{fig:Sims3}
\end{figure}

Overall, the simulations of the van der Pol PDE \eqref{eq:vdp} that we have conducted so far indicate that there are substantial classes of initial conditions that are in the basins of attraction of (large-amplitude) stationary patterns of the types  considered in the analysis here. 
Moreover, this behavior appears to be related to the width of plateaus in both components $u_0(x)$ and $v_0(x)$ of the initial data.
If one keeps all other aspects as in the simulations shown, then there appears to be a critical threshold for the width of the plateaus.
In particular, data for which the plateau width is below the threshold evolve to these attractors. Whereas, for data in which the plateau width exceeds the threshold, the system no longer exhibits stationary large-amplitude fronts and pulses. Instead, the PDE exhibits attracting patterns that vary periodically in time with a small amplitude, while their spatial variation may either be uniformly small, or consist of distinct small- and large-amplitude parts (see also Fig.~\ref{fig:Sims4} below).

To gain an intuitive understanding of this threshold, we recall from Section~\ref{sec:turingbifn} that, near the (spatial) Turing bifurcation at $a=a_T$, the trivial state $(a,f(a))$ is already unstable with respect to the long (spatial) wavelength perturbations associated to the (temporal) Hopf bifurcation at $a=1$, since $a_T<1$ by \eqref{eq:kTaT} --see Fig.~\ref{fig:EVcurves}. 
Hence, if the initial data consist of perturbations of $(a,f(a))$ for which the support is sufficiently large, then one expects that the unstable Hopf modes will be triggered.
On the other hand, if the initial data consists of sufficiently localized perturbations, {\it i.e.,} has  plateaus that are too narrow, then the dynamics of the PDE are predominantly driven by the Turing mode, as seen above. 
Moreover, these observations also suggest that to understand the dynamics of patterns in (\ref{eq:vdp}) that do not originate from such localized perturbations, one needs to consider both instability mechanisms, {\it i.e.} one needs to study the co-dimension 2 bifurcation in which the spatial Turing mode and the temporal Hopf mode interact. 

In that case, the natural Ginzburg-Landau set-up involves two (complex) amplitudes: $A(\xi,\tau)$ that modulates the (linear) Turing mode $e^{i k_T x}$ and $B(\xi,\tau)$ that modulates the Hopf mode $e^{i \omega_T t}$ (cf. Section~\ref{sec:turingbifn}). In the generic setting, the dynamics of small-amplitude solutions near this bifurcation has been studied in \cite{SW2022} (in the setting of reaction-diffusion systems).
In fact, the validity of a coupled (cubic) system of equations for $A$ and $B$ has been established in \cite{SW2022} for the generic situation in which
the shapes of the eigenvalue curves $\lambda_\pm(k)$ are independent of the small parameter $\delta$.  
Here, significant additional analysis appears to be required to go beyond the generic situation, because in (\ref{eq:vdp}) the distance between the Hopf and Turing bifurcations is $2 \sqrt{\eps d} = 2 \sqrt{\eps} \delta$. Thus, it is natural to take  $\delta > 0$ as the main small parameter in the Ginzburg-Landau analysis (recall also that $\delta = 0.01$ (and $\eps = 0.1$) in most  simulations here). 
Furthermore, in order to derive the $(A,B)$-system that governs the dynamics of small patterns $\mathcal{O}(\delta)$ close to $a_T$, one needs to adapt the generic scalings of \cite{SW2022}. We also leave this analysis for future work. 

Finally, regarding the PDE dynamics, there is also the question whether (some of) the stationary small-amplitude canard patterns presented in Sections~\ref{sec:numerics-1}, \ref{sec:numerics-2}, and \ref{subsec:Turingspatialcanards} may also be stable as solutions of the PDE \eqref{eq:vdp}. 
In our (limited) numerical simulations to date, we have not yet encountered such patterns. As observed above, by choosing the extent of the support of the initial conditions sufficiently large, {\it i.e.,} above threshold, we see that system (\ref{eq:vdp}) may exhibit small-amplitude patterns. 
In the simulation shown in the top row of Fig. \ref{fig:Sims4}, we consider localized initial data for which the spatial extent is somewhat larger than in Figs. \ref{fig:Sims1}, \ref{fig:Sims2}, and \ref{fig:Sims3} and take (again)  $a=1$. 
We observe the formation of a small-amplitude pattern that varies periodically in time and in space. Its spatial variation has a long wavelength character, which indicates that the amplitude $A(\xi,\tau)$ of the Turing mode has vanished. 
Thus, the PDE dynamics seem to be dominated by the Hopf bifurcation associated to the van der Pol equation and the associated (complex, uncoupled) Ginzburg-Landau equation for $B(\xi, \tau)$. 

This changes drastically if we decrease $a$ to $0.999$ (see the bottom row of Fig. \ref{fig:Sims4}). 
Here, as also in the simulations shown in Figs.~\ref{fig:Sims1} (second column), \ref{fig:Sims2}, and \ref{fig:Sims3}, a localized, four-front, large-amplitude spatial structure appears near $x=0$ (see Fig.~\ref{fig:Sims4}). 
However, unlike in Figs. \ref{fig:Sims1} and \ref{fig:Sims2}, the background state is now unstable with respect to the Hopf bifurcation, and the tails of the large-amplitude pattern vary periodically in time -- and on the long spatial scale. (In fact, the localized structure itself also oscillates in time, but with an amplitude that is much smaller than that of the oscillations beyond the large-amplitude part of the pattern.) 
Note that the attractor of Fig. \ref{fig:Sims4} has the (expected) nature of a stationary localized structure as in Fig. \ref{fig:Sims2} that is destabilized by a Hopf bifurcation stemming from its essential spectrum (cf. \cite{SS01}).

Of course, this evidence is far from sufficient to conclude that the stationary small-amplitude canard patterns cannot be stable.
So far, our findings only indicate that in the study of potentially stable small-amplitude patterns, one necessarily must also take the spatial aspects of the Hopf bifurcation into account. 
In other words, one needs to merge the (non-generic) coupled Ginzburg-Landau dynamics of the Turing mode $A(\xi,\tau) e^{i k_T x}$ and the Hopf mode $B(\xi,\tau) e^{i \omega_H t}$ with the canard analysis in the present work. Moreover, a much more detailed and extensive (numerical) investigation of the PDE dynamics is crucial: our limited experiments show that the nature of the final attractor depends in a subtle way on small variations in the initial conditions and (consequentially) on the accuracy of the numerical procedure. 
These questions would be interesting and relevant subjects for future work. 

\begin{figure}[t]
\centering
        \begin{subfigure}[t]{0.32\textwidth}
		\centering
		\includegraphics[width=\linewidth]{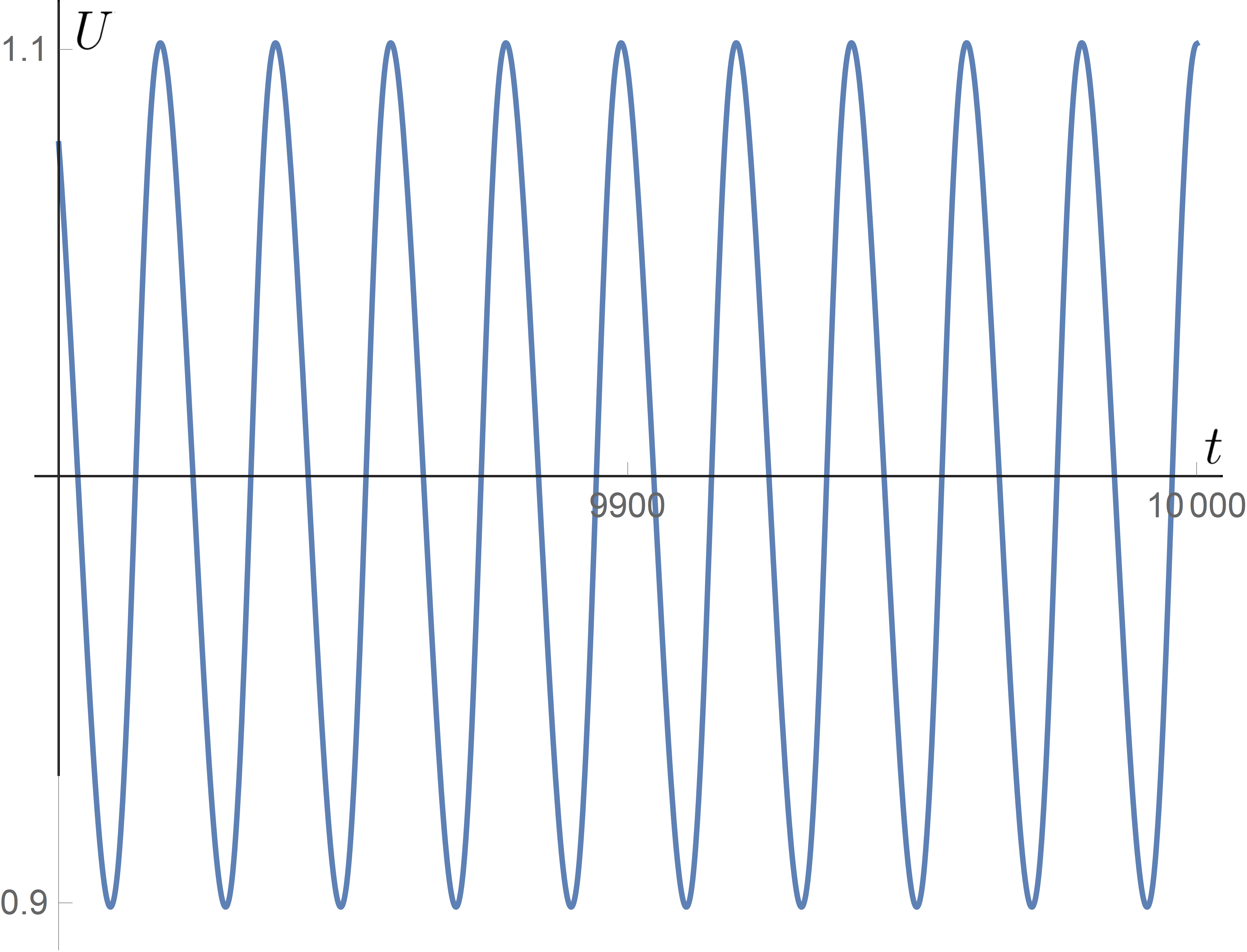}
	\end{subfigure}
        \begin{subfigure}[t]{0.32\textwidth}
		\centering
		\includegraphics[width=\linewidth]{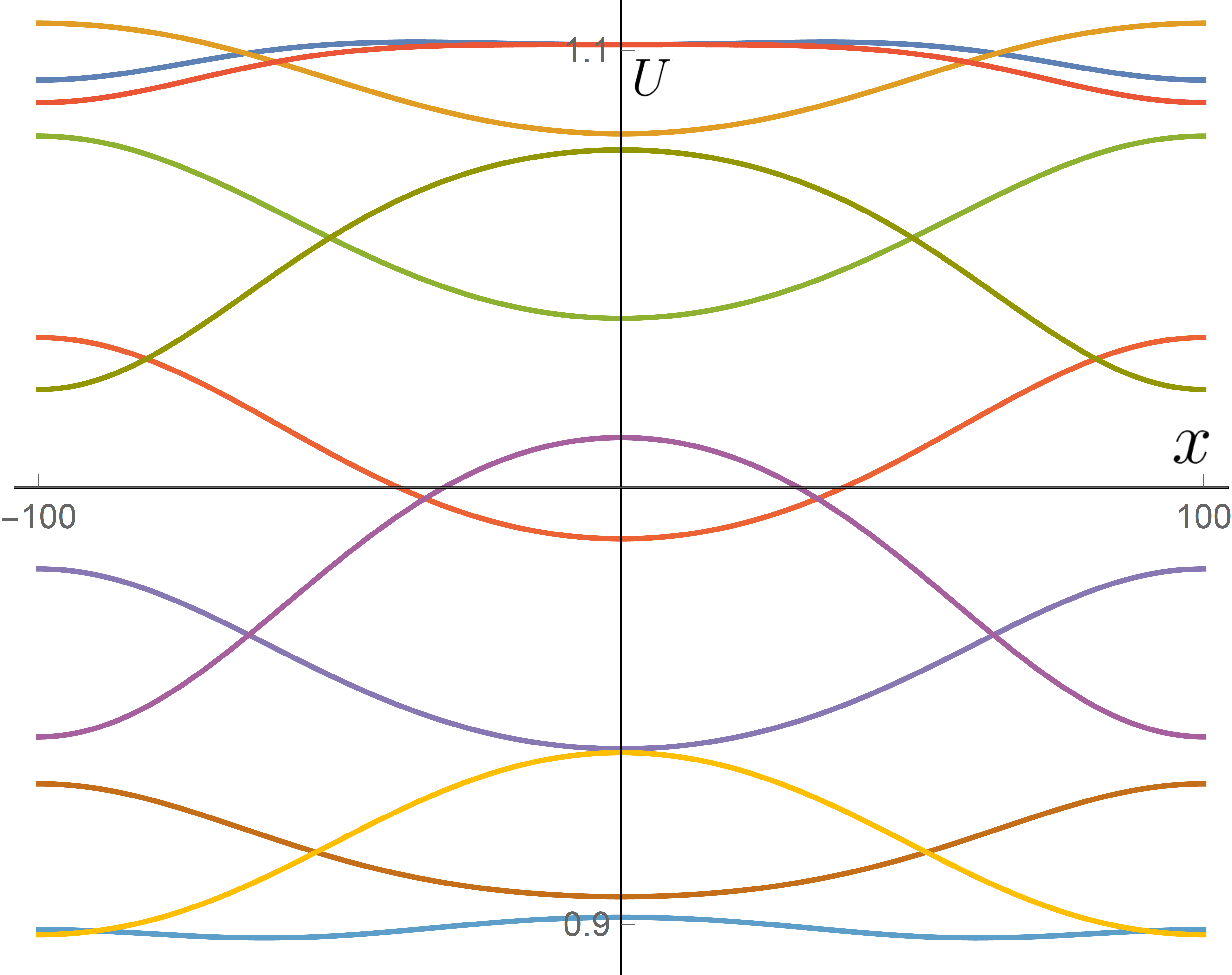}
	\end{subfigure}
        \begin{subfigure}[t]{0.32\textwidth}
		\centering
		\includegraphics[width=\linewidth]{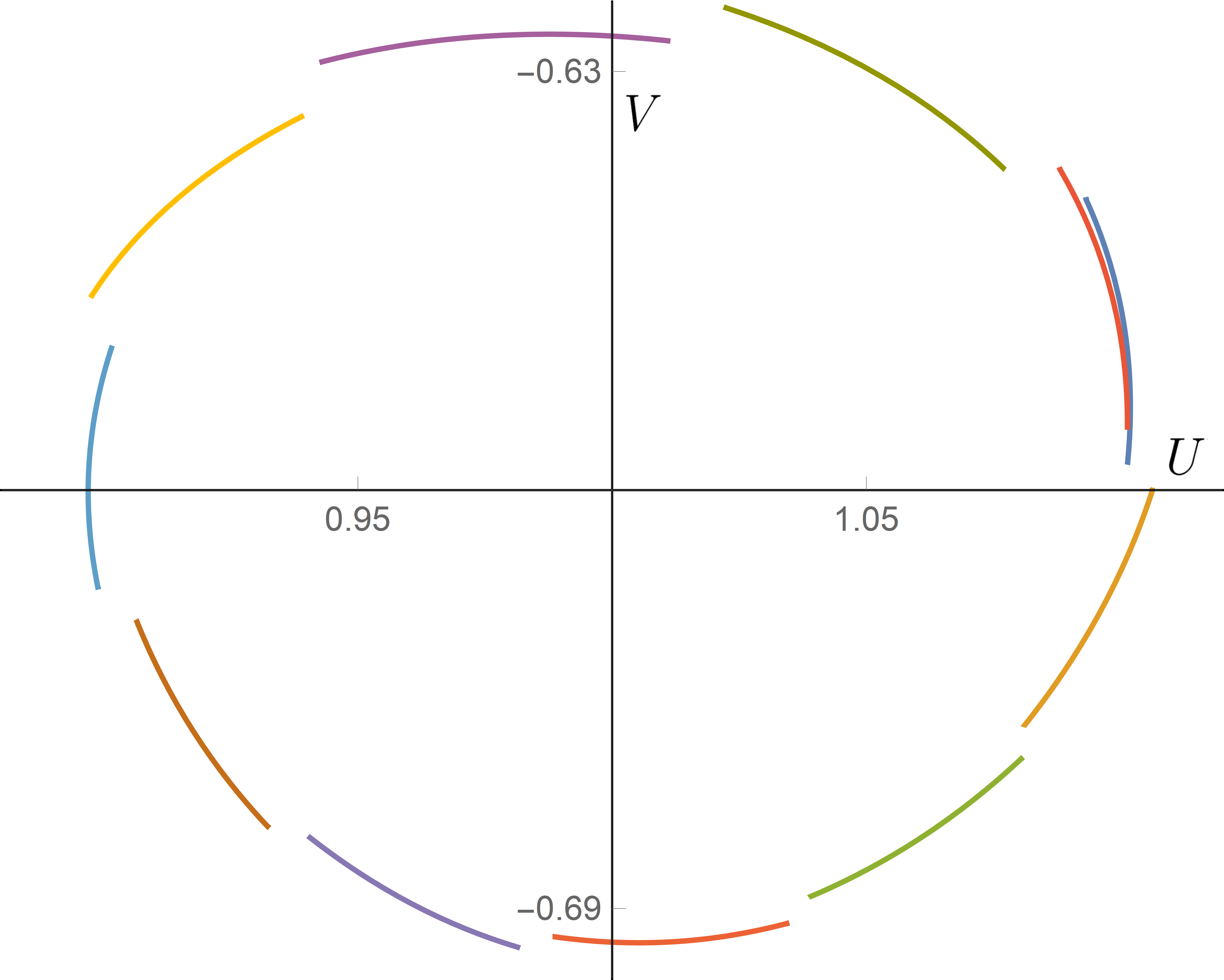}
	\end{subfigure}
	\\
\vspace{.75cm}
        \begin{subfigure}[t]{0.49\textwidth}
		\centering
		\includegraphics[width=\linewidth]{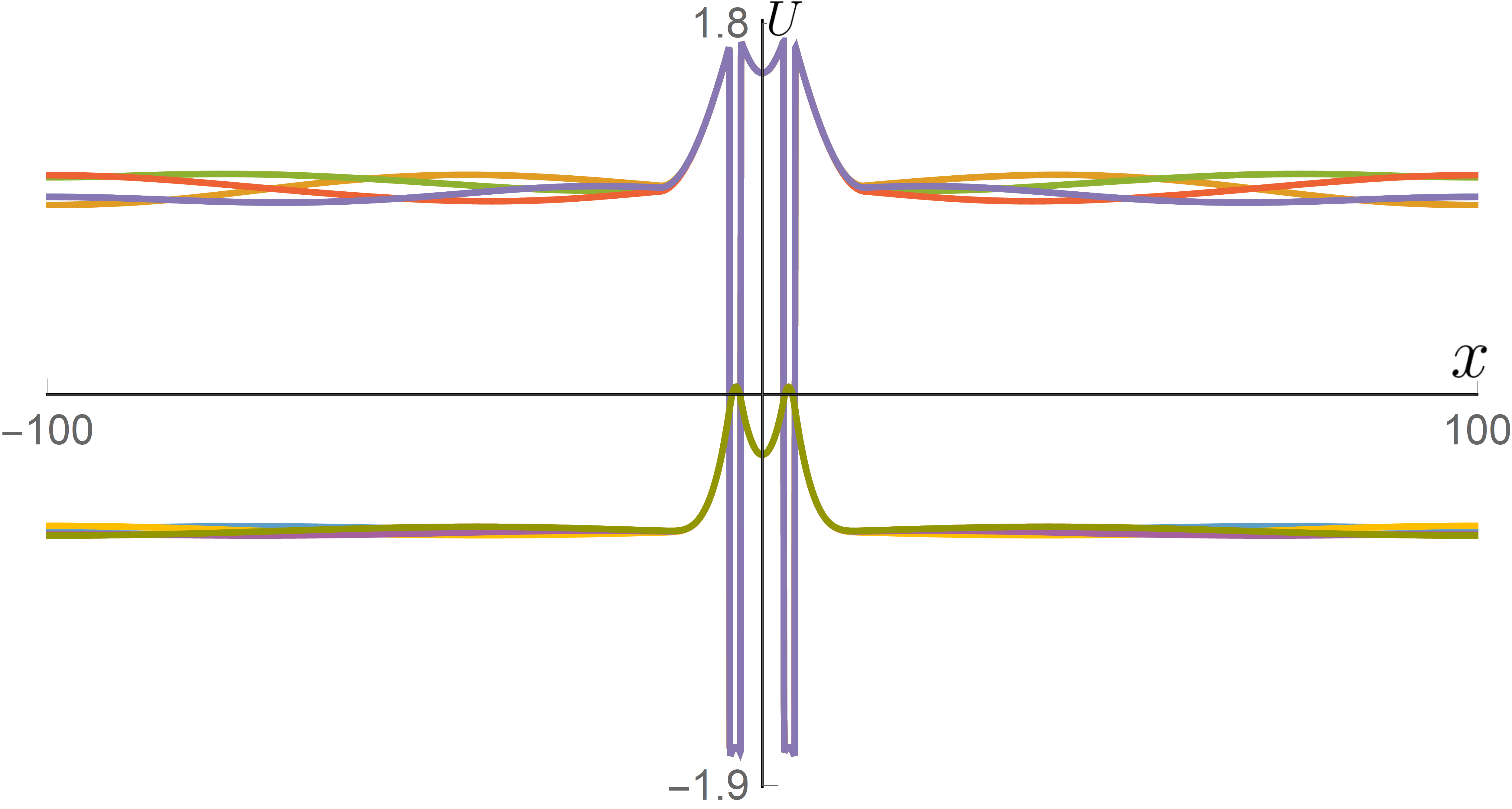}
	\end{subfigure}
        \begin{subfigure}[t]{0.49\textwidth}
		\centering
		\includegraphics[width=\linewidth]{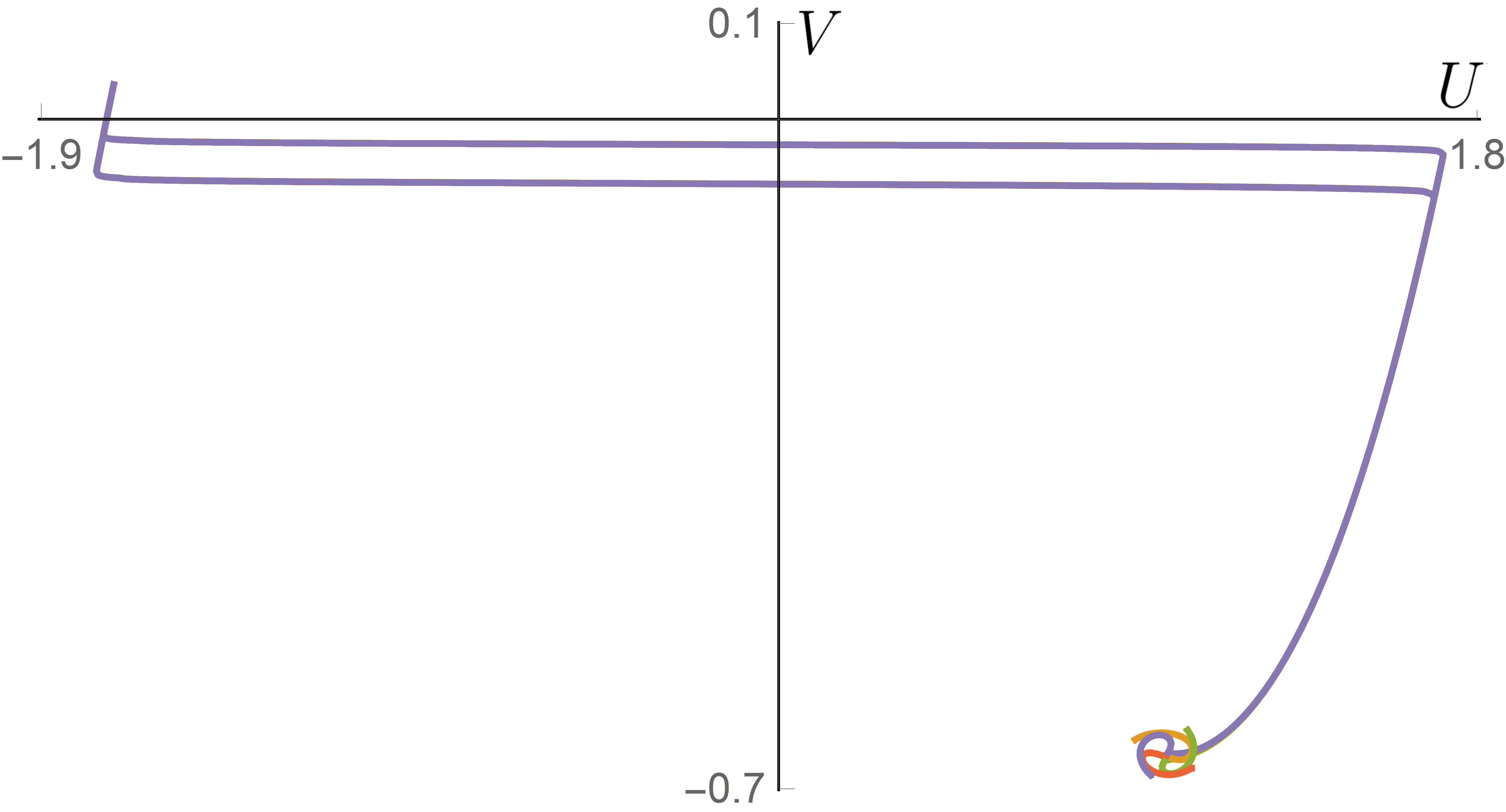}
	\end{subfigure}
\caption{Non-stationary attractors of \eqref{eq:vdp} obtained by simulations starting from localized initial data with a spatial extent that is larger than that used  in Figs.~\ref{fig:Sims1}, \ref{fig:Sims2}, and \ref{fig:Sims3}, $(u_0(x),v_0(x)) = (1.00  + 0.05\,[\tanh(10(x - 4)) - \tanh(10(x + 4))], -2/3 - 0.025\,[\tanh(0.5(x - 8)) - \tanh(0.5(x + 8))])$ for $x \in (-100,100)$, with homogeneous Neumann boundary conditions and $(\varepsilon, \delta) = (0.1,0.01)$. 
Top row: $a=1$, a time-periodic, (uniformly) small-amplitude attractor with a long spatial wavelength is formed. Left: $U(0,t)$ for $t \in (9800,1000)$; middle: $U(x,t_j)$ for $x \in (-100,100)$ with $t_j = 9980 + 2j$, $j=0, ..., 10$; right: $(U(x,t_j), V(x,t_j))$ for $x \in (-100,100)$ in the $(U,V)$-plane with again $t_j = 9980 + 2j$, $j=0, ..., 10$. Note that the $t=t_j$-patterns circle closely around the critical point $(a,f(a)) = (1.0, -2/3)$. Bottom row: $a=0.999$, {\it i.e.}, just beyond the Hopf bifurcation, but still before the Turing bifurcation. Left: $U(x,t_j)$ and $V(x,t_j)$ for $x \in (-100,100)$ with $t_j = 9980 + 5j$, $j=0, ..., 4$; right: $(U(x,t_j), V(x,t_j))$ for $x \in (-100,100)$ in the $(U,V)$-plane with again $t_j = 9980 + 5j$, $j=0, ..., 4$. Note that the `circling spiral' around $(a,f(a))$ has the same magnitude as the (non-spiralling) circle in the $(U,V)$ plane of the $a=1$ case.} 
\label{fig:Sims4}
\end{figure}
\smallskip

\begin{remark}
The types of stationary, multi-front patterns shown in Fig.~\ref{fig:Sims1} have been constructed for various problems in the literature on singularly perturbed reaction-diffusion equations (see for example \cite{BDHL2023} and references therein). 
However, in the known constructions, the asymptotic homogeneous states of the homoclinic patterns correspond to critical points on normally hyperbolic manifolds, which is of course not the case here.
\end{remark}

\section{Conclusions and Outlook \label{sec:conclusions}}
\subsection{Summary \label{subsec:summary}}

In this article, we reported on the discovery of classes of spatially periodic canard solutions that emerge from Turing bifurcations in the van der Pol PDE \eqref{eq:vdp} in one dimension, a phenomenon that we have dubbed ``Turing's canards".
The canards that we studied analytically and numerically include classes of small-amplitude and large-amplitude spatially periodic canard solutions, with large, $\mathcal{O}(1)$, and small wavenumbers. (See Figs.~\ref{fig:kOrderOne_SAO}--\ref{fig:kOrderDelta_LAO}, as well as  Figs. \ref{fig:deconstructSAO} and \ref{fig:deconstructLAO} for representative canards, and see  Figs.~\ref{fig:frommole2manatee},  \ref{fig:bifndetailed}, and \ref{fig:isola}--\ref{fig:transition_doubleloop} for bifurcation diagrams.) 
Furthermore, we observed numerically that several of these classes of spatially-periodic canards are attractors in the PDE.

The spatial ODE system, recall \eqref{eq:spatialODE-y}, governs time-independent solutions of the PDE.
It has reversible, 1:1 resonant Hopf bifurcation points exactly at the parameter values where the PDE undergoes Turing bifurcations.
Quartets of eigenvalues merge there into two identical purely imaginary pairs, and hence hyperbolicity of the equilibrium/homogeneous state is lost,
recall Fig.~\ref{fig:spatialeigenvalues} and Proposition~\ref{prop:Turingcanards} in Sec.~\ref{sec:turingbifn}. 
We performed a complete analysis of the Turing/reversible 1:1 Hopf point at $a_T=\sqrt{1-2\delta\sqrt{\eps}}$, recall \eqref{eq:kTaT}, in the full four-dimensional phase space (and the results for the other RFSN-II points follow by symmetry).
We studied the two-dimensional fast/layer system (Sec.~\ref{sec:fast}) and the two-dimensional slow system (Sec.~\ref{sec:slow}).
Both are one-degree-of-freedom Hamiltonian systems, due to the reversibility symmetry of the full spatial ODE system.  
Our analysis showed that the critical manifolds, which govern the slow dynamics to leading order, are two-dimensional, cubic-shaped manifolds consisting of saddle points of the fast system on the left and right branches and of  center points on the middle branch. 

We identified the key folded singularities, namely the reversible folded saddle-node points of type II (RFSN-II points), that lie on the fold sets between the saddle and center branches of the cubic crirical manifold. 
These RFSN-II exist asymptotically close to the Turing bifurcations. 
Using the method of geometric desingularization (see Section~\ref{sec:desingFSNII}), we showed that the true and faux canards of the RFSN-II points are responsible for creating the spatially periodic canard patterns that we discovered, with the spatial canards having long segments near the true and faux canards. 

The analysis of the dynamics in the coordinate charts led to the discovery that there is a special algebraic solution $\Gamma_0$ (see Section~\ref{sec:K2}) in the rescaling chart that constitutes the core component of the maximal spatial canards in the full system. 
The orbit $\Gamma_0$ consists of two branches, one corresponding to each of the true and faux canards of the RFSN-II point, that approach the cusp point from above and below. 
It is the geometrically unique orbit in the rescaling chart that asymptotes to the two critical points in the extry-exit chart on the equator of the hemisphere, thereby serving as a separatrix (or ``river" type solution) over the hemisphere (in analogy to the parabola in the rescaling chart of the canard explosion in the fast-slow van der Pol ODE, recall \cite{KS2001}). 
Perturbation analysis of this algebraic solution then led to the calculation of a critical value  $a_c(\delta)$ 
(recall \eqref{eq:ac}) at which this orbit persists to leading order in $\delta$ (where the asympotic expansion was calculated using the dynamic, small-amplitude, perturbation parameter $r_2$ in the rescaling chart). 
This is the value at which the true and faux canards continue into each other.
Further analysis of key solutions focused on their smoothness in $\delta$, and critical values of $a$ were identified, the next one of which is $a_{c2}(\delta)$  (recall \eqref{eq:ac2}). 
These calculations were performed on a fixed level set of the conserved quantity $\tilde{\mathcal{G}}$, and can be generalized.  
All of these canards are maximal canards that have the longest segments near the true and faux canards of the RFSN-II points, and they serve as boundaries in phase and parameter space separating spatially periodic canards of different profiles.

The dynamics of the canards change as one moves along the isolas of periodic solutions in the $(a,k)$ parameter plane. 
One elementary change along branches of isolas occurs when the length of the canard segments grows as $a$ changes (recall the green branch in Fig.~\ref{fig:transition_growslow}). 
Next, for branches of isolas of spatially periodic solutions with canard segments along the true and faux canards of the RFSN-II point on the right slow manifold $S_s^+$, we studied the nucleation of interior spikes at the RFSN-II point on the left critical manifold $S_s^-$ (recall the orange branch in Fig.~\ref{fig:transitionspikeformation}). 
In addition, we followed these new spikes through the transition in parameter space from small to large spikes (recall  Section~\ref{subsec:spikeformation}).
A number of further bifurcations of spatially periodic canards were also observed, including a bifurcation to canards with double loops (recall Fig.~\ref{fig:transition_doubleloop}).

Self-similarity plays a central role in the spatially periodic canards. 
We showed that, in the singular limit $\delta=0$, the zero level set of the Hamiltonian of the slow system is scale invariant, and hence that it has an infinite self-similarity. 
This self-similarity manifests through a sequence of crossing points and small, nested loops. 
Then, we observed that for $0<\delta\ll 1$, the self-similarity is broken. 
The spatial patterns exhibit nearly self-similar dynamics, recall Fig.~\ref{fig:selfsimilar}.
Solutions with different numbers of nested, successively-smaller, nearly self-similar loops are shown in Figs.~\ref{fig:smallamp-selfsimilar} and \ref{fig:kOrderDelta_LAO}. 
Moreover, the wavenumber decreases as the number of successive loops increases.

The spatial canards are analogs in spatial dynamics of the situation in fast-slow ODEs with time-periodic limit cycle canards, where the explosion of temporal canards occurs near --and asymptotically close to-- the singular Hopf bifurcation. 
The new spatially periodic canards were found  (recall Sec.~\ref{sec:spatialdynamicsanalog}) to have many features in common with the classical temporal limit cycle canards, as well as several  important new features. The most interesting of these new features is that the critical manifold in the spatial ODE system is two-dimensional and consists of branches of saddle equilibria and center equilibria of the fast system, which contrasts with the one-dimensional critical manifolds of attracting and repelling equilibria that give rise to canard explosions in fast-slow ODEs.

To complement the above results for the spatial ODE system, we also studied some basic dynamical properties of the PDE \eqref{eq:vdp}, recall Section~\ref{sec:pdedyn}.  
We showed that the Turing bifurcation from which the spatially periodic canards is sub-critical for the PDE.
The standard Ginzburg-Landau theory shows that small-amplitude perturbations should grow because the coefficients of both the linear and cubic terms are positive, and it cannot determine what the nonlinear saturation mechanisms might be. 
Hence, this study also sheds new light on what attractors can exist in the sub-critical case.

The PDE simulations that we have performed so far showed that the large-amplitude, spatially periodic canards are attractors in the full PDE \eqref{eq:vdp}. With localized initial data, which are at the homogeneous state over most of the interval and have localized $\tanh$-shaped perturbations (`plateaus'), we observed that, when the plateaus are not too wide, the attractors are stationary, large-amplitude canard patterns that are homoclinic in space to the (stable) homogeneous state $(a,f(a))$ for $a \gtrsim a_T$.
Then, for $a \lesssim a_T$, with the same initial data (plateaus not too wide), the attractors are large-amplitude canard patterns homoclinic to spatially periodic patterns, since the homogeneous state is linearly unstable here. Recall Figs.~\ref{fig:Sims1}-\ref{fig:Sims3}. 
By contrast, when the plateaus in the initial data are too wide, then the attractors exhibit time periodic dynamics for $a<1$, due to the Hopf bifurcation.
In addition, a class of small-amplitude attractors is observed in the PDE. 
For $1 > a \gtrsim a_T$, these are periodic in both time and space, and they are observed for initial data in which the support is sufficiently wide, greater than a threshold. 
By contrast, for $a \lesssim  a_T$, the same iniital data develops into a large-amplitude attractor whose spatial tails vary periodically in time.   
Recall Fig.~\ref{fig:Sims4}. 
Furthermore, the simuations suggest that there is a rich interplay between the Turing and Hopf modes, especially since the real parts of the dominant eigenvalue are both of size $\mathcal{O}(\delta)$ at the Hopf value $a=1$ and the Turing bifurcation $a_T=\sqrt{1-2\delta\sqrt{\eps}}$ (recall Fig.~\ref{fig:EVcurves}), and it is expected that the PDE dynamics are governed by coupled Ginzburg-Landau equations (one for each mode), as described in Section~\ref{sec:pdedyn}.

\subsection{RFSN-II points and spatial canards in general reaction-diffusion systems \label{subsec:gen}}

In this brief section, we generalize some of the main results about the classes of small-amplitude, spatially periodic canards asymptotically close to the Turing bifurcation value $a_T$ established here for the van der Pol PDE \eqref{eq:vdp} to other reaction-diffusion systems with separated diffusivities, which undergo Turing bifurcations and which have reaction kinetics consisting of two or more branches separated by non-degenerate fold points.
In particular, we consider 
systems of reaction-diffusion PDEs
\begin{equation} \label{eq:genRDsys}
  \begin{split}
    u_t &= \hat{f}(u,v) + d u_{xx}, \\ 
    v_t &= \hat{g}(u,v) + v_{xx}, 
  \end{split}
\end{equation}
where $0<d\ll 1$.
To generalize the results about the RFSN-II points and their canards, system \eqref{eq:genRDsys} needs to have a sufficiently large open set in the $(u,v)$ plane on which the critical set $\hat{f}(u,v)=0$ has a locally unique (non-degenerate) quadratic solution $v=h_0(u)$  {\it i.e.}, a point $(u_0,v_0)$ in the open set such that $\hat{f}(u_0,h_0(u_0))=0$, $\frac{\partial \hat{f}}{\partial u} (u_0,h_0(u_0))=0$, and  $\frac{\partial^2 \hat{f}}{\partial u^2}(u_0,h_0(u_0)) \ne 0$.   
With this assumption, there will be a critical manifold consisting of a saddle branch and a center branch that meet along a fold curve in the $(u,p,v,q)$ space
(recall \eqref{eq:foldcurves} and Fig.~\ref{fig:S} for the van der Pol system).

Like all reaction-diffusion systems in one dimension, these multi-scale reaction-diffusion systems are invariant under the transformation $x \to -x$. 
Hence, the systems of spatial ODEs governing the shapes of time-independent patterns have a reversibility symmetry of the form $\mathcal{R}$, recall \eqref{eq:RF}, and the Turing bifurcation points in these systems correspond to reversible 1:1~resonant Hopf bifurcations in the spatial ODE systems. 
In addition, the fast system (or layer problem) 
\begin{equation} \nonumber
\begin{split}
    u_y &= p \\
    p_y &= -\hat{f}(u,v)
\end{split}
\end{equation}
will be Hamiltonian, with $v$ as a parameter, recall \eqref{eq:H-fast}.
Furthermore, the desingularized reduced vector fields on the critical manifold $S=\left\{  p = 0, v = h_0(u) \right\}$ will be
\begin{equation} \label{eq:desing-genRDsys}
  \begin{split}
    u_{x_d}&= q, \\ 
    q_{x_d} &= \frac{\hat{f}_u}{\hat{f}_v} \hat{g}(u,h_0(u)).
  \end{split}
\end{equation}
These systems are also Hamiltonian, just as the desingularized reduced system \eqref{eq:desing-reduced} is for the van der Pol system.  
Due to the reversibility symmetry, the ordinary singularities (fixed points) of the desingularized vector fields must be saddles, centers, or saddle-nodes, and the folded singularities must be reversible folded saddle (RFS), reversible folded center, or reversible folded saddle-node (RFSN) points, since the eigenvalues of the Jacobians must be symmetric with respect to the real and imaginary axes.
Finally, the true and faux canards of RFSN-II and RFS points will give rise to the canard segments of the spatially periodic solutions.

\subsection{Discussion \label{subsec:disc}}

Open questions about the full PDE \eqref{eq:vdp} have been listed at the end of Section~\ref{sec:pdedyn}. 
Here, we list some open questions about the spatial ODE system.

We are presently performing the geometric desingularization of the reversible folded saddle (RFS) points on the fold sets $L^\pm$ in \eqref{eq:vdp}, which were shown to exist for parameter values further from the Turing bifurcation (and hence from the RFSN-II singularity) in Section~\ref{sec:slow}.  
Simulations (see for example  Figs.~\ref{fig:transition_growslow}--\ref{fig:transition_doubleloop}) show that these RFS singularities are responsible for the creation of the spatially periodic canard solutions observed for $a$ values sufficiently far away from $a_T$. The further $a$ is from $a_T$, the true and faux canards of the RFS play a similar role in the geometric deconstructions of the spatially periodic solutions as those of the RFSN-II for $a$ near $a_T$. Also, the ordinary singularity $E$, which is a center for $a<1$, is located further from the fold set, and hence it would be of interest to determine how the number and structure of the loops changes as $a$ decreases.
Furthermore, in the opposite limit as $a \to 1$, preliminary calculations indicate  that the canards of the RFS points converge to the canards created by the RFSN-II.

The results here motivate a more general rigorous analysis of folded singularities in singularly perturbed systems of ODEs with reversibility symmetry, especially into the geometry of the invariant manifolds associated to RFSN-II and RFS points. 
We refer to \cite{KW2010} for the theory of general (not necessarily reversible) folded saddle nodes, and to \cite{Mitry2017} for the theory of general folded saddles.  
 
The three-component reaction-diffusion model studied in \cite{BDHL2023} consists of one activator and two inhibitors. 
The kinetics of the activator and first inhibitor are essentially those of the FitzHugh-Nagumo ODE. 
The kinetics of the second inhibitor are also linear.
All three species diffuse, with the diffusivity of the activator being asymptotically smaller than those of the inhibitors.
Overall, the reaction-diffusion subsystem for the activator and first inhibitor is of the general form \eqref{eq:genRDsys}.
Hence, it would be of interest to study the roles of the folded singularities and their canards in the onset of the spikes in the two-component FHN system in which the inhibitor also diffuses, as well as in the full three-component reaction-diffusion system.
The formation of the spike on the left branch, from $S_s^-$ due to the nearby RFSN-II point and its canards observed here for \ref{eq:vdp} (recall Sec.~\ref{subsec:spikeformation}) may explain the nucleation of some spikes in the three-component model of \cite{BDHL2023}.
In the other direction, analysis similar to that in \cite{BDHL2023} of how a small spike grows into a full-fledged spike should apply here for system \ref{eq:vdp}. 

Another question concerns the double asymptotic limit in which $\eps$ and $\delta$ are small. It is well-known that the asymptotic limit of small $\eps$ gives rise to temporal canards in the kinetics, and we have established here that the asymptotic limit of small $\delta$ gives rise to canards in the spatial dynamics. 
How do the temporal limit cycle canards of the kinetics problem for small $\eps$ interact with spatial canards?

\bigskip\noindent
{\bf Acknowledgments.}
The authors gratefully acknowledge Irv Epstein, Guido Schneider, and Gene Wayne,
for useful comments. We also thank Irv Epstein for bringing reference \cite{KLSBE2021} to our attention, for the possible evidence of spatial canards. 
The results in this article were presented by T.V. at the conference Multiscale Systems: Theory and Applications, held July 8-12, 2024, at the Lorentz Center, Leiden University, Leiden, NL.
NSF-DMS 1616064 and NSF-DMS 1853342 provided partial support to T.K. and T.V., respectively. 
The research of A.D. is supported by the ERC-Synergy project RESILIENCE (101071417).

\appendix 

\section{Numerical methods}
\label{app:numericalmethods}

\subsection{Continuation of periodic solutions} 

Periodic solutions of the system \eqref{eq:spatialODE-y} with non-trivial first integral $\mathcal G$ given by \eqref{eq:G} were computed and numerically continued using the method developed in \cite{GV2007}. More specifically, families of periodic solutions were computed by calculating solutions of the auxiliary system 
\begin{equation} \label{eq:continuation}
  \begin{split}
      \dot{\boldsymbol u} &= T \left(\boldsymbol{F}(\boldsymbol u) + \eta \nabla \mathcal G \right),
  \end{split}
\end{equation}
subject to periodic boundary conditions $\boldsymbol u(0) = \boldsymbol u(1)$. Here,
$\boldsymbol u = (u,p,v,q)$, $\boldsymbol F$ is the vector field in \eqref{eq:spatialODE-y}, $T$ is the spatial period, $\eta$ is a new auxiliary parameter (fixed at zero), and the overdot denotes the derivative with respect to the scaled spatial variable $\widehat y = \tfrac{y}{T}$.

The branches of $\mathcal G(\boldsymbol u) = g$, where $g$ is a constant, were computed by appending the integral constraint
\[ \int_0^1 \mathcal G(\boldsymbol u) \, d\widehat y = g, \]
to the system \eqref{eq:continuation} subject to the periodic boundary conditions $\boldsymbol u(0) = \boldsymbol u(1)$.

The numerical continuation was implemented using the continuation software AUTO \cite{AUTO}. 
The bifurcation diagrams for the families of spatially periodic canard solutions of \eqref{eq:spatialODE-y} shown in Figs.~\ref{fig:frommole2manatee},   \ref{fig:bifndetailed}, and later figures were also computed using AUTO.  We note that  additional branches of periodic solutions may exist.

\subsection{Computation of saddle slow manifolds and maximal canards}  \label{app:numericalmethods_slowmanifolds}
The saddle slow manifolds shown in Fig.~\ref{fig:slowmanifolds} were computed following the method developed in \cite{Hasan2018}. More specifically, the saddle slow manifolds were computed in two parts: one for the solutions enclosed by the true and faux canards of the folded saddle and one for the solutions outside that lie outside the region enclosed by the true and faux canards of the folded saddle. 

The subsets of $S_{s,\delta}^+$ outside the region enclosed by the true and faux canards of the folded saddle were computed by solving the system \eqref{eq:continuation}, where $\boldsymbol{u} = (u,p,v,q)$ and $\boldsymbol F$ is the vector field in \eqref{eq:spatialODE-y}, and subject to the boundary conditions
\begin{equation*}
\begin{split}
\boldsymbol u(0) \in \left\{ v = \tfrac{1}{3}u^3-u, \,\, q = q_0 : q_0 < 0 \right\} 
\quad \text{ and } \quad 
\boldsymbol u(1) \in \left\{ p = 0, \,\, u = 1 \right\}.
\end{split}
\end{equation*}
The left-end condition enforces the constraint that solutions enter the saddle slow manifold along the $p$-nullcline at a fixed $q$-distance from the folded singularity. The right-end condition is a statement that solutions leave the neighbourhood of the slow manifold along the $u$-nullcline and terminate at the fold. 

The subsets of $S_{s,\delta}^+$ enclosed by the true and faux canards of the folded saddle were computed by solving the system \eqref{eq:continuation}, where $\boldsymbol{u} = (u,p,v,q)$ and $\boldsymbol F$ is the vector field in \eqref{eq:spatialODE-y}, and subject to the boundary conditions
\begin{equation*}
\begin{split}
\boldsymbol u(0) \in \left\{ v = \tfrac{1}{3}u^3-u, \,\, u = u_0 : u_0 > 1 \right\},
\quad
\boldsymbol u(1) \in \left\{ v = \tfrac{1}{3}u^3-u \right\}, 
\quad \text{ and } \quad 
\left\{ q(0) + q(1) = 0 \right\}.
\end{split}
\end{equation*}
The left-end condition specifies that solutions enter the saddle slow manifold along the $p$-nullcline at a fixed $u$-distance from the fold. The remaining boundary conditions ensure that the solutions turn away from the fold and stay on the slow manifold. 

The maximal canards were then computed as saddle-node bifurcations of the above two-point boundary value problems.

\subsection{Continuation of canard orbits in chart \texorpdfstring{$K_2$}{Lg}}  \label{sec:appA2}
For the results shown in Section~\ref{sec:selfsimilar}, solutions of the blown-up system \eqref{eq:ODE-K2} on the zero level set $H_2(u_2,p_2,v_2,q_2,r_2) = 0$ were computed and numerically continued by solving the auxiliary problem \eqref{eq:continuation}, where $\boldsymbol{u} = (u_2,p_2,v_2,q_2)$, $\boldsymbol F$ is the vector field in \eqref{eq:ODE-K2}, and the conserved quantity is $\mathcal{G} = H_2$. These equations were solved subject to the boundary conditions 
\[ p_2(0) + p_2(1) = 0, \quad \text{ and } \quad q_2(0) + q_2(1) = 0, \]
which enforces the symmetry $(u_2,p_2,v_2,q_2) \to (u_2,-p_2,v_2,-q_2)$, together with the integral constraint
\[ \int_0^1 H_2(u_2,p_2,v_2,q_2,r_2)\,d\widehat{y} = 0 \]
which constrains the solution to the zero level contour of the Hamiltonian. 

\subsection{ODE and PDE Simulations} 
Finally, direct numerical simulations of solutions of the spatial dynamics of \eqref{eq:spatialODE-y} were carried out using Mathematica's in-built ODE solvers.
Direct numerical solutions of the full PDE \eqref{eq:vdp} were performed using
Mathematica's NDSolveValue-package with the ``MethodOfLines'' and ``SpatialDiscretization'' prescibed by $\{$``TensorProductGrid'', ``MinPoints'' $\to$ 10000$\}$.

\section{The proof of Proposition  \texorpdfstring{\ref{prop:Turingcanards}}{Lg} \label{sec:app-proof}}

In this appendix, we prove Proposition \ref{prop:Turingcanards}. by applying Theorem 3.21 from Chapter 4.3.3  of \cite{HI2011}.  
We work with the spatial ODE system in a general form, 
\begin{equation}
\label{eq:appNF-spatialODE}
    \begin{split}
        0 &= v - f(u) + d_u u_{xx} \\
        0 &= \eps(a-u) + d_v v_{xx},
    \end{split}
\end{equation}
where the diffusivities, $d_u$ and $d_v$ are positive. 

To derive the normal form, we rewrite these spatial ODEs as the following fourth-order system:
\begin{equation*}
    \begin{split}
        u_x &= p \\
        p_x &= \frac{1}{d_u} 
        (f(u)-v) \\
        v_x &= q \\
        q_x &= \frac{\eps}{d_v} (u-a).
    \end{split}
\end{equation*}
We translate the variables so that the equilibrium $(a,0,f(a),0)$ is at the origin and use the notation ${\bf u}=(u_1,u_2,u_3,u_4)$ of Chapter 4.3.3  for the dependent variables: 
$u=a+u_1, p = u_2, v = f(a) + u_3,$ and $q=u_4.$ 
Hence, the system is
\begin{equation}
\label{eq:appA-start}
    \begin{split}
        {u_1}_x &= u_2 \\
        {u_2}_x &= \frac{1}{d_u} 
        \left[ (a^2-1) u_1 + a u_1^2 + \frac{1}{3} u_1^3 - u_3  \right]\\
        {u_3}_x &= u_4 \\
        {u_4}_x &= \frac{\eps}{d_v} u_1.
    \end{split}
\end{equation}
The quartet of eigenvalues is given by
\begin{equation*}
    \pm\frac{1}{\sqrt{2}d_u d_v} \sqrt{ (a^2-1)d_u d_v^2 \pm i d_v^{3/2} \sqrt{ 4 \eps d_u - (a^2-1)^2 d_v} }.
\end{equation*}
At $a=a_T=\sqrt{1 - 2 \sqrt{\frac{\eps d_u}{d_v}}}$, the quartet degenerates into two coincident pairs of pure imaginary eigenvalues 
\begin{equation}
    \label{eq:omega}
\pm i \omega = \pm i \left(\frac{\eps}{d_u d_v}\right)^{1/4}.
\end{equation}
The eigenvalues $\pm i \omega$ are algebraically double and geometrically simple.
This is the non-degenerate, reversible, 1:1~resonant Hopf bifurcation identified in Section~\ref{sec:turingbifn}, recall also Fig.~\ref{fig:spatialeigenvalues} (where we note that $d_u=d=\delta^2$ and $d_v=1$ in \eqref{eq:vdp}).

In order to unfold this point, we set 
\begin{equation}
    \label{eq:HI-mu}
    a = a_T + \mu,
\end{equation}
where the parameter $\mu$ here is different from the spatial eigenvalue $\mu$ used in Section~\ref{sec:turingbifn},
and we write the system \eqref{eq:appA-start} as
\begin{equation}
    \label{eq:HI-form}
    {\bf u}_x = {\bf \mathcal{F}} ( {\bf u}, \mu) = {\bf L} {\bf u} 
      +{\bf R}_{20}({\bf u},{\bf u}) 
    + {\bf R}_{30}({\bf u},{\bf u},{\bf u}) 
    + \mu {\bf R}_{11} ({\bf u})
    + \mu {\bf R}_{21}({\bf u},{\bf u})
    + \mu^2 {\bf R}_{12}({\bf u}), 
\end{equation}
where the operators are defined as
\begin{equation*}
    {\bf L} = \left[
    \begin{array}{cccc}
        0 & 1 & 0 & 0 \\
        -2 \omega^2 & 0 & \frac{-1}{d_u} & 0 \\
        0 & 0 & 0 & 1 \\
        \frac{\eps}{d_v}& 0 & 0 & 0 
    \end{array}
    \right], \ \ 
    {\bf R}_{20}({\bf u},{\bf v}) = \left[
    \begin{array}{c}
        0 \\
        \frac{a_T}{d_u} u_1 v_1 \\
        0 \\
        0 
    \end{array}
    \right], \ \ 
    {\bf R}_{30}({\bf u},{\bf v}, {\bf w}) = \left[
    \begin{array}{c}
        0 \\
        \frac{1}{3d_u} u_1 v_1 w_1\\
        0 \\
        0 
    \end{array}
    \right], \ \ 
\end{equation*}
\begin{equation*}
    {\bf R}_{11}({\bf u}) = \left[
    \begin{array}{c}
        0 \\
        \frac{2a_T}{d_u} u_1\\
        0 \\
        0 
    \end{array}
    \right], \ \ 
    {\bf R}_{21}({\bf u},{\bf v}) = \left[
    \begin{array}{c}
        0 \\
        \frac{1}{d_u} u_1 v_1 \\
        0 \\
        0 
    \end{array}
    \right], \ \ 
    {\bf R}_{12}({\bf u}) = \left[
    \begin{array}{c}
        0 \\
        \frac{1}{d_u} u_1\\
        0 \\
        0 
    \end{array}
    \right]. \ \ 
\end{equation*}
The terms represent, respectively  ${\bf L}$: the Jacobian at the origin at $a_T$; 
${\bf R}_{20}$: 
the quadratic terms in ${\bf u}$;
${\bf R}_{30}$: 
the cubic terms in ${\bf u}$;  ${\bf R}_{11}$ and ${\bf R}_{21}$: the terms  that are linearly proportional to the unfolding parameter $\mu$, and ${\bf R}_{12}$: the term proportional to $\mu^2$. (We  follow the general notation in \cite{HI2011} for the unfolding of this bifurcation, labeled as $(i\omega)^2$ in Chapter 4.3.3. The subscripts on ${\bf R}$ indicate the powers of ${\bf u}$ and $\mu$, respectively, in each term in \eqref{eq:HI-form}; for example, the term involving ${\bf R}_{21}$ is quadratic in ${\bf u}$ and linear in $\mu$.)

For ${\bf L}$, the Jacobian at $a_T$, we use the following eigenvector and generalized eigenvector associated to $i\omega$: 
\begin{equation}
    \label{eq:zeta0}
    \zeta_0 = \left[
    \begin{array}{c}
        1 \\
        i \omega  \\
    \frac{-\eps}{\omega^2 d_v} \\
    \frac{-i \eps}{\omega d_v} 
    \end{array}
    \right], 
    \quad
    \zeta_1 = \left[
    \begin{array}{c}
        \frac{-i}{\omega}\\
        2 \\
    \frac{-i\eps}{\omega^3 d_v} \\
    0 
    \end{array}
    \right].
\end{equation}
These satisfy $({\bf L}-i\omega)\zeta_0=0$ and $({\bf L}-i\omega)\zeta_1 = \zeta_0$. 
Moreover, since ${\bf L}$ is real, one also has 
$({\bf L} + i \omega) \bar{\zeta}_0 = 0$
and $({\bf L} + i \omega) \bar{\zeta}_1 = \bar{\zeta}_0$,
where the overbar denotes the complex conjugate.
The set $ \{ \zeta_0, \zeta_1, \bar{\zeta}_0,\bar{\zeta}_1 \}$ is used as a basis for $\mathbb{R}^4$, and we represent the vector ${\bf u} = A \zeta_0 + B \zeta_1 + \bar{A} \bar{\zeta}_0 + \bar{B} \bar{\zeta}_1$ by ${\bf u}=(A,B,\bar{A},\bar{B})$, where $A, B \in \mathbb{C}$.

Now, we derive the normal form of \eqref{eq:HI-form}. 
It may be obtained directly by applying Lemma 3.17 in Chapter 4.3.3 of \cite{HI2011}, as follows. 
The linear part, ${\bf L}$, is conjugate to the block Jordan matrix,
\begin{equation}
\label{eq:J-appHI}
    {\bf J} = \left[
    \begin{array}{cccc}
        i \omega & 1 & 0 & 0 \\
        0 & i \omega  & 0 & 0 \\
        0 & 0 & -i \omega  & 1 \\
        0 & 0 & 0 & -i \omega 
    \end{array}
    \right]. 
\end{equation}
For the nonlinear part, which also includes the $\mu$-dependent terms, we observe that Hypotheses 3.1, 3.2, and 3.14 from \cite{HI2011} are satisfied
for every integer $k \ge 3$, {\it i.e.,} the vector is $C^k$ for every $K\ge 3$ since it is polynomial.  Hence, Lemma 3.17 establishes that, for any integer $p$ with $2 \le p \le k$,  there exist neighborhoods $\mathcal{V}_1$ of the origin in $\mathbb{R}^4$ and $\mathcal{V}_2$ of the origin in $\mathbb{R}$ and a real-valued polynomial $\Phi$ of degree $p$ such that the near-identity coordinate change,
${\bf u} = A \zeta_0 + B \zeta_1 + \bar{A} \bar{\zeta}_0 + \bar{B} \bar{\zeta}_1 + \Phi(A,B,\bar{A},\bar{B},\mu)$ defined on $\mathcal{V}_1 \times \mathcal{V}_2$, transforms \eqref{eq:HI-form} into the following normal form:
\begin{equation}
    \label{eq:AB-generalNFform}
    \begin{split}
        A_x &= i \omega A + B + i A P\left(\mu, A \bar{A}, \frac{i}{2}(A \bar{B} - \bar{A} B) \right) + \rho_A \\
        B_x &= i \omega B 
        + i B P\left(\mu,A\bar{A}, \frac{I}{2}(A\bar{B}-\bar{A}B)\right) + A Q\left(\mu, A \bar{A}, \frac{i}{2}(A \bar{B}-\bar{A} B)\right) + \rho_B.
    \end{split}
\end{equation}
Moreover, $\Phi(0,0,0,0,0)=0$, $\partial_{(A,B,\bar{A},\bar{B})} \Phi(0,0,0,0,0)=0$, the coefficients of the monomials of degree $q$ in $\Phi(\cdot,\mu)$ are $C^{k-q}$ in $\mu$, and 
$\Phi$ satisfies the reversibility symmetry
$\mathcal{R} \Phi(A,B,\bar{A},\bar{B},\mu) = \Phi(\bar{A},-\bar{B}, A, -B, \mu)$.
Here, $P$ and $Q$ are real-valued polynomials of degree $p-1$.
The remainder terms $\rho_A(A,B,\bar{A},\bar{B},\mu)$ and $\rho_B(A,B,\bar{A},\bar{B},\mu)$ are $C^k$ functions, satisfy the estimate 
$|\rho_A| + |\rho_B | = o( (|A| + |B|)^p)$, and have the following symmetries:
$\mathcal{R} \rho_A(A,B,\bar{A},\bar{B},\mu) = -\bar{\rho}_A(\bar{A},\bar{B}, A, B, \mu)$ and
$\mathcal{R} \rho_B(A,B,\bar{A},\bar{B},\mu) = \bar{\rho}_B(\bar{A},\bar{B}, A, B, \mu)$ for each $\mu$.
(See the general normal form for reversible 1:1~resonant Hopf bifurcations given by equations (3.25) in Chapter 4.3.3 of  \cite{HI2011}.)

As shown in \cite{HI2011}, the existence of equilibria, spatially periodic orbits, quasi-periodic orbits, and homoclinic orbits of \eqref{eq:AB-generalNFform} may be determined by working with the principal parts of the polynomials $P$ and $Q$. That is, it suffices to work with $p=2$. 
Hence, throughout the rest of this appendix, we set
\begin{equation}
    P= \hat{\alpha} \mu + \hat{\beta} A \bar{A} + \frac{i}{2}\hat{\gamma} (A \bar{B} - \bar{A} B) \qquad {\rm and}\qquad Q = \hat{a} \mu + \hat{b} A \bar{A} + \frac{i}{2}\hat{c}(A\bar{B}-\bar{A}B).
\end{equation}

After lengthy calculations using the invariance equations, one finds:
\begin{equation*}
\label{eq:ahat-etal}
\begin{split} 
    \hat{a}&= \frac{a_T}{2 d_u}, \ \
    \hat{b}=\frac{25-8\sqrt{\frac{d_v}{\eps d_u}}}{36d_u}, \ \ 
    \hat{c}= \frac{109 d_u \left(\frac{d_v}{\eps}\right)^{1/4} - 32(\frac{d_v}{\eps})^{3/4}}{72 d_u^{5/4}}, \\ 
    \hat{\alpha}&= \frac{-a_T}{4 \omega d_u}, \ \ 
    \hat{\beta}= \frac{-1}{16}\left( \frac{d_v}{\eps d_u^3} \right)^{1/4},\ \ 
    \hat{\gamma}=  \frac{-32 d_v +37\sqrt{\eps d_u d_v} }{216 \eps d_u}.
    \end{split}
\end{equation*}
(Note that these six parameters are the same as those in Hypothesis 3.18 in \cite{HI2011}, except that we have added the hats.)

Next, we turn to the signs of these six parameters in the principal parts of $P$ and $Q$. Since $a_T>0$, we know $\hat{a}>0$ and $\hat{\alpha}<0$.
Also, $\hat{\beta}<0$. 
The sign of $\hat{b}$ depends on the ratios of the diffusivities:
\begin{equation}
\label{eq:signbhat}
\hat{b} < 0 \ \ {\rm for} \ \ \frac{\eps d_u}{d_v} < \frac{64}{625},
        \quad {\rm and} \quad 
    \hat{b} > 0 \ \ {\rm for} \ \ \frac{\eps d_u}{d_v} > \frac{64}{625}.
\end{equation}
This sign analysis of $\hat{b}$ determines the parameter conditions in Proposition~\ref{prop:Turingcanards}. The former is referred to as the focusing/sub-critical case, and the latter as the defocusing/super-critical case.
The sign of $\hat{c}$ is given by:
\begin{equation*}
    \hat{c} > 0 \ \ {\rm for} \ \ d_u >  \frac{32}{109}\sqrt{\frac{d_v}{\eps}}, \quad {\rm and} \quad 
    \hat{c} < 0 \ \ {\rm for} \ \ d_u < \frac{32}{109}\sqrt{\frac{d_v}{\eps}}.
\end{equation*}
Finally, the sign of $\hat{\gamma}$ is given by
\begin{equation*}
    \hat{\gamma} > 0 \ \ {\rm for} \ \ d_u >  \left(\frac{32}{37}\right)^2\frac{d_v}{\eps}, \quad {\rm and} \quad 
    \hat{\gamma} < 0 \ \ {\rm for} \ \ d_u < \left(\frac{32}{37}\right)^2 \frac{d_v}{\eps}.
\end{equation*}

\section{The proof of Proposition~\ref{prop:61}}  {\label{sec:app61}}

In this appendix, we prove Proposition~\ref{prop:61}.
\begin{proof}
We analyze the dynamics around the nilpotent equilibrium $(0,0,0)$ by performing the blow-up transformation
\[ u_2 = \overline{r}\,\overline{u}_2, \quad p_2 = \overline{r}\, \overline{p}_2, \quad \text{ and } \quad q_2 = \overline{r}\,\overline{q}_2, \]
with $(\overline{u}_2,\overline{p}_2,\overline{q}_2)\in \mathbb{S}^2$ and $\overline{r} \geq 0$, which inflates the nilpotent equilibrium to the unit sphere. For our analysis, we restrict attention to the half-space $\{ \overline{u}_2 \geq 0 \}$ which completely contains the algebraic solutions $\Gamma_0^{\pm}$. We will examine the dynamics in three coordinate charts: 
\[ K_{21}: \{ \overline{u}_2=1 \}, \quad K_{22}: \{ \overline{p}_2=1 \}, \quad \text{ and } \quad K_{23}: \{ \overline{q}_2 = 1\}, \]
and then appeal to the symmetry \eqref{eq:H2zerosymmetry} to obtain the dynamics on the remainder of the blown-up hemisphere. We denote an object $\Phi$ of \eqref{eq:H2zerodesing} in the blown-up coordinates by $\overline{\Phi}$ and in the charts $K_{2j}$ by $\Phi_{2j}$ for $j=1,2,3$. \medskip

{\em Dynamics in chart $K_{21}: \{ \overline{u}_2=1 \}$}: 
The blow-up transformation in chart $K_{21}$ is 
\[ u_2 = r_{21}, \quad p_2 = r_{21}\, p_{21}, \quad \text{ and } \quad q_2 = r_{21}\,q_{21}. \]
Transformation and desingularization ($d\zeta_{21} = r_{21} d\eta_2$) gives
\begin{equation}
\begin{split}
    \dot{r}_{21} &= \sqrt{\eps} r_{21} p_{21} \\ 
    \dot{p}_{21} &= \tfrac{2}{3}r_{21}-\tfrac{1}{2} \sqrt{\eps} \left( p_{21}^2+q_{21}^2 \right) \\ 
    \dot{q}_{21} &= 1-\sqrt{\eps} p_{21}q_{21}
\end{split}
\end{equation}
where the overdot denotes the derivative with respect to the rescaled spatial coordinate $\zeta_{21}$. 

The set $\{ r_{21} = 0 \}$ is invariant. The dynamics in the $\{ r_{21} = 0 \}$ subspace are given by 
\begin{equation} \label{eq:K21rzero}
\begin{split}
    \dot{p}_{21} &= -\tfrac{1}{2} \sqrt{\eps} \left( p_{21}^2+q_{21}^2 \right) \\ 
    \dot{q}_{21} &= 1-\sqrt{\eps} p_{21}q_{21}.
\end{split}
\end{equation}
The coordinate change $(z_{21},w_{21}) = \tfrac{1}{2}\left( p_{21}-q_{21}, p_{21}+q_{21} \right)$ transforms the system \eqref{eq:K21rzero} to the decoupled system of Riccati equations
\begin{equation} \label{eq:K21rzeroRiccati}
\begin{split}
    \dot{z}_{21} &= -\sqrt{\eps} z_{21}^2-\tfrac{1}{2} \\ 
    \dot{w}_{21} &= -\sqrt{\eps} w_{21}^2+\tfrac{1}{2}.
\end{split}
\end{equation}
From this, we find that solutions of \eqref{eq:K21rzero} are given by 
\begin{equation*}
\begin{split}
    p_{21}(\zeta_{21}) &= \tfrac{1}{\sqrt{2}\eps^{1/4}} \left[ \tanh \left( \tfrac{\eps^{1/4}}{\sqrt{2}}\zeta_{21} + \tanh^{-1}\left( \sqrt{2}\eps^{1/4} \tilde{w}_{21} \right) \right) - \tan \left( \tfrac{\eps^{1/4}}{\sqrt{2}}\zeta_{21} - \tan^{-1}\left( \sqrt{2}\eps^{1/4} \tilde{z}_{21} \right) \right) \right] \\ 
    q_{21}(\zeta_{21}) &= \tfrac{1}{\sqrt{2}\eps^{1/4}} \left[ \tanh \left( \tfrac{\eps^{1/4}}{\sqrt{2}}\zeta_{21} + \tanh^{-1}\left( \sqrt{2}\eps^{1/4} \tilde{w}_{21} \right) \right) + \tan \left( \tfrac{\eps^{1/4}}{\sqrt{2}}\zeta_{21} - \tan^{-1}\left( \sqrt{2}\eps^{1/4} \tilde{z}_{21} \right) \right) \right]
\end{split}
\end{equation*}
where $\tilde{z}_{21}$ and $\tilde{w}_{21}$ are the values of $z_{21}$ and $w_{21}$ at $\zeta_{21}=0$. Among these solutions, we distinguish two particular solutions:
\begin{equation} \label{eq:ell21pm}
\ell_{21}^{\pm} := \left\{ p_{21}+q_{21} = \pm \sqrt{2}\eps^{-1/4} \right\}.  
\end{equation}
The solutions with initial conditions above $\ell_{21}^-$ are forward asymptotic (i.e., $\zeta_{21} \to +\infty$) to the line $\ell_{21}^+$
%
%
and the solutions with initial conditions below $\ell_{21}^+$ are backward asymptotic (i.e., $\zeta_{21} \to -\infty$) to the line $\ell_{21}^-$. 
%
The dynamics in the invariant subspace $\{r_{21}=0\}$ are shown in Fig.~\ref{fig:secondblowupcharts}(a).

\begin{figure}[h!]
  \centering
  \includegraphics[width=\textwidth]{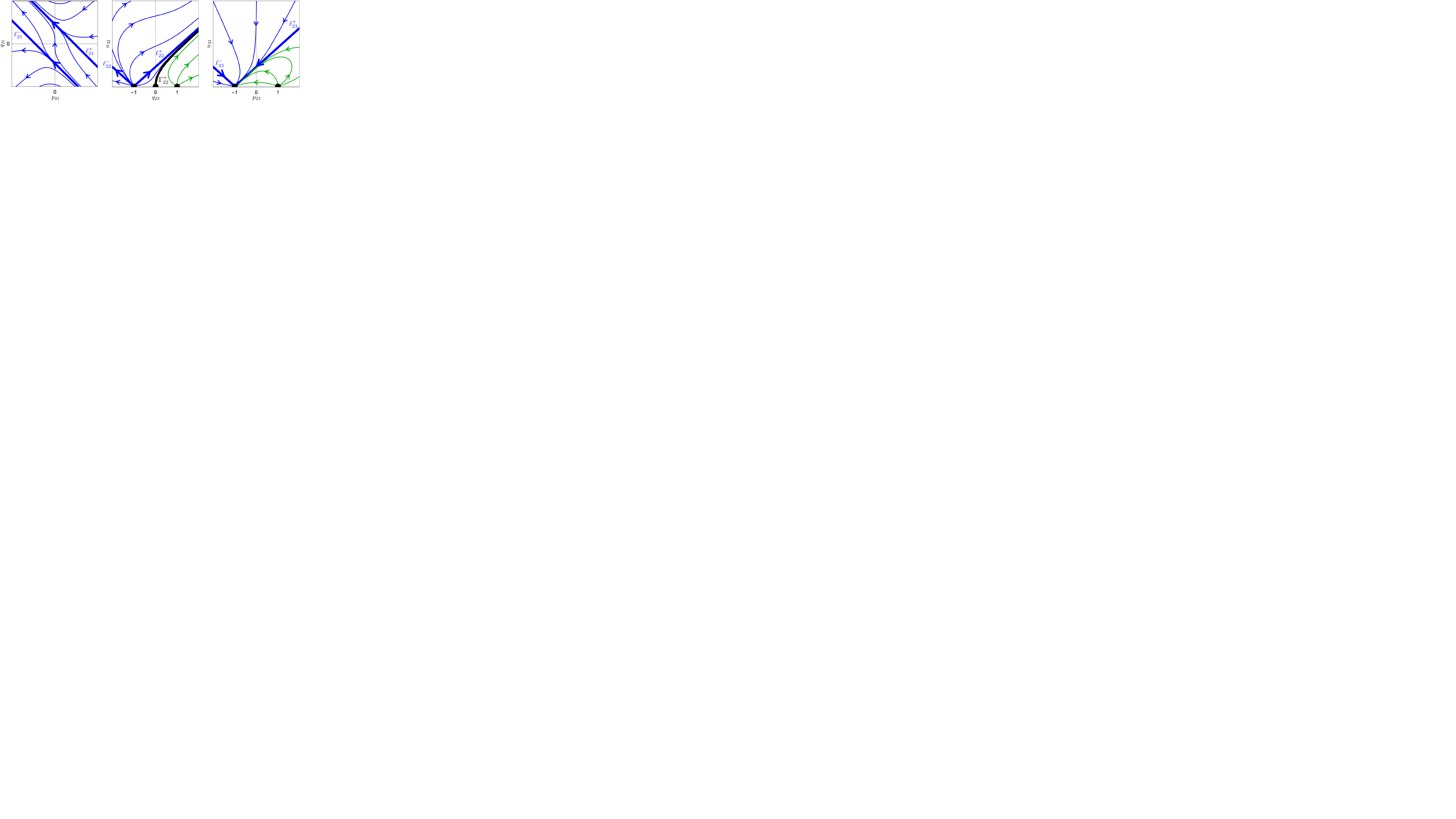}
  \put(-468,150){(a)}
  \put(-310,150){(b)}
  \put(-152,150){(c)}
  \caption{Dynamics of the unperturbed problem in the three main coordinate charts. (a) In chart $K_{21}$, the straight line solution $\ell_{21}^+$ is attracting and the straight line solution $\ell_{21}^-$ is repelling. (b) In chart $K_{22}$, the unstable manifold, $\Gamma_{22}^+$ (black curve), divides between (blue) solutions that emanate from $(u_{22},q_{22})=(0,-1)$ and (green) solutions that emanate from $(u_{22},q_{22})=(0,1)$. (c) In chart $K_{23}$, the straight line solution $\ell_{23}^+$ divides between (blue) solutions that are backward asymptotic to infinity and (green) solutions that are backward asymptotic to the equilibrium at $(u_{23},p_{23})=(0,1)$.}
  \label{fig:secondblowupcharts}
\end{figure}

{\em Dynamics in chart $K_{22}: \left\{ \overline{p}_2 = 1 \right\}$}: The blow-up transformation in chart $K_{22}$ is
\[ u_2 = r_{22} \, u_{22}, \quad p_2 = r_{22}, \quad \text{ and } \quad q_{2} = r_{22}\, q_{22}. \]
Transformation and desingularization ($d\zeta_{22} = r_{22} d\eta_2$) gives 
\begin{equation}
\begin{split}
    \dot{r}_{22} &= r_{22} \left( \tfrac{1}{2}\sqrt{\eps}(1-q_{22}^2)+\tfrac{2}{3} r_{22} u_{22}^3 \right) \\ 
    \dot{u}_{22} &= \tfrac{1}{2}\sqrt{\eps}u_{22}(1+q_{22}^2)-\tfrac{2}{3}r_{22} u_{22}^4 \\ 
    \dot{q}_{22} &= -\tfrac{1}{2}\sqrt{\eps}q_{22}(1-q_{22}^2)+u_{22}^2-\tfrac{2}{3}r_{22}u_{22}^3 q_{22},
\end{split}
\end{equation}
where the overdot has been recycled to denote the derivative with respect to $\zeta_{22}$. 

The line $\left\{ r_{22} = 0, u_{22} = 0 \right\}$ is invariant with dynamics governed by 
\[ \dot{q}_{22} = -\tfrac{1}{2}\sqrt{\eps} q_{22}(1-q_{22}^2). \]
The equilibrium at $q_{22}=0$ is stable, and the equilibria at $q_{22} = \pm 1$ are unstable. 

The plane $\{ u_{22} = 0 \}$ is also invariant. The dynamics restricted to this plane are given by 
\begin{equation}
\begin{split}
    \dot{r}_{22} &= \tfrac{1}{2}\sqrt{\eps}r_{22}(1-q_{22}^2) \\ 
    \dot{q}_{22} &= -\tfrac{1}{2}\sqrt{\eps}q_{22}(1-q_{22}^2).
\end{split}
\end{equation}
The system possesses a pair of lines of equilibria, $\mathscr{L}_{22,+}^u = \left\{ q_{22} = 1 \right\}$ and $\mathscr{L}_{22,-}^u = \left\{ q_{22} = -1 \right\}$, which are both center-unstable with eigenvalues $\lambda_u = \sqrt{\eps}$ and $\lambda_c = 0$. The associated eigenspaces are given by 
\[ \mathbb{E}^u\left( \mathscr{L}_{22,\pm}^u \right) = \begin{bmatrix} \mp r_{22} \\ 1 \end{bmatrix} \quad \text{ and } \quad \mathbb{E}^c\left( \mathscr{L}_{22,\pm}^u \right) = \begin{bmatrix} 1 \\ 0 \end{bmatrix}. \]
The lines $\mathscr{L}_{22,\pm}^u$ correspond to the center-unstable lines of equilibria $\mathscr{L}_{\pm}^u$. 

The plane $\{ r_{22} = 0 \}$ is invariant. The dynamics restricted to this subspace are given by 
\begin{equation} 
\begin{split}
    \dot{u}_{22} &= \tfrac{1}{2}\sqrt{\eps}u_{22}(1+q_{22}^2) \\ 
    \dot{q}_{22} &= -\tfrac{1}{2}\sqrt{\eps}q_{22}(1-q_{22}^2)+u_{22}^2.
\end{split}
\end{equation}
As shown in Fig.~\ref{fig:secondblowupcharts}(b), there is a saddle equilibrium at the origin, with stable eigenvalue $\lambda_s = -\tfrac{1}{2}\sqrt{\eps}$ and stable eigendirection aligned with the $q_{22}$-axis, and unstable eigenvalue $\lambda_u = \tfrac{1}{2}\sqrt{\eps}$ and unstable eigendirection aligned with the $u_{22}$-axis. 
Moreover, there is a pair of unstable degenerate nodes at $(u_{22},q_{22})=(0,\pm 1)$ with spectra given by $\sigma_u = \{ \sqrt{\eps}, \sqrt{\eps} \}$ and corresponding eigenspaces
\[ \mathbb{E}^u\left( 0,\pm 1 \right) = \operatorname{span} \left\{ \begin{bmatrix} 1 \\ 0 \end{bmatrix}, \begin{bmatrix} 0 \\ 1 \end{bmatrix} \right\}. \]

Thus, the equilibria $(r_{22},u_{22},q_{22})=(0,0,\pm 1)$ correspond to the intersections of the center-unstable lines, $\overline{\mathcal{L}}_{\pm}^u$, of equilibria (see \eqref{eq:Lminus} and \eqref{eq:Lplus}) with the blown-up hemisphere. 

We concl;ude with the following two key observations. First, there are two straight line solutions, 
\begin{equation} \label{eq:ell22pm}
\begin{split}
    \ell_{22}^{\pm} := \left\{ r_{22}=0, u_{22} = \pm \tfrac{1}{\sqrt{2}} \eps^{1/4} \left( q_{22}+1 \right) \right\}.
\end{split}
\end{equation}
Second, the unstable manifold, $W^u(0,0)=:\Gamma_{22}^+$, of the saddle equilibrium at the origin splits the $(u_{22},q_{22})$ phase space into two regions. Solutions with initial conditions to the left of $\Gamma_{22}^+$ are backward asymptotic to the equilibrium at $(r_{22},u_{22},q_{22})=(0,0,-1)$, and solutions with initial conditions to the right of $\Gamma_{22}^+$ are backward asymptotic to the equilibrium at $(r_{22},u_{22},q_{22})=(0,0,1)$. The dynamics in the $\{ r_{22}=0 \}$ subspace are shown in Fig.~\ref{fig:secondblowupcharts}(b).
\medskip

{\em Dynamics in chart $K_{23}: \{ \overline{q}_2 = 1 \}$}: The blow-up transformation in chart $K_{23}$ is given by 
\[ u_2 = r_{23}\,u_{23}, \quad p_2 = r_{23}\,p_{23}, \quad \text{ and } \quad q_2 = r_{23}. \]
Transformation and desingularization ($d\zeta_{23}=r_{23}d\eta_2$) gives
\begin{equation}
\begin{split}
    \dot{r}_{23} &= r_{23}u_{23}^2 \\ 
    \dot{u}_{23} &= u_{23}\left( \sqrt{\eps}p_{23}-u_{23}^2 \right) \\ 
    \dot{p}_{23} &= \tfrac{1}{2}\sqrt{\eps}\left( p_{23}^2-1 \right)-u_{23}^2p_{23}+\tfrac{2}{3} r_{23} u_{23}^3,
\end{split}
\end{equation}
where the overdot now denotes derivatives with respect to $\zeta_{23}$. 

The line $\{ r_{23}=0, u_{23}=0 \}$ is invariant. The dynamics on this line are governed by 
\[ \dot{p}_{23} = \tfrac{1}{2}\sqrt{\eps} \left( p_{23}^2-1 \right). \]
There is a stable equilibrium at $p_{23}=-1$ and an unstable equilibrium at $p_{23}=1$. 

The plane $\{ u_{23}=0 \}$ is invariant. The dynamics in the $(r_{23},p_{23})$ plane are given by 
\begin{equation}
\begin{split}
    \dot{r}_{23} &= 0 \\ 
    \dot{p}_{23} &= \tfrac{1}{2}\sqrt{\eps}\left( p_{23}^2-1 \right).
\end{split}
\end{equation}
The line $\mathscr{L}_{23,-}^s = \{ p_{23}=-1 \}$ of equilibria is center-stable and corresponds to the line $\mathscr{L}_-^s$. 
The line $\mathscr{L}_{23,+}^u = \{ p_{23}=1 \}$ of equilibria is center-unstable and corresponds to the line $\mathscr{L}_+^u$. 

The plane $\{ r_{23}=0 \}$ is invariant. The dynamics on this subspace are governed by 
\begin{equation}
\begin{split}
    \dot{u}_{23} &= u_{23}\left( \sqrt{\eps}p_{23}-u_{23}^2 \right) \\ 
    \dot{p}_{23} &= \tfrac{1}{2}\sqrt{\eps}\left( p_{23}^2-1 \right)-u_{23}^2p_{23}.
\end{split}
\end{equation}
As shown in Fig.~\ref{fig:secondblowupcharts}(c), there is a stable degenerate node at $(u_{23},p_{23})=(0,-1)$ with spectrum $\sigma_s=\{ -\sqrt{\eps},-\sqrt{\eps} \}$ and stable subspace 
\[ \mathbb{E}^s\left( 0,-1 \right) = \operatorname{span} \left\{ \begin{bmatrix} 1 \\ 0 \end{bmatrix}, \begin{bmatrix} 0 \\ 1 \end{bmatrix} \right\}.\]
There is also an unstable degenerate node at $(u_{23},p_{23})=(0,1)$ with spectrum $\sigma_u=\{ \sqrt{\eps},\sqrt{\eps} \}$ and unstable subspace 
\[ \mathbb{E}^u\left( 0,1 \right) = \operatorname{span} \left\{ \begin{bmatrix} 1 \\ 0 \end{bmatrix}, \begin{bmatrix} 0 \\ 1 \end{bmatrix} \right\}.\]
Solutions converge to the equilibium at $(u_{23},p_{23})=(0,-1)$ along the lines 
\begin{equation} \label{eq:ell23pm} 
\ell_{23}^{\pm} := \{ u_{23}=\pm \tfrac{1}{\sqrt{2}} \eps^{1/4}(p_{23}+1) \}.
\end{equation}
The dynamics in the $\{r_{23}=0\}$ subspace are shown in Fig.~\ref{fig:secondblowupcharts}(c). 

The equilibrium $(r_{23},u_{23},p_{23})=(0,0,1)$ corresponds to the intersection of the center-unstable line, $\overline{\mathcal{L}}_+^u$, of equilibria with the blown-up hemisphere. Similarly, the equilibrium $(r_{23},u_{23},p_{23})=(0,0,-1)$ corresponds to the intersection of the center-stable line, $\overline{\mathcal{L}}_-^s$, of equilibria with the blown-up hemisphere.
\medskip 

{\em Transition between charts $K_{21}:\{ \overline{u}_2=1 \}$ and $K_{22}:\{ \overline{p}_2=1 \}$}: The coordinate change, $\varphi_{12}(r_{21},p_{21},q_{21})$, from chart $K_{21}$ to chart $K_{22}$ is given by 
\[ \varphi_{12}(r_{21},p_{21},q_{21}) = (r_{22}, u_{22}, q_{22}) = \left( r_{21}p_{21}, \tfrac{1}{p_{21}}, \tfrac{q_{21}}{p_{21}} \right), \quad \text{ for } \,p_{21} >0. \]
The inverse map, $\varphi_{21}(r_{22},u_{22},q_{22})$, which transports coordinates from chart $K_{22}$ to $K_{21}$, is given by 
\[ \varphi_{21}(r_{22},u_{22},q_{22}) = (r_{21},p_{21},q_{21}) = \left( r_{22}u_{22}, \tfrac{1}{u_{22}},\tfrac{q_{22}}{u_{22}} \right), \quad \text{ for }\, u_{22}>0.  \]
Under this coordinate change, we find that 
\[ \lim_{\zeta_{21} \to -\infty} \varphi_{12} \left( \ell_{21}^{\pm} \right) = \lim_{\zeta_{22} \to -\infty} \ell_{22}^{\pm} = \{ (r_{22},u_{22},q_{22}) = (0,0,-1) \}. \]
Thus, solutions emanate from the point $\overline{\mathscr{L}}_-^u \cap \mathbb{S}^2$ on the equator of the blown-up hemisphere. 
See Fig.~\ref{fig:secondblowup}.
\medskip

{\em Transition between $K_{21}:\{ \overline{u}_2=1 \}$ and $K_{23}:\{ \overline{q}_2=1 \}$}: The coordinate change, $\varphi_{13}(r_{21},p_{21},q_{21})$, from chart $K_{21}$ to chart $K_{23}$ is given by 
\[ \varphi_{13}(r_{21},p_{21},q_{21}) = (r_{23},u_{23},p_{23}) = \left( r_{21}q_{21}, \tfrac{1}{q_{21}}, \tfrac{p_{21}}{q_{21}} \right), \quad  \text{ for } \, q_{21} >0.  \]
The inverse map, $\varphi_{31}(r_{23},u_{23},p_{23})$, which transports coordinates from chart $K_{23}$ to $K_{21}$, is given by
\[ \varphi_{31}(r_{23},u_{23},p_{23}) = (r_{21},p_{21},q_{21}) = \left( r_{23}u_{23}, \tfrac{p_{23}}{u_{23}}, \tfrac{1}{u_{23}} \right), \quad  \text{ for } \, u_{23} >0.  \]
From these transition maps, we find that the image of the attracting line $\ell_{21}^+$ is the line $\ell_{23}^+$ and is forward asymptotic to the equilibrium $(r_{23},u_{23},p_{23})=(0,0,-1)$. 
That is, 
\[ \lim_{\zeta_{21} \to \infty} \varphi_{13} \left( \ell_{21}^+ \right) = \lim_{\zeta_{23}\to \infty} \ell_{23}^+ = \{ (r_{23},u_{23},p_{23})=(0,0,-1) \}.  \]
Thus, solutions terminate at the point $\overline{\mathscr{L}}_-^a \cap \mathbb{S}^2$ on the blown-up hemisphere. 
See Fig.~\ref{fig:secondblowup}.
\medskip

{\em Transition between $K_{22}:\{ \overline{p}_2=1 \}$ and $K_{23}:\{ \overline{q}_2=1 \}$}:
The coordinate change, $\varphi_{23}(r_{22},u_{22},q_{22})$, from chart $K_{22}$ to chart $K_{23}$ is given by 
\[ \varphi_{23}(r_{22},u_{22},q_{22}) = (r_{23},u_{23},p_{23}) = \left( r_{22}q_{22}, \tfrac{u_{22}}{q_{22}}, \tfrac{1}{q_{22}} \right), \quad \text{ for } \, q_{22} > 0.  \]
The inverse map, $\varphi_{32}(r_{23},u_{23},p_{23})$, which transports coordinates from chart $K_{23}$ to $K_{22}$, is given by
\[ \varphi_{32}(r_{23},u_{23},p_{23}) = (r_{22},u_{22},q_{22}) = \left( r_{23}p_{23}, \tfrac{u_{23}}{p_{23}}, \tfrac{1}{p_{23}} \right), \quad \text{ for } \, p_{23} >0.  \]
The images of the lines $\ell_{22}^{\pm}$ are the lines $\ell_{23}^{\pm}$ and they are forward asymptotic to the equilibrium at $(r_{23},u_{23},p_{23})=(0,0,-1)$. That is,
\[ \lim_{\zeta_{22} \to \infty} \varphi_{23} \left( \ell_{22}^{\pm} \right) = \lim_{\zeta_{23}\to \infty} \ell_{23}^{\pm} = \{ (r_{23},u_{23},p_{23})=(0,0,-1) \}. \]
Thus, solutions terminate at the point $\overline{\mathscr{L}}_-^a \cap \mathbb{S}^2$ on the equator of the blown-up hemisphere. 
See Fig.~\ref{fig:secondblowup}.
\medskip

{\em Symmetry:} With the above analysis in hand, we can obtain the dynamics in the coordinate charts $K_{24}:=\left\{ \overline{p}_2 = -1 \right\}$ and $K_{25}:=\left\{ \overline{q}_2 = -1 \right\}$ via the symmetry \eqref{eq:H2zerosymmetry}. In particular, the image of the unstable manifold, $\Gamma_{22}^+$, of the saddle equilibrium at the origin in chart $K_{22}: \left\{ \overline{p}_2=1\right\}$ under the symmetry transformation \eqref{eq:H2zerosymmetry} is the stable manifold, $W^s(0,0,0) =: \Gamma_{24}^-$, of the saddle equilibrium at the origin in chart $K_{24}: \left\{ \overline{p}_2=-1\right\}$.
\medskip

{\em Dynamics on the hemisphere:} From our analysis of the dynamics in the charts $K_{21}, K_{22}$, and $K_{23}$, together with the transition maps $\varphi_{ij}$ between them, we conclude the following. 
\begin{itemize}
\item Solutions emanate from the point $\overline{\mathscr{L}}_-^u \cap \mathbb{S}^2$ on the equator of the blown-up hemisphere, travel over the top of the hemisphere in the region enclosed by $\overline{\Gamma}_0^-$ and $\overline{\Gamma}_0^+$, and terminate at the point $\overline{\mathscr{L}}_-^s \cap \mathbb{S}^2$ on the equator of the blown-up hemisphere. These are the class 1 heteroclinics.
\item Solutions emanate from the point $\overline{\mathscr{L}}_+^u \cap \mathbb{S}^2$ on the equator of the blown-up hemisphere, travel over the top of the hemisphere in the region enclosed by $\overline{\Gamma}_0^+$ and the equator, and terminate at the point $\overline{\mathscr{L}}_-^s \cap \mathbb{S}^2$ on the equator of the blown-up hemisphere. These are the class 2 heteroclinics. 
\item The unstable manifold, $\overline{\Gamma}_0^+$, of the equilibrium point corresponding to the intersection of the positive $\overline{p}_2$-axis with $\mathbb{S}^2$ is the separatrix that divides between class 1 and class 2 heteroclinics. 
\item Solutions emanate from the point $\overline{\mathscr{L}}_-^u \cap \mathbb{S}^2$ on the equator of the blown-up hemisphere, travel over the top of the hemisphere in the region enclosed by $\overline{\Gamma}_0^-$ and the equator, and terminate at the point $\overline{\mathscr{L}}_+^s$. These are the class 3 heteroclinics. 
\item The stable manifold, $\overline{\Gamma}_0^-$, of the equilibrium point corresponding to the intersection of the negative $\overline{p}_2$-axis with $\mathbb{S}^2$ is the separatrix that divides between class 1 and class 3 heteroclinics.
\end{itemize}

\end{proof}

\section{Analysis of the governing equations in the entry/exit chart \texorpdfstring{$K_1$}{Lg}} {\label{sec:appK1}}

In this appendix, we present the analysis of \eqref{eq:K1} in the sequence of invariant hyperplanes:
$\{ \delta_1=0\} \cap \{ a_1=0\}$, 
$\{ r_1=0\} \cap \{ a_1 = 0 \}$, 
$\{ r_1 = 0 \} \cap \{ \delta_1 = 0 \}$, 
$\{ \delta_1=0 \}$, 
and $\{ r_1=0\}$,
respectively.
These are the intermediate results used in Section~\ref{sec:K1} to obtain the invariant sets and dynamics of the full system \eqref{eq:K1}.

In the invariant hyperplane
$\{\delta_1=0\} \cap \{ a_1=0 \}$, system \eqref{eq:K1} reduces to
\begin{equation}
        \label{eq:delta1zero}
\begin{split}
        \dot{r}_1 &= \frac{1}{2} \sqrt{\eps} p_1 r_1 \\
        \dot{p}_1 &= 1 - v_1 - \frac{3}{2}\sqrt{\eps} p_1^2
                              + \frac{1}{3}\sqrt{\eps}r_1^2 \\
        \dot{v}_1 &= -2 \sqrt{\eps} p_1 v_1 \\
        \dot{q}_1 &= -\frac{3}{2} \sqrt{\eps} p_1 q_1.
\end{split}
\end{equation}
The first three components of this vector field are independent of $q_1$, and hence the fourth equation decouples from the others.
The line $I$ given by \eqref{eq:I}
is also invariant for this larger system,
and $E_{\pm}$ are again fixed points on $I$,
recall \eqref{eq:Epm}.
The equilibrium $E_+$ is a saddle with three stable eigenvalues
$-\sqrt{6}\eps^{1/4}$,
$-2\sqrt{\frac{2}{3}}\eps^{1/4}$,
and $-\sqrt{\frac{3}{2}}\eps^{1/4}$,
and one unstable eigenvalue
$\frac{1}{\sqrt{6}} \eps^{1/4}$.
The stable subspace is given by
\begin{equation*}
        {\rm span}
        \left\{
        \left[ \begin{array}{c}
               0 \\ 1 \\ 0 \\ 0
        \end{array} \right],
\quad
\left[ \begin{array}{c}
        0 \\ -\sqrt{3} \\ \sqrt{2} \eps^{1/4} \\ 0
       \end{array} \right],
\quad
\left[ \begin{array}{c}
               0 \\ 0 \\ 0 \\ 1
       \end{array} \right]
\right\},
\end{equation*}
and the unstable subspace is in the $r_1$-direction,
\begin{equation*}
        {\rm span}
        \left\{
        \left[ \begin{array}{c}
               1 \\ 0 \\ 0 \\ 0
        \end{array} \right]
        \right\}.
\end{equation*}
The equilibrium $E_-$ is a saddle with three unstable eigenvalues
$\sqrt{\frac{3}{2}}\eps^{1/4}$,
$2\sqrt{\frac{2}{3}}\eps^{1/4}$,
and $\sqrt{6}\eps^{1/4}$,
and one stable eigenvalue
$-\frac{1}{\sqrt{6}} \eps^{1/4}$.
The unstable subspace is given by
\begin{equation*}
        {\rm span}
        \left \{
\left[ \begin{array}{c}
          0 \\ 0 \\ 0 \\ 1
       \end{array} \right],
\quad
\left[ \begin{array}{c}
           0 \\ \sqrt{3}\\ \sqrt{2} \eps^{1/4} \\ 0
        \end{array} \right],
\quad
        \left[ \begin{array}{c}
           0 \\ 1 \\ 0 \\ 0
        \end{array} \right]
\right\},
\end{equation*}
and the stable subspace is in the $r_1$-direction,
\begin{equation*}
        {\rm span}
        \left\{
        \left[ \begin{array}{c}
               1 \\ 0 \\ 0 \\ 0
             \end{array} \right]
        \right\}.
\end{equation*}

There is also a two-dimensional surface of equilibria
in the hyperplane $\{ \delta_1=0 \} \cap \{a_1=0\}$.
It is given by
\begin{equation}
\label{eq:surface}
        \mathcal{S}=\{ r_1 \in \mathbb{R},\,\,
                       \delta_1=0,\,\, p_1=0,\,\,
                       v_1=1 + \frac{1}{3}\sqrt{\eps}r_1^2,\,\,
                       q_1 \in \mathbb{R},\,\, a_1=0\}.
\end{equation}
The eigenvalues
of the Jacobian at points on $\mathcal{S}$
are
\begin{equation*}
        \lambda_s = -\sqrt{2\sqrt{\eps} + r_1^2 \eps}, \quad
        \lambda_u = \sqrt{2\sqrt{\eps} + r_1^2 \eps}, \quad
        \lambda_c = 0,0.
\end{equation*}
Hence, $\mathcal{S}$
is a surface of saddle points,
and it corresponds to the saddle branches
$S_s^\pm$ of the critical manifold $S$.
The associated stable, unstable, and center subspaces are
\begin{equation*}
\mathbb E_s = \operatorname{span} \left\{ \left[ \begin{array}{c}
        -\sqrt{\eps} r_1 \\
         2\sqrt{2\sqrt{\eps} + r_1^2 \eps}  \\
         \frac{4}{3}\sqrt{\eps}(3 + r_1^2 \sqrt{\eps})  \\
         3 \sqrt{\eps} q_1
       \end{array} \right] \right\},
\quad
\mathbb E_u = \operatorname{span} \left\{ \left[ \begin{array}{c}
            -\sqrt{\eps} r_1 \\
            -2\sqrt{2\sqrt{\eps} + r_1^2 \eps}  \\
            \frac{4}{3}\sqrt{\eps}(3 + r_1^2 \sqrt{\eps})  \\
            3 \sqrt{\eps} q_1
       \end{array} \right] \right\},
\quad
\mathbb E_c 
             = \left\{ \left[ \begin{array}{c}
                 0 \\  0 \\ 0 \\ 1
       \end{array} \right],
             \left[ \begin{array}{c}
                     3 \\  0 \\ 2 \sqrt{\eps} r_1 \\ 0
       \end{array} \right] \right\}.
\end{equation*}
The center manifold $W^c(\mathcal{S})$ is two-dimensional
in this hyperplane, and it emanates from $\ell$.

\medskip

Next, we examine the invariant hyperplane
$\{r_1=0\} \cap \{a_1=0\}$.
Here, system \eqref{eq:K1} reduces to
\begin{equation}
        \label{eq:r1zero}
\begin{split}
        \dot{\delta}_1 &= -\sqrt{\eps} p_1 \delta_1 \\
        \dot{p}_1 &= 1 - v_1 - \frac{3}{2}\sqrt{\eps} p_1^2 \\
        \dot{v}_1 &= \sqrt{\eps} (-2 p_1 v_1 + \delta_1 q_1)\\
        \dot{q}_1 &= -\frac{3}{2}\sqrt{\eps} p_1 q_1 + \delta_1.
\end{split}
\end{equation}
This system is fully coupled,
in contrast to the above system. The line $I$ given by \eqref{eq:I} is also invariant for this larger system, and $E_{\pm}$ are again fixed points on $I$, recall \eqref{eq:Epm}.
The equilibrium $E_+$ is stable with eigenvalues
$-\sqrt{6}\eps^{1/4}$,
$-2\sqrt{\frac{2}{3}}\eps^{1/4}$,
$-\sqrt{\frac{3}{2}}\eps^{1/4}$,
and $-\sqrt{\frac{2}{3}}\eps^{1/4}$.
The associated eigenvectors are
\begin{equation*}
        \left[ \begin{array}{c}
               0 \\ 1 \\ 0 \\ 0
        \end{array} \right],
\quad
\left[ \begin{array}{c}
        0 \\ -\sqrt{3} \\ \sqrt{2} \eps^{1/4} \\ 0
       \end{array} \right],
\quad
\left[ \begin{array}{c}
               0 \\ 0 \\ 0 \\ 1
       \end{array} \right],
\quad
\left[ \begin{array}{c}
        \eps^{1/4} \\ 0 \\ 0 \\ \sqrt{6}
       \end{array} \right].
\end{equation*}
The equilibrium $E_-$ is unstable with eigenvalues
$\sqrt{\frac{2}{3}}\eps^{1/4}$,
$\sqrt{\frac{3}{2}}\eps^{1/4}$,
$2\sqrt{\frac{2}{3}}\eps^{1/4}$,
and $\sqrt{6}\eps^{1/4}$, and eigenvectors 
\begin{equation*}
\left[ \begin{array}{c}
        -\eps^{1/4} \\ 0 \\ 0 \\ \sqrt{6}
       \end{array} \right],
\quad
\left[ \begin{array}{c}
          0 \\ 0 \\ 0 \\ 1
       \end{array} \right],
\quad
\left[ \begin{array}{c}
           0 \\ \sqrt{3}  \\ \sqrt{2} \eps^{1/4}\\ 0
        \end{array} \right],
\quad
        \left[ \begin{array}{c}
           0 \\ 1 \\ 0 \\ 0
        \end{array} \right].
\end{equation*}
Also, the line $\ell$ (recall \eqref{eq:ell})
is a line of saddle equilibria
of \eqref{eq:r1zero},
since the eigenvalues are
\begin{equation*}
        \lambda_s = -\sqrt{2} \eps^{1/4}, \quad
        \lambda_u = \sqrt{2} \eps^{1/4}, \quad
        \lambda_c = 0,0.
\end{equation*}
The associated stable, unstable, and center subspaces are
\begin{equation*}
\mathbb E_s = \operatorname{span} \left\{\left[ \begin{array}{c}
         0 \\
         2\sqrt{2}\eps^{-1/4} \\
         4 \\
         3 q_1
       \end{array} \right] \right\},
\quad
\mathbb E_u = \operatorname{span} \left\{ \left[ \begin{array}{c}
             0 \\
            -2\sqrt{2}\eps^{-1/4} \\
            4 \\
            3 q_1
       \end{array} \right] \right\},
\quad
\mathbb E_c = \operatorname{span}\left\{  
             \left[ \begin{array}{c}
                 0 \\  0 \\ 0 \\ 1
       \end{array} \right],\,\,
                \left[ \begin{array}{c}
        \frac{4}{4-3\sqrt{\eps}q_1^2}\\ \frac{2q_1}{4-3\sqrt{\eps}q_1^2} \\ 0 \\ 0
       \end{array} \right] \right\}.
\end{equation*}
Hence, \eqref{eq:r1zero} has a two-dimensional center manifold
\begin{equation}
N \equiv W^c(\mathcal{\ell}) \cap \{ r_1=0 \}.
\end{equation}
Moreover, for $\delta_1>0$, $N$ is unique in the half-space $\{ p_1 < 0 \}$.
Also, $\delta_1$ increases on $N$
in this half-space.

\medskip
Proceeding, we present the main results for the dynamics in the $\{ r_1=0\} \cap \{ \delta_1=0\}$ hyperplane. The equations here are given by system \eqref{eq:r1delta1a1zero} with the equation $\dot{a}_1= -\frac{3}{2}\sqrt{\eps} p_1 a_1$ appended. Since the $(p_1,v_1,q_1)$ subsystem is independent of $a_1$ on this hyperplane, the dynamics in these three variables are the same as the dynamics for \eqref{eq:r1delta1a1zero}, and then one may solve for $a_1$ using quadrature.
The line $I$ given by \eqref{eq:I} is invariant, and $E_{\pm}$ are fixed points on it.
The equilibrium $E_+$ is stable with eigenvalues
$-\sqrt{6}\eps^{1/4}$,
$-2\sqrt{\frac{2}{3}}\eps^{1/4}$,
$-\sqrt{\frac{3}{2}}\eps^{1/4}$, and $-\sqrt{\frac{3}{2}}\eps^{1/4}$, 
and eigenvectors
\begin{equation*}
{\boldsymbol w}_1^+ = \left[ \begin{array}{c}
               1 \\ 0 \\ 0 \\ 0
             \end{array} \right],
\quad
{\boldsymbol w}_2^+ = \left[ \begin{array}{c}
        -\sqrt{3} \\ \sqrt{2}\eps^{1/4} \\ 0 \\ 0
             \end{array} \right],
\quad
{\boldsymbol w}_3^+ = \left[ \begin{array}{c}
               0 \\ 0 \\ 1 \\ 0
             \end{array} \right],
             \quad
{\boldsymbol w}_4^+ = \left[ \begin{array}{c}
               0 \\ 0 \\ 0 \\ 1
             \end{array} \right].
\end{equation*}
The equilibrium $E_-$ is unstable with eigenvalues
$\sqrt{\frac{3}{2}}\eps^{1/4}$,
$\sqrt{\frac{3}{2}}\eps^{1/4}$, 
$2\sqrt{\frac{2}{3}}\eps^{1/4}$, $\sqrt{6}\eps^{1/4}$,
and eigenvectors
\begin{equation*}
{\boldsymbol w}_1^- = \left[ \begin{array}{c}
               0 \\ 0 \\ 1 \\ 0
             \end{array} \right],
\quad
{\boldsymbol w}_2^- = \left[ \begin{array}{c}
               0 \\ 0 \\ 0 \\ 1
             \end{array} \right],
\quad
{\boldsymbol w}_3^- = \left[ \begin{array}{c}
        \sqrt{3} \\ \sqrt{2}\eps^{1/4} \\ 0 \\ 0
             \end{array} \right],
\quad
{\boldsymbol w}_4^- = \left[ \begin{array}{c}
               1 \\ 0 \\ 0 \\ 0
             \end{array} \right].
\end{equation*}

There is also a two-dimensional surface of equilibria
\begin{equation}
\label{eq:surface-hat}
        \hat{\mathcal{S}}=\{ r_1=0,\,\,
                       \delta_1=0,\,\, p_1=0,\,\,
                       v_1=1,\,\,
                       q_1 \in \mathbb{R},\,\, a_1 \in \mathbb{R}\}.   
\end{equation}
The eigenvalues of the Jacobian at points on $\hat{\mathcal{S}}$ are
\begin{equation*}
        \lambda_s = -\sqrt{2}\eps^{1/4}, \quad
        \lambda_u = \sqrt{2}\eps^{1/4}, \quad
        \lambda_c = 0,0.
\end{equation*}
Hence, $\hat{\mathcal{S}}$
is a surface of saddle points.
The associated stable, unstable, and center subspaces are
\begin{equation*}
\mathbb E_s = \operatorname{span} \left\{ \left[ \begin{array}{c}
        2\sqrt{2} \\
         4\eps^{1/4} \\
         3 \eps^{1/4}q_1 \\
         3 \eps^{1/4} a_1
       \end{array} \right] \right\},
\quad
\mathbb E_u = \operatorname{span} \left\{ \left[ \begin{array}{c}
          -2\sqrt{2} \\
         4\eps^{1/4} \\
         3 \eps^{1/4}q_1 \\
         3 \eps^{1/4} a_1          
       \end{array} \right] \right\},
\quad
\mathbb E_c 
             = {\rm span} \left\{ \left[ \begin{array}{c}
                 0 \\  0 \\ 1 \\ 0
       \end{array} \right],
             \left[ \begin{array}{c}
                     0 \\  0 \\ 0 \\ 1
       \end{array} \right] \right\}.
\end{equation*}
Therefore, this subsystem has a two-dimensional center manifold $W^c(\hat{\mathcal{S}}) \cap \{ r_1=0 \} \cap \{ \delta_1=0 \}$. 
It emanates from $\ell$, and $W^c(\hat{\mathcal{S}}) \cap \{ r_1=0 \} \cap \{ \delta_1 =0 \}$ contains
$W^c(\ell) \cap \{ r_1=0 \} \cap \{ \delta_1 =0 \}\cap \{ a_1=0 \}$.

\medskip
Up next is the invariant hyperplane $\{ \delta_1 = 0 \}$.
The full system \eqref{eq:K1} is fifth-order here:
\begin{equation}
        \label{eq:delta1}
\begin{split}
        \dot{r}_1 &= \frac{1}{2}\sqrt{\eps} p_1 r_1 \\
        \dot{p}_1 &= 1 - v_1 - \frac{3}{2} \sqrt{\eps} p_1^2
                              + \frac{1}{3} \sqrt{\eps} r_1^2 \\
        \dot{v}_1 &= -2 \sqrt{\eps} p_1 v_1 \\
        \dot{q}_1 &= -\frac{3}{2} \sqrt{\eps} p_1 q_1 \\
        \dot{a}_1 &= \frac{3}{2} \sqrt{\eps} p_1 a_1.
\end{split}
\end{equation}
The line $I$ given by \eqref{eq:I} is invariant, and $E_{\pm}$ are fixed points on it.
The equilibrium $E_+$ is a saddle with stable eigenvalues
$-\sqrt{6}\eps^{1/4}$,
$-2\sqrt{\frac{2}{3}}\eps^{1/4}$,
$-\sqrt{\frac{3}{2}}\eps^{1/4}$, and $-\sqrt{\frac{3}{2}}\eps^{1/4}$, and unstable eigenvalue $\frac{1}{\sqrt{6}}\eps^{1/4}$.
The associated stable and unstable eigenspaces are
\begin{equation*}
\mathbb E_s = \operatorname{span} \left\{
\left[ \begin{array}{c}
              0 \\ 1 \\ 0 \\ 0 \\ 0
             \end{array} \right],
\,\,
\left[ \begin{array}{c}
        0 \\-\sqrt{3} \\ \sqrt{2}\eps^{1/4} \\ 0 \\ 0
             \end{array} \right],
\,\,
\left[ \begin{array}{c}
          0 \\ 0 \\ 0 \\ 1 \\ 0
             \end{array} \right],
\,\,
\left[ \begin{array}{c}
           0 \\ 0 \\ 0 \\ 0 \\ 1
             \end{array} \right]
\right\}, \quad 
\mathbb E_u = \operatorname{span} \left\{ \left[ \begin{array}{c}
            1 \\ 0 \\ 0 \\ 0 \\ 0
             \end{array} \right] \right\}. 
\end{equation*}
The equilibrium $E_-$ is a saddle with stable eigenvalue
$-\frac{1}{\sqrt{6}} \eps^{1/4}$
and unstable eigenvalues $\sqrt{\frac{3}{2}}\eps^{1/4}$,
$\sqrt{\frac{3}{2}}\eps^{1/4}$, 
$2\sqrt{\frac{2}{3}}\eps^{1/4}$, $\sqrt{6}\eps^{1/4}$,
The associated stable and unstable eigenspaces are 
\begin{equation*}
\mathbb E_s =\operatorname{span} \left\{ \left[ \begin{array}{c}
           1 \\ 0 \\ 0 \\ 0 \\ 0
             \end{array} \right] \right\},
\quad
\mathbb E_u  = \operatorname{span} \left\{ \left[ \begin{array}{c}
           0 \\ 0 \\ 0 \\ 1 \\ 0
             \end{array} \right],
\,\,
\left[ \begin{array}{c}
         0 \\ 0 \\ 0 \\ 0 \\ 1
             \end{array} \right],
\,\,
\left[ \begin{array}{c}
        0 \\ \sqrt{3} \\ \sqrt{2}\eps^{1/4} \\ 0 \\ 0
             \end{array} \right],
\,\,
\left[ \begin{array}{c}
               0 \\ 1 \\ 0 \\ 0 \\ 0
             \end{array} \right]
\right\}.
\end{equation*}

There is also a two-dimensional surface of equilibria
\begin{equation}
\label{eq:surface-tilde}
        \tilde{\mathcal{S}}=\{ r_1=0,\,\,
                       \delta_1=0,\,\, p_1=0,\,\,
                       v_1=1 + \frac{1}{3} \sqrt{\eps} r_1^2, \,\,
                       q_1 \in \mathbb{R},\,\, a_1 \in \mathbb{R}\}
\end{equation}
The eigenvalues of the Jacobian at points on $\tilde{\mathcal{S}}$ are
\begin{equation*}
        \lambda_s = -\sqrt{2\sqrt{\eps} + \eps r_1^2}, \quad
        \lambda_u = \sqrt{2\sqrt{\eps} + \eps r_1^2}, \quad
        \lambda_c = 0, 0, 0.
\end{equation*}
Hence, $\tilde{\mathcal{S}}$
is a surface of saddle points. 
The associated stable and unstable subspaces are
\begin{equation*}
\mathbb E_s = \operatorname{span} \left\{ \left[ \begin{array}{c}
        -\sqrt{\eps} r_1 \\ 2\sqrt{2\sqrt{\eps} + \eps r_1^2} \\
         \frac{4}{3}\sqrt{\eps}(3+\sqrt{\eps}r_1^2) \\
         3 \sqrt{\eps}q_1 \\
         3 \sqrt{\eps} a_1
       \end{array} \right] \right\},
\quad
\mathbb E_u = \operatorname{span} \left\{ \left[ \begin{array}{c}
          -\sqrt{\eps} r_1 \\ -2\sqrt{2\sqrt{\eps} + \eps r_1^2} \\
         \frac{4}{3}\sqrt{\eps}(3 + \sqrt{\eps}r_1^2) \\
         3 \sqrt{\eps} q_1 \\
         3 \sqrt{\eps} a_1          
       \end{array} \right] \right\}.
\end{equation*}
The associated center subspace is 
\begin{equation*} 
\mathbb E_c 
             = {\rm span} \left\{ \left[ \begin{array}{c}
                 0 \\ 0 \\ 0 \\ 1 \\ 0
       \end{array} \right],
             \left[ \begin{array}{c}
                     0 \\ 0 \\ 0 \\ 0 \\ 1
       \end{array} \right], 
        \left[ \begin{array}{c}
                     3 \\ 0 \\ 2\sqrt{\eps}r_1  \\ 0 \\ 0
       \end{array} \right] 
       \right\}.
\end{equation*}
Therefore, this subsystem has a three-dimensional center manifold $W^c(\tilde{\mathcal{S}}) \cap \{ r_1=0 \} \cap \{ \delta_1=0 \}$. 
It emanates from $\ell$, coincides with the critical manifold $S$, and $W^c(\tilde{\mathcal{S}}) \cap \{ \delta_1 =0 \}$ contains
$W^c(\ell) \cap \{ \delta_1 =0 \}\cap \{ a_1=0 \}$.

\medskip
In the invariant hyperplane $\{ r_1=0 \}$, system \eqref{eq:K1} is the following fifth-order set of equations:
\begin{equation}
        \label{eq:r1=0}
\begin{split}
\frac{d \delta_1}{dy_1} &= -\sqrt{\eps}p_1 \delta_1 \\
\frac{dp_1}{dy_1} &= 1-v_1-\frac{3}{2}\sqrt{\eps}p_1^2 \\
\frac{dv_1}{dy_1} &= \sqrt{\eps} \left(-2p_1 v_1 + \delta_1 q_1 \right) \\
\frac{dq_1}{dy_1} &= -\frac{3}{2}\sqrt{\eps} p_1 q_1 +\delta_1 \\
         \frac{da_1}{dy_1} &= -\frac{3}{2}\sqrt{\eps} p_1 a_1.
\end{split}
\end{equation}
The line $I$ given by \eqref{eq:I} is still invariant for this system, and $E_{\pm}$ remain as fixed points on $I$, recall \eqref{eq:Epm}.
The equilibrium $E_+$ is stable with spectrum 
\begin{equation}
    \label{eq:r1=0-E+}
\sigma^s_+ = \left\{ -\sqrt{6}\eps^{1/4},
-2\sqrt{\frac{2}{3}}\eps^{1/4},
-\sqrt{\frac{3}{2}}\eps^{1/4},
-\sqrt{\frac{3}{2}}\eps^{1/4},
-\sqrt{\frac{2}{3}}\eps^{1/4}\right\}
\end{equation}
At $E_+$, the stable subspace is 
\begin{equation}
\label{eq:Esu4E+r1=0}
\mathbb{E}^s = {\rm span} \ \ 
\left\{ 
\left[ \begin{array}{c}
               0 \\ 1 \\ 0 \\ 0 \\ 0 
             \end{array} \right],
\left[ \begin{array}{c} 0 \\
        -\sqrt{3} \\ \sqrt{2}\eps^{1/4} \\ 0 \\ 0
             \end{array} \right],
\left[ \begin{array}{c}
             0 \\ 0 \\ 0 \\ 1 \\ 0 
             \end{array} \right],
\left[ \begin{array}{c}
             0 \\ 0 \\ 0 \\ 0 \\ 1 
             \end{array} \right],
\left[ \begin{array}{c}
             \eps^{1/4} \\ 0 \\ 0 \\ \sqrt{6} \\ 0 
             \end{array} \right]
\right\}
 \end{equation}

The other equilibrium, $E_-$, is unstable with spectrum given by
\begin{equation}
\label{eq:r1=0:E-}
    \sigma_-^u = \left\{ 
    \sqrt{\frac{2}{3}}\eps^{1/4},
    \sqrt{\frac{3}{2}}\eps^{1/4},
    \sqrt{\frac{3}{2}}\eps^{1/4},
2\sqrt{\frac{2}{3}}\eps^{1/4}, \sqrt{6}\eps^{1/4}
\right\}.
\end{equation}
At $E_-$, the unstable eigenspace is 
\begin{equation}
\label{eq:Esu4E-r1=0}
\mathbb{E}^u = {\rm span} \ \left\{
\left[ \begin{array}{c}
             -\eps^{1/4} \\ 0 \\ 0 \\ \sqrt{6} \\ 0 
             \end{array} \right],
\left[ \begin{array}{c}
             0 \\ 0 \\ 0 \\ 1 \\ 0 
             \end{array} \right],
\left[ \begin{array}{c}
             0 \\ 0 \\ 0 \\ 0 \\ 1 
             \end{array} \right],
\left[ \begin{array}{c} 
        0 \\ \sqrt{3} \\ \sqrt{2}\eps^{1/4} \\ 0 \\ 0
             \end{array} \right],
\left[ \begin{array}{c} 0 \\
        1 \\ 0 \\ 0 \\ 0
             \end{array} \right]
\right\}.
\end{equation}

There is also a manifold of equilibria
\begin{equation}
\label{eq:mfldS0}
\mathcal{S}_0 = \{ r_1=0,\,\,
    \delta_1=0,\,\, p_1=0,\,\, v_1=1,\,\, q_1 \in \mathbb{R},\,\, a_1 \in \mathbb{R} \}.
\end{equation}
It is a manifold of saddle fixed points, since the eigenvalues are
\begin{equation*}
        \lambda_s = -\sqrt{2} \eps^{1/4}, \quad
        \lambda_u = \sqrt{2} \eps^{1/4}, 
        \quad
        \lambda_c = 0, 0, 0.
\end{equation*}
The associated eigenspaces are
\begin{equation*}
\mathbb{E}^s = {\rm span} \ 
\left\{ 
\left[ \begin{array}{c}
             0 \\ 2 \sqrt{2}\\ 4\eps^{1/4} \\  3 \eps^{1/4} q_1 \\  3\eps^{1/4}a_1  
             \end{array} \right]
             \right\},
             \quad
\mathbb{E}^u = {\rm span} \ \left\{
\left[ \begin{array}{c}
0 \\ - 2 \sqrt{2} \\ 4\eps^{1/4} \\  3\eps^{1/4} q_1 \\  3\eps^{1/4}a_1  
\end{array} \right] \right\},
\quad
\mathbb{E}^c = {\rm span} \ \left\{ 
\left[ \begin{array}{c}
               0 \\ 0 \\ 0 \\ 1 \\ 0 
             \end{array} \right],
\left[ \begin{array}{c}
               0 \\ 0 \\ 0 \\ 0 \\ 1 
             \end{array} \right],
\left[ \begin{array}{c} \frac{4}{4-3\sqrt{\eps}q_1^2} \\
        \frac{2q_1}{4-3\sqrt{\eps}q_1^2} \\ 0 \\ 0 \\ 0
             \end{array} \right]
\right\}.
\end{equation*}
Therefore, in the hyperplane $\{ r_1=0 \}$, there is 
a three-dimensional
center manifold
\begin{equation}
 \label{eq:N}
    N = W^c(\mathcal{\ell}).
\end{equation}
This manifold contains the surface $\mathcal{S}_0$ of equilibria.
Moreover, as shown in Section~\ref{sec:K1}, these key manifolds persist in the full system \eqref{eq:K1} for $0< r_1 \ll 1$.

\medskip

\section{The proof of Lemma \texorpdfstring{\ref{lem:persistence}}{Lg}} {\label{sec:appGammadelta}}

\begin{proof}
   The proof is split into two steps. First, we calculate a regular perturbation expansion in powers of $r_2$ of the solution $\Gamma_0$ in the $H_2=0$ level set. This expansion is valid in $K_2$ on arbitrary finite intervals of the independent variable $y_2$. Second, we identify the fixed points on the equator of the hemisphere to which this perturbed solution limits as $y_2 \to \pm \infty$, thereby establishing the persistence of the heteroclinic connection, which corresponds to the intersection of the two invariant manifolds. 
   All calculations in these steps were performed both by hand and symbolically.

{\bf Step 1.}
Let 
\begin{equation}
\label{eq:Gammadelta}
    \Gamma_\delta =  \Gamma_0 + r_2 \Gamma_1 + r_2^2 \Gamma_2 + r_2^3 \Gamma_3 + \mathcal{O}(r_2^4), \quad {\rm with} \quad
    \Gamma_{k}=(u_{2k},p_{2k},v_{2k},q_{2k}), \ \ k=1,2,\ldots.
\end{equation}
We substitute this expansion into \eqref{eq:ODE-K2} and solve for $\Gamma_k$, order by order in powers of $r_2$. At $\mathcal{O}(r_2^0)$, we recover $\Gamma_0$, as given by \eqref{eq:Gamma0}. 
At $\mathcal{O}(r_2)$, 
the equations are
\begin{equation*}
    \begin{split}
        {u_{21}}' &= \sqrt{\eps} p_{21} \\
  {p_{21}}' &= 2 u_{20}u_{21} - v_{21} \\
  {v_{21}}' &= \sqrt{\eps} q_{21}\\
  {q_{21}}' &= u_{21} - a_2.
    \end{split}
\end{equation*}
In the algebraic solution, there is a free parameter $\chi_{21}\in \mathbb{R}$: 
\begin{equation*}
\begin{split}
u_{21}(y_2) &= \chi_{21} y_2 + \frac{3}{2} a_2, \quad
p_{21}(y_2)=\frac{1}{\sqrt{\eps}} \chi_{21}, \\
v_{21}(y_2)&=\frac{1}{6}\sqrt{\eps}\chi_{21}y_2^3 + \frac{1}{4}\sqrt{\eps} a_2 y_2^2, \quad
q_{21}(y_2) =\frac{1}{2} \chi_{21} y_2^2 +\frac{1}{2} a_2 y_2.
\end{split}
\end{equation*}

At $\mathcal{O}(r_2^2)$,  the equations are
\begin{equation*}
    \begin{split}
        {u_{22}}' &= \sqrt{\eps} p_{22} \\
  {p_{22}}' &= 2 u_{20}u_{22} + u_{21}^2 - v_{22} + \frac{1}{3}\sqrt{\eps} u_{20}^3 \\
  {v_{22}}' &= \sqrt{\eps} q_{22}\\
  {q_{22}}' &= u_{22}.
    \end{split}
\end{equation*}
Here, there is also a free parameter $\chi_{22}\in \mathbb{R}$
in the algebraic solution. We find 
\begin{equation*}
\begin{split}
u_{22}(y_2) &= \frac{3}{\sqrt{\eps}} \chi_{21}^2  + \frac{5}{96}\sqrt{\eps} + \chi_{22} y_2 - \frac{5\eps^{3/2}}{3,456} y_2^4, \\
p_{22}(y_2)&=\frac{1}{\sqrt{\eps}} \chi_{22} - \frac{5}{864}\eps y_2^3, \\
v_{22}(y_2)&=\frac{9}{4} a_2^2 + 3 a_2 \chi_{21} y_2 + \left( \frac{3}{2}\chi_{21}^2 + \frac{5\eps}{192}\right) y_2^2 + \frac{1}{6}\sqrt{\eps}\chi_{22}y_2^3 - \frac{\eps^2}{20,736} y_2^6, \\
q_{22}(y_2) &=\frac{3}{\sqrt{\eps}} a_2 \chi_{21} 
+\left(  
\frac{3}{\sqrt{\eps}} \chi_{21}^2 + \frac{5\sqrt{\eps}}{96}
\right) y_2 + \frac{1}{2} \chi_{22} y_2^2 - \frac{\eps^{3/2}}{3,456} y_2^5.
\end{split}
\end{equation*}

At $\mathcal{O}(r_2^3)$,  the equations are
\begin{equation*}
    \begin{split}
        {u_{23}}' &= \sqrt{\eps} p_{23} \\
  {p_{23}}' &= 2 u_{20}u_{23} + 2u_{21}u_{22} - v_{23} + \sqrt{\eps} u_{20}^2 u_{21} \\
  {v_{23}}' &= \sqrt{\eps} q_{23}\\
  {q_{23}}' &= u_{23}.
    \end{split}
\end{equation*}
The algebraic solution at this order has the free parameter $\chi_{23}\in \mathbb{R}$:
\begin{equation*}
\begin{split}
u_{23}(y_2) &= \frac{6}{\sqrt{\eps}} \chi_{21} \chi_{22} + \chi_{23} y_2 - \frac{7\eps}{96} a_2 y_2^2 - \frac{5\eps}{144}\chi_{21} y_2^3,\\
p_{23}(y_2)&=\frac{1}{\sqrt{\eps}} \chi_{23} - \frac{7\sqrt{\eps}}{48} a_2 y_2 - \frac{5\sqrt{\eps}}{48} \chi_{21} y_2^2, \\
v_{23}(y_2)&= a_2 \left( \frac{9}{\sqrt{\eps}} \chi_{21}^2 + \frac{29 \sqrt{\eps}}{96} \right) 
+\left( 3 a_2 \chi_{22} + \frac{6}{\sqrt{\eps}} \chi_{21}^3 + \frac{5\sqrt{\eps}}{16} \chi_{21} 
\right) y_2 \\
&+ 3 \chi_{21} \chi_{22} y_2^2 + \frac{\sqrt{\eps}}{6} \chi_{23} y_2^3 - \frac{7 \eps^{3/2}}{1,152}a_2 y_2^4
-\frac{\eps^{3/2}}{576} \chi_{21} y_2^5, \\
q_{23}(y_2) &=\frac{1}{\sqrt{\eps}} \left( 3 a_2 \chi_{22} + \frac{6}{\sqrt{\eps}} \chi_{21}^3 + \frac{5\sqrt{\eps}}{16} \chi_{21} \right)
+\frac{6}{\sqrt{\eps}} \chi_{21} \chi_{22} y_2 \\
&+ \frac{1}{2} \chi_{23} y_2^2 - \frac{7\eps}{288} a_2 y_2^3 - \frac{5\eps}{576} \chi_{21} y_2^4.
\end{split}
\end{equation*}

\noindent
{\bf Step 2.}
We substitute the expansions for $(u_2,p_2,v_2,q_2)$ from Step 1 into the Hamiltonian $H_2$ given by \eqref{eq:H2}. After lengthy calculations,  we find that all of the terms that depend on powers of $y_2$ vanish, and one is left with:
\begin{equation*}
    H_2 \vert_{\Gamma_\delta} =
    -\frac{1}{12} a_2 r_2 - \frac{5\sqrt{\eps}}{576} r_2^2
    -\frac{1}{6}\sqrt{\eps}a_2^2 r_2^4
    +\mathcal{O}(r_2^5).
\end{equation*}
Next, we multiply $H_2$ by $12\sqrt{\eps} r_2^2$, recall that
$\tilde{a}=\sqrt{\eps}r_2^3 a_2$ by \eqref{eq:dynrescale-K2}, 
and impose $H_2 \vert_{\Gamma_\delta} = 0$. This yields
\[ 
\tilde{a} = - \frac{5\eps}{48} r_2^4 + \mathcal{O}(r_2^6).
\]
Finally, we recall that $\delta=r_2^2$ in chart $K_2$ and $a=1+\tilde{a}$ by  \eqref{eq:translate}.
Therefore, we have derived the formula $a_c(\delta) = 1 - \frac{5\eps}{48}\delta^2 + \mathcal{O}(\delta^3)$, given in \eqref{eq:lemma1-exp-a}.

\end{proof}



\end{document}